%
%
%
%
%
%
%

%
%
%
%

\documentclass[11pt]{amsart}
\usepackage{amssymb,amsmath,color}
\usepackage{tikz-dependency}
\usepackage{tikz}
\usepackage{enumerate}

\hyphenation{Hetyei}
\hyphenation{associa-hedron}

\setlength{\oddsidemargin}{ -0.5cm}
\setlength{\evensidemargin}{ -0.5cm}
\textwidth 17cm

\newtheorem{theorem}{Theorem}[section]
\newtheorem{proposition}[theorem]{Proposition}
\newtheorem{lemma}[theorem]{Lemma}
\newtheorem{definition}[theorem]{Definition}

\newtheorem{corollary}[theorem]{Corollary}

\newtheorem{remark}[theorem]{Remark}

\numberwithin{equation}{section}

\newcommand{\Rrr}{{\mathbb R}}

\newcommand{\Ddd}{{\widetilde{D}}}

\newcommand{\Simplex}{\Delta}

\newcommand{\rest}[2] {\left.#1\right| {_{#2}}} 

\DeclareMathOperator{\conv}{conv}

\newcommand{\vanish}[1]{}

\parskip=12pt

\begin{document}

\title[Classification of uniform flag triangulations of the root polytope]
{Classification of uniform flag triangulations of the boundary
of the full root polytope of type $A$}

\author{Richard EHRENBORG, G\'abor HETYEI and Margaret READDY}

\address{Department of Mathematics, University of Kentucky, Lexington,
  KY 40506-0027.\hfill\break \tt http://www.math.uky.edu/\~{}jrge/,
  richard.ehrenborg@uky.edu.}

\address{Department of Mathematics and Statistics,
  UNC-Charlotte, Charlotte NC 28223-0001.\hfill\break
\tt http://www.math.uncc.edu/\~{}ghetyei/,
ghetyei@uncc.edu.}

\address{Department of Mathematics, University of Kentucky, Lexington,
  KY 40506-0027.\hfill\break
\tt http://www.math.uky.edu/\~{}readdy/,
margaret.readdy@uky.edu.}

\subjclass[2000]
{Primary
52B05, 
52B12, 
Secondary
05A15, 
05E45} 

\keywords{Bessel function;
Catalan number;
Cyclohedron;
Delannoy number;
Face vector;
Flag complex;
Matching ensemble;
Type $B$ associahedron}

\date{\today}  

\begin{abstract}
The full root polytope of type~$A$ is the convex hull of all pairwise differences of
the standard basis vectors which we represent by forward and backward
arrows. 
We completely classify all flag triangulations of this polytope
that are uniform in the sense that the edges may be described as a
function of the relative order of the indices of the four basis vectors
involved. These fifteen triangulations fall naturally into three classes:
three in the lex class, three in the revlex class and nine in the Simion class.
We also consider a refined face count where we distinguish between
forward and backward arrows. We prove the refined face
counts only depend on the class of the triangulations.
The refined face generating functions are expressed in
terms of
the Catalan and Delannoy generating functions
and the modified Bessel function of the first kind.
\end{abstract}

\maketitle

\section{Introduction}

Triangulations of root polytopes and of products of simplices have been a
subject of intense study in recent
years~\cite{Ardila_Beck_Hosten_Pfeifle_Seashore,Ceballos_Padrol_Sarmiento_old,
  Ceballos_Padrol_Sarmiento, Cellini_Marietti_abelian,
  Gelfand_Graev_Postnikov}. 
Motivated by an observation made
in~\cite{Cori_Hetyei}, we recently~\cite{Ehrenborg_Hetyei_Readdy}
established that the Simion type~$B$ associahedron~\cite{Simion} may be
realized as a pulling triangulation of the full root polytope of type~$A$, defined as
the convex hull of all differences of pairs of standard basis vectors in
Euclidean space. These vertices can be thought of as {\em arrows} between
numbered nodes. We also showed that all pulling triangulations are flag.
The full root polytope is the centrally symmetric variant of the
type~$A$ positive root polytope whose lexicographic and revlex
triangulations were studied by Gelfand, Graev and
Postnikov~\cite{Gelfand_Graev_Postnikov}.
A question naturally arises:
Are there other reasonably uniform triangulations of
the root polytope?

In this paper we fully answer this question. We classify all flag
triangulations that are uniform in the sense that the flag condition
depends only on the relative order on the numbering of the basis vectors
involved.
The key tool we use is a
characterization of triangulations of a product of simplices given by
Oh and Yoo~\cite{Oh_Yoo1,Oh_Yoo_2} in terms of {\em matching
  ensembles}. We determine that there are three classes of triangulations:
variants of the lexicographic pulling triangulation, variants of the
revlex pulling triangulation, and variants of the triangulation
representing the Simion type~$B$ associahedron. 

It is known that
all triangulations of the boundary of the root polytope
have the same face numbers. For pulling triangulations, this
was shown in~\cite{Hetyei_Legendre}.
For general triangulations it may be shown by
generalizing the argument in~\cite{Hetyei_Legendre} and using the fact that all
triangulations of a product of simplices have the same face
numbers~\cite[Lemma 2.3]{Oh_Yoo_2}.

To distinguish between the three
major classes of uniform flag triangulations,
we introduce a refined face count which keeps track of
the number of forward and backward arrows in each face.
Surprisingly we
find that the refined face count of a 
triangulation only depends on which class it belongs to,
regardless of how we fix the number of
forward and backward arrows.
For the Simion class
the generating function for Catalan numbers and
weighted generalizations of the
Delannoy numbers play a crucial role in the refined face count.
For the revlex class
different weighted generalizations of the
Delannoy numbers 
and 
modified Bessel functions of the first kind play the important role.
Finally for the lex class, the Catalan numbers play the essential role.

Our paper is structured as follows. In
Section~\ref{section_classification} we state our main classification
theorem and prove the sufficiency part. The necessity part is in
Section~\ref{section_sufficiency}.
In Section~\ref{section_fifteen_distinct_triangulations}
we outline the argument that all triangulations that we consider are
actually pairwise non-isomorphic.
Results facilitating the refined face count
in all cases are presented in Section~\ref{section_tools}. The actual
refined face count appears in
Sections~\ref{section_Simion}, \ref{section_revlex}
and~\ref{section_enumeration_nonest}, respectively.
We end the paper with concluding remarks.

\section{Preliminaries}
\label{section_preliminaries}

\subsection{The root polytope}

The {\em full root polytope}~$P_{n}$
is the convex hull of the $n(n+1)$ vertices $e_{j}-e_{i}$ where $i \neq j$
and
$\{e_{1}, e_{2}, \ldots, e_{n+1}\}$
is the standard orthonormal basis of the Euclidean space $\mathbb{R}^{n+1}$.
This polytope was first studied by Cho~\cite{Cho}, and it is called the
``full'' type~$A$ root polytope in the work of
Ardila, Beck, Ho\c{s}ten, Pfeifle and
Seashore~\cite{Ardila_Beck_Hosten_Pfeifle_Seashore}
in order to distinguish it from
the {\em positive root polytope}~$P^{+}_{n}$
which is the convex hull of the origin $0$ and
the positive roots $e_{j} - e_{i}$ where
$1 \leq i \leq j \leq n$.
The full root polytope is also known as
the Legendre polytope~\cite{Hetyei_Legendre}.
This name is motivated by the fact that the polynomial
$\sum_{j=0}^{n} f_{j-1} \cdot ((x-1)/2)^{j}$
is the $n$th Legendre polynomial, where $f_{i}$ is the number of
$i$-dimensional faces in any pulling triangulation of the boundary of~$P_{n}$.
Another way to view the $n$-dimensional root polytope is to
intersect the hyperplane
$x_{1} + x_{2} + \cdots + x_{n+1} = 0$
with the $(n+1)$-dimensional cross-polytope
formed by the convex hull of the vertices
$\pm 2e_{1}$, $\pm 2e_{2}$, $\ldots$, $\pm 2 e_{n+1}$.

The full root polytope~$P_{n}$ contains the positive root polytope~$P^{+}_{n}$.
The polytope~$P^{+}_{n}$ was first studied by Gelfand, Graev and
Postnikov~\cite{Gelfand_Graev_Postnikov} and later by
Postnikov~\cite{Postnikov_P}. Many properties of the full root polytope~$P_{n}$
are straightforward generalizations of those of the positive root
polytope~$P^{+}_{n}$.   
In this paper we refer to $P_{n}$ as the root polytope.

We use the shorthand notation $(i,j)$
for the vertex $e_{j}-e_{i}$ of the root polytope~$P_{n}$, and we
introduce the notation 
$$
V_{n}
=
\{(i,j) \:\: : \:\: 1 \leq i,j \leq n+1, i \neq j\} .
$$
We consider the vertices $(i,j)$ as the set of all
directed nonloop edges on the vertex set $\{1,2, \ldots, n+1\}$. To avoid
confusion between edges and vertices of the root polytope, we will
refer to the vertices of~$P_{n}$ as {\em arrows}.
The positive root polytope~$P^{+}_{n}$ is then the convex hull of the origin
and of the forward arrows.
Suppressing the orientation, we arrive at the edges
representing vertices in the work of Gelfand, Graev and
Postnikov~\cite{Gelfand_Graev_Postnikov}. 

A subset of arrows is contained in some proper face of $P_{n}$
exactly when there is no $i \in \{1,2, \ldots, n+1\}$ that is both the
head and the tail of an arrow;
see~\cite[Lemmas~4.2 and~4.4]{Hetyei_Legendre}.
Equivalently, the faces are products of two
simplices~\cite[Lemma 2.2]{Ehrenborg_Hetyei_Readdy}: 
they are of the form
$(-\Simplex_{I}) \times \Simplex_{J}$ where $I,J\neq \emptyset$,
$I \cap J=\emptyset$ and the symbol $\Simplex_{K}$
denotes the convex hull of the set $\{e_{i} \: : \: i \in K\}$ for
$K\subseteq \{1,2, \ldots,  n+1\}$.
Keeping consistent with our shorthand notation for the vertices,
in the remainder of the paper we simply write
each proper face of $P_{n}$ as $\Simplex_{I} \times \Simplex_{J}$.
The facets of $P_{n}$ are exactly the faces
$\Simplex_{I} \times \Simplex_{J}$ where the disjoint union of $I$ and $J$
is $\{1,2, \ldots, n+1\}$.  The edges of $P_{n}$ are of the form
$\Simplex_{I} \times \Simplex_{J}$ where $\{|I|,|J|\}=\{1,2\}$. 
The two-dimensional faces $\Simplex_{I} \times \Simplex_{J}$
of $P_{n}$ are either squares when $|I| = |J| = 2$ or
triangles when $\{|I|,|J|\}=\{1,3\}$. Extending the
terminology introduced in~\cite{Gelfand_Graev_Postnikov} for 
the positive root polytope~$P^{+}_{n}$, we make the following definition.
\begin{definition}
  A set of arrows $\sigma\subset V_n$ is {\em admissible} if it is
  contained in $I\times J$ for some pair $(I,J)$ of disjoint subsets of
  $\{1,2, \ldots, n+1\}$. 
\end{definition}  

Affine independent subsets of vertices of faces of the root polytope
are described as follows.
A set $S=\{(i_{1},j_{1}),(i_{2},j_{2}), \ldots, (i_{k},j_{k})\}$
is a $(k-1)$-dimensional simplex if and only if, 
disregarding the orientation of the directed
edges, the set $S$ contains no cycle, that is, it is a
forest~\cite[Lemma~2.4]{Hetyei_Legendre}. The analogous observations were
made for the positive root polytope~$P^{+}_{n}$
in~\cite{Gelfand_Graev_Postnikov} and for products of simplices
in~\cite[Lemma~6.2.8]{deLoera_Rambau_Santos}
(see also~\cite[Lemma~2.1]{Ceballos_Padrol_Sarmiento_old}).     

Clearly every triangulation of the boundary~$\partial P_{n}$ of the root
polytope that does not introduce any new vertices must consist of faces
whose vertices are represented by admissible sets of arrows which,
disregarding the orientations of the arrows, are forests. To these known
necessary conditions we would like to add one more necessary condition:
the $2$-skeleton of every such triangulation must be a triangulation of
the $2$-skeleton of the boundary~$\partial P_{n}$ of the root
polytope. As noted above, the $2$-skeleton of the boundary~$\partial
P_{n}$ has only triangular and square faces. Triangulating such a
complex without introducing any new vertices amounts to adding exactly
one diagonal to each square, cutting each square into two triangles. 
Thus our necessary condition may be equivalently restated as
follows. Recall that a simplicial complex is {\em flag}
if the minimal non-faces have cardinality~$2$.

\begin{lemma}
\label{lemma_easy}
Let $\triangle_{n}$ be a triangulation of the boundary
of the root polytope~$P_{n}$ which does not introduce any new vertices.
Then its $2$-skeleton satisfies the following conditions:
\begin{enumerate}[(a)]
\item
For each three-element subset $\{i,j,k\}$ of the set $\{1,2, \ldots, n+1\}$,
the two sets $\{(i,j),(i,k)\}$ and $\{(j,i),(k,i)\}$ are edges
in the complex~$\triangle_{n}$.
\item
For two disjoint pairs $\{i_{1},i_{2}\}$ and $\{j_{1},j_{2}\}$ of
the set $\{1,2, \ldots, n+1\}$, exactly one of the two sets
$\{(i_{1},j_{1}),(i_{2},j_{2})\}$
and $\{(i_{1},j_{2}),(i_{2},j_{1})\}$ is an edge in the
complex~$\triangle_{n}$.
\item The $2$-skeleton of $\triangle_n$ is flag.
\end{enumerate}  
\end{lemma}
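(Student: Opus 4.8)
The plan is to derive each of the three conditions directly from the fact that $\triangle_n$ is a triangulation of $\partial P_n$ using no new vertices, together with the combinatorial description of the faces of $P_n$ recalled above. For part (a), the key observation is that both $\{(i,j),(i,k)\}$ and $\{(j,i),(k,i)\}$ are admissible sets of arrows that, forgetting orientation, form a forest (a path on $\{i,j,k\}$), and in fact they span an edge of $P_n$: the arrows $(i,j)$ and $(i,k)$ lie in the face $\Simplex_{\{i\}}\times\Simplex_{\{j,k\}}$, which is a triangle (an edge of $P_n$ in the sense that $\{|I|,|J|\}=\{1,2\}$ is false — it is a $2$-face, a triangle). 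So I would argue instead that this triangle face already belongs to $\partial P_n$ and, being a simplex, is not subdivided by $\triangle_n$; hence its edge $\{(i,j),(i,k)\}$ is an edge of $\triangle_n$. The symmetric statement for $\{(j,i),(k,i)\}$ follows identically using the face $\Simplex_{\{j,k\}}\times\Simplex_{\{i\}}$.

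For part (b), consider the square $2$-face $\Simplex_{\{i_1,i_2\}}\times\Simplex_{\{j_1,j_2\}}$ of $P_n$, whose four vertices are the arrows $(i_1,j_1),(i_1,j_2),(i_2,j_1),(i_2,j_2)$. Since $\triangle_n$ restricts to a triangulation of this square using no new vertices, and a square admits exactly two triangulations into two triangles each — obtained by adding one of the two diagonals — exactly one of the two diagonals $\{(i_1,j_1),(i_2,j_2)\}$ and $\{(i_1,j_2),(i_2,j_1)\}$ is an edge of $\triangle_n$. Here I should note that the diagonal pairs are precisely the pairs of arrows in $\{i_1,i_2\}\times\{j_1,j_2\}$ that, as a set of two arrows, form a forest with no shared head or tail; the other two pairs of arrows either share a tail or share a head and are already edges of the square (hence also of $\triangle_n$, but not the relevant ones). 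The "exactly one" is the crux: at least one because $\triangle_n$ must cut the square into simplices and the only way is via a diagonal, at most one because two such diagonals would cross, contradicting that $\triangle_n$ is a simplicial complex (two arrows forming the other diagonal would yield an edge whose relative interior meets the relative interior of the first diagonal's edge).

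For part (c), I would show that the $2$-skeleton of $\triangle_n$ has all its minimal non-faces of cardinality $2$. A non-face of cardinality $3$ would be a set $\{a,b,c\}$ of three arrows each pair of which is an edge of $\triangle_n$ but which is not a $2$-simplex of $\triangle_n$. By the characterization of simplices of $P_n$, a set of arrows that is admissible and forms a forest spans a face of $P_n$; and if that face has dimension $\ge 2$ it is either a triangle or sits inside a square, so the whole triangle $\{a,b,c\}$ (when it spans a triangular face) is a face of $\triangle_n$ by part (a)'s argument, while a triple inside a square cannot have all three of its pairwise edges in $\triangle_n$ unless... — here I need to be slightly careful. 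The cleanest route is: if $\{a,b,c\}$ has all three pairs as edges of $\triangle_n$, then in particular the underlying edge set is a forest (each pair is, and being pairwise in a flag-candidate configuration the union is too — this needs the admissibility/forest analysis), so $\{a,b,c\}$ is affinely independent and spans a $2$-face of $P_n$ or lies in the interior of a higher-dimensional face of $P_n$; in the boundary-of-$P_n$ setting the triangulation $\triangle_n$ of that face must then contain $\{a,b,c\}$ as a simplex. The main obstacle I anticipate is this last point — ruling out the configuration where three arrows are pairwise joined by edges of $\triangle_n$ that come from two different squares (so no single $2$-face of $P_n$ contains all three), which would force a "hidden" non-face; I expect this is handled by observing that three arrows spanning only a $1$- or $2$-dimensional affine space but admissible must actually lie in a common triangle or square face of $P_n$, and then the restriction-to-that-face argument closes the gap.
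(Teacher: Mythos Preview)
Parts~(a) and~(b) are essentially the paper's argument. In~(a) you have a small slip: the face $\Simplex_{\{i\}}\times\Simplex_{\{j,k\}}$ has exactly the two vertices $(i,j)$ and $(i,k)$, so it is a $1$-dimensional face---an edge of~$P_n$---not a triangle. The paper's one-line argument is that the segments in~(a) are precisely the edges of~$P_n$, hence belong to any triangulation; your detour through ``this triangle, being a simplex, is not subdivided'' is unnecessary once the dimension is corrected. Your treatment of~(b) matches the paper's.

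For~(c), the obstacle you flag at the end is genuine and your sketch does not overcome it. The failure point is the hope that three pairwise-adjacent arrows must lie in a common $2$-face of~$\partial P_n$. When $a,b,c$ are three vertex-disjoint arrows (a matching on six nodes), the three edges $\{a,b\},\{a,c\},\{b,c\}$ are diagonals of three \emph{different} square $2$-faces, and the triangle $\conv\{a,b,c\}$ lies in a $4$-dimensional face $\Simplex_I\times\Simplex_J$ with $|I|=|J|=3$, not in any $2$-face. Your outline gives no reason why $\{a,b,c\}$ must then be a simplex of~$\triangle_n$. The paper's own proof of~(c) is a single sentence invoking the fact that a triangulation of a $2$-dimensional polygonal complex is determined by its added diagonals, but this tacitly identifies the $2$-skeleton of~$\triangle_n$ with the triangulation of the $2$-skeleton of~$\partial P_n$ induced by~$\triangle_n$---and those are different complexes in exactly the configuration you worry about. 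So the paper's proof and yours have the same gap at the same place; neither argument, as written, settles~(c) for an arbitrary triangulation of~$\partial P_n$.
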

\begin{proof}
The set of line segments of the form $[e_{j} - e_{i},  e_{k} - e_{i}]$
are exactly the edges of the root polytope~$P_{n}$, hence condition~(a)
is equivalent to stating that $\triangle_{n}$ must contain all edges of
$P_{n}$. Condition~(b) is a way to restate that 
exactly one of the two diagonals of
any square face $\Delta_{\{i_{1},i_{2}\}} \times \Delta_{\{j_{1},j_{2}\}}$
belongs to the triangulation. Finally, condition~(c) is an obvious
consequence of the fact that a triangulation of a
$2$-dimensional polygonal complex that does not introduce new vertices
is uniquely determined by the set of diagonals added to create the
triangulation. 
\end{proof}

\begin{definition}
\label{definition_permissible}  
We say the simplicial complex $\triangle_n$ on the vertex set
$V_{n}$ is {\em permissible} if it satisfies the following criteria:
\begin{enumerate}
\item Each face of $\triangle_n$ is an admissible set of arrows.
\item Disregarding the orientation on the arrows, the set of arrows
  contained in any face of $\triangle_n$ is circuit-free (a forest).
\item The complex $\triangle_n$ satisfies the conditions stated in
  Lemma~\ref{lemma_easy}.   
\end{enumerate}
\end{definition}  
Equivalently, the faces of $\triangle_n$ are simplices
contained in the boundary of the root polytope~$P_{n}$, and the
$2$-skeleton of $\triangle_n$ is a triangulation of the boundary of
$P_{n}$. 

Most, but not all triangulations we will consider in this paper are {\em pulling
  triangulations}. We will use Hudson's~\cite{Hudson} definition. 
\begin{definition}[Hudson]
\label{definition_pull}
Given a polytopal complex $\Gamma$ on a vertex set
$V$ and a linear ordering $<$ of the vertex set, 
the {\em pulling triangulation}
$\triangle(\Gamma)$ with respect to $<$ is defined
recursively as follows. We set 
$\triangle(\Gamma)=\Gamma$ if $\Gamma$ consists
of a single vertex. Otherwise let $v$ be the least element of $V$ with
respect to $<$ and set 
$$
\triangle(\Gamma)
=
\triangle(\Gamma- v)\cup
\bigcup_F \left\{ \conv(\{v\}\cup G)\::\: G\in
\triangle(\Gamma(F))\right\} ,
$$
where the union runs over the facets $F$ not containing $v$ of the
maximal faces of $\Gamma$ which contain~$v$.
Here $\Gamma- v$ is the complex consisting
of all faces of~$\Gamma$ not
containing the vertex $v$, and 
$\Gamma(F)$ is the complex of all
faces of $\Gamma$ contained in~$F$. The triangulations
$\triangle(\Gamma- v)$ and $\triangle(\Gamma(F))$ are
with respect to the order~$<$
restricted to their respective vertex sets.
\end{definition}

\begin{theorem}[Hudson]
The pulling triangulation $\triangle(\Gamma)$
is a triangulation of
the polytopal complex~$\Gamma$
without introducing any new vertices.
\label{theorem_pull}
\end{theorem}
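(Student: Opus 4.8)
The natural line of attack is induction on the number of vertices $|V|$, but the statement as given is not strong enough to carry the induction: one must also control how the triangulations of different maximal faces of $\Gamma$ agree on their overlaps. So I would prove, simultaneously and by induction on $|V|$, the two assertions
(i) $\triangle(\Gamma)$ is a geometric simplicial complex with $|\triangle(\Gamma)|=|\Gamma|$ whose vertex set is contained in $V$, and
(ii) for every face $F$ of $\Gamma$, the set $\triangle(\Gamma)(F)$ of simplices of $\triangle(\Gamma)$ contained in $F$ equals $\triangle(\Gamma(F))$, the pulling triangulation of $\Gamma(F)$ with respect to the restriction of $<$.
Granting (i), the fact that no new vertices are introduced is immediate from the recursion in Definition~\ref{definition_pull}: the vertex set of $\triangle(\Gamma)$ is the union of that of $\triangle(\Gamma-v)$ (inside $V\setminus\{v\}$ by induction), those of the various $\triangle(\Gamma(F))$ (inside the vertex sets of the $F$'s, hence inside $V$), and $\{v\}$. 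The base case $|V|=1$ is trivial.

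For the inductive step let $v$ be the $<$-least vertex, and recall that in a polytopal complex every vertex of the complex is a vertex of each maximal face containing it. The elementary pyramid decomposition then applies to every maximal face $P$ of $\Gamma$ with $v\in P$: such $P$ is the union of the pyramids $\conv(\{v\}\cup F)$ over the facets $F$ of $P$ not containing $v$, any two of these pyramids meet in a common face, and $v\notin\operatorname{aff}(F)$, so coning a simplex $G\subseteq F$ from $v$ produces a genuine simplex of dimension $\dim G+1$. By the induction hypothesis, each $\triangle(\Gamma(F))$ triangulates $F$ without new vertices and $\triangle(\Gamma-v)$ triangulates $|\Gamma-v|$. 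Hence each maximal $P\ni v$ is subdivided into simplices by the cones $\conv(\{v\}\cup G)$, $G\in\triangle(\Gamma(F))$; and since $|\Gamma|$ is the union of $|\Gamma-v|$ together with these $P$'s, the collection of simplices produced by the recursion has union $|\Gamma|$.

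The heart of the matter is that these simplices fit together into a simplicial complex, i.e.\ any two of them meet in a common face. Two simplices of $\triangle(\Gamma-v)$ are handled by induction. For a cone $\conv(\{v\}\cup G)$ with $G\in\triangle(\Gamma(F))$, $F$ a facet of a maximal $P\ni v$, against a simplex $H\in\triangle(\Gamma-v)$: the pyramid $\conv(\{v\}\cup F)$ meets $|\Gamma-v|$ only along $F$, since its remaining boundary facets pass through $v$ and lie in facets of $P$ containing $v$, which are not faces of $\Gamma-v$; and because $F$ is a face of $\Gamma-v$, assertion (ii) for $\Gamma-v$ gives $G\in\triangle((\Gamma-v)(F))\subseteq\triangle(\Gamma-v)$, reducing to the previous case. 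For two cones $\conv(\{v\}\cup G)$ and $\conv(\{v\}\cup G')$ coming from facets $F,F'$ (of the same, or of two different, maximal faces through $v$), their intersection is $\conv(\{v\}\cup(G\cap G'))$ as soon as $G\cap G'$ is a common face of $\triangle(\Gamma(F))$ and of $\triangle(\Gamma(F'))$; but $F\cap F'$ is a common face of $\Gamma$, so applying (ii) to $F$ and to $F'$ shows that both $\triangle(\Gamma(F))$ and $\triangle(\Gamma(F'))$ restrict on $F\cap F'$ to $\triangle(\Gamma(F\cap F'))$, which is exactly what is needed. I expect this locality property of pulling triangulations — which is precisely what the strengthening (ii) buys us — to be the main obstacle; everything else is the standard pyramid decomposition of a single polytope.

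It remains to re-derive (ii) for $\triangle(\Gamma)$ itself so the induction closes. If $v\notin F$ then $F$ is a face of $\Gamma-v$ with $(\Gamma-v)(F)=\Gamma(F)$, no cone $\conv(\{v\}\cup G)$ lies in $F$, and so $\triangle(\Gamma)(F)=\triangle(\Gamma-v)(F)=\triangle(\Gamma(F))$ by induction. If $v\in F$, then $F$ is itself a polytope with vertex $v$; the simplices of $\triangle(\Gamma)$ contained in $F$ and not containing $v$ are, by the previous computation, exactly $\triangle(\Gamma(F)\setminus v)$, while a cone $\conv(\{v\}\cup G)$ lies in $F$ iff $G\subseteq F\cap F_P$ for one of the facets $F_P\not\ni v$ of a maximal $P\ni v$, and, using the standard fact that the facets of a face $F$ of $P$ are precisely the intersections $F\cap F_P$ with $F_P$ a facet of $P$ not containing $F$, one checks these $F\cap F_P$ run over exactly the facets of $F$ not containing $v$; invoking (ii) on the lower-dimensional faces $F\cap F_P$ then identifies the cones in $\triangle(\Gamma)(F)$ with $\bigcup\{\conv(\{v\}\cup G):G\in\triangle(\Gamma(F'))\}$ over facets $F'\not\ni v$ of $F$. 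By Definition~\ref{definition_pull} this union, together with $\triangle(\Gamma(F)\setminus v)$, is precisely $\triangle(\Gamma(F))$. This completes the induction and hence the proof.
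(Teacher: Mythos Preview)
The paper does not give a proof of this theorem; it is stated with attribution to Hudson~\cite{Hudson} and not argued further, so there is no in-paper proof to compare against.

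That said, your argument follows the standard route and is essentially correct. The locality strengthening~(ii)---that $\triangle(\Gamma)$ restricted to any face $F$ coincides with $\triangle(\Gamma(F))$---is precisely the inductive loading that makes the recursion close, and you are right to flag it as the main point. Two spots would benefit from another sentence. In the cone--cone case, the equality $\conv(\{v\}\cup G)\cap\conv(\{v\}\cup G')=\conv(\{v\}\cup(G\cap G'))$ is not automatic from $G\cap G'$ being a common face; it can fail when $v\in\operatorname{aff}(G\cup G')$. When $P\neq P'$ one should first intersect with $R=P\cap P'$, using that $R$ is a face of each to cut the cones down to $\conv(\{v\}\cup(G\cap R))$ and $\conv(\{v\}\cup(G'\cap R))$ (both simplices on vertices of $R$), and only then argue inside the single polytope $R$ via the pyramid decomposition at the vertex $v$. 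Second, the ``standard fact'' you invoke when verifying~(ii) for $F\ni v$ is slightly misstated: not every intersection $F\cap F_P$ with a facet $F_P\not\supseteq F$ is a facet of $F$. What you actually need, and what is true, is the one direction that every facet of $F$ not containing $v$ arises as $F\cap F_P$ for some facet $F_P$ of $P$ with $v\notin F_P$. Neither issue is a missing idea, only missing detail.
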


A pulling triangulation of the boundary complex of a polytope~$P$ 
may be geometrically described as follows.
\begin{theorem}
Let~$P$ be an $n$-dimensional polytope in~$\Rrr^{n}$
with the linearly ordered vertex set
$V = \{v_{1} < v_{2} < \cdots < v_{m}\}$.
For each vertex~$v_{i}$ pick a generic vector~$w_{i}$ such that
the point $v_{i} + w_{i}$ can see every facet containing the vertex~$v_{i}$.
Then the combinatorial type of polytope
$Q = \conv(\{
v_{1} + \epsilon \cdot w_{1},
v_{2} + \epsilon^{2} \cdot w_{2},
\ldots,
v_{m} + \epsilon^{m} \cdot w_{m}
\})$
for sufficiently small enough $\epsilon > 0$,
does not depend on the choice of $\epsilon, w_{1}, \ldots, w_{m}$.
Furthermore, the polytope~$Q$ is a simplicial polytope.
The boundary of~$Q$, that is, $\partial Q$ is the
pulling triangulation of the boundary~$\partial P$
with respect to the linear order $\{v_{1}, v_{2}, \ldots, v_{m}\}$.
\end{theorem}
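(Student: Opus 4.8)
It suffices to establish the last assertion, that $\partial Q$ is combinatorially isomorphic to the pulling triangulation $\triangle(\partial P)$ under the vertex correspondence $v_i+\epsilon^iw_i\leftrightarrow v_i$: since $\triangle(\partial P)$ is a fixed simplicial complex by Theorem~\ref{theorem_pull}, this forces $Q$ to be simplicial and forces its combinatorial type to be independent of the admissible choices of $\epsilon$ and of $w_1,\dots,w_m$. The plan is to argue by induction on the number $m$ of vertices. If $m=n+1$ then $P$ is a simplex; affine independence of $v_1,\dots,v_{n+1}$ is an open condition, so for $\epsilon$ small the perturbed points are again affinely independent, $Q$ is a simplex, and $\partial Q$ is the unique triangulation of the boundary of a simplex, which is $\triangle(\partial P)$.

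For the inductive step put $v=v_1$ and $v'=v_1+\epsilon w_1$, and first peel off $v$. The crucial lemma is that $R:=\conv\bigl(\{v'\}\cup\{v_2,\dots,v_m\}\bigr)$ equals $\conv(P\cup\{v'\})$ for every sufficiently small $\epsilon>0$. Indeed, writing a facet $G$ of $P$ through $v$ as $\langle a_G,x\rangle\le b_G$ with $\langle a_G,v\rangle=b_G$, the visibility hypothesis on $w_1$ gives $\langle a_G,w_1\rangle>0$, so $\langle a_G,v'\rangle>b_G$ and $v'$ lies beyond $G$, whereas for a facet $G$ with $v\notin G$ one has $\langle a_G,v\rangle<b_G$ strictly, hence $\langle a_G,v'\rangle<b_G$ once $\epsilon$ is small, so $v'$ lies beneath $G$. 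Moreover the ray $\{v-\delta w_1:\delta\ge 0\}$ meets no facet of $P$ through $v$ (each such inequality becomes strict for $\delta>0$), so it leaves $P$ through a facet $F$ with $v\notin F$; all vertices of $F$ lie in $\{v_2,\dots,v_m\}$, so the exit point lies in $\conv(v_2,\dots,v_m)$, and expressing $v$ as the appropriate convex combination of $v'$ and this exit point shows $v\in R$, whence $R=\conv(P\cup\{v'\})$. Now the beneath--beyond theorem describes $R$: since $v$ lies only on facets of $P$ that $v'$ is beyond, $v$ is no longer a vertex of $R$, and the facets of $R$ are the facets of $P$ not containing $v$, together with the pyramids $\conv(\{v'\}\cup\rho)$ where $\rho$ ranges over the horizon ridges --- the ridges of $P$ contained in exactly one facet through $v$, equivalently the facets of the facets $G\ni v$ of $P$ that do not contain $v$. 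Under $v'\leftrightarrow v$ this says that $\partial R$ is obtained from $\partial P$ by replacing each facet $G\ni v$ with its pyramidal decomposition $\bigcup_{\rho\subseteq G,\;v\notin\rho}\conv(\{v\}\cup\rho)$ from the apex $v$, which is precisely the first step of the recursion in Definition~\ref{definition_pull} for $\partial P$ with least vertex $v$.

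It remains to show that the residual perturbation $v_i\mapsto v_i+\epsilon^iw_i$ for $i\ge 2$, whose magnitudes are of order strictly smaller than the perturbation $\epsilon w_1$ already applied, subdivides each cell of $\partial R$ by pulling in the order $v_2<\cdots<v_m$. For a facet $G'$ of $P$ with $v\notin G'$, its vertices lie in $\{v_2,\dots,v_m\}$ and, since $G'$ is itself a facet through each of its own vertices, visibility forces each such $w_i$ to have positive component along the outer normal of $G'$; hence the perturbed vertices are pushed a distance of order $\epsilon^i$ to the outer side of the supporting hyperplane of $G'$, the distances decreasing along the vertex order, so the portion of $\partial Q$ lying over $G'$ is a regular subdivision of $G'$ with heights of strictly decreasing order of magnitude along the vertex order. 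By the inductive hypothesis applied to $G'$ --- a polytope with fewer than $m$ vertices, the out-of-facet component of the perturbation only shifting the reconstructed cap outward without affecting its combinatorics --- this is the pulling triangulation $\triangle(G')$ with the restricted order. For a pyramid $\conv(\{v'\}\cup\rho)$ the apex $v'$ stays fixed while the ridge $\rho$, again a polytope with fewer than $m$ vertices, is triangulated by the inductive hypothesis into $\triangle(\rho)$, yielding the cells $\conv(\{v'\}\cup\sigma)$ with $\sigma\in\triangle(\rho)$; and the triangulation induced on $\rho$ from this pyramid agrees with that induced from the adjacent facet $G'\supseteq\rho$, because both are determined by the single global perturbation restricted to the vertices of $\rho$. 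Collecting these pieces over all facets of $P$ not containing $v$ and all horizon ridges $\rho$, and comparing with Definition~\ref{definition_pull}, shows $\partial Q\cong\triangle(\partial P)$, completing the induction.

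The step I expect to be the main obstacle is this last one: showing that the single global perturbation $v_i\mapsto v_i+\epsilon^iw_i$ consistently pulls every cell of the intermediate complex $\partial R$ at once. It requires a careful separation-of-scales estimate --- so that the orders $\epsilon,\epsilon^2,\dots,\epsilon^m$ never interfere, even after one restricts a subset of indices to a lower-dimensional face, where the surviving exponents need no longer be consecutive --- together with the observation that the visibility hypothesis at $v_i$ survives, up to an $O(\epsilon)$ perturbation of the supporting hyperplanes involved, when one passes from $P$ to the facets $G'$ and to the cells of $\partial R$, so that the inductive hypothesis genuinely applies to them.
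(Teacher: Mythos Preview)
The paper does not actually prove this theorem: it is stated in Section~\ref{section_preliminaries} as a known geometric description of pulling triangulations, immediately after Hudson's Theorem~\ref{theorem_pull}, with no proof or reference attached. So there is no ``paper's own proof'' to compare against.

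Your sketch is a reasonable outline of the standard beneath--beyond argument, and you have correctly located the real work in the final paragraph: verifying that the residual perturbations $\epsilon^{2}w_{2},\ldots,\epsilon^{m}w_{m}$ induce, on each cell of the intermediate complex $\partial R$, precisely the pulling triangulation required by the induction. One point deserving more care than you give it is the visibility hypothesis in the induction. You need that, for a facet $G'$ of $P$ not containing $v_{1}$ and for each vertex $v_{i}\in G'$, the vector $w_{i}$ (or rather its component tangential to the affine hull of $G'$, since the normal component only lifts the cap and does not affect its combinatorics) makes $v_{i}+w_{i}$ see every facet of $G'$ through $v_{i}$. The facets of $G'$ through $v_{i}$ are ridges $G'\cap G''$ of $P$ with $v_{i}\in G''$, and the original hypothesis says $w_{i}$ points to the outer side of $G''$; you should check that this indeed translates, after projecting to $\operatorname{aff}(G')$, into the required visibility within $G'$. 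This is true but uses that the outer normal of $G'\cap G''$ within $\operatorname{aff}(G')$ is a positive combination of the outer normals of $G'$ and $G''$ in $\mathbb{R}^{n}$, minus the $G'$-component. Similarly, on the pyramid cells $\conv(\{v'\}\cup\rho)$ you invoke the inductive hypothesis on $\rho$, but $\rho$ is a ridge of $P$, two dimensions down, and the same projection argument must be iterated. None of this is wrong, but your phrase ``the visibility hypothesis at $v_{i}$ survives, up to an $O(\epsilon)$ perturbation'' glosses over exactly the geometric lemma that makes the induction go through; a complete proof would state and prove that lemma separately.
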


In a recent paper~\cite{Ehrenborg_Hetyei_Readdy}, the authors
have shown that the Simion type~$B$ associahedron~\cite{Simion},
also known as the cyclohedron,
is combinatorially equivalent to a pulling triangulation of the boundary of
the root polytope.
Here we only point out the following key
observation~\cite[Theorem 3.1]{Ehrenborg_Hetyei_Readdy}.
\begin{theorem}
Every pulling triangulation of the boundary of the root polytope~$P_{n}$
is flag.
\end{theorem}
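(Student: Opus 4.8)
The plan is to prove flagness directly, in the strong form that $\triangle:=\triangle(\partial P_{n},\prec)$ is determined by its $1$-skeleton: whenever every two-element subset of a set $\sigma$ of arrows is a face of $\triangle$, then $\sigma$ itself is a face. (Here $\prec$ is the linear order on $V_{n}$ defining the triangulation.) Two reductions open the argument. Since a pair $\{(i,j),(j,k)\}$ whose head and tail coincide lies in no proper face of $P_{n}$ (see~\cite{Hetyei_Legendre}), if all pairs in $\sigma$ are faces of $\triangle$ then $\sigma$ is admissible, say $\sigma\subseteq I\times J$. Moreover, by the standard fact that a pulling triangulation restricts on each face $G$ to the pulling triangulation of $G$ (see~\cite{deLoera_Rambau_Santos}), $\sigma$ is a face of $\triangle$ if and only if it is a face of $\triangle(F,\prec|_{F})$, where $F$ is the smallest face of $P_{n}$ containing $\sigma$; since $F$ is a proper face of $P_{n}$, it is a product of two simplices. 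So the problem reduces to understanding pulling triangulations of products of simplices.

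The engine is the behaviour of a pulling triangulation on a square two-dimensional face. Call a two-element subset $\{(i,j),(k,l)\}$ of arrows with $i,j,k,l$ pairwise distinct a \emph{diagonal}; it is a diagonal of the square face $\Delta_{\{i,k\}}\times\Delta_{\{j,l\}}$, and we call it \emph{good} if it contains the $\prec$-least of the four vertices of that square. Unwinding Definition~\ref{definition_pull} for a square $Q=\Delta_{\{r,r'\}}\times\Delta_{\{s,s'\}}$ shows that the pulling triangulation selects the diagonal through the $\prec$-least vertex of $Q$; together with the restriction property, a diagonal of a square face belongs to $\triangle$ exactly when it is good. This yields the \emph{exchange principle}: if $(r,s)$, $(r,s')$, $(r',s)$, $(r',s')$ are the vertices of a square face of $P_{n}$, $(r,s)\prec(r,s')$ and $(r,s)\prec(r',s)$, then no set all of whose diagonals are good can contain both $(r,s')$ and $(r',s)$ — otherwise the diagonal $\{(r,s'),(r',s)\}$ would be good by hypothesis yet not good, since $(r,s)$ precedes both of its endpoints. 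Applying the principle to the $\prec$-minimal arrow $(r,s)$ of a hypothetical cycle together with its two cycle neighbours, which necessarily have the forms $(r,s')$ and $(r',s)$ and are both $\prec$-larger than $(r,s)$, shows that any admissible set all of whose diagonals are good is automatically a forest, hence a simplex of $\partial P_{n}$.

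The heart of the argument is the face characterization for a product of simplices $F=\Delta_{I}\times\Delta_{J}$: a set $\sigma\subseteq I\times J$ is a face of $\triangle(F,\prec)$ if and only if $\sigma$ is a forest and every diagonal contained in $\sigma$ is good. I would prove this by induction on $\dim F$; the case $|I|=1$ or $|J|=1$ is trivial, because then $F$ is a simplex with no square face and every subset of $I\times J$ is an acyclic star. For the inductive step let $a=(p,q)$ be the $\prec$-least arrow of $F$; the only facets of $F$ not containing $a$ are $\Delta_{I\setminus\{p\}}\times\Delta_{J}$ and $\Delta_{I}\times\Delta_{J\setminus\{q\}}$, so Definition~\ref{definition_pull} exhibits $\triangle(F,\prec)$ as the cone with apex $a$ over $\triangle(\Delta_{I\setminus\{p\}}\times\Delta_{J})\cup\triangle(\Delta_{I}\times\Delta_{J\setminus\{q\}})$, the two subtriangulations agreeing on the common ridge by the restriction property. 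One splits on whether $a\in\sigma$, applies the inductive hypothesis to $\sigma$, respectively $\sigma\setminus\{a\}$, inside the smaller products, and then reconciles the node-usage conditions produced by the recursion with ``forest plus good diagonals''; this last step uses the exchange principle again, now with $a$ the \emph{global} $\prec$-minimum and hence the $\prec$-minimum of every square face containing it. I expect this reconciliation to be the main obstacle: nothing in it is deep, but matching the recursive description of $\triangle(F,\prec)$ term by term against the combinatorial one requires care, particularly when $a$ lies in the interior of the forest spanned by $\sigma$. (Alternatively, after checking that the clique complex of the good diagonals is contained in $\triangle(F,\prec)$, equality could be deduced from the fact that all triangulations of a product of simplices share the same face numbers~\cite[Lemma 2.3]{Oh_Yoo_2}.)

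Granting the characterization, flagness follows immediately. Suppose every two-element subset of $\sigma$ is a face of $\triangle$. Then, as above, $\sigma$ is admissible, and every diagonal contained in $\sigma$ is a face of $\triangle$ and hence good; by the exchange principle $\sigma$ is a forest. Taking $F$ to be the smallest face of $P_{n}$ containing $\sigma$, the characterization gives $\sigma\in\triangle(F,\prec|_{F})$, and the restriction property makes $\triangle(F,\prec|_{F})$ a subcomplex of $\triangle$, so $\sigma\in\triangle$. Therefore the minimal non-faces of $\triangle(\partial P_{n},\prec)$ have cardinality two, which is precisely the flag condition.
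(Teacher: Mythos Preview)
The present paper does not contain a proof of this theorem: it is quoted from the authors' earlier work~\cite[Theorem~3.1]{Ehrenborg_Hetyei_Readdy} and used here as background. There is therefore no in-paper proof to compare your proposal against.

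That said, your argument is sound and is essentially the approach of the cited paper. The reduction to a single product of simplices via the restriction property of pulling triangulations is standard, and your ``good diagonal'' description of the edges is exactly how the pulling order acts on each square face. Your exchange principle is correct as stated: if $(r,s)\prec(r,s')$ and $(r,s)\prec(r',s)$ then the $\prec$-minimum of the square $\{(r,s),(r,s'),(r',s),(r',s')\}$ is either $(r,s)$ or $(r',s')$, and in either case the good diagonal is $\{(r,s),(r',s')\}$. The deduction that an admissible set whose diagonals are all good is a forest follows cleanly from this.

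The inductive characterization you worry about is in fact routine once one unwinds Definition~\ref{definition_pull}. With $a=(p,q)$ the $\prec$-least vertex of $F=\Delta_{I}\times\Delta_{J}$, the faces of $\triangle(F,\prec)$ are precisely the sets $\sigma$ with $\sigma\setminus\{a\}$ a face of $\triangle(F_{1},\prec)$ or of $\triangle(F_{2},\prec)$, where $F_{1}=\Delta_{I\setminus\{p\}}\times\Delta_{J}$ and $F_{2}=\Delta_{I}\times\Delta_{J\setminus\{q\}}$. The only point to check is that if $\sigma$ is a forest with all diagonals good then $\sigma\setminus\{a\}$ avoids $p$ or avoids $q$; but if $(p,s')$ and $(r',q)$ both lie in $\sigma\setminus\{a\}$ then $\{(p,s'),(r',q)\}$ is a diagonal whose square has $a=(p,q)$ as its $\prec$-minimum, so it is not good, a contradiction. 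This handles both cases $a\in\sigma$ and $a\notin\sigma$ uniformly, and the induction closes. Your anticipated ``main obstacle'' is thus already dispatched by your own exchange principle with $a$ the global minimum, exactly as you suggested.
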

Thus the triangulation giving rise
to a combinatorial equivalent of the Simion type~$B$ associahedron is
completely determined by the rules given in the associated column of
Table~\ref{table_six_way}, which summarizes all the pulling
triangulations considered in~\cite{Ehrenborg_Hetyei_Readdy}.

\newcommand{\lift}[1]{\raisebox{2.5mm}{#1}}

\newcommand{\base}[4]
{
\node[circle, inner sep = 0.9pt] (a1) at (0, 0) {$#1$};
\node[circle, inner sep = 0.9pt] (a2) at (0.7, 0) {$#2$};
\node[circle, inner sep = 0.9pt] (a3) at (1.4, 0) {$#3$};
\node[circle, inner sep = 0.9pt] (a4) at (2.1, 0) {$#4$};
}

\newcommand{\TheNewNewTable}{
\begin{table}[t]
\begin{center}
\begin{tabular}{|c|c||c|c|c|}  
\hline
  Type & Order of nodes  & Lexicographic & Revlex & Type~$B$\\
  & & pulling & pulling & associahedron \\
\hline
\hline  
\lift{$THTH$} & 
\lift{$i_{1}<j_{1}<i_{2}<j_{2}$}
&
\begin{tikzpicture}
\base{i_{1}}{j_{1}}{i_{2}}{j_{2}}
\draw[->, thick] (a1) to[out=55, in=125] (a2);
\draw[->, thick] (a3) to[out=55, in=125] (a4);
\end{tikzpicture}
&
\begin{tikzpicture}
\base{i_{1}}{j_{1}}{i_{2}}{j_{2}}
\draw[->, thick] (a1) to[out=55, in=125] (a4);
\draw[<-, thick] (a2) to[out=55, in=125] (a3);
\end{tikzpicture}
& 
\begin{tikzpicture}
\base{i_{1}}{j_{1}}{i_{2}}{j_{2}}
\draw[->, thick] (a1) to[out=55, in=125] (a4);
\draw[<-, thick] (a2) to[out=55, in=125] (a3);
\end{tikzpicture}
\\
\hline
\lift{$HTHT$} & 
\lift{$j_{1}<i_{1}<j_{2}<i_{2}$}
&
\begin{tikzpicture}
\base{j_{1}}{i_{1}}{j_{2}}{i_{2}}
\draw[<-, thick] (a1) to[out=55, in=125] (a2);
\draw[<-, thick] (a3) to[out=55, in=125] (a4);
\end{tikzpicture}
&
\begin{tikzpicture}
\base{j_{1}}{i_{1}}{j_{2}}{i_{2}}
\draw[<-, thick] (a1) to[out=55, in=125] (a4);
\draw[->, thick] (a2) to[out=55, in=125] (a3);
\end{tikzpicture}
& 
\begin{tikzpicture}
\base{j_{1}}{i_{1}}{j_{2}}{i_{2}}
\draw[<-, thick] (a1) to[out=55, in=125] (a2);
\draw[<-, thick] (a3) to[out=55, in=125] (a4);
\end{tikzpicture}
\\
\hline
\lift{$THHT$} & 
\lift{$i_{1}<j_{1}<j_{2}<i_{2}$}
&
\begin{tikzpicture}
\base{i_{1}}{j_{1}}{j_{2}}{i_{2}}
\draw[->, thick] (a1) to[out=55, in=125] (a2);
\draw[<-, thick] (a3) to[out=55, in=125] (a4);
\end{tikzpicture}
&
\begin{tikzpicture}
\base{i_{1}}{j_{1}}{j_{2}}{i_{2}}
\draw[->, thick] (a1) to[out=55, in=125] (a3);
\draw[<-, thick] (a2) to[out=55, in=125] (a4);
\end{tikzpicture}
& 
\begin{tikzpicture}
\base{i_{1}}{j_{1}}{j_{2}}{i_{2}}
\draw[->, thick] (a1) to[out=55, in=125] (a2);
\draw[<-, thick] (a3) to[out=55, in=125] (a4);
\end{tikzpicture}
\\
\hline
\lift{$HTTH$} & 
\lift{$j_{1}<i_{1}<i_{2}<j_{2}$}
&
\begin{tikzpicture}
\base{j_{1}}{i_{1}}{i_{2}}{j_{2}}
\draw[<-, thick] (a1) to[out=55, in=125] (a2);
\draw[->, thick] (a3) to[out=55, in=125] (a4);
\end{tikzpicture}
&
\begin{tikzpicture}
\base{j_{1}}{i_{1}}{i_{2}}{j_{2}}
\draw[<-, thick] (a1) to[out=55, in=125] (a3);
\draw[->, thick] (a2) to[out=55, in=125] (a4);
\end{tikzpicture}
& 
\begin{tikzpicture}
\base{j_{1}}{i_{1}}{i_{2}}{j_{2}}
\draw[<-, thick] (a1) to[out=55, in=125] (a2);
\draw[->, thick] (a3) to[out=55, in=125] (a4);
\end{tikzpicture}
\\
\hline
\lift{$TTHH$} &  
\lift{$i_{1}<i_{2}<j_{1}<j_{2}$}
&
\begin{tikzpicture}
\base{i_{1}}{i_{2}}{j_{1}}{j_{2}}
\draw[->, thick] (a1) to[out=55, in=125] (a3);
\draw[->, thick] (a2) to[out=55, in=125] (a4);
\end{tikzpicture}
&
\begin{tikzpicture}
\base{i_{1}}{i_{2}}{j_{1}}{j_{2}}
\draw[->, thick] (a1) to[out=55, in=125] (a4);
\draw[->, thick] (a2) to[out=55, in=125] (a3);
\end{tikzpicture}
&  
\begin{tikzpicture}
\base{i_{1}}{i_{2}}{j_{1}}{j_{2}}
\draw[->, thick] (a1) to[out=55, in=125] (a4);
\draw[->, thick] (a2) to[out=55, in=125] (a3);
\end{tikzpicture}
\\
\hline
\lift{$HHTT$} & 
\lift{$j_{1}<j_{2}<i_{1}<i_{2}$}
&
\begin{tikzpicture}
\base{j_{1}}{j_{2}}{i_{1}}{i_{2}}
\draw[<-, thick] (a1) to[out=55, in=125] (a3);
\draw[<-, thick] (a2) to[out=55, in=125] (a4);
\end{tikzpicture}
&
\begin{tikzpicture}
\base{j_{1}}{j_{2}}{i_{1}}{i_{2}}
\draw[<-, thick] (a1) to[out=55, in=125] (a4);
\draw[<-, thick] (a2) to[out=55, in=125] (a3);
\end{tikzpicture}
& 
\begin{tikzpicture}
\base{j_{1}}{j_{2}}{i_{1}}{i_{2}}
\draw[<-, thick] (a1) to[out=55, in=125] (a4);
\draw[<-, thick] (a2) to[out=55, in=125] (a3);
\end{tikzpicture}
\\
\hline
\end{tabular}
\end{center}
\vspace*{3mm}
\caption{Pairs of arrows that are edges in three triangulations of
the boundary~$\partial P_{n}$ of the root polytope.}
\label{table_six_way}  
\end{table}
}

\TheNewNewTable

The first two columns in Table~\ref{table_six_way} are the 
rules for two other pulling triangulations of the boundary of
the root polytope~$P_{n}$, also discussed
in~\cite{Ehrenborg_Hetyei_Readdy}. These are the lexicographic (lex) and
revlex pulling orders.
Their restriction to the positive root polytope~$P^{+}_n$
are called the antistandard, respectively, standard
triangulations in~\cite{Gelfand_Graev_Postnikov}. The terminology we use
in~\cite{Ehrenborg_Hetyei_Readdy} was introduced
in~\cite{Hetyei_Legendre}, where it was observed 
that these are pulling triangulations. (These pulling triangulations
are not to be confused with the terms ``lexicographic triangulation'' and
``reverse lexicographic triangulation'' used in~\cite{Sturmfels} where
the first is a {\em placing}  triangulation, and only the second is a true pulling
triangulation.) The lexicographic pulling order was also studied
in~\cite{Ardila_Beck_Hosten_Pfeifle_Seashore}.
In this paper we characterize all triangulations which are defined by
similar local rules. Some of these are not necessarily pulling
triangulations. In this processs we obtain an independent verification
of the fact that the local rules shown in Table~\ref{table_six_way}
yield triangulations of the boundary~$\partial P_{n}$ of the root polytope.

\subsection{Characterizing triangulations of
$\Simplex_{a-1} \times \Simplex_{b-1}$ via matching ensembles}

Our key tool to verify that the flag complexes we define triangulate
the boundary of the root polytope~$P_{n}$ is the characterization of
the triangulations of the Cartesian product
$\Simplex_{a-1} \times \Simplex_{b-1}$
given by Oh and Yoo~\cite{Oh_Yoo1,Oh_Yoo_2}.
See also~\cite{Ceballos_Padrol_Sarmiento_old}.
We identify the vertices of $\Simplex_{a-1} \times \Simplex_{b-1}$
with edges in the complete bipartite graph~$K_{a,b}$,
whose vertex set is
$\{1,2, \ldots, a\} \uplus \{\overline{1}, \overline{2}, \ldots, \overline{b}\}$,
and call this the
{\em bipartite graph representation} of
$\Simplex_{a-1} \times \Simplex_{b-1}$.
By~\cite[Lemma~6.2.8]{deLoera_Rambau_Santos}
a set of affine independent vertices
of $\Simplex_{a-1} \times \Simplex_{b-1}$ corresponds to a forest in
the bipartite graph representation, and maximal affine independent
sets correspond to trees. Facets of a triangulation of
$\Simplex_{a-1} \times \Simplex_{b-1}$
thus correspond to spanning trees.
The results of Oh and Yoo
characterize 
those sets of spanning trees that correspond to a triangulation
of $\Simplex_{a-1} \times \Simplex_{b-1}$.

\begin{definition}
A family $\mathcal{M}$ of matchings of $K_{a,b}$
is a {\em matching ensemble} if it satisfies the following 
three axioms:
\begin{description}
\item[Support axiom]
For $I\subseteq \{1,2, \ldots, a\}$ and
$\overline{J}\subseteq \{\overline{1},\overline{2}, \ldots, \overline{b}\}$
with $|I|=|\overline{J}|$ there is a unique matching 
in $\mathcal{M}$ that matches the elements of $I$ with the elements of
$\overline{J}$ in the subgraph induced by $I \uplus \overline{J}$ of $K_{a,b}$.
\item[Closure axiom]
Any subset of a matching in $\mathcal{M}$
is also a matching in $\mathcal{M}$.  
\item[Linkage axiom]
If $m$ is a non-empty matching in $\mathcal{M}$ and $v$ is
any vertex of $K_{a,b}$ not incident to any edge of $m$ then there is
an edge $e \in m$ and there is an edge $e'\not\in m$ incident to $v$
such that the resulting matching $m'=(m-e)\cup e'$ also belongs to 
$\mathcal{M}$.    
\end{description}
\label{definition_SA_CA_LA}
\end{definition}  

For~$T$ a spanning tree of $K_{a,b}$ define $\phi(T)$ to be the set of all
matchings contained in the edges of the tree~$T$.
Extend this notion to families of spanning trees by defining
\begin{align*}
\Phi(\mathcal{T})
& =
\bigcup_{T \in \mathcal{T}} \phi(T) .
\end{align*}
S.\ Oh and H.\ Yoo proved the following
result~\cite[Theorem~5.4]{Oh_Yoo_2}.
\begin{theorem}[Oh--Yoo]
\label{theorem_me}
The function $\Phi$ is a bijection between families of spanning trees
representing triangulations of $\Simplex_{a-1} \times \Simplex_{b-1}$
and matching ensembles of the bipartite graph $K_{a,b}$.
\end{theorem}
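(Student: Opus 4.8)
I would prove the theorem by producing the inverse of $\Phi$ explicitly and checking the two composites are identities. Given a matching ensemble $\mathcal{M}$ of $K_{a,b}$, let $\Psi(\mathcal{M})$ be the family of all spanning trees $T$ of $K_{a,b}$ with $\phi(T)\subseteq\mathcal{M}$. Three things then need to be shown: (i) if $\mathcal{T}$ represents a triangulation of $\Simplex_{a-1}\times\Simplex_{b-1}$ then $\Phi(\mathcal{T})$ satisfies the three axioms; (ii) if $\mathcal{M}$ is a matching ensemble then $\Psi(\mathcal{M})$ is the set of facets of a triangulation; (iii) $\Psi\circ\Phi=\mathrm{id}$ and $\Phi\circ\Psi=\mathrm{id}$. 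Throughout I identify an edge $e$ of $K_{a,b}$ with the corresponding vertex $v_{e}$ of the product, and I use the two standard facts: a set of edges is affinely independent precisely when it is a forest (the cited Lemma~6.2.8 of~\cite{deLoera_Rambau_Santos}), and the circuits of the vertex set of $\Simplex_{a-1}\times\Simplex_{b-1}$ are exactly the even cycles of $K_{a,b}$, each splitting into its two alternating perfect matchings, and those two matchings have the same barycenter.

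\textbf{Step (i): the forward map lands in matching ensembles.} The Closure axiom is immediate from the definition of $\phi$. For the Support axiom, fix $I$ and $\overline{J}$ with $|I|=|\overline{J}|=k$ and let $q$ be the common barycenter of all perfect matchings of $I\uplus\overline{J}$; it lies in the relative interior of the face $\Simplex_{I}\times\Simplex_{\overline{J}}$, hence in the relative interior of a unique simplex $\conv(G)$ of the triangulation restricted to that face, where $G$ is a forest covering $I\uplus\overline{J}$. Writing $q$ as a positive combination on $G$ and reading off the two sets of marginals shows that the corresponding biadjacency weights are a scalar multiple of a doubly stochastic $k\times k$ matrix supported on the \emph{acyclic} graph $G$; by Birkhoff's theorem together with acyclicity $G$ must be a single perfect matching. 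Thus exactly one perfect matching of $I\uplus\overline{J}$ spans a simplex of $\mathcal{T}$, i.e.\ lies in $\Phi(\mathcal{T})$. For the Linkage axiom one argues locally: given $m\in\Phi(\mathcal{T})$ and a vertex $v$ not covered by $m$, perturb the barycenter of $\conv(m)$ slightly into the subface spanned by the vertices of $m$ together with $v$; the perturbed point has a carrier in $\mathcal{T}$ of one higher dimension which, by the same Birkhoff-plus-acyclicity analysis, contains a perfect matching of the enlarged vertex set, and that matching is forced to be of the form $(m\setminus e)\cup e'$ for some $e\in m$ and some $e'\not\in m$ incident to $v$.

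\textbf{Step (ii): matching ensembles give triangulations.} The key ingredient is a \emph{compatibility lemma} about $\Simplex_{a-1}\times\Simplex_{b-1}$ alone: two spanning trees $T_{1},T_{2}$ satisfy $\conv(T_{1})\cap\conv(T_{2})=\conv(T_{1}\cap T_{2})$ if and only if there is no vertex set $W$ carrying two \emph{distinct} perfect matchings $M_{1}\subseteq T_{1}$ and $M_{2}\subseteq T_{2}$. One direction: such $M_{1}\neq M_{2}$ share a barycenter $x_{W}\in\conv(T_{1})\cap\conv(T_{2})$ which, by uniqueness of the representation of a point over a forest together with acyclicity of $T_{2}$, cannot lie in $\conv(T_{1}\cap T_{2})$. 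Conversely, a point in $\conv(T_{1})\cap\conv(T_{2})\setminus\conv(T_{1}\cap T_{2})$ of minimal combined support, after canceling any common edge (which is legitimate since such an edge lies in $\conv(T_{1}\cap T_{2})$), witnesses a circuit supported on $T_{1}\cup T_{2}$ whose two sides are exactly such $M_{1}$ and $M_{2}$. Granting this lemma, the Support axiom forces any two trees in $\Psi(\mathcal{M})$ to be compatible, since each contains only the matchings prescribed (uniquely) by $\mathcal{M}$. Since the cells $\conv(T)$ for $T\in\Psi(\mathcal{M})$ are full-dimensional unimodular simplices meeting pairwise in common faces, it then suffices to prove \emph{completeness}, namely $\bigcup_{T\in\Psi(\mathcal{M})}\conv(T)=\Simplex_{a-1}\times\Simplex_{b-1}$. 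I would do this by showing the complex generated by $\Psi(\mathcal{M})$ is a connected pseudomanifold of the right dimension with boundary $\partial(\Simplex_{a-1}\times\Simplex_{b-1})$: interior codimension-one faces correspond to two-component spanning forests $T\setminus\{e\}$, and the Linkage axiom supplies for each of them a second tree $T'=(T\setminus\{e\})\cup\{e'\}$ with $\phi(T')\subseteq\mathcal{M}$, the required "flip", while the Support axiom identifies the boundary faces. (Alternatively, one can induct on $a+b$ by deleting a vertex of $K_{a,b}$, whose restricted family of matchings is again a matching ensemble, and reassembling the resulting lower-dimensional triangulation; or count $|\Psi(\mathcal{M})|=\binom{a+b-2}{a-1}$ and compare volumes.)

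\textbf{Step (iii) and the main obstacle.} With (i) and (ii) in hand, (iii) is routine: $\mathcal{T}\subseteq\Psi(\Phi(\mathcal{T}))$ and $\Phi(\Psi(\mathcal{M}))\subseteq\mathcal{M}$ are immediate, and the reverse inclusions follow from compatibility — a full-dimensional simplex $\conv(T)$ meeting every cell of a triangulation in a common face (which holds once $\phi(T)$ lies in that triangulation's ensemble) must be a cell, giving $\Psi(\Phi(\mathcal{T}))\subseteq\mathcal{T}$; and for $m\in\mathcal{M}$ a generic point of $\conv(m)$ has $\conv(m)$ itself as its carrying face in the triangulation $\Psi(\mathcal{M})$, so $m\subseteq T$ for some $T\in\Psi(\mathcal{M})$, giving $\mathcal{M}\subseteq\Phi(\Psi(\mathcal{M}))$. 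I expect the genuine obstacle to be the completeness step in (ii): showing the candidate facets actually tile the whole product is precisely where the Linkage axiom is used in an essential way, and either the pseudomanifold argument or the deletion induction must be executed carefully there. The Birkhoff-plus-acyclicity analysis behind the Support axiom and the compatibility lemma are the remaining technical points, but they are comparatively self-contained.
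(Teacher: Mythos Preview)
The paper does not prove this theorem: it is quoted verbatim as a result of Oh and Yoo, with the citation ``\cite[Theorem~5.4]{Oh_Yoo_2}'' and no proof given in the present paper. There is therefore no ``paper's own proof'' to compare your proposal against. The paper uses Theorem~\ref{theorem_me} as a black box (together with the Ceballos--Padrol--Sarmiento description of $\Phi^{-1}$ in Lemma~\ref{lemma_mefacets} and Postnikov's Lemma~\ref{lemma_Postnikov}) inside the proof of Theorem~\ref{theorem_legendreme}, but it does not attempt to reprove it.

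Your sketch is a plausible outline of how one might prove the Oh--Yoo theorem, and the ingredients you isolate (Birkhoff-plus-acyclicity for the Support axiom, the alternating-cycle compatibility criterion, and a pseudomanifold/flip argument driven by the Linkage axiom for completeness) are the standard ones. If you want to check your argument against the original, you should consult \cite{Oh_Yoo_2} directly rather than this paper.
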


Ceballos, Padrol and Sarmiento~\cite[Lemma~2.5]{Ceballos_Padrol_Sarmiento}
explicitly describe the inverse $\Phi^{-1}$.
\begin{lemma}[Ceballos--Padrol--Sarmiento]
Given a matching ensemble $\mathcal{M}$ on $K_{a,b}$, the spanning tree~$T$
of $K_{a,b}$ belongs to $\Phi^{-1}(\mathcal{M})$
if and only if for each matching 
$m \in \mathcal{M}$, there is no cycle in $T \cup m$ that alternates
between $T$ and~$m$.   
\label{lemma_mefacets}
\end{lemma}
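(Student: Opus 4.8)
We will deduce the lemma from the Oh--Yoo bijection (Theorem~\ref{theorem_me}) together with the Support, Closure and Linkage axioms of Definition~\ref{definition_SA_CA_LA}. Write $\mathcal{T}=\Phi^{-1}(\mathcal{M})$ for the unique family of spanning trees that represents a triangulation of $\Simplex_{a-1}\times\Simplex_{b-1}$ and satisfies $\Phi(\mathcal{T})=\mathcal{M}$; thus $\phi(T')\subseteq\mathcal{M}$ for every $T'\in\mathcal{T}$, and the assertion to be proved is that $T\in\mathcal{T}$ exactly when no $m\in\mathcal{M}$ yields a cycle in $T\cup m$ alternating between $T$ and $m$. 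Throughout we use that, since $K_{a,b}$ is bipartite, every cycle is even and splits into its two ``alternating'' matchings, each of which is a perfect matching of the cycle's vertex set lying inside the induced subgraph; in particular the two alternating matchings of a cycle have the same support in the sense of the Support axiom.

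For the first implication, suppose $T\in\mathcal{T}$ and, for contradiction, that $C\subseteq T\cup m$ is a cycle alternating between $T$ and $m$ for some $m\in\mathcal{M}$. Reading off the two alternating matchings of $C$ gives matchings $A$ and $B$ with $A\subseteq T$, $B\subseteq m$, and $A\neq B$ (they are disjoint, nonempty edge sets of a common cycle). The Closure axiom gives $B\in\mathcal{M}$, while $A\in\phi(T)\subseteq\mathcal{M}$ because $T\in\mathcal{T}$. Hence $A$ and $B$ are two distinct matchings in $\mathcal{M}$ that match the same set $V(C)\cap\{1,\dots,a\}$ onto the same set $V(C)\cap\{\overline{1},\dots,\overline{b}\}$ inside the induced subgraph, contradicting the Support axiom. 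So no such cycle exists.

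For the converse, suppose $T\notin\mathcal{T}$. In the bipartite graph representation $T$ corresponds to a full-dimensional simplex $\sigma_T$ of $\Simplex_{a-1}\times\Simplex_{b-1}$, while the maximal simplices of the triangulation are the $\sigma_{T'}$ with $T'\in\mathcal{T}$; since these cover the polytope, the nonempty open relative interior of $\sigma_T$ meets the relative interior of some $\sigma_{T_1}$ with $T_1\in\mathcal{T}$, and $T_1\neq T$. Writing a common relative interior point as a strictly positive convex combination of the vertices of $\sigma_T$ and of $\sigma_{T_1}$ produces a nontrivial affine dependence whose positive part is contained in $T$ and whose negative part is contained in $T_1$; refining it to a circuit of the point configuration yields a circuit $(Z^{+},Z^{-})$ with $Z^{+}\subseteq T$ and $Z^{-}\subseteq T_1$. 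By \cite[Lemma~6.2.8]{deLoera_Rambau_Santos} the minimal affinely dependent subsets of this configuration are exactly the cycles of $K_{a,b}$, and a direct check shows that the sign decomposition of the circuit of a cycle is its pair of alternating matchings; hence $C:=Z^{+}\uplus Z^{-}$ is a cycle of $K_{a,b}$ whose alternating matchings are $Z^{+}\subseteq T$ and $Z^{-}\subseteq T_1$. Setting $m:=Z^{-}$ we get $m\in\phi(T_1)\subseteq\Phi(\mathcal{T})=\mathcal{M}$, and $C\subseteq T\cup m$ alternates between $T$ (the edges of $Z^{+}$) and $m$. This is the desired alternating cycle, completing the proof.

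The main obstacle is the last paragraph: one must invoke the standard facts that a non-member full-dimensional simplex of a triangulation has relative interior meeting that of some member, that such an overlap is witnessed by a common circuit with its positive elements on one side and its negative elements on the other, and that the circuits of $\Simplex_{a-1}\times\Simplex_{b-1}$ in the bipartite model are precisely the alternating even cycles of $K_{a,b}$. These are classical (the first two are Radon / oriented-matroid facts, the third is the companion of the forest characterization already quoted in the excerpt), but they carry the geometric content of the statement; everything else is a short manipulation of the Support and Closure axioms. Alternatively, one may by-pass the geometry by extracting from Oh and Yoo's argument the identity $\Phi^{-1}(\mathcal{M})=\{T:\phi(T)\subseteq\mathcal{M}\}$ and then checking, purely from the Support and Closure axioms (as in the second paragraph, plus the observation that if a matching $m_0\subseteq T$ differs from the unique $m_1\in\mathcal{M}$ of the same support then $m_0\cup m_1$ contains an alternating cycle in $T\cup m_1$), that $\phi(T)\subseteq\mathcal{M}$ holds if and only if no $m\in\mathcal{M}$ produces an alternating cycle in $T\cup m$.
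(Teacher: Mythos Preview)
The paper does not give its own proof of this lemma; it is quoted from Ceballos, Padrol and Sarmiento~\cite[Lemma~2.5]{Ceballos_Padrol_Sarmiento} and stated without proof, so there is nothing in the paper to compare against.

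Your argument is correct. The forward direction via the Support and Closure axioms is clean and is the natural proof. For the converse, the circuit extraction you sketch does work: the conformal circuit decomposition of the affine dependence coming from a common interior point of $\sigma_T$ and $\sigma_{T_1}$ yields a circuit with $Z^{+}\subseteq T$ and $Z^{-}\subseteq T_1$, and circuits of the vertex set of $\Simplex_{a-1}\times\Simplex_{b-1}$ are exactly the even cycles of $K_{a,b}$ split into their two alternating matchings. You can, however, shorten this step considerably by invoking Lemma~\ref{lemma_Postnikov} (Postnikov), which appears immediately after the statement you are proving: once you have $T\neq T_1\in\mathcal{T}$ with $\sigma_T\cap\sigma_{T_1}$ containing an interior point of $\sigma_T$, the intersection cannot be a common face, so Postnikov's lemma produces a cycle in $T\cup T_1$ alternating between $T$ and $T_1$; the $T_1$-half of that cycle is then a matching in $\phi(T_1)\subseteq\mathcal{M}$. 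This replaces the appeal to general oriented-matroid facts by a result already available in the paper. The alternative purely combinatorial route you outline at the end (via $\Phi^{-1}(\mathcal{M})=\{T:\phi(T)\subseteq\mathcal{M}\}$ and the symmetric difference $m_0\cup m_1$) is also valid and is arguably the most self-contained.
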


Closely related to this result is the following lemma,
essentially due to
Postnikov; see Lemma~12.6 in~\cite{Postnikov_P}.
Although Postnikov
originally made the
statement 
in the case of
spanning trees, the proof carries over with very
little modification to the case of forests.
Recall that for a forest~$F$ in~$K_{a,b}$ we denote
$\Simplex_{F}$ to be simplex in $\Simplex_{a-1} \times \Simplex_{b-1}$
whose vertices correspond to the edges of the forest.

\begin{lemma}[Postnikov]
Let $F$ and $F'$ be two forests in the bipartite graph $K_{a,b}$.
The intersection of the two simplices $\Simplex_{F} \cap \Simplex_{F'}$
is either empty or a simplex represented by
a set of edges of $K_{a,b}$ if and only if the graph $F \cup F'$ does not
contain a cycle of length greater than or equal to $4$
in which the edges alternate between~$F$ and~$F'$.
\label{lemma_Postnikov}
\end{lemma}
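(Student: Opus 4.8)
The plan is to reduce the statement about the simplices $\Simplex_F$ and $\Simplex_{F'}$ inside $\Simplex_{a-1}\times\Simplex_{b-1}$ to a purely combinatorial statement about the union graph $F\cup F'$, and then to prove the two directions of the equivalence separately. First I would set up coordinates: a point of $\Simplex_{a-1}\times\Simplex_{b-1}$ is a pair $(x,y)$ with $x\in\Rrr^a$, $y\in\Rrr^b$, $x,y\ge 0$, $\sum x_i=\sum y_{\overline{j}}=1$, and I would identify $\Simplex_F$ with the set of those points expressible as a convex combination $\sum_{e\in F}\lambda_e\, (\chi_e)$ where $\chi_{(i,\overline{j})}=(e_i,e_{\overline j})$; thus $(x,y)\in\Simplex_F$ exactly when there is a nonnegative edge-weighting of $F$ whose $i$-marginal is $x_i$ and whose $\overline j$-marginal is $y_{\overline j}$. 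A point of $\Simplex_F\cap\Simplex_{F'}$ is then a pair of weightings $(\lambda,\lambda')$ on $F$ and $F'$ with the same marginals $(x,y)$, and the key observation is that the difference $\lambda-\lambda'$, viewed as an edge-signing of $F\cup F'$, has zero net flow at every vertex; hence its support decomposes into alternating cycles of $F\cup F'$. This is the combinatorial heart of the argument.

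For the ``if'' direction (no alternating cycle of length $\ge 4$ $\Rightarrow$ the intersection is a simplex or empty), I would argue as follows. Given $(x,y)$ in the intersection, by the flow argument above any two weightings $\lambda$ on $F$ and $\lambda'$ on $F'$ realizing $(x,y)$ differ by a combination of alternating cycles; if there are none, then $\lambda$ and $\lambda'$ agree on $F\cap F'$ and moreover vanish off $F\cap F'$ (an edge of $F\setminus F'$ with positive weight would force, together with the marginals, the existence of an alternating cycle, or else the graph $F\cup F'$ being a forest the weighting is uniquely determined on each tree component). Therefore $(x,y)\in\Simplex_{F\cap F'}$, and conversely $\Simplex_{F\cap F'}\subseteq\Simplex_F\cap\Simplex_{F'}$ is trivial, so the intersection equals $\Simplex_{F\cap F'}$, which is a simplex represented by the edge set $F\cap F'$ (or empty if that intersection of simplices is empty — but in fact $\Simplex_{F\cap F'}$ is always nonempty as a face, so the ``empty'' alternative corresponds to the degenerate reading; I would phrase this carefully to match Postnikov's statement). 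The one subtlety is ruling out the case where $F\cup F'$ contains no alternating $4$-cycle yet two distinct weightings exist; this is exactly where one uses that alternating-cycle-free implies the flow difference is zero.

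For the ``only if'' direction I would prove the contrapositive: if $F\cup F'$ contains an alternating cycle $C=(i_1,\overline{j_1},i_2,\overline{j_2},\ldots,i_k,\overline{j_k},i_1)$ of length $2k\ge 4$ with edges alternating between $F$ and $F'$, then I exhibit a point in $\Simplex_F\cap\Simplex_{F'}$ that does not lie in $\Simplex_{F\cap F'}$, and in fact a whole segment in the intersection that is not a face, showing $\Simplex_F\cap\Simplex_{F'}$ is not a simplex spanned by edges. Concretely, put weight $1/k$ on each of the $k$ $F$-edges of $C$ and also $1/k$ on each of the $k$ $F'$-edges of $C$; both give the same marginals (each vertex of $C$ gets total weight $1/k$), so $(x,y)$ lies in both simplices, but since the two supports are the disjoint edge sets $C\cap F$ and $C\cap F'$, the point lies in the relative interior of a positive-dimensional face of each, and its representation is not unique — perturbing weights around $C$ keeps it in the intersection, producing a segment transverse to any common face, hence the intersection cannot be a single simplex on a set of edges. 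I expect this direction to be the more delicate one to write cleanly, because one must verify that the constructed configuration genuinely violates ``is a simplex represented by a set of edges of $K_{a,b}$'' rather than merely being a union of faces; the cleanest formulation is that the intersection of two faces of a polytope is always a face, so $\Simplex_F\cap\Simplex_{F'}$ is a face of $\Simplex_{a-1}\times\Simplex_{b-1}$, and such a face is of the claimed form precisely when it is itself a product of sub-simplices — and the alternating cycle obstructs this by producing a face that is not so decomposable. So the main obstacle is the bookkeeping in the ``only if'' direction: correctly translating ``alternating cycle present'' into a concrete failure of the face to be edge-represented. I would handle it by the explicit uniform-weight construction above and a short dimension count.
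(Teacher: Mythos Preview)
Your ``only if'' direction matches the paper: both construct the point $\frac{1}{k}\sum_s (e_{i_s}, e_{\overline{j_s}})$ in $\Simplex_F\cap\Simplex_{F'}$ and observe that the minimal faces of $\Simplex_F$ and of $\Simplex_{F'}$ containing it have disjoint vertex sets, so the intersection is not a common face. One correction: $\Simplex_F$ and $\Simplex_{F'}$ are not faces of $\Simplex_{a-1}\times\Simplex_{b-1}$ in general (only stars $K_{1,m}$ or $K_{m,1}$ give faces), so your appeal to ``the intersection of two faces of a polytope is always a face'' is inapplicable; stick with the carrier-face argument.

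Your ``if'' direction has a genuine gap. The divergence-free weighting $\mu$ on $F\cup F'$ does decompose into cycles, but those cycles need not alternate between $F$ and $F'$. Take $F=\{\{1,\bar1\},\{1,\bar2\},\{2,\bar2\}\}$ and $F'=\{\{1,\bar1\},\{2,\bar1\},\{2,\bar2\}\}$: the unique $4$-cycle in $F\cup F'$ has edge pattern $(F\cap F'),(F'\setminus F),(F\cap F'),(F\setminus F')$, which admits no alternating labeling, yet the lemma applies and one checks $\Simplex_F\cap\Simplex_{F'}=\Simplex_{F\cap F'}$. Your parenthetical fallback (``$F\cup F'$ being a forest'') is also false in this example. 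What is missing is the use of the sign constraints $\mu_e=\lambda_e\ge 0$ on $F\setminus F'$ and $\mu_e=-\lambda'_e\le 0$ on $F'\setminus F$; with these, one can orient each edge by the sign of $\mu$, find a directed cycle in the support, and check that it alternates --- but this orientation step is precisely the content of Postnikov's argument, which the paper follows. The paper orients edges of $F\setminus F'$ from $i$ to $\bar j$ and edges of $F'\setminus F$ from $\bar j$ to $i$, observes that alternating cycles are exactly the directed cycles, and in their absence selects a height function constant on the connected components of $F\cap F'$ and increasing along the directed edges; this height function furnishes the separating linear functional. The paper's only addition to Postnikov's spanning-tree proof is a single sentence assigning heights to nodes not incident to any edge of $F\cup F'$.
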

\begin{proof}
The proof of the necessity is exactly the same
as for spanning trees.
If there is a cycle
$(i_{1},\overline{j_{1}},i_{2}, \overline{j_{2}}, \ldots,  i_{k},\overline{j_{k}})$
such that
$$
\{\{i_{1},\overline{j_{1}}\}, \{i_{2},\overline{j_{2}}\}, \ldots,
\{i_{k},\overline{j_{k}}\}\} \subseteq F
\:\:\:\: \text{ and } \:\:\:\:
\{\{i_{1},\overline{j_{k}}\}, \{i_{2},\overline{j_{1}}\}, \ldots,
\{i_{k},\overline{j_{k-1}}\}\} \subseteq F'
$$
then the point
$1/k \cdot \sum_{s=1}^{k} (e_{i_{s}} - e_{\overline{j_{s}}})$
belongs
to the intersection $\Simplex_{F} \cap \Simplex_{F'}$, but all vertices
of the smallest dimensional faces of the two simplices containing the
point do not belong to the intersection. To prove the converse, we only
need to add one 
sentence to Postnikov's proof. 
Direct all edges
$\{i,\overline{j}\} \in F\setminus F'$ from $i$ to $\overline{j}$ and
direct all edges $\{i,\overline{j}\} \in F'\setminus F$ from
$\overline{j}$ to $i$. The resulting set $U(F,F')$ of directed edges is
acyclic. We select a height function that is constant on the connected
components of $F\cap F'$ and increases along the directed edges in
$U(F,F')$ joining two connected components of $F\cap F'$. Since we
started with forests instead of spanning trees, the resulting height
function is still undefined on those nodes that are not incident to any edge
in the union $F \cup F'$. For these, we select the height to be constant
less than any of the already defined values.  
\end{proof}

\section{Classifying uniform flag triangulations of the root polytope}
\label{section_classification}

\subsection{Uniform triangulations and their classification}

A common property of all three flag complexes described in
Table~\ref{table_six_way} is that the edges are defined in
a {\em uniform fashion}.
We make this clear in the following definition.

\begin{definition}
\label{definition_uflag}
A flag simplicial complex $\triangle_{n}$ on the vertex set~$V_{n}$
is a {\em uniform flag complex}
if determining whether or not
a pair of vertices $\{(i_{1},j_{1}),(i_{2},j_{2})\}$ forms an edge
depends only on the equalities and inequalities
between the values of $i_{1}$, $i_{2}$, $j_{1}$ and~$j_{2}$.
\end{definition}

We begin with the necessary conditions for describing
uniform flag triangulations.
To facilitate making statements,
we introduce some new terminology and notation. We use the letter
$T$ to mark the tail of each arrow and the letter $H$ to mark the head.
For each pair of arrows on four nodes, we will indicate the
relative order of the two heads and two tails by writing down the
appropriate letters left to right in the order as they occur. We will
refer to the resulting word as the {\em type} of the pair of arrows.
After that we will simply state in words the condition
that a pair of arrows
of a given type must satisfy to be an edge of the triangulation.
Examples of this convention are given in Table~\ref{table_six_way}.

The main classification result in this paper is the following.
\begin{theorem}
Let  $\triangle_{n}$ be a permissible uniform flag complex on the vertex
set~$V_{n}$ for some $n \geq 5$.
Then the complex~$\triangle_{n}$ represents a triangulation of
the boundary~$\partial P_{n}$ of the root polytope if and only if it
satisfies exactly one of the following conditions:
\begin{enumerate}
\item
Both $THTH$ and $HTHT$ types of pairs of arrows do not nest,
and both $HTTH$ and $THHT$ types of arrows do not cross.
\item
Both $THTH$ and $HTHT$ types of pairs of arrows nest,
and both $HTTH$ and $THHT$ types of arrows cross.
\item
Exactly one of the $THTH$ and $HTHT$ types of pairs of arrows nest.
Furthermore, if
both $THHT$ and $HTTH$ types of pairs cross
then both $TTHH$ and $HHTT$ types of pairs nest.
\end{enumerate}
\label{theorem_main}
\end{theorem}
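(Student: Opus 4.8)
The plan is to verify, for each of the fifteen candidate complexes defined by the three conditions, that the resulting flag complex is a valid triangulation of $\partial P_{n}$, using the permissibility criteria of Definition~\ref{definition_permissible} together with the Oh--Yoo matching-ensemble characterization. The strategy exploits the local structure of $\partial P_{n}$: its facets are products $\Simplex_{I}\times\Simplex_{J}$ with $I\uplus J=\{1,\ldots,n+1\}$, so a permissible complex triangulates $\partial P_{n}$ exactly when its restriction to each such facet is a triangulation of that product of simplices. Thus the global problem reduces to checking, facet by facet, that the induced family of spanning trees arises from a matching ensemble, and then invoking Theorem~\ref{theorem_me}. The uniformity hypothesis is essential here: the edge rule is the same on every facet, so one verification handles all facets simultaneously once the combinatorics is set up correctly.

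First I would translate the edge rules of each condition into the bipartite-graph language. A facet $\Simplex_{I}\times\Simplex_{J}$ with $|I|=a$, $|J|=b$ corresponds to $K_{a,b}$; an arrow $(i,j)$ with $i\in I$, $j\in J$ (a forward arrow inside this facet) or $i\in J$, $j\in I$ (a backward arrow) becomes an edge of $K_{a,b}$, and the linear order on $\{1,\ldots,n+1\}$ induces interleaved orders on the two colour classes. The uniform edge rule then becomes a rule on pairs of edges of $K_{a,b}$ depending only on the relative order of their four endpoints, i.e.\ on whether the two edges ``cross'' or ``nest'' in the cyclic/linear sense. For each of the three conditions I would identify the associated matching ensemble $\mathcal{M}$ directly: the edges of $\triangle_{n}$ restricted to the facet are the $2$-element matchings of $\mathcal{M}$, and one reconstructs $\mathcal{M}$ as the family of all matchings all of whose pairs are edges. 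I then verify the Support, Closure and Linkage axioms of Definition~\ref{definition_SA_CA_LA}. Closure is immediate from the flag condition. Support --- existence and uniqueness of a matching of $I$ with $\overline{J}$ inside the induced subgraph --- is the heart of the argument and is where the precise combinatorial content of conditions (1)--(3) gets used; for the lex and revlex classes the unique matching is the obvious ``non-nesting'' or ``non-crossing'' one, and for the Simion class it is a hybrid determined by the $TTHH$/$HHTT$ clause. The Linkage axiom is a local exchange property that I would check by a short case analysis on the position of the new vertex $v$ relative to the matching.

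Having identified the matching ensemble $\mathcal{M}$ on each $K_{a,b}$, Theorem~\ref{theorem_me} gives a triangulation of $\Simplex_{a-1}\times\Simplex_{b-1}$ whose facets are $\Phi^{-1}(\mathcal{M})$, described explicitly by Lemma~\ref{lemma_mefacets} as the spanning trees containing no alternating cycle with any matching of $\mathcal{M}$. It remains to check two things. First, that these facet-wise triangulations glue together into a triangulation of the whole boundary $\partial P_{n}$: this follows because any two facets $\Simplex_{I}\times\Simplex_{J}$ and $\Simplex_{I'}\times\Simplex_{J'}$ of $P_{n}$ meet in a common face $\Simplex_{I\cap I'}\times\Simplex_{J\cap J'}$, and by Lemma~\ref{lemma_Postnikov} (Postnikov) the induced simplices intersect properly as long as no alternating $4$-or-longer cycle appears in the union --- which is exactly what the matching-ensemble condition on the common subgraph guarantees, since the edge rule is uniform and hence consistent across facets. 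Second, that the complex is flag with the prescribed minimal non-faces; this is built into the construction, since we defined $\triangle_{n}$ as the flag complex generated by the edge rule, and one checks the $2$-skeleton conditions of Lemma~\ref{lemma_easy}(a)--(c) hold --- (a) and (b) being the statement that every required edge is present and every square gets exactly one diagonal, which is forced by the type rules.

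The main obstacle I anticipate is the Support axiom for the Simion class, condition~(3). There the edge rule is not simply ``always non-crossing'' or ``always non-nesting'': it depends on the interaction between the $THTH$/$HTHT$ choice, the $THHT$/$HTTH$ choice, and the conditional $TTHH$/$HHTT$ clause, and one must show these rules are \emph{globally consistent} --- that on any induced subgraph $K_{|I|,|\overline{J}|}$ with $|I|=|\overline{J}|$ there is exactly one perfect matching all of whose pairs obey the rule. Existence is a direct construction (building the matching greedily from the outermost nodes inward, with the $TTHH/HHTT$ clause dictating the choice when a tail-block meets a head-block), but \emph{uniqueness} requires ruling out any competing perfect matching, and this is a delicate induction on $|I|$ where the nine sub-cases of condition~(3) must each be handled; a sign error or missed sub-case here is the likeliest place for the argument to break. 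I would organize this as a separate lemma establishing that each of the fifteen rule-sets satisfies the matching-ensemble axioms, and then the theorem's sufficiency direction follows formally from Theorems~\ref{theorem_me} and the gluing argument above; the necessity direction is deferred to Section~\ref{section_sufficiency} as stated.
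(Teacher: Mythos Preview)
Your sufficiency plan is essentially the paper's approach: reduce to facets $\Simplex_{I}\times\Simplex_{J}$, identify the edge rule with a family of $2$-matchings, and verify the Oh--Yoo axioms. The paper packages your ``gluing via Postnikov'' step into a standalone tool (Theorem~\ref{theorem_legendreme}), which shows that for a permissible flag complex the axioms (SA) and (LA) on the induced matchings are \emph{equivalent} to being a triangulation of $\partial P_{n}$; this cleanly separates the geometric reduction from the combinatorics, so that the class-by-class work becomes purely a verification of (SA) and (LA). Your direct route is logically the same, but you would be re-proving Theorem~\ref{theorem_legendreme} inside each case.

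Two points deserve sharpening. First, your description of the Support axiom as ``the obvious non-nesting/non-crossing matching'' for the lex and revlex classes understates the work. In the lex class the unique matching is \emph{not} globally non-nesting or non-crossing: forward and backward arrows are governed by different sub-rules, and the paper's device is to encode the pair $(I,J)$ as a $TH$-word/lattice path and read off the matching from its excursions above and below the $x$-axis (Lemmas~\ref{lemma_backward_only}--\ref{lemma_forward_only}, Theorem~\ref{theorem_nonest}). For the revlex class the key observation is that every arrow must arch over the midpoint of $I\cup J$ (Theorem~\ref{theorem_nest}). Your greedy ``outermost-in'' construction for the Simion class is close to what the paper does, but the paper's explicit factorizations of the $TH$-word (Propositions~\ref{proposition_existence_simion_subclass_a}--\ref{proposition_existence_simion_subclass_b_part_two}) make both existence and uniqueness transparent and make the linkage verification routine via Lemmas~\ref{lemma_LA} and~\ref{lemma_LA_backward}.

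Second, you do not address the necessity direction at all. The paper proves it by short contradiction arguments (Propositions~\ref{proposition_nonest}, \ref{proposition_nest}, \ref{proposition_nest_follow}): assuming a forbidden combination of rules, one exhibits either a specific edge violating (LA) after inserting a node, or two distinct matchings on the same support violating (SA). These arguments use $n\geq 5$ (six nodes are needed in Proposition~\ref{proposition_nest_follow}), which is why the hypothesis appears in the theorem; your proposal gives no mechanism for ruling out the non-listed rule-sets, so as stated it only establishes the ``if'' direction.
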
  

The three classes in listed in Theorem~\ref{theorem_main} are
pairwise mutually exclusive. We name them as follows, and give
brief motivations.
\begin{enumerate}
\item 
This class contains the triangulation obtained by
the lexicographic pulling order of the root polytope
and hence is named {\em the lex class.}
\item 
This class contains the triangulation obtained by
the revlex pulling order of the root polytope
and hence is named {\em the revlex class.}
\item 
This class contains the Simion type~$B$ associahedron
and hence is called {\em the Simion class}.
Furthermore, we subdivide this class
into the three subclasses,
the Simion subclass of types~$a$ through~$c$, according to:
\begin{enumerate}
\item
Both $THHT$ and $HTTH$ types of pairs do not cross.
\item
Exactly one of the $THHT$ and $HTTH$ types of pairs cross.
\item
Both $THHT$ and $HTTH$ types of pairs cross,
and both $TTHH$ and $HHTT$ types of pairs nest.
\end{enumerate}
\end{enumerate}

In Subsection~\ref{subsection_necessary_conditions}
we prove the necessity part of Theorem~\ref{theorem_main}
in
Propositions~\ref{proposition_nonest}
through~\ref{proposition_nest_follow}.
The sufficiency part of Theorem~\ref{theorem_main} is
proved Section~\ref{section_sufficiency}.
The main tool for proving these results is
Theorem~\ref{theorem_legendreme}
which gives necessary and sufficient conditions for
a simplicial complex on the vertex set~$V_{n}$
to be a triangulation of the boundary~$\partial P_{n}$
of the root polytope. These conditions,
called the support and the linkage axioms,
are based upon
Definition~\ref{definition_SA_CA_LA}.

\newcommand{\smallbase}[4]
{
\node[circle, inner sep = 0.9pt] (a1) at (0, 0) {$\scriptstyle #1$};
\node[circle, inner sep = 0.9pt] (a2) at (0.6, 0) {$\scriptstyle #2$};
\node[circle, inner sep = 0.9pt] (a3) at (1.2, 0) {$\scriptstyle #3$};
\node[circle, inner sep = 0.9pt] (a4) at (1.8, 0) {$\scriptstyle #4$};
}

\begin{table}[ht]
\begin{center}
\begin{tabular}{|c|c|c||c|c|c|}
\hline
\vphantom{$\displaystyle \sum$}
$\triangle$ & $\triangle^{*}$ & $\overline{\triangle}$ &
$\triangle$ & $\triangle^{*}$ & $\overline{\triangle}$ \\
\hline\hline
\begin{tikzpicture}
\smallbase{T}{H}{T}{H}
\draw[->, thick] (a1) to[out=55, in=125] (a4);
\draw[<-, thick] (a2) to[out=55, in=125] (a3);
\end{tikzpicture}
&
\begin{tikzpicture}
\smallbase{H}{T}{H}{T}
\draw[<-, thick] (a1) to[out=55, in=125] (a4);
\draw[->, thick] (a2) to[out=55, in=125] (a3);
\end{tikzpicture}
&
\begin{tikzpicture}
\smallbase{T}{H}{T}{H}
\draw[->, thick] (a1) to[out=55, in=125] (a4);
\draw[<-, thick] (a2) to[out=55, in=125] (a3);
\end{tikzpicture}
&
\begin{tikzpicture}
\smallbase{T}{H}{T}{H}
\draw[->, thick] (a1) to[out=55, in=125] (a2);
\draw[->, thick] (a3) to[out=55, in=125] (a4);
\end{tikzpicture}
&
\begin{tikzpicture}
\smallbase{H}{T}{H}{T}
\draw[<-, thick] (a1) to[out=55, in=125] (a2);
\draw[<-, thick] (a3) to[out=55, in=125] (a4);
\end{tikzpicture}
&
\begin{tikzpicture}
\smallbase{T}{H}{T}{H}
\draw[->, thick] (a1) to[out=55, in=125] (a2);
\draw[->, thick] (a3) to[out=55, in=125] (a4);
\end{tikzpicture}
\\
\hline\hline
\begin{tikzpicture}
\smallbase{H}{T}{H}{T}
\draw[<-, thick] (a1) to[out=55, in=125] (a4);
\draw[->, thick] (a2) to[out=55, in=125] (a3);
\end{tikzpicture}
&
\begin{tikzpicture}
\smallbase{T}{H}{T}{H}
\draw[->, thick] (a1) to[out=55, in=125] (a4);
\draw[<-, thick] (a2) to[out=55, in=125] (a3);
\end{tikzpicture}
&
\begin{tikzpicture}
\smallbase{H}{T}{H}{T}
\draw[<-, thick] (a1) to[out=55, in=125] (a4);
\draw[->, thick] (a2) to[out=55, in=125] (a3);
\end{tikzpicture}
&
\begin{tikzpicture}
\smallbase{H}{T}{H}{T}
\draw[<-, thick] (a1) to[out=55, in=125] (a2);
\draw[<-, thick] (a3) to[out=55, in=125] (a4);
\end{tikzpicture}
&
\begin{tikzpicture}
\smallbase{T}{H}{T}{H}
\draw[->, thick] (a1) to[out=55, in=125] (a2);
\draw[->, thick] (a3) to[out=55, in=125] (a4);
\end{tikzpicture}
&
\begin{tikzpicture}
\smallbase{H}{T}{H}{T}
\draw[<-, thick] (a1) to[out=55, in=125] (a2);
\draw[<-, thick] (a3) to[out=55, in=125] (a4);
\end{tikzpicture}
\\
\hline\hline
\begin{tikzpicture}
\smallbase{T}{H}{H}{T}
\draw[->, thick] (a1) to[out=55, in=125] (a2);
\draw[<-, thick] (a3) to[out=55, in=125] (a4);
\end{tikzpicture}
&
\begin{tikzpicture}
\smallbase{H}{T}{T}{H}
\draw[<-, thick] (a1) to[out=55, in=125] (a2);
\draw[->, thick] (a3) to[out=55, in=125] (a4);
\end{tikzpicture}
&
\begin{tikzpicture}
\smallbase{H}{T}{T}{H}
\draw[<-, thick] (a1) to[out=55, in=125] (a2);
\draw[->, thick] (a3) to[out=55, in=125] (a4);
\end{tikzpicture}
&
\begin{tikzpicture}
\smallbase{T}{H}{H}{T}
\draw[->, thick] (a1) to[out=55, in=125] (a3);
\draw[<-, thick] (a2) to[out=55, in=125] (a4);
\end{tikzpicture}
&
\begin{tikzpicture}
\smallbase{H}{T}{T}{H}
\draw[<-, thick] (a1) to[out=55, in=125] (a3);
\draw[->, thick] (a2) to[out=55, in=125] (a4);
\end{tikzpicture}
&
\begin{tikzpicture}
\smallbase{H}{T}{T}{H}
\draw[<-, thick] (a1) to[out=55, in=125] (a3);
\draw[->, thick] (a2) to[out=55, in=125] (a4);
\end{tikzpicture}
\\
\hline\hline
\begin{tikzpicture}
\smallbase{H}{T}{T}{H}
\draw[<-, thick] (a1) to[out=55, in=125] (a2);
\draw[->, thick] (a3) to[out=55, in=125] (a4);
\end{tikzpicture}
&
\begin{tikzpicture}
\smallbase{T}{H}{H}{T}
\draw[->, thick] (a1) to[out=55, in=125] (a2);
\draw[<-, thick] (a3) to[out=55, in=125] (a4);
\end{tikzpicture}
&
\begin{tikzpicture}
\smallbase{T}{H}{H}{T}
\draw[->, thick] (a1) to[out=55, in=125] (a2);
\draw[<-, thick] (a3) to[out=55, in=125] (a4);
\end{tikzpicture}
&
\begin{tikzpicture}
\smallbase{H}{T}{T}{H}
\draw[<-, thick] (a1) to[out=55, in=125] (a3);
\draw[->, thick] (a2) to[out=55, in=125] (a4);
\end{tikzpicture}
&
\begin{tikzpicture}
\smallbase{T}{H}{H}{T}
\draw[->, thick] (a1) to[out=55, in=125] (a3);
\draw[<-, thick] (a2) to[out=55, in=125] (a4);
\end{tikzpicture}
&
\begin{tikzpicture}
\smallbase{T}{H}{H}{T}
\draw[->, thick] (a1) to[out=55, in=125] (a3);
\draw[<-, thick] (a2) to[out=55, in=125] (a4);
\end{tikzpicture}
\\
\hline\hline
\begin{tikzpicture}
\smallbase{H}{H}{T}{T}
\draw[<-, thick] (a1) to[out=55, in=125] (a4);
\draw[<-, thick] (a2) to[out=55, in=125] (a3);
\end{tikzpicture}
&
\begin{tikzpicture}
\smallbase{T}{T}{H}{H}
\draw[->, thick] (a1) to[out=55, in=125] (a4);
\draw[->, thick] (a2) to[out=55, in=125] (a3);
\end{tikzpicture}
&
\begin{tikzpicture}
\smallbase{H}{H}{T}{T}
\draw[<-, thick] (a1) to[out=55, in=125] (a4);
\draw[<-, thick] (a2) to[out=55, in=125] (a3);
\end{tikzpicture}
&
\begin{tikzpicture}
\smallbase{H}{H}{T}{T}
\draw[<-, thick] (a1) to[out=55, in=125] (a3);
\draw[<-, thick] (a2) to[out=55, in=125] (a4);
\end{tikzpicture}
&
\begin{tikzpicture}
\smallbase{T}{T}{H}{H}
\draw[->, thick] (a1) to[out=55, in=125] (a3);
\draw[->, thick] (a2) to[out=55, in=125] (a4);
\end{tikzpicture}
&
\begin{tikzpicture}
\smallbase{H}{H}{T}{T}
\draw[<-, thick] (a1) to[out=55, in=125] (a3);
\draw[<-, thick] (a2) to[out=55, in=125] (a4);
\end{tikzpicture}
\\
\hline\hline
\begin{tikzpicture}
\smallbase{T}{T}{H}{H}
\draw[->, thick] (a1) to[out=55, in=125] (a4);
\draw[->, thick] (a2) to[out=55, in=125] (a3);
\end{tikzpicture}
&
\begin{tikzpicture}
\smallbase{H}{H}{T}{T}
\draw[<-, thick] (a1) to[out=55, in=125] (a4);
\draw[<-, thick] (a2) to[out=55, in=125] (a3);
\end{tikzpicture}
&
\begin{tikzpicture}
\smallbase{T}{T}{H}{H}
\draw[->, thick] (a1) to[out=55, in=125] (a4);
\draw[->, thick] (a2) to[out=55, in=125] (a3);
\end{tikzpicture}
&
\begin{tikzpicture}
\smallbase{T}{T}{H}{H}
\draw[->, thick] (a1) to[out=55, in=125] (a3);
\draw[->, thick] (a2) to[out=55, in=125] (a4);
\end{tikzpicture}
&
\begin{tikzpicture}
\smallbase{H}{H}{T}{T}
\draw[<-, thick] (a1) to[out=55, in=125] (a3);
\draw[<-, thick] (a2) to[out=55, in=125] (a4);
\end{tikzpicture}
&
\begin{tikzpicture}
\smallbase{T}{T}{H}{H}
\draw[->, thick] (a1) to[out=55, in=125] (a3);
\draw[->, thick] (a2) to[out=55, in=125] (a4);
\end{tikzpicture}
\\
\hline
\end{tabular}
\end{center}
\caption{The action of the two involutions
$\triangle \longmapsto \triangle^{*}$
and
$\triangle \longmapsto \overline{\triangle}$. Observe the columns of
$\triangle^{*}$ and $\overline{\triangle}$ are mirror images of each other.}
\label{table_star_and_overline}
\end{table}

We end this subsection by introducing two commuting operations
on triangulations.
Let  $\triangle_{n}$ be a uniform flag complex on the vertex set~$V_{n}$.
Let the {\em dual triangulation}
$\triangle_{n}^{*}$
be the triangulation obtained by
reversing all the arrows.
Let the {\em reflected dual triangulation}
$\overline{\triangle_{n}}$
be the triangulation obtained by
reversing all the arrows and then 
replacing node $i$ with $n+2-i$.

\begin{lemma}
Let $\triangle_{n}$ be a uniform flag complex on the vertex set~$V_{n}$.
Then the uniform flag complexes
$\triangle_{n}$ and $\triangle_{n}^{*}$
belong to the same (sub)class.
Furthermore,
the conditions on the types
$THTH$, $HTHT$, $TTHH$ and $HHTT$
stay invariant under the
involution $\triangle_{n} \longmapsto \overline{\triangle_{n}}$,
whereas the condition on
the types $THHT$ and $HTTH$ are exchanged.
\label{lemma_involutions}
\end{lemma}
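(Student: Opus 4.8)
The plan is to trace the effect of each of the two involutions on the relative order of the two heads and two tails of a pair of arrows supported on four distinct nodes, since the (sub)class to which a uniform flag complex belongs is determined solely by which admissible configuration it selects as an edge for each of the six types $THTH$, $HTHT$, $THHT$, $HTTH$, $TTHH$, $HHTT$. For each such type there are exactly two admissible configurations of the pair of arrows on the nodes $p_{1}<p_{2}<p_{3}<p_{4}$: for $THTH$ and $HTHT$ the two arcs spanned by the arrows either are disjoint or nest, for $THHT$ and $HTTH$ they either are disjoint or cross, and for $TTHH$ and $HHTT$ they either cross or nest.

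First I would handle $\triangle_{n}\longmapsto\triangle_{n}^{*}$. Reversing an arrow $(i,j)\longmapsto(j,i)$ leaves the underlying undirected edge, hence the arc it spans, unchanged and merely interchanges its head and its tail. Writing $\overline{w}$ for the letter obtained from $w\in\{T,H\}$ by the interchange $T\leftrightarrow H$, a pair of arrows of type $w_{1}w_{2}w_{3}w_{4}$ is therefore carried to a pair of type $\overline{w_{1}}\,\overline{w_{2}}\,\overline{w_{3}}\,\overline{w_{4}}$; this interchanges $THTH\leftrightarrow HTHT$, $THHT\leftrightarrow HTTH$, $TTHH\leftrightarrow HHTT$, while preserving whether the two arcs are disjoint, nest, or cross. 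Consequently a pair of arrows is an edge of $\triangle_{n}^{*}$ exactly when its reversal is an edge of $\triangle_{n}$, so the selection rule of $\triangle_{n}^{*}$ for type $HTHT$ coincides with that of $\triangle_{n}$ for type $THTH$, and likewise after interchanging $THTH\leftrightarrow HTHT$, $THHT\leftrightarrow HTTH$, $TTHH\leftrightarrow HHTT$; all of these substitutions are recorded in the columns headed $\triangle$ and $\triangle^{*}$ of Table~\ref{table_star_and_overline}. Since each of the three classes and each of the three Simion subclasses is defined by a condition that is symmetric under the simultaneous interchange $THTH\leftrightarrow HTHT$, $THHT\leftrightarrow HTTH$, $TTHH\leftrightarrow HHTT$, it follows that $\triangle_{n}$ and $\triangle_{n}^{*}$ belong to the same (sub)class.

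For $\triangle_{n}\longmapsto\overline{\triangle_{n}}$ I would compose the arrow reversal above with the relabeling $i\longmapsto n+2-i$, which reverses the linear order of the nodes. The relabeling reflects the four nodes $p_{1}<p_{2}<p_{3}<p_{4}$ from left to right, and such a reflection sends disjoint arcs to disjoint arcs, nestings to nestings, and crossings to crossings. Combining the two effects, a pair of type $w_{1}w_{2}w_{3}w_{4}$ is carried to a pair of type $\overline{w_{4}}\,\overline{w_{3}}\,\overline{w_{2}}\,\overline{w_{1}}$; this fixes each of $THTH$, $HTHT$, $TTHH$, $HHTT$ and interchanges $THHT\leftrightarrow HTTH$, again preserving the disjoint/nest/cross attribute. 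This is precisely why the $\overline{\triangle}$ column of Table~\ref{table_star_and_overline} is the left--right mirror image of the $\triangle^{*}$ column. As $\triangle_{n}\longmapsto\overline{\triangle_{n}}$ is an involution (arrow reversal and relabeling are commuting involutions), a pair of arrows of a given four-node type is an edge of $\overline{\triangle_{n}}$ precisely when its image under the same operation is an edge of $\triangle_{n}$; hence the selection rule of $\overline{\triangle_{n}}$ for each of the types $THTH$, $HTHT$, $TTHH$, $HHTT$ agrees with that of $\triangle_{n}$ for the same type, whereas the rules of $\overline{\triangle_{n}}$ for $THHT$ and for $HTTH$ are the rules of $\triangle_{n}$ for $HTTH$ and for $THHT$, respectively. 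This is the second assertion of the lemma.

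I expect the only real content to lie in the bookkeeping for the two operations: following each of the six order-types, and within each type each of its two admissible configurations, through an arrow reversal and through a reflection of the node order. The error-prone point is keeping track of the disjoint/nest/cross label through the left--right reflection occurring in $\overline{\triangle_{n}}$; the safeguard is to argue throughout in terms of the intervals (arcs) spanned by the arrows rather than the directed arrows themselves, which is exactly the computation summarized in Table~\ref{table_star_and_overline}.
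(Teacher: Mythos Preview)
Your proposal is correct and follows essentially the same approach as the paper: the paper's entire proof is the one-line reference ``See Table~\ref{table_star_and_overline},'' and your argument is precisely the bookkeeping that this table encodes, tracking each of the six order-types (and within each type the two admissible arc configurations) through arrow reversal and through the composition of arrow reversal with node reflection. Your prose verification that the (sub)class definitions are symmetric under the type swaps induced by $\triangle\mapsto\triangle^{*}$ is the only content not already implicit in the table, and it is correct.
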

\begin{proof}
See Table~\ref{table_star_and_overline}.
\end{proof}

\subsection{Necessary conditions for uniform flag complex}
\label{subsection_necessary_conditions}

The next result is essential for proving the
necessary and sufficient conditions in
Theorem~\ref{theorem_main}. 

\begin{theorem}
Let $\triangle_{n}$ be a permissible uniform flag complex on the vertex
set~$V_{n}$. Let $\mathcal{M}$ be the family of all
faces that are matchings, that is, let
$$
\mathcal{M}
=
\{\{(i_{1},j_{1}),(i_{2},j_{2}), \ldots, (i_{k},j_{k})\} \in \triangle_{n}
\: : \: 
|\{i_{1},j_{1},i_{2},j_{2}, \ldots, i_{k},j_{k}\}|=2k\}.
$$
Identify each vertex $(i,j)$ with the vertex $e_{j} - e_{i}$ of the root polytope.
Then the complex $\triangle_{n}$ represents a triangulation of
the boundary~$\partial P_{n}$ of the root polytope if and only if 
the family of matchings $\mathcal{M}$ satisfies the following two properties:   
\begin{itemize}
\item[(SA)]
For two disjoint subsets $I,J\subset \{1,2, \ldots,  n+1\}$ satisfying $|I|=|J|$
there is a unique $\sigma\in {\mathcal{M}}$ such that 
$\sigma\subseteq I\times J$ and $|\sigma|=|I|$;
\item[(LA)]
Assume $I$ and $J$ are disjoint subsets of $\{1,2, \ldots,  n+1\}$.
Let $\sigma$ be a non-empty matching in $\mathcal{M}$ such that
$\sigma\subseteq I\times J$.
Then for each $k \not \in I \cup J$ there is
an edge $(i,j) \in \sigma$ such that
$(\sigma-\{(i,j)\}) \cup \{(k,j)\} \in {\mathcal{M}}$.
Also, for each $k \not \in I \cup J$ there is
an edge $(i,j) \in \sigma$ such that
$(\sigma-\{(i,j)\}) \cup \{(i,k)\}\in {\mathcal{M}}$. 
\end{itemize}
\label{theorem_legendreme}  
\end{theorem}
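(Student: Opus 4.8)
The plan is to reduce Theorem~\ref{theorem_legendreme} to the Oh--Yoo characterization (Theorem~\ref{theorem_me}) together with the local-to-global description of triangulations of $\partial P_n$ via products of simplices. First I would recall that every proper face of $P_n$ is of the form $\Simplex_I\times\Simplex_J$ with $I,J$ disjoint and nonempty, and that a triangulation of $\partial P_n$ with no new vertices is equivalent to a compatible system of triangulations of all the facets $\Simplex_I\times\Simplex_J$ (with $I\uplus J=\{1,\dots,n+1\}$), where ``compatible'' means they agree on common faces. Since $\triangle_n$ is assumed permissible, its faces are already forests inside admissible sets of arrows, so the only thing left to verify is that the restriction of $\triangle_n$ to each facet $\Simplex_I\times\Simplex_J$ is genuinely a triangulation of that product of simplices. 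Here the key point is that $\triangle_n$ is \emph{uniform and flag}: a face of $\triangle_n$ is any admissible forest all of whose $2$-element subsets are edges, so the restriction to $\Simplex_I\times\Simplex_J$ is the flag complex generated by its edge set, and one checks (using condition~(c) of Lemma~\ref{lemma_easy} and the forest condition) that this restriction is determined entirely by which matchings of $K_{|I|,|J|}$ it contains.

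The second step is to translate the Oh--Yoo matching-ensemble axioms into the statement of the theorem. Identify the facet $\Simplex_I\times\Simplex_J$ with $\Simplex_{|I|-1}\times\Simplex_{|J|-1}$, hence with $K_{|I|,|J|}$ via the bipartite graph representation, so that arrows $(i,j)$ with $i\in I$, $j\in J$ become bipartite edges. Let $\mathcal{M}_{I,J}$ be the family of matchings of $\triangle_n$ supported on $I\times J$. By Theorem~\ref{theorem_me}, $\triangle_n$ restricts to a triangulation of $\Simplex_I\times\Simplex_J$ if and only if $\mathcal{M}_{I,J}$ is a matching ensemble, i.e.\ satisfies the support, closure, and linkage axioms of Definition~\ref{definition_SA_CA_LA}. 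The closure axiom is automatic since $\triangle_n$ is a simplicial complex (subsets of faces are faces). The support axiom, restricted over all $I,J$, says exactly that for all disjoint $I,J$ of equal size there is a unique $\sigma\in\mathcal M$ with $\sigma\subseteq I\times J$ and $|\sigma|=|I|$ --- this is (SA). The linkage axiom, again quantified over all $I,J$ and restricted to the bipartite edges that are actually available, becomes: given a nonempty $\sigma\subseteq I\times J$ in $\mathcal M$ and a vertex $k\notin I\cup J$ (either an unused ``tail'' slot or an unused ``head'' slot), some single swap $(i,j)\mapsto(k,j)$ or $(i,j)\mapsto(i,k)$ keeps us in $\mathcal M$ --- this is (LA). (One must be slightly careful: Oh--Yoo's linkage lets $v$ be any vertex of $K_{a,b}$ including those already ``present'' but not matched; I would note that because uniformity forces $\mathcal M$ to be closed under relabelling order-isomorphic configurations, the version with $k\notin I\cup J$ already implies the general one, so the two formulations are equivalent here.)

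The third step is the gluing: the support and linkage axioms make each restriction a triangulation, and because the combinatorial description via matchings is intrinsic and the facets of $P_n$ meet along faces of the form $\Simplex_{I'}\times\Simplex_{J'}$ with $I'\subseteq I$, $J'\subseteq J$, the local triangulations automatically agree on overlaps --- the matching family $\mathcal M$ itself already encodes all of them simultaneously. Combined with the fact (Lemma~\ref{lemma_easy}) that a permissible $\triangle_n$ already has the correct $2$-skeleton, I conclude that $\triangle_n$ triangulates $\partial P_n$ iff (SA) and (LA) hold, with closure free and the support/linkage conditions for faces of size $>2$ following from flagness plus uniformity. I would also invoke Lemma~\ref{lemma_mefacets} or Lemma~\ref{lemma_Postnikov} to argue that the maximal faces of $\triangle_n$ really are the full-dimensional simplices of a triangulation (no ``holes''): distinct maximal simplices have intersecting interiors precluded exactly by the no-alternating-cycle condition, which the ensemble axioms guarantee.

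The main obstacle I anticipate is the bookkeeping in the reduction from ``triangulation of $\partial P_n$'' to ``compatible triangulations of all facets $\Simplex_I\times\Simplex_J$,'' and in particular the claim that for a \emph{uniform flag} complex the full complex $\triangle_n$ is recovered from, and is consistent with, its matching family $\mathcal M$ on every facet. Concretely, one needs: (i) every admissible forest whose edge-pairs all lie in $\triangle_n$ is itself a face (flagness, given); (ii) a forest that is not a matching ``decomposes'' in a way that lets its membership be checked against matchings --- here I would use that any forest in a bipartite facet has all its information about which triangulation it sits in captured by the matchings it contains, via Oh--Yoo, so no extra conditions beyond (SA)/(LA) on $\mathcal M$ are needed; and (iii) the linkage axiom in Oh--Yoo's ``any vertex'' form versus the ``$k\notin I\cup J$'' form genuinely coincide under uniformity. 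Step (iii) is where I expect to spend the most care, since a naive reading of the two could differ when $k$ is an index that already appears in some edge of a \emph{larger} face; uniformity (the swap condition depends only on the order type of the indices involved) is precisely what rescues the equivalence, and I would spell this out as a short lemma before assembling the proof.
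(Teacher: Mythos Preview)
Your outline matches the paper's proof: reduce to checking that the restriction of $\triangle_n$ to each face $\Simplex_I\times\Simplex_J$ is a triangulation, translate (SA)/(LA) into the Oh--Yoo support and linkage axioms (closure being automatic), and for the converse show that the Oh--Yoo triangulation $\triangle(I,J)$ determined by $\mathcal{M}|_{I\times J}$ coincides with $\triangle_n|_{I\times J}$ via two containments --- one using flagness, Lemma~\ref{lemma_easy}(b) and (SA), the other via the centroid argument with Lemma~\ref{lemma_Postnikov} and Lemma~\ref{lemma_mefacets}, exactly as you anticipate in your final paragraph.

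One correction: your step~(iii) worry is unfounded, and invoking uniformity there is unnecessary. The statement of (LA) universally quantifies over disjoint $I,J$ with $\sigma\subseteq I\times J$, so you are free to take $I$ and $J$ to be precisely the tail-set and head-set of $\sigma$; then ``$k\notin I\cup J$'' is literally the condition ``$k$ is not incident to any arrow of $\sigma$,'' which is Oh--Yoo's hypothesis on $v$. Conversely, given a larger ambient facet $\Simplex_{I'}\times\Simplex_{J'}$ and a vertex $v\in I'\cup J'$ not incident to $\sigma$, the swap produced by (LA) (applied with the minimal $I,J$) automatically lands back inside $I'\times J'$. No order-isomorphism relabelling is needed, and indeed the paper's proof never uses uniformity at this step.
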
  
\begin{proof}
Condition~(1) stated in Definition~\ref{definition_permissible} implies
that each face of $\triangle_{n}$ represents a subset of a proper face
$\Simplex_{I} \times \Simplex_{J}$ of the root polytope~$P_{n}$,
where $I$ and $J$ are disjoint
subsets of $\{1,2, \ldots, n+1\}$. Under these conditions $\triangle_{n}$
represents a triangulation of $\partial P_{n}$ if and only if for each
pair $(I,J)$ of disjoint nonempty subsets of $\{1,2, \ldots, n+1\}$ the
set of faces whose vertices are contained in $I\times J$ represent a
triangulation of the face $\Simplex_{I} \times \Simplex_{J}$ of~$P_{n}$.  
Property (SA) is equivalent to the support axiom
in the definition of a matching ensemble,
while property (LA) is equivalent to the linkage axiom.
The closure axiom is an immediate consequence of the fact
that a subset of a face is a face in a simplicial complex. By
Theorem~\ref{theorem_me} the family of matchings $\mathcal{M}$ must satisfy
the stated axioms.

To prove the converse, assume that $\mathcal{M}$ satisfies the stated
axioms and let $I$ and $J$ be an arbitrary pair of nonempty disjoint
subsets of $\{1,2, \ldots, n+1\}$.
Then the {\em restriction} $\rest{\mathcal{M}}{I \times J}$ of $\mathcal{M}$
to $I\times J$, i.e., the set of matchings contained in $\mathcal{M}$ whose
elements belong to $I\times J$, is a matching ensemble. By
Theorem~\ref{theorem_me} there is a triangulation $\triangle(I,J)$
of $\Simplex_{I} \times \Simplex_{J}$ corresponding to
this matching ensemble for which the elements of
$\rest{\mathcal{M}}{I \times J}$ are the matchings of the complete bipartite graph
$K_{I,J}$ contained in the spanning trees representing the facets of
$\triangle(I,J)$. It suffices to show that $\triangle(I,J)$ is
the family $\rest{\triangle_{n}}{I \times J}$ of faces of $\triangle_{n}$ whose
vertices are contained in $I \times J$. 

Assume, by way of contradiction,
that the two simplicial complexes
$\triangle(I,J)$ and $\rest{\triangle_{n}}{I\times J}$ differ.
If there is a face $\sigma$ of $\triangle(I,J)$ that does not belong to
$\rest{\triangle_{n}}{I \times J}$
then (the vertex sets being equal) this
face $\sigma$ also contains an edge $\{(i_{1},j_{1}),(i_{2},j_{2})\}$ that does
not belong to $\rest{\triangle_{n}}{I\times J}$, since
$\rest{\triangle_{n}}{I\times J}$ is a flag complex which would contain
$\sigma$ if it contained all of its edges. By part (a) of
Lemma~\ref{lemma_easy} the set $\{i_{1},j_{1},i_{2},j_{2}\}$ must have
four distinct elements, and by part (b) of the same lemma we must
have $\{(i_{1},j_{2}),(i_{2},j_{1})\} \in \rest{\triangle_{n}}{I\times J}$. By the
definition of $\mathcal{M}$ we have
$\{(i_{1},j_{2}),(i_{2},j_{1})\} \in \rest{{\mathcal M}}{I\times J}$
and by our assumption we also have
$\{(i_{1},j_{1}),(i_{2},j_{2})\} \in \rest{{\mathcal{M}}}{I\times J}$. This
violates the uniqueness part of the support axiom (SA) for the pair of
subsets $(\{i_{1},i_{2}\},\{j_{1},j_{2}\})$.
Hence $\triangle(I,J)$ is contained in $\rest{\triangle_{n}}{I\times J}$,
that is,
$\triangle(I,J) \subseteq \rest{\triangle_{n}}{I\times J}$.

We are left with the
possibility of having a face $\sigma$ in $\rest{\triangle_{n}}{I\times J}$
that does not belong to $\triangle(I,J)$. After adding a few more
vertices, if necessary, we may assume that this face $\sigma$ is a
facet of $\rest{\triangle_{n}}{I\times J}$. The vertices of the
facet $\sigma$ are arrows from $I$ to $J$ which, disregarding their
orientation, must form a forest.
If some arrows of $\sigma$ form a cycle
$\{\{i_{1},j_{1}\}, \{i_{2},j_{1}\}, \{i_{2},j_{2}\}, \ldots, \{i_{k},j_{k}\}, \{i_{1},j_{k}\}\}$
then the uniqueness part of the support axiom (SA) is
violated for the pair of sets
$(\{i_{1}, i_{2}, \ldots, i_{k}\}, \{j_{1}, j_{2}, \ldots, j_{k}\})$,
since both $\{(i_{1},j_{1}), (i_{2},j_{2}), \ldots, (i_{k},j_{k})\}$
and $\{(i_{1},j_{k}), (i_{2},j_{1}), \ldots, (i_{k},j_{k-1})\}$
belong to $\mathcal{M}$.
Consider the centroid $1/|\sigma| \cdot \sum_{(i,j) \in \sigma} (e_{j} - e_{i})$
of the face of~$P_{n}$ represented by~$\sigma$.
This point belongs to some facet~$\Simplex_{T}$
of the triangulation $\triangle(I,J)$. Here $T$ is the
spanning tree of $K_{I,J}$ representing the facet. All vertices of
$\sigma$ cannot be represented by edges belonging to $T$,
for otherwise $\sigma$ belongs to $\triangle(I,J)$.
Hence, by Lemma~\ref{lemma_Postnikov} there is a cycle
$(i_{1},j_{1},i_{2},j_{2}, \ldots, i_{k},j_{k})$ in $K_{I,J}$ such that the matching 
$\{(i_{1},j_{1}),(i_{2},j_{2}), \ldots, (i_{k},j_{k})\}$ belongs to $T$ and
the matching $\{(i_{1},j_{k}),(i_{2},j_{1}), \ldots, (i_{k},j_{k-1})\}$ belongs to
$\sigma$. Both matchings belong to~$\mathcal{M}$, and we have reached a
contradiction with Lemma~\ref{lemma_mefacets}.
\end{proof}

\begin{figure}
\begin{center}
\begin{tikzpicture}
\node[circle, inner sep = 0.9pt] (a1) at (0, 0) {$\scriptstyle{T}$};
\node[circle, inner sep = 0.9pt] (a2) at (0.7, 0) {$\scriptstyle{H}$};
\node[circle, inner sep = 0.9pt] (a3) at (1.4, 0) {$\scriptstyle{T}$};
\node[circle, inner sep = 0.9pt] (a4) at (2.1, 0) {$\scriptstyle{H}$};
\draw[->, thick] (a1) to[out=55, in=125] (a2);
\draw[->, thick] (a3) to[out=55, in=125] (a4);

\node[circle, inner sep = 0.9pt] () at (2.8, 0) {\&};

\node[circle, inner sep = 0.9pt] (b1) at (3.5, 0) {$\scriptstyle{H}$};
\node[circle, inner sep = 0.9pt] (b2) at (4.2, 0) {$\scriptstyle{T}$};
\node[circle, inner sep = 0.9pt] (b3) at (4.9, 0) {$\scriptstyle{H}$};
\node[circle, inner sep = 0.9pt] (b4) at (5.6, 0) {$\scriptstyle{T}$};
\draw[<-, thick] (b1) to[out=55, in=125] (b2);
\draw[<-, thick] (b3) to[out=55, in=125] (b4);

\node[circle, inner sep = 0.9pt] () at (6.3, 0) {$\Longrightarrow$};

\node[circle, inner sep = 0.9pt] (c1) at (7.0, 0) {$\scriptstyle{T}$};
\node[circle, inner sep = 0.9pt] (c2) at (7.7, 0) {$\scriptstyle{H}$};
\node[circle, inner sep = 0.9pt] (c3) at (8.4, 0) {$\scriptstyle{H}$};
\node[circle, inner sep = 0.9pt] (c4) at (9.1, 0) {$\scriptstyle{T}$};
\draw[->, thick] (c1) to[out=55, in=125] (c2);
\draw[<-, thick] (c3) to[out=55, in=125] (c4);

\node[circle, inner sep = 0.9pt] () at (9.8, 0) {\&};

\node[circle, inner sep = 0.9pt] (d1) at (10.5, 0) {$\scriptstyle{H}$};
\node[circle, inner sep = 0.9pt] (d2) at (11.2, 0) {$\scriptstyle{T}$};
\node[circle, inner sep = 0.9pt] (d3) at (11.9, 0) {$\scriptstyle{T}$};
\node[circle, inner sep = 0.9pt] (d4) at (12.6, 0) {$\scriptstyle{H}$};
\draw[<-, thick] (d1) to[out=55, in=125] (d2);
\draw[->, thick] (d3) to[out=55, in=125] (d4);
\end{tikzpicture}
\end{center}
\caption{A graphical representation of
Proposition~\ref{proposition_nonest}.}
\label{figure_proposition_nonest}
\end{figure}

We now begin to obtain the necessary conditions.

\begin{proposition}
Let $\triangle_{n}$ be a uniform flag complex on the vertex set~$V_{n}$
representing a triangulation of the boundary~$\partial P_{n}$ 
of the root polytope for $n \geq 4$.
Assume that pairs of arrows of types
$THTH$ and $HTHT$ do not nest in the triangulation~$\triangle_{n}$.
Then pairs of arrows of types $HTTH$ and $THHT$ 
do not cross in the triangulation~$\triangle_{n}$.
\label{proposition_nonest}
\end{proposition}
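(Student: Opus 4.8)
The plan is to derive both conclusions from the linkage axiom~(LA) supplied by Theorem~\ref{theorem_legendreme}, applied to a well-chosen size-two matching together with a fifth node. The preliminary step is to translate the relevant conditions into statements about concrete pairs of arrows, reading them off Table~\ref{table_six_way}. For four nodes $p<q<r<s$ I record: the pair $\{(q,s),(r,p)\}$ is the crossing $HTTH$ pair (heads $p,s$; tails $q,r$); the pair $\{(p,s),(r,q)\}$ is the nesting $THTH$ pair (tails $p,r$; heads $q,s$); and the pair $\{(q,r),(s,p)\}$ is the nesting $HTHT$ pair (heads $p,r$; tails $q,s$). Because $\triangle_{n}$ is a flag complex, a pair of arrows that is an edge is automatically a face, hence a member of the family $\mathcal{M}$ of Theorem~\ref{theorem_legendreme}.

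Suppose, for contradiction, that pairs of type $HTTH$ cross. Since $n\geq 4$, we may choose five nodes $a<b<e<c<d$ of $\{1,2,\dots,n+1\}$. On the four nodes $a<b<c<d$ the pair $\sigma=\{(b,d),(c,a)\}$ is the crossing $HTTH$ pair, hence $\sigma\in\mathcal{M}$; moreover $\sigma\subseteq I\times J$ with the disjoint sets $I=\{b,c\}$, $J=\{a,d\}$, and $e\notin I\cup J$. Apply the ``new node as a head'' form of (LA) with $k=e$: one of the two pairs $\{(b,e),(c,a)\}$ and $\{(b,d),(c,e)\}$ lies in $\mathcal{M}$. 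If $\{(b,e),(c,a)\}\in\mathcal{M}$, then reading the nodes in increasing order $a<b<e<c$ (heads $a,e$; tails $b,c$) identifies this as the nesting $HTHT$ pair, contradicting the hypothesis. If $\{(b,d),(c,e)\}\in\mathcal{M}$, then reading the nodes as $b<e<c<d$ (tails $b,c$; heads $e,d$) identifies it as the nesting $THTH$ pair, again contradicting the hypothesis. Hence pairs of type $HTTH$ do not cross.

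For pairs of type $THHT$, I would invoke the reflected-dual involution $\triangle_{n}\mapsto\overline{\triangle_{n}}$. This operation is induced by a symmetry of the root polytope, so $\overline{\triangle_{n}}$ is again a uniform flag triangulation of $\partial P_{n}$; by Lemma~\ref{lemma_involutions} it leaves the no-nesting conditions on the types $THTH$ and $HTHT$ unchanged while interchanging the conditions on $THHT$ and $HTTH$. Applying the previous paragraph to $\overline{\triangle_{n}}$ shows that pairs of type $HTTH$ do not cross in $\overline{\triangle_{n}}$, which is precisely the statement that pairs of type $THHT$ do not cross in $\triangle_{n}$.

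The one genuinely fiddly part is the bookkeeping in the first two steps: lining up the type words $THTH$, $HTHT$, $HTTH$ with the correct positions of heads and tails, and correctly identifying which of the two perfect matchings on a quadruple is the nesting one and which the crossing one. After that the argument is forced, provided one chooses the right instance of (LA): the new node must become a \emph{head}, and it must be inserted strictly between the two tails; with $e$ placed elsewhere, or with the ``new node as a tail'' form, one only reproduces $HTTH$ pairs and obtains no contradiction.
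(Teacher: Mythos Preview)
Your proof is correct and follows essentially the same route as the paper's: apply the linkage axiom~(LA) to the crossing $HTTH$ pair (the paper uses the concrete instance $\sigma=\{(2,5),(4,1)\}$ with $k=3$, which is your $a,b,e,c,d=1,2,3,4,5$), and observe that both outcomes are forbidden nesting pairs of type $THTH$ or $HTHT$. The only cosmetic difference is in the symmetry used for the second conclusion: the paper says ``reverse all arrows in the proof'' (the dual $\triangle\mapsto\triangle^{*}$), whereas you invoke the reflected dual $\triangle\mapsto\overline{\triangle}$ via Lemma~\ref{lemma_involutions}; both work, since each swaps $HTTH$ with $THHT$ while preserving the no-nest hypotheses on $THTH$ and $HTHT$.
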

\begin{proof}
Assume by way of contradiction, that $HTTH$ type pairs of arrows
cross in $\triangle_{n}$. 
Hence the edge
$\sigma = \{(2,5), (4,1)\}$ is in the triangulation~$\triangle_{n}$.
Now let $k=3$ and
apply 
the linkage axiom~(LA) in Theorem~\ref{theorem_legendreme}.
We obtain either the edge
$\{(2,5), (4,3)\}$
or
$\{(2,3), (4,1)\}$,
contradicting the assumed condition on $THTH$ or~$HTHT$.
The second conclusion follows by reversing all the arrows in the proof.
\end{proof}

The next necessary condition is completely analogous to that of
Proposition~\ref{proposition_nonest}.

\begin{proposition}
\label{proposition_nest}
Let $\triangle_{n}$ be a uniform flag complex on the vertex set~$V_{n}$
representing a triangulation of the boundary~$\partial P_{n}$ 
of the root polytope for $n \geq 4$.
Assume that pairs of arrows of the types
$THTH$ and $HTHT$ nest in the triangulation~$\triangle_{n}$.
Then pairs of arrows of types $HTTH$ and $THHT$ 
cross in the triangulation~$\triangle_{n}$.
\end{proposition}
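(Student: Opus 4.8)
The plan is to dualize the proof of Proposition~\ref{proposition_nonest} by interchanging the roles of heads and tails. First I would reduce to a single type: reversing every arrow (the involution $\triangle_n \longmapsto \triangle_n^{*}$ of Lemma~\ref{lemma_involutions}) preserves the hypothesis, since it merely swaps the two nesting conditions $THTH \leftrightarrow HTHT$, and it swaps the two conclusions $HTTH \leftrightarrow THHT$ while leaving the crossing/non-crossing dichotomy untouched (the underlying pair of intervals spanned by the two arrows is unchanged). So it suffices to prove that pairs of arrows of type $HTTH$ cross.

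Next I would set up the contradiction on five nodes. Suppose $HTTH$ pairs do not cross. On the nodes $1 < 2 < 4 < 5$ the $HTTH$ pattern has heads at $1$ and $5$ and tails at $2$ and $4$; a nesting pairing is impossible here (the two outer nodes are both heads, the two inner nodes both tails), so ``does not cross'' forces the disjoint pairing, and hence $\sigma = \{(2,1),(4,5)\}$ is an edge of $\triangle_n$. Equivalently, Lemma~\ref{lemma_easy}(b) says exactly one of $\{(2,1),(4,5)\}$ and $\{(2,5),(4,1)\}$ is an edge, and the non-crossing assumption selects the first. Now I would apply the head-replacement clause of the linkage axiom~(LA) in Theorem~\ref{theorem_legendreme} with $I = \{2,4\}$, $J = \{1,5\}$, the non-empty matching $\sigma$, and $k = 3 \notin I \cup J$: one of the two arrows of $\sigma$ must have its head replaced by $3$ to yield a matching still in $\mathcal{M} \subseteq \triangle_n$, that is, either $\{(2,3),(4,5)\} \in \triangle_n$ or $\{(2,1),(4,3)\} \in \triangle_n$.

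Finally I would read off the contradiction in each case. The pair $\{(2,3),(4,5)\}$ has type $THTH$ on $2<3<4<5$ and spans the disjoint intervals $[2,3]$ and $[4,5]$, so it does not nest; the pair $\{(2,1),(4,3)\}$ has type $HTHT$ on $1<2<3<4$ and spans the disjoint intervals $[1,2]$ and $[3,4]$, so it does not nest either. Both outcomes contradict the hypothesis that $THTH$ and $HTHT$ pairs nest, so $HTTH$ pairs must cross; applying the arrow-reversal reduction then gives that $THHT$ pairs cross as well. I do not anticipate a genuine obstacle: the argument is a mechanical mirror image of Proposition~\ref{proposition_nonest}, and the five nodes $1,2,3,4,5$ it uses are available precisely because $n \geq 4$. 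The only points needing a little care are invoking the head-replacement (rather than tail-replacement) half of (LA) and observing that for type $HTTH$ the negation of ``cross'' really is the disjoint pairing, since no nesting configuration exists on that head/tail pattern.
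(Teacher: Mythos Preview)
Your proof is correct and follows essentially the same approach as the paper: assume $HTTH$ pairs do not cross, take the edge $\sigma=\{(2,1),(4,5)\}$, apply the head-replacement half of the linkage axiom with $k=3$, and observe that each outcome is a non-nesting $THTH$ or $HTHT$ pair, contradicting the hypothesis; the $THHT$ case then follows by reversing all arrows. The paper's own proof is terser but identical in substance, simply pointing to the edge $\sigma=\{(2,1),(4,5)\}$ and declaring the argument analogous to Proposition~\ref{proposition_nonest}.
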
  
\begin{proof}
The proof is completely analogous to
the proof of Proposition~\ref{proposition_nonest},
but this time use the edge
$\sigma = \{(2,1), (4,5)\}$ in $\triangle_{n}$.
\end{proof}

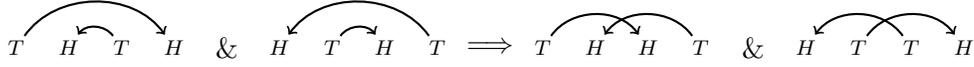
\begin{figure}
\begin{center}
\begin{tikzpicture}
\node[circle, inner sep = 0.9pt] (a1) at (0, 0) {$\scriptstyle{T}$};
\node[circle, inner sep = 0.9pt] (a2) at (0.7, 0) {$\scriptstyle{H}$};
\node[circle, inner sep = 0.9pt] (a3) at (1.4, 0) {$\scriptstyle{T}$};
\node[circle, inner sep = 0.9pt] (a4) at (2.1, 0) {$\scriptstyle{H}$};
\draw[->, thick] (a1) to[out=55, in=125] (a4);
\draw[<-, thick] (a2) to[out=55, in=125] (a3);

\node[circle, inner sep = 0.9pt] () at (2.8, 0) {\&};

\node[circle, inner sep = 0.9pt] (b1) at (3.5, 0) {$\scriptstyle{H}$};
\node[circle, inner sep = 0.9pt] (b2) at (4.2, 0) {$\scriptstyle{T}$};
\node[circle, inner sep = 0.9pt] (b3) at (4.9, 0) {$\scriptstyle{H}$};
\node[circle, inner sep = 0.9pt] (b4) at (5.6, 0) {$\scriptstyle{T}$};
\draw[<-, thick] (b1) to[out=55, in=125] (b4);
\draw[->, thick] (b2) to[out=55, in=125] (b3);

\node[circle, inner sep = 0.9pt] () at (6.3, 0) {$\Longrightarrow$};

\node[circle, inner sep = 0.9pt] (c1) at (7.0, 0) {$\scriptstyle{T}$};
\node[circle, inner sep = 0.9pt] (c2) at (7.7, 0) {$\scriptstyle{H}$};
\node[circle, inner sep = 0.9pt] (c3) at (8.4, 0) {$\scriptstyle{H}$};
\node[circle, inner sep = 0.9pt] (c4) at (9.1, 0) {$\scriptstyle{T}$};
\draw[->, thick] (c1) to[out=55, in=125] (c3);
\draw[<-, thick] (c2) to[out=55, in=125] (c4);

\node[circle, inner sep = 0.9pt] () at (9.8, 0) {\&};

\node[circle, inner sep = 0.9pt] (d1) at (10.5, 0) {$\scriptstyle{H}$};
\node[circle, inner sep = 0.9pt] (d2) at (11.2, 0) {$\scriptstyle{T}$};
\node[circle, inner sep = 0.9pt] (d3) at (11.9, 0) {$\scriptstyle{T}$};
\node[circle, inner sep = 0.9pt] (d4) at (12.6, 0) {$\scriptstyle{H}$};
\draw[<-, thick] (d1) to[out=55, in=125] (d3);
\draw[->, thick] (d2) to[out=55, in=125] (d4);
\end{tikzpicture}
\end{center}
\caption{A graphical representation of
Proposition~\ref{proposition_nest}.}
\label{figure_proposition_nest}
\end{figure}

\begin{proposition}
Let $\triangle_{n}$ be a uniform flag complex on the vertex set~$V_{n}$
representing a triangulation of the boundary~$\partial P_{n}$ 
of the root polytope for $n\geq 5$.
Assume exactly one of the $THTH$ and $HTHT$ type of pairs of arrows nest,
and the other type does not nest.
Also assume that both $THHT$ and $HTTH$ type of
pairs of arrows cross.
Then both $TTHH$ and $HHTT$
type of pairs nest.
\label{proposition_nest_follow}
\end{proposition}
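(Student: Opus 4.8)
The plan is to argue by contradiction, exactly as in the proofs of Propositions~\ref{proposition_nonest} and~\ref{proposition_nest}, using the characterization of triangulations of $\partial P_n$ via the axioms (SA) and (LA) in Theorem~\ref{theorem_legendreme}. The first step is to cut down the cases. By Lemma~\ref{lemma_involutions} the involution $\triangle_n\mapsto\triangle_n^{*}$ interchanges the conditions imposed on the types $THTH$ and $HTHT$, interchanges those on $TTHH$ and $HHTT$, and fixes those on $THHT$ and $HTTH$. Hence it suffices to treat the case in which pairs of type $THTH$ nest and pairs of type $HTHT$ do not nest, and in that case it is enough to show that pairs of type $TTHH$ nest: applying $\ast$ then yields that pairs of type $HHTT$ nest, and also disposes of the symmetric case in which $HTHT$ nests and $THTH$ does not.

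So assume, contrary to the claim, that pairs of type $TTHH$ do \emph{not} nest in $\triangle_n$. By uniformity this means that for any four nodes $a<b<c<d$ the crossing pair $\{(a,c),(b,d)\}$ is an edge of $\triangle_n$; in particular $\sigma=\{(1,3),(2,4)\}\in\mathcal{M}$. Because $n\geq 5$ there are at least six nodes available, and the argument proceeds by feeding $\sigma$ into the linkage axiom (LA) together with fresh nodes not in the support of the matching currently under consideration. Each application of (LA) to a two-arrow matching $\{(i,j),(i',j')\}\in\mathcal{M}$ and a node $k$ outside its support returns one of two two-arrow matchings of $\mathcal{M}$, each again supported on four distinct nodes, hence each of one of the six types; and by uniformity, for a suitable position of $k$, one of the two outcomes is a pair whose type-rule has already been fixed \emph{the opposite way} by one of the four standing hypotheses ($THTH$ nests, $HTHT$ does not nest, $THHT$ crosses, $HTTH$ crosses) or by the assumption that $TTHH$ does not nest --- in which case we are done. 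If instead the other outcome occurs, it is a new admissible two-arrow matching in $\mathcal{M}$, and I would apply (LA) to it with the remaining fresh node. The claim is that after a bounded number of such steps, involving in total only the six available nodes, every surviving branch produces an edge of a type whose uniform rule contradicts one of the standing hypotheses, which is the desired contradiction. (Equivalently, one can phrase this as exhibiting disjoint sets of three tails and three heads on the six nodes for which the support axiom (SA) cannot hold, because every bijection between them restricts to a pair of two arrows whose type is forced into the wrong configuration.)

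Once pairs of type $TTHH$ are known to nest, the identical argument applied to $\triangle_n^{*}$ --- which satisfies the same four hypotheses --- shows that pairs of type $HHTT$ nest; together with the reduction in the first paragraph this proves the proposition.

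\textbf{Main obstacle.}
The delicate part is the bookkeeping in the middle paragraph: one must choose the initial edge, the order in which the fresh nodes are introduced, and the relative positions of the new node $k$, so that in \emph{every} branch of the resulting (LA) case analysis the two-arrow pair produced has a type on which a standing hypothesis (or the contradiction hypothesis) has already pinned down the opposite configuration. What makes this nontrivial is that a single application of (LA) to $\{(1,3),(2,4)\}$ never closes both branches simultaneously --- one branch always returns a pair of an as-yet-unconstrained shape --- so the case analysis must be chained, and one must verify that the chain terminates using no more than the six nodes guaranteed by the hypothesis $n\geq 5$. Handling the free-standing sixth type (the $HHTT$ rule, which has not yet been determined at the point where $TTHH$ is being pinned down) is what forces the chaining rather than a one-step contradiction, and is the reason $n\geq 5$ rather than $n\geq 4$ is needed here.
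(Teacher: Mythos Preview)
Your reduction via the involution $\triangle\mapsto\triangle^{*}$ does not close. If $\triangle_n$ is in the case ``$THTH$ nests, $HTHT$ does not'', then $\triangle_n^{*}$ is in the \emph{other} case ``$HTHT$ nests, $THTH$ does not''. So even after you prove ``$THTH$ nests, $HTHT$ not, both $THHT,HTTH$ cross $\Longrightarrow$ $TTHH$ nests'', applying it to $\triangle_n^{*}$ is illegitimate: $\triangle_n^{*}$ fails the hypothesis you specialized to. The four implications (Case~A/B)$\Rightarrow$($TTHH$/$HHTT$ nest) split into two $*$--orbits, $\{(\text{A},TTHH),(\text{B},HHTT)\}$ and $\{(\text{A},HHTT),(\text{B},TTHH)\}$, and you must prove one statement from each. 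In Case~A you therefore still owe a separate argument for $HHTT$.

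More seriously, the core of your proposal is only a strategy: you assert that a chained application of (LA) starting from $\{(1,3),(2,4)\}$ will terminate in a contradiction on six nodes, but you do not carry out a single step. A quick check shows that one-step applications of (LA) to this $\sigma$ (inserting a fifth node as head or as tail, on either side or in the middle) always have a surviving branch consistent with all hypotheses, so the argument, if it works at all, requires a genuine multi-step case analysis that you have not supplied.

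The paper bypasses (LA) entirely and uses (SA) directly, which is far cleaner. In Case~A, assuming $HHTT$ crosses, both
\[
\{(2,1),(4,3),(6,5)\}\quad\text{and}\quad\{(2,5),(4,1),(6,3)\}
\]
are faces (check each pair against the six rules), giving two distinct matchings with tail set $\{2,4,6\}$ and head set $\{1,3,5\}$, contradicting (SA). Assuming instead $TTHH$ crosses, the pair
\[
\{(1,6),(3,2),(5,4)\}\quad\text{and}\quad\{(1,4),(3,6),(5,2)\}
\]
does the same job for tail set $\{1,3,5\}$ and head set $\{2,4,6\}$. Your parenthetical remark about (SA) gestures in this direction but is phrased as ``no bijection works'', whereas what is needed---and what the paper exhibits---is \emph{two} bijections that both work.
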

\begin{proof}
Without loss of generality we may assume that $THTH$ type of arrows nest
and $HTHT$ type of pairs do not nest; the opposite case may be dealt
with by reversing all arrows. 
Assume that $HHTT$ type of arrows cross
and observe that 
both $\{(2,1),(4,3),(6,5)\}$ and $\{(2,5),(4,1),(6,3)\}$
form faces in the triangulation $\triangle_{n}$.
Note that this contradicts
the support axiom~(SA) in Theorem~\ref{theorem_legendreme}
and thus $HHTT$ type of arrows must nest.
A similar contradiction may be reached when $TTHH$ type of
arrows cross, by considering the faces
$\{(1,6),(3,2),(5,4)\}$ and $\{(1,4),(3,6),(5,2)\}$.
\end{proof}  

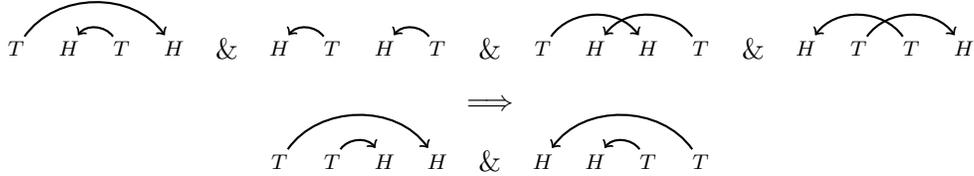
\begin{figure}
\begin{center}
\begin{tikzpicture}
\node[circle, inner sep = 0.9pt] (a1) at (0, 1.5) {$\scriptstyle{T}$};
\node[circle, inner sep = 0.9pt] (a2) at (0.7, 1.5) {$\scriptstyle{H}$};
\node[circle, inner sep = 0.9pt] (a3) at (1.4, 1.5) {$\scriptstyle{T}$};
\node[circle, inner sep = 0.9pt] (a4) at (2.1, 1.5) {$\scriptstyle{H}$};
\draw[->, thick] (a1) to[out=55, in=125] (a4);
\draw[<-, thick] (a2) to[out=55, in=125] (a3);

\node[circle, inner sep = 0.9pt] () at (2.8, 1.5) {\&};

\node[circle, inner sep = 0.9pt] (b1) at (3.5, 1.5) {$\scriptstyle{H}$};
\node[circle, inner sep = 0.9pt] (b2) at (4.2, 1.5) {$\scriptstyle{T}$};
\node[circle, inner sep = 0.9pt] (b3) at (4.9, 1.5) {$\scriptstyle{H}$};
\node[circle, inner sep = 0.9pt] (b4) at (5.6, 1.5) {$\scriptstyle{T}$};
\draw[<-, thick] (b1) to[out=55, in=125] (b2);
\draw[<-, thick] (b3) to[out=55, in=125] (b4);

\node[circle, inner sep = 0.9pt] () at (6.3, 1.5) {\&};

\node[circle, inner sep = 0.9pt] (c1) at (7.0, 1.5) {$\scriptstyle{T}$};
\node[circle, inner sep = 0.9pt] (c2) at (7.7, 1.5) {$\scriptstyle{H}$};
\node[circle, inner sep = 0.9pt] (c3) at (8.4, 1.5) {$\scriptstyle{H}$};
\node[circle, inner sep = 0.9pt] (c4) at (9.1, 1.5) {$\scriptstyle{T}$};
\draw[->, thick] (c1) to[out=55, in=125] (c3);
\draw[<-, thick] (c2) to[out=55, in=125] (c4);

\node[circle, inner sep = 0.9pt] () at (9.8, 1.5) {\&};

\node[circle, inner sep = 0.9pt] (d1) at (10.5, 1.5) {$\scriptstyle{H}$};
\node[circle, inner sep = 0.9pt] (d2) at (11.2, 1.5) {$\scriptstyle{T}$};
\node[circle, inner sep = 0.9pt] (d3) at (11.9, 1.5) {$\scriptstyle{T}$};
\node[circle, inner sep = 0.9pt] (d4) at (12.6, 1.5) {$\scriptstyle{H}$};
\draw[<-, thick] (d1) to[out=55, in=125] (d3);
\draw[->, thick] (d2) to[out=55, in=125] (d4);

\node[circle, inner sep = 0.9pt] () at (6.3, 0.75) {$\Longrightarrow$};

\node[circle, inner sep = 0.9pt] (e1) at (3.5, 0) {$\scriptstyle{T}$};
\node[circle, inner sep = 0.9pt] (e2) at (4.2, 0) {$\scriptstyle{T}$};
\node[circle, inner sep = 0.9pt] (e3) at (4.9, 0) {$\scriptstyle{H}$};
\node[circle, inner sep = 0.9pt] (e4) at (5.6, 0) {$\scriptstyle{H}$};
\draw[->, thick] (e1) to[out=55, in=125] (e4);
\draw[->, thick] (e2) to[out=55, in=125] (e3);

\node[circle, inner sep = 0.9pt] () at (6.3, 0) {\&};

\node[circle, inner sep = 0.9pt] (f1) at (7.0, 0) {$\scriptstyle{H}$};
\node[circle, inner sep = 0.9pt] (f2) at (7.7, 0) {$\scriptstyle{H}$};
\node[circle, inner sep = 0.9pt] (f3) at (8.4, 0) {$\scriptstyle{T}$};
\node[circle, inner sep = 0.9pt] (f4) at (9.1, 0) {$\scriptstyle{T}$};
\draw[<-, thick] (f1) to[out=55, in=125] (f4);
\draw[<-, thick] (f2) to[out=55, in=125] (f3);

\end{tikzpicture}
\end{center}
\caption{A graphical representation of
the first case of Proposition~\ref{proposition_nest_follow}.}
\label{figure_proposition_nest_follow}
\end{figure}

We conclude this section by an observation that we will frequently use
in our proofs in the situations when we need to consider only forward
arrows or only backward arrows.

\begin{theorem}
\label{theorem_forward}  
Let  $\triangle_{n}$ be a permissible uniform flag complex on the vertex
set~$V_{n}$, satisfying the condition that $THTH$ type of
pairs of arrows do not nest. Then the restriction $\triangle_{n}^{+}$ of
$\triangle_{n}$ to the set of forward arrows represents a triangulation of
the union of those boundary facets of the positive polytope~$P^{+}_{n}$
that do not contain the origin. 
\end{theorem}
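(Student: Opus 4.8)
The plan is to apply the positive-polytope analogue of Theorem~\ref{theorem_legendreme}: a permissible uniform flag complex on the forward arrows triangulates the union of origin-avoiding facets of $P^{+}_{n}$ if and only if the family of forward-arrow matchings it contains satisfies the support and linkage axioms (SA) and (LA) restricted to pairs $(I,J)$ with $I<J$ coordinatewise in the natural sense, or more precisely to the matching ensemble structure on the bipartite graph with parts $\{1,\dots,n+1\}$ (as tails) and $\{1,\dots,n+1\}$ (as heads) but with edges only of the form $(i,j)$ with $i<j$. The first step is therefore to check that the faces $\Simplex_I\times\Simplex_J$ of $P^{+}_{n}$ not containing the origin are exactly those coming from disjoint $I,J$ with every element of $I$ less than the element of $J$ it is matched to along some tree — equivalently, those admissible forests all of whose arrows are forward. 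This is the positive-polytope version of the face description already cited from~\cite{Hetyei_Legendre,Gelfand_Graev_Postnikov}, so I would simply invoke it.

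Next I would verify (SA) for $\triangle_{n}^{+}$. Given disjoint $I,J$ with $|I|=|J|$ and an assignment using only forward arrows, existence of a matching $\sigma\in\mathcal{M}$ with $\sigma\subseteq I\times J$ comes for free from (SA) for the full complex $\triangle_{n}$; what must be shown is that this $\sigma$ uses only forward arrows, i.e.\ that $\triangle_{n}^{+}$ "inherits" the support matching. Here is where the hypothesis that $THTH$ pairs do not nest enters: if the support matching $\sigma$ for some $I\times J$ contained a backward arrow $(i,j)$ with $i>j$, I would produce a two-element sub-configuration of $\sigma$ that is a $THTH$-type nesting pair, contradicting the hypothesis, OR use an exchange/induction argument on $|I|$ to reduce to the two-arrow case; the two-arrow case is immediate from inspecting Table~\ref{table_six_way}, where the "no nesting for $THTH$" rule forces the support pair for two coordinates in increasing order to be the two forward arrows. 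Uniqueness is inherited directly from $\triangle_{n}$.

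For (LA), the point is that the exchange guaranteed by the linkage axiom for $\triangle_{n}$ must, under the no-$THTH$-nesting hypothesis, stay inside the forward arrows whenever we start with a forward matching and the new node $k$ sits in a position compatible with a forward replacement. Concretely, given $\sigma\subseteq I\times J$ a nonempty forward matching and $k\notin I\cup J$, I would apply (LA) for $\triangle_{n}$ to obtain an edge $(i,j)\in\sigma$ with $(\sigma-\{(i,j)\})\cup\{(k,j)\}\in\mathcal{M}$, then argue that $k<j$ can be arranged (for the "replace the tail" case) by choosing the right $(i,j)$, and that if the replacement produced a backward arrow one again extracts a forbidden $THTH$ nesting from the resulting matching together with some remaining forward arrow of $\sigma$. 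The symmetric "replace the head" statement follows the same way. The main obstacle I anticipate is precisely this last bookkeeping: ensuring the exchange step of (LA) can always be routed through forward arrows rather than merely admitting that \emph{some} exchange exists — this needs a careful choice of which arrow of $\sigma$ to swap out, and possibly a small induction on $|\sigma|$ together with repeated use of the non-nesting rule to rule out each backward alternative. Once (SA) and (LA) for $\triangle_{n}^{+}$ are established, the positive-polytope analogue of Theorem~\ref{theorem_legendreme} (whose proof is the same as given, replacing $P_n$ by $P^{+}_n$ and "facets" by "origin-avoiding facets", using~\cite[Lemma~2.3]{Oh_Yoo_2}) yields that $\triangle_{n}^{+}$ triangulates the union of the origin-avoiding boundary facets of $P^{+}_{n}$.
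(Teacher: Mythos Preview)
Your proposal has a genuine gap at the very first step. You write that ``existence of a matching $\sigma\in\mathcal{M}$ with $\sigma\subseteq I\times J$ comes for free from (SA) for the full complex $\triangle_{n}$'', and similarly you plan to apply (LA) for $\triangle_{n}$ and then restrict. But look again at the hypotheses of Theorem~\ref{theorem_forward}: $\triangle_{n}$ is only assumed to be a \emph{permissible uniform flag complex} satisfying the non-nesting rule for $THTH$. It is \emph{not} assumed to be a triangulation of $\partial P_{n}$, so neither (SA) nor (LA) is available for $\triangle_{n}$. Your entire strategy of inheriting the matching-ensemble axioms from the ambient complex therefore has nothing to inherit from. (Indeed, Theorem~\ref{theorem_forward} is invoked later in the paper precisely in situations where we do not yet know that $\triangle_{n}$ triangulates anything.)

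The paper's proof sidesteps this completely by a direct identification rather than by axiom-checking. Observe that any two non-incident forward arrows form a pair of type $TTHH$ or $THTH$; no other type is possible. The hypothesis fixes the $THTH$ rule (non-nesting), so the only remaining choice is the $TTHH$ rule. If $TTHH$ pairs cross, the faces of $\triangle_{n}^{+}$ are exactly the sets of forward arrows $\{(i_{1},j_{1}),\ldots,(i_{k},j_{k})\}$ with $i_{1}<\cdots<i_{k}$ and $j_{1}<\cdots<j_{k}$; if $TTHH$ pairs nest, they are the sets with $i_{1}<\cdots<i_{k}$ and $j_{1}>\cdots>j_{k}$. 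These are precisely the lexicographic and revlex pulling triangulations of $P^{+}_{n}$ already known from~\cite{Gelfand_Graev_Postnikov,Hetyei_Legendre}, so $\triangle_{n}^{+}$ is a triangulation because it literally coincides with a known one. No matching-ensemble machinery is needed.
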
  
\begin{proof}
Since all the arrows are forward arrows and any
non-incident pair of such arrows is of the type $TTHH$ or $THTH$,
the face structure of $\triangle_{n}^{+}$ depends
on the rules associated with $TTHH$ and $THTH$.
If such arrow pairs cross
then the faces of any $\rest{\triangle_{n}^{+}}{I \times J}$ are all sets of
forward arrows  
$\{(i_{1},j_{1}),(i_{2},j_{2}), \ldots, (i_{k},j_{k})\}$
where
$i_{1} < i_{2} < \cdots < i_{k}$ and
$j_{1} < j_{2} < \cdots < j_{k}$ hold.
If such pairs of arrows nest 
then the faces of $\rest{\triangle_{n}}{I \times J}$ are all sets of forward arrows
$\{(i_{1},j_{1}),(i_{2},j_{2}), \ldots, (i_{k},j_{k})\}$ such that
$i_{1} < i_{2} < \cdots < i_{k}$ and
$j_{1} > j_{2} > \cdots > j_{k}$ hold.
In other words, we obtain the well-known lexicographic and
revlex pulling triangulations of~$P^{+}_{n}$. 
\end{proof}

\section{Verifying the sufficiency part of the classification}
\label{section_sufficiency}

In this section we show that any uniform flag complex $\triangle_{n}$ on $V_{n}$ that
satisfies one of the sets of criteria listed in
Theorem~\ref{theorem_main} represents a triangulation of the boundary
$\partial P_{n}$ of the root polytope. We do this by verifying in each
case that the support and linkage axioms of
Theorem~\ref{theorem_legendreme} are satisfied. 

To verify the support axiom (SA), we will list the elements of
the disjoint union $I \cup J$
in order, marking each element of~$I$ with a $T$ (tail) and each
element of~$J$ with a $H$ (head). Thus we obtain a word containing the
same number of letters $T$ and $H$. We also associate 
to each tail an up $(1,1)$ step, and to
each head a down $(1,-1)$ step. These steps yield a lattice path
starting at the origin and ending on the $x$-axis. We will describe the
unique matching contained in $\triangle_{n}$ whose tail set is $I$ and head
set is $J$ in terms of this {\em associated $TH$-word} and {\em
  associated lattice path}.

During the verification of the support axiom (SA) we will often treat
forward and backward arrows separately. This separation leads to
considering special associated $TH$-words and lattice paths. 
\begin{definition}
We call a lattice path consisting of up $(1,1)$ steps and down $(1,-1)$
steps a {\em lower (upper) Dyck path} if it starts and ends on the $x$-axis and
never goes above (below) the $x$-axis. We call a $TH$-word a lower
(upper) Dyck word if replacing each $T$ with an up step and each $H$
with a down step results in a lower (upper) Dyck path.  
\end{definition}  
The following lemma is straightforward.
\begin{lemma}
\label{lemma_backward_only}
For any uniform flag complex on $V_{n}$ and any face $\sigma\subset V_{n}$
that is a matching and consists only of backward arrows the lattice path
associated to the tail set $I$ and head set $J$ is a lower Dyck~path. 
\end{lemma}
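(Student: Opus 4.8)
The plan is to prove the statement purely combinatorially: the hypothesis that $\sigma$ is a face of a uniform flag complex plays no role beyond fixing the notation. What is actually used is only that $\sigma=\{(i_1,j_1),\dots,(i_k,j_k)\}$ is a matching all of whose arrows are backward, i.e.\ $i_s>j_s$ for every $s$, with the $2k$ indices $i_1,\dots,i_k,j_1,\dots,j_k$ pairwise distinct, tail set $I=\{i_1,\dots,i_k\}$ and head set $J=\{j_1,\dots,j_k\}$. First I would dispose of the two easy halves of the definition of a lower Dyck path: the associated lattice path has exactly $k$ up steps (one for each tail in $I$) and exactly $k$ down steps (one for each head in $J$), so it starts at the origin and returns to the $x$-axis after its last step.

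The heart of the argument is that the path never rises above the $x$-axis. Writing $I\cup J=\{\ell_1<\ell_2<\cdots<\ell_{2k}\}$, the height of the associated path after $m$ steps equals $|I\cap P_m|-|J\cap P_m|$, where $P_m=\{\ell_1,\dots,\ell_m\}$ is the set of the $m$ smallest indices occurring in $\sigma$. So it suffices to show $|I\cap P_m|\le |J\cap P_m|$ for every $m$. Here I would invoke that every arrow of $\sigma$ is backward: if a tail $i_s$ lies in $P_m$, then its matched head satisfies $j_s<i_s\le\ell_m$, so $j_s$ is among the $m$ smallest elements of $I\cup J$, i.e.\ $j_s\in P_m$ as well. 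Hence $i_s\mapsto j_s$ restricts to a well-defined map $I\cap P_m\to J\cap P_m$, which is injective because $\sigma$ is a matching; the inequality $|I\cap P_m|\le|J\cap P_m|$ follows immediately, and with it the claim.

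I do not expect a genuine obstacle; the only care needed is the bookkeeping between the $TH$-word, its lattice path, and the sets $I$ and $J$ — in particular that in this paper a tail $T$ contributes an up step and a head $H$ a down step, so that ``the path never goes above the $x$-axis'' is exactly ``every prefix of the $TH$-word has at least as many $H$'s as $T$'s'', which is the prefix inequality established above. I would close by noting the dual remark, that a matching consisting only of forward arrows yields an upper Dyck path by reversing all arrows, and that this lemma is what later lets us split the verification of the support axiom (SA) into a forward part and a backward part.
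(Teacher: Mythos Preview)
Your proof is correct and takes essentially the same approach as the paper. The paper's own argument is the single sentence ``as we read the list of heads and tails from left to right, the number of tails cannot exceed the number of heads''; your injection $i_s\mapsto j_s$ from $I\cap P_m$ to $J\cap P_m$ is exactly a formalization of that observation.
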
  
Indeed, as we read the list of heads and tails from left to right, the number
of tails cannot exceed the 
number of heads. The next lemma is a partial converse of
Lemma~\ref{lemma_backward_only}.

\begin{lemma}
Let $\triangle_{n}$ be a uniform flag complex on $V_{n}$ that satisfies the
necessary conditions stated in Theorem~\ref{theorem_main} and has the
property that $HTHT$ type of pairs of arrows do not nest.
Let $I$ and $J$ be two sets satisfying
$I, J \subset \{1,2, \ldots, n+1\}$,  $I\cap J=\emptyset$
and $|I| = |J| \neq 0$.
Assume that the $TH$-word $w$ associated with the two sets $I$ and $J$ is a
lower Dyck word. Then there is a unique matching contained in
$\triangle_{n}$ that matches $I$ to~$J$. Furthermore, this matching
consists of backward arrows only. 
\label{lemma_backward_only_converse}
\end{lemma}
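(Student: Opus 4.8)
The plan is to split the statement into two independent claims: \textbf{(I)} there is a unique \emph{backward} face-matching of $\triangle_n$ with tail set $I$ and head set $J$; and \textbf{(II)} no face-matching of $\triangle_n$ matching $I$ to $J$ can contain a forward arrow. Together these give the lemma, with $m^{*}$ the backward matching from (I).

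For (I), I would read each head $H$ as an opening bracket and each tail $T$ as a closing bracket, so that ``$w$ is a lower Dyck word'' says exactly that this bracketing is balanced. Since $\triangle_n$ is flag and two vertex-disjoint backward arrows always span four distinct nodes, a set of pairwise vertex-disjoint backward arrows is a face precisely when every pair among them is an edge. Two such arrows span arcs whose intervals are either disjoint --- in which case the four nodes have type $HTHT$ and the arrows realize the side-by-side (non-nesting) pairing, which is an edge because $HTHT$ pairs do not nest --- or overlapping, in which case the four nodes have type $HHTT$ and the arrows realize either the nested or the crossing pairing, exactly one of which is an edge by Lemma~\ref{lemma_easy}(b), according as $HHTT$ pairs nest or cross. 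Hence a backward matching is a face of $\triangle_n$ if and only if it is non-crossing (when $HHTT$ pairs nest) or non-nesting (when $HHTT$ pairs cross). A balanced bracketing admits exactly one non-crossing perfect matching (match each closer to the nearest unmatched opener to its left) and exactly one non-nesting perfect matching (match the $r$th opener to the $r$th closer), and both exist precisely because $w$ is a lower Dyck word; so there is a unique backward face-matching $m^{*}$ with tail set $I$ and head set $J$. (Alternatively, (I) follows by applying Theorem~\ref{theorem_forward} to the dual complex $\triangle_n^{*}$, for which $THTH$ pairs do not nest, and translating back along the arrow-reversal.) By Lemma~\ref{lemma_backward_only} nothing is lost, since every all-backward face-matching has a lower Dyck associated word.

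For (II), suppose for contradiction that some face-matching $m'$ matching $I$ to $J$ contains a forward arrow, and among its forward arrows choose $e=(a_i,a_j)$ with $i<j$ and $j$ minimal. Writing $d_j=\#H-\#T$ over the prefix $a_1\cdots a_j$, we have $d_j\ge 1$ because $a_j$ is a head and $w$ is a lower Dyck word. Counting arcs of $m'$ with exactly one endpoint in $\{a_1,\dots,a_j\}$, the number whose head lies in that set exceeds the number whose tail lies there by $d_j$, so at least one such arc $g=(a_{j'},a_{i'})$ has its head at a position $i'<j$ and its tail at a position $j'>j$. If $i'<i$, then $a_{i'},a_i,a_j,a_{j'}$ read in increasing order of index have type $HTHT$ and $\{e,g\}$ realizes the nesting pairing, contradicting that $HTHT$ pairs do not nest; hence $i<i'<j$, and then $\{e,g\}$ has type $THHT$ realized by the crossing pairing, which is impossible unless $THHT$ pairs cross. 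Running the mirror argument --- choosing instead the forward arc of $m'$ of minimal tail-position and counting over the prefix ending just before that tail --- forces $HTTH$ pairs to cross. Thus if either $THHT$ or $HTTH$ pairs do not cross we are done; this covers the lex class and the Simion subclasses in which at least one of $THHT,HTTH$ does not cross.

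It remains to handle the case in which $HTHT$ pairs do not nest while both $THHT$ and $HTTH$ pairs cross; by Propositions~\ref{proposition_nonest}--\ref{proposition_nest_follow} this forces both $TTHH$ and $HHTT$ pairs to nest and $THTH$ pairs to nest. In that case two forward arrows can both be faces only if their intervals are nested (side-by-side forward pairs are forbidden because $THTH$ pairs nest, and crossing forward pairs because $TTHH$ pairs nest), so the forward arrows of $m'$ form a chain of nested intervals; taking $e=(a_i,a_j)$ to be the innermost one, every arc of $m'$ meeting the open interval $(i,j)$ is forced to be backward (a forward arc there would either cross $e$ or be nested inside it), and the only arcs crossing the endpoints of $e$ are tails exiting to the left of position $i$ and heads exiting to the right of position $j$. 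Here I would combine a left-exiting tail-arc with a right-exiting head-arc, or a forward arc of the chain with a backward arc spanning $[i,j]$, and use the nesting constraints on $TTHH$ and $HHTT$ pairs together with a count of heads and tails on the prefix ending before $i$ and the suffix beginning after $j$, to produce a pair of type $TTHH$ or $HHTT$ occurring in its forbidden (crossing) pairing --- the final contradiction. Hence $m'$ must be backward-only, so $m'=m^{*}$. The hard part is exactly this last case: once $HTHT$ pairs do not nest but both $THHT$ and $HTTH$ cross, the two-arc comparison no longer closes the argument, and one must exploit the forced nesting of $TTHH$ and $HHTT$ pairs through a more delicate combinatorial analysis of the rigid structure imposed on $m'$ around an innermost forward arc.
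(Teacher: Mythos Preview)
Your overall strategy matches the paper's: split into (I) existence and uniqueness of an all-backward face-matching, and (II) impossibility of a forward arrow. For (I) your direct reduction to the unique non-crossing (resp.\ non-nesting) perfect matching of a balanced bracketing is cleaner than the paper's inductive peeling argument. For the first part of (II) you use a prefix head--tail count to force a backward arrow across the boundary, where the paper instead passes to a smallest counterexample and examines the match of the leftmost head; both routes correctly conclude that $THHT$ and $HTTH$ must cross.

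The remaining case is not finished, and your sketch contains a labeling slip and stops short of the contradiction. For a backward arrow the head sits to the left, so arcs crossing the left boundary of the innermost forward arrow $e=(a_i,a_j)$ have \emph{head} $<i$ and tail in $(i,j)$ (call these $L$-arcs, say $p$ of them), while arcs crossing the right boundary have \emph{tail} $>j$ and head in $(i,j)$ ($R$-arcs). Here is how to close the argument along the lines you indicate. Since $HTTH$ and $THHT$ cross and $HTHT$ does not nest, no backward arc lies entirely to the left of $i$ or entirely to the right of $j$, and none straddles both $i$ and $j$; hence $[1,i-1]$ contains only the $f$ tails of the outer forward arrows and the $p$ heads of $L$-arcs, so $d_{i-1}=p-f\ge 1$ gives $p\ge 1$. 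Let $t^{*}$ be the largest $L$-tail. Because $HHTT$ nests, every $R$-head lies to the right of $t^{*}$, and every backward arc entirely inside $(i,j)$ lies wholly to one side of $t^{*}$. Counting $H$'s minus $T$'s on $[1,t^{*}]$ now gives
\[
d_{t^{*}}=(p+m_1)-(f+1+p+m_1)=-f-1<0,
\]
where $m_1$ counts the interior backward arcs wholly left of $t^{*}$; this contradicts the lower Dyck hypothesis. The paper reaches the same contradiction but from the opposite end: it works with the forward arrow of \emph{smallest} tail (the outermost one, after first reducing to a single forward arrow via a smallest-counterexample argument) and exhibits a point where the path climbs above the $x$-axis; your innermost-arrow version is equally valid.
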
  
\begin{proof}
First we show that no matching from $I$ to $J$ contains a forward
arrow. Assume by way of contradiction that there is a smallest
counterexample to this statement, and let $(i,j)$ be the forward arrow
with the smallest tail $i$ in such an example. The associated
lattice path must start with a down step, hence the least element of
$I\cup J$ is a head $j_{1}$, the head of a backward arrow $(i_{1},j_{1})$. If
$i_{1}<i$ holds then the removal of the arrow $(i_{1},j_{1})$ yields a smaller
counterexample, contradicting the assumption of minimality. We
cannot have $i_{1}>j$ either as $HTHT$ type of pairs of arrows do not
nest. Hence we have $j_{1}<i<i_{1}<j$ and $HTTH$ type of pairs of arrows
cross. The same reasoning may be repeated for any backward arrow whose
head is to the left of $i$: the tails of these arrows are all between
$i$ and~$j$, in particular no tail of a backward arrow is to the left of
$i$. By the choice of $i$, there is no tail of a forward arrow to the
left of $i$ either: the first up step in the associated lattice path is
contributed by the tail $i$. Hence $i$ is preceded by $k\geq 1$ heads:
$j_{1}, j_{2}, \ldots,  j_{k}$, and the tails $i_{1}, i_{2}, \ldots, i_{k}$ of these
backward arrows all occur before $j$. The associated lattice path goes
above the $x$-axis before the down step associated to $j$ unless there
is a backward arrow $(i',j')$ whose head $j'$ occurs before~$j$, while
$i'$ occurs only after it. The pair $\{(i,j),(i',j')\}$ is a crossing
$THHT$ type of pair of arrows. Since both $THHT$ and $HTTH$ type of pairs
cross, the complex $\triangle_{n}$ cannot belong to the lex class. It cannot
belong to the revlex class either because $HTHT$ type of pairs of
arrows do not nest by our assumption. We are left with the possibility
that $HTHT$ type of pairs do not nest and $THTH$ type of pairs nest. By
Proposition~\ref{proposition_nest_follow} $HHTT$ type of pairs nest. As a
consequence no backward arrow $(i',j')$ satisfying $j' < j < i'$ can cross
any arrow $(i_{s},j_{s})$ satisfying $j_{s} < i < i_{s} < j$. But then the associated
lattice path goes above the $x$-axis at the step associated to
$\max(i_{1},i_{2}, \ldots, i_{k})$ and we obtain a contradiction.

Having established that no matching can contain a forward arrow, we may
show the existence of a unique matching by induction. Regardless on the
condition on the $HHTT$ type of pairs of arrows, the first step of the
associated lattice path is a down step, corresponding to a head~$j_{1}$. We
only need to show that there is a unique way to identify the tail
$i_{1}$ of this arrow, and that the removal of the steps associated
to $j_{1}$ and $i_{1}$ results in a lattice path that does not go above the
horizontal axis. In the case when $HHTT$ type of pairs nest, $i_{1}$ must be
the tail marking the first return to the horizontal axis, because all
arrows whose head is between $j_{1}$ and $i_{1}$ must also have their tail
in the same interval as backward arrows cannot cross. The removal of
the first down step and the first return to the $x$-axis yields a
lattice path that does not go above the $x$-axis. Finally, in the case
when $HHTT$ type of pairs cross, $i_{1}$ must be the least tail, marking the
first up step, as any backward arrow whose tail precedes $i_{1}$ would form a
$HHTT$ type of nesting pair with $(i_{1},j_{1})$. The removal of the first up
step and the first down step yields once again a lattice path that does
not go above the $x$-axis.   
\end{proof}  

\begin{remark}
{\em
The  proof of Lemma~\ref{lemma_backward_only_converse} defines the
matchings induced by the associated lattice paths in a recursive
fashion. It is not difficult to prove the following explicit
rules by induction:
\begin{enumerate}  
\item
If $HHTT$ type of arrows nest then each head~$j$, representing a down
step, is matched to the tail~$i$ representing the first return to the
same level.
\item
If $HHTT$ type of arrows cross then the $k$th head in the left to
right order is matched to the $k$th tail in the left to right order.  
\item
In either case, each head $j$ is matched to a tail $i$ in such a
way that there is no return to the $x$-axis strictly between the down
step associated to $j$ and the up step associated to $i$. 
\end{enumerate}  
}
\label{remark_backward_only}
\end{remark}

\begin{lemma}
Assume the same conditions hold as in
Lemma~\ref{lemma_backward_only_converse},
but with the extra condition that the associated word $w$
factors as a product of two lower Dyck words $w_{1}$ and $w_{2}$.
Then the matching as a graph is a disjoint union
of the two matchings for $w_{1}$, respectively $w_{2}$.
\label{lemma_lower_Dyck_word}
\end{lemma}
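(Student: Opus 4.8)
The plan is to reduce the statement to the flag property of $\triangle_n$ together with Lemma~\ref{lemma_backward_only_converse}. First I would record that the concatenation $w=w_1w_2$ of two lower Dyck words is again a lower Dyck word, and that it is precisely the $TH$-word associated with the pair $(I,J)$ obtained by taking $I=I_1\uplus I_2$ and $J=J_1\uplus J_2$, where $(I_i,J_i)$ is read off from $w_i$; here $I_1\cup J_1$ is the set of the $2|I_1|$ smallest elements of $I\cup J$ and $I_2\cup J_2$ is the set of the remaining ones. By Lemma~\ref{lemma_backward_only_converse} there is then a unique matching $M$ in $\triangle_n$ matching $I$ to $J$, and likewise unique matchings $M_1$ (matching $I_1$ to $J_1$) and $M_2$ (matching $I_2$ to $J_2$), all three consisting of backward arrows only. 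Since the vertex sets $I_1,J_1,I_2,J_2$ are pairwise disjoint, $M_1\cup M_2$ is a matching matching $I$ to $J$. Thus it suffices to show that $M_1\cup M_2$ is a face of $\triangle_n$; the uniqueness of $M$ then forces $M=M_1\cup M_2$, which is exactly the assertion.

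To see that $M_1\cup M_2\in\triangle_n$, I would use that $\triangle_n$ is flag, so it is enough to check that every two-element subset of $M_1\cup M_2$ is an edge. A pair contained in $M_1$ or in $M_2$ is an edge because $M_1$ and $M_2$ are faces of the simplicial complex $\triangle_n$. So let $(i,j)\in M_1$ and $(i',j')\in M_2$; both are backward arrows, hence $j<i$ and $j'<i'$, and since every node of $I_1\cup J_1$ precedes every node of $I_2\cup J_2$ we have $i<j'$. Therefore the four nodes occur in the left-to-right order $j,i,j',i'$, i.e.\ the pair has type $HTHT$, and $\{(i,j),(i',j')\}$ is the ``side by side'' configuration for this type (the one appearing in the lexicographic column of Table~\ref{table_six_way}), namely the one in which the two arrows do \emph{not} nest. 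Since by hypothesis $HTHT$ type pairs do not nest in $\triangle_n$, this pair is an edge. Hence $M_1\cup M_2$ is a face of $\triangle_n$, completing the argument.

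I do not anticipate a genuine obstacle here; the one point requiring care is matching the table's bookkeeping for the $HTHT$ type — that is, confirming that the configuration forced between an arrow of $M_1$ and an arrow of $M_2$ is the non-nesting one rather than the nesting one, so that the hypothesis ``$HTHT$ type pairs do not nest'' is precisely what is used. As a sanity check (and an alternative route) one may instead appeal to the explicit description in Remark~\ref{remark_backward_only}: in the unique matching associated with a lower Dyck word, no head is matched to a tail lying on the other side of a return of the associated lattice path to the $x$-axis; the junction between $w_1$ and $w_2$ is such a return, so no arrow of $M$ can straddle it, which again gives $M=M_1\cup M_2$.
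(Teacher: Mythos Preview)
Your proof is correct and follows essentially the same approach as the paper: you build the disjoint union $M_1\cup M_2$, verify it is a face of $\triangle_n$, and then invoke the uniqueness from Lemma~\ref{lemma_backward_only_converse}. The paper's own proof is the same argument compressed to two sentences (``the union satisfies all the conditions, hence by uniqueness the result follows''); your version simply spells out, via the flag property and the $HTHT$ rule, why the union is indeed a face.
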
  
\begin{proof}
The matching obtained by taking the union
of the two matchings for $w_{1}$, respectively $w_{2}$,
satisfies all the conditions. Hence by uniqueness the result follows.
\end{proof}

\begin{remark}
{\em
We will use Lemmas~\ref{lemma_backward_only} and
\ref{lemma_backward_only_converse} for backward arrows most of the time.
The reader should note that the dual statements, for forward arrows
and associated lattice paths being upper Dyck paths, also hold.
}
\end{remark}
These lemmas were about the case when $THTH$ type of arrows do not nest. We
will consider the case when they nest in the dual form below.

\begin{lemma}
\label{lemma_forward_only}
Let $\triangle_{n}$ be a uniform flag complex on $V_{n}$ that satisfies the
necessary conditions stated in Theorem~\ref{theorem_main} and has the
property that $HTHT$ type of pairs of arrows nest.
Let $I$ and $J$ be two sets satisfying
$I,J \subset \{1,2, \ldots, n+1\}$,  $I\cap J=\emptyset$
and $|I| = |J| \neq 0$.
There is a matching in $\triangle_{n}$ consisting of forward arrows only
that matches $I$ to $J$ if and only if every tail precedes every head,
that is, $i<j$ holds for all $i \in I$ and $j \in J$. Furthermore, if
every tail precedes every head in $I\cup J$ then the matching contained
in $\triangle_{n}$ that matches $I$ to $J$ is unique.
\end{lemma}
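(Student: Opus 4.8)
The plan is to mirror, in dual form, the argument given for Lemma~\ref{lemma_backward_only_converse}, using the two commuting involutions of Lemma~\ref{lemma_involutions} to transfer the already-proved statement rather than reproving it from scratch. Recall that the dual triangulation $\triangle_{n}^{*}$ is obtained by reversing all arrows: this swaps tails and heads, so it turns a matching of forward arrows into a matching of backward arrows, and it interchanges the types $THTH \leftrightarrow HTHT$, $HTTH \leftrightarrow THHT$, $TTHH \leftrightarrow HHTT$. By Lemma~\ref{lemma_involutions} the complex $\triangle_{n}^{*}$ again satisfies the necessary conditions of Theorem~\ref{theorem_main}, and the hypothesis ``$HTHT$ type of pairs of arrows nest in $\triangle_{n}$'' becomes ``$THTH$ type of pairs nest in $\triangle_{n}^{*}$''; equivalently, $HTHT$ type of pairs in $\triangle_{n}^{*}$ do \emph{not} nest (since the two $HTHT$ types are exclusive, exactly one nests). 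Wait -- this is not quite symmetric, so let me be careful: what I really want is the statement of Lemma~\ref{lemma_backward_only_converse} applied to $\triangle_{n}^{*}$.

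Concretely: first I would observe that reversing all arrows gives an isomorphism of simplicial complexes $\triangle_{n} \cong \triangle_{n}^{*}$ that carries the pair of sets $(I,J)$ (with $I$ the tail set, $J$ the head set) to the pair $(J,I)$ (now $J$ is the tail set and $I$ the head set), and carries forward arrows to backward arrows. Under this correspondence, the $TH$-word of $(I,J)$ read in the original left-to-right order, with each $T$ becoming an $H$ and vice versa, is exactly the $TH$-word of $(J,I)$ for $\triangle_{n}^{*}$. The condition ``every tail precedes every head in $I\cup J$'', i.e.\ $i<j$ for all $i\in I$, $j\in J$, is precisely the condition that the original $TH$-word is $T^{|I|}H^{|J|}$; under the letter-swap this becomes $H^{|I|}T^{|J|}$ read left to right for $\triangle_{n}^{*}$, which is the same as saying: the word of $(J,I)$ is a lower Dyck word that moreover has the special form with all $H$'s first. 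Hmm, but Lemma~\ref{lemma_backward_only_converse} only requires the word to be a lower Dyck word, which $H^{|I|}T^{|J|}$ certainly is. So the ``if'' direction follows immediately: since $HTHT$ type of pairs do not nest in $\triangle_{n}^{*}$ (as $\triangle_{n}$ has them nesting), Lemma~\ref{lemma_backward_only_converse} gives a unique matching from $J$ to $I$ in $\triangle_{n}^{*}$, and it consists of backward arrows; pulling back along the arrow-reversal isomorphism yields a unique matching from $I$ to $J$ in $\triangle_{n}$ consisting of forward arrows.

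For the ``only if'' direction I would argue contrapositively and directly, in the spirit of the ``no forward arrow'' argument in Lemma~\ref{lemma_backward_only_converse}: if the $TH$-word is \emph{not} of the form $T^{|I|}H^{|J|}$, then some head $j$ precedes some tail $i$ of $I\cup J$, so reading left to right there is a prefix in which heads outnumber tails, i.e.\ the dual word of $(J,I)$ for $\triangle_{n}^{*}$ is not a lower Dyck word (it rises above the axis). The dual of Lemma~\ref{lemma_backward_only} -- forward arrows in a matching force an upper Dyck word -- shows that any matching of forward arrows from $I$ to $J$ in $\triangle_{n}$ would have $T^{|I|}H^{|J|}$ as its $TH$-word (an upper Dyck word with all tails weakly before all heads forces exactly that shape once $|I|=|J|$), a contradiction. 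The only genuine subtlety, and the step I expect to need the most care, is checking that the involution really does preserve the hypothesis in the precise form Lemma~\ref{lemma_backward_only_converse} wants: namely that $HTHT$ type of pairs not nest in $\triangle_{n}^{*}$ is equivalent to $HTHT$ (or $THTH$) type of pairs nest in $\triangle_{n}$, which uses both the exclusivity of the two $HTHT$ possibilities and Lemma~\ref{lemma_involutions}'s bookkeeping under $\triangle \mapsto \triangle^{*}$; once that is pinned down, the rest is a transcription. Alternatively, and perhaps more transparently, one can simply repeat verbatim the proof of Lemma~\ref{lemma_backward_only_converse} with the roles of $T$ and $H$ (equivalently, forward and backward, and up-steps and down-steps) interchanged throughout, invoking Propositions~\ref{proposition_nonest}--\ref{proposition_nest_follow} and the support axiom exactly as there; I would present the dual-involution route as the main argument and remark that the direct dualization also works.
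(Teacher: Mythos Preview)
Your duality approach has a genuine gap: Lemma~\ref{lemma_forward_only} is \emph{not} the dual of Lemma~\ref{lemma_backward_only_converse}. Under $\triangle\mapsto\triangle^{*}$ the condition ``$HTHT$ nests in $\triangle_{n}$'' becomes ``$THTH$ nests in $\triangle_{n}^{*}$'', and this is logically independent of the hypothesis ``$HTHT$ does not nest in $\triangle_{n}^{*}$'' that Lemma~\ref{lemma_backward_only_converse} requires: the $THTH$ and $HTHT$ rules are separate binary choices (in the revlex class both nest, in the lex class neither does). Your ``exclusivity'' claim conflates the two options for a single type with the two different types. The same mismatch obstructs your alternative of rewriting the proof of Lemma~\ref{lemma_backward_only_converse} with $T\leftrightarrow H$: that would establish a statement whose hypothesis is ``$THTH$ does not nest'', again the wrong condition. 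Your ``only if'' argument is also broken independently of this: the dual of Lemma~\ref{lemma_backward_only} says only that a forward-arrow matching yields an upper Dyck word, and words such as $THTH$ or $TTHTHH$ are upper Dyck but not of the form $T^{k}H^{k}$; the implication ``some head precedes some tail $\Rightarrow$ heads outnumber tails in some prefix'' is false.

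The paper's proof is direct and does not touch duality. For ``only if'' it observes that if some $j\in J$ precedes some $i\in I$, then the forward arrows through $j$ and through $i$ in the matching form the non-nesting $THTH$ pair on nodes $i'<j<i<j'$, which is excluded exactly when $THTH$ pairs nest. For ``if'' and uniqueness, once all tails precede all heads every pair of arrows from $I$ to $J$ has type $TTHH$, and the $TTHH$ rule (nest or cross) forces a unique matching. Note that the proof --- and every application of the lemma elsewhere in the paper --- actually uses the $THTH$ condition, not $HTHT$; the stated hypothesis appears to be a typo for ``$THTH$ type of pairs of arrows nest''. Under the literal hypothesis ``$HTHT$ nests'' the ``only if'' direction is in fact false: if additionally $THTH$ does not nest (the Simion class with this orientation), then $\{(1,2),(3,4)\}$ is a face of forward arrows with $I=\{1,3\}$, $J=\{2,4\}$, yet $2<3$.
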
  
\begin{proof}
It is a direct consequence of the condition on $THTH$ type of pairs of
arrows that all tails must precede all heads in every face that consists
of forward arrows only. Conversely assume that $I=\{i_{1},i_{2}, \ldots, i_{k}\}$
and $J=\{j_{1},j_{2}, \ldots, j_{k}\}$ satisfy
$i_{1} < i_{2} < \cdots < i_{k}$ and $i_{k} < j$ for
all $j\in J$. Clearly, any arrow whose tail is in $I$ and whose head is
in $J$ is a forward arrow. Any pair of such arrows is a $TTHH$ type of pair.
Hence the only matching contained in $\triangle_{n}$ is
$\{(i_{1},j_{1}),(i_{2},j_{2}), \ldots, (i_{k},j_{k})\}$ where $j_{1}<j_{2}<\cdots <j_{k}$
holds if $TTHH$ type of pairs cross and $j_{1}>j_{2}>\cdots >j_{k}$ holds when
$TTHH$ type of pairs nest.  
\end{proof}

The verification of the linkage axiom (LA) is
facilitated by the following observations.

\begin{lemma}
Let $\triangle_{n}$ be any permissible uniform flag complex on $V_{n}$. Then the
complex $\triangle_{n}$ satisfies the relevant part of the linkage axiom~(LA)
when $k$ is inserted as a tail and there exists an $i \in I$ 
such that there is no element of $I \cup J$ strictly between $k$ and $i$.
Similarly,
when $k$ is inserted as a head and there exists a $j \in J$
such that there is no element of $I \cup J$ strictly between $k$ and $j$,
the linkage axiom is satisfied.
\label{lemma_LA}
\end{lemma}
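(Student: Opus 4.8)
The plan is to use flagness together with uniformity in the most direct possible way: the relinked matching differs from the given face $\sigma$ in a single vertex, and replacing a tail (or head) by an immediately adjacent node changes the type of no pair of arrows, so the modified matching stays a face. First I would recall the set-up: in property~(LA) of Theorem~\ref{theorem_legendreme} the sets $I$ and $J$ are (as in every application of (LA) in this paper) the tail set and the head set of $\sigma$, so that $\sigma$ matches $I$ bijectively onto $J$ and every element of $I$ is the tail of a unique edge of $\sigma$; also $k\notin I\cup J$. Consider the tail-insertion case and let $i\in I$ be such that no member of $I\cup J$ lies strictly between $k$ and $i$, with $(i,j)\in\sigma$ the edge having tail $i$. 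Set $\sigma'=(\sigma-\{(i,j)\})\cup\{(k,j)\}$. Since $k\notin I\cup J$ we have $k\neq j$, so $(k,j)$ is a genuine arrow, and $\sigma'$ is again a matching using the same number of distinct nodes as $\sigma$; it therefore suffices to prove $\sigma'\in\triangle_n$, whence $\sigma'\in\mathcal{M}$.

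The main step is this last claim, which I would establish via flagness. The set $\sigma'-\{(k,j)\}=\sigma-\{(i,j)\}$ is a face of $\triangle_n$, being contained in the face $\sigma$, so by flagness it is enough to check that $\{(k,j),(i',j')\}$ is an edge of $\triangle_n$ for every $(i',j')\in\sigma-\{(i,j)\}$. The four nodes $k,j,i',j'$ are pairwise distinct. Now $j,i',j'$ all lie in $I\cup J$ and none of them equals $i$ or $k$; since no element of $I\cup J$ separates $k$ from $i$, each of $j,i',j'$ stands on the same side of $k$ as it does of $i$. Hence the pattern of equalities and inequalities among $k,j,i',j'$---together with which nodes are tails, which are heads, and which arrow joins which---is identical to that of $i,j,i',j'$ in the pair $\{(i,j),(i',j')\}$. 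As $\{(i,j),(i',j')\}$ is contained in the face $\sigma$ it is an edge of $\triangle_n$, so uniformity (Definition~\ref{definition_uflag}) forces $\{(k,j),(i',j')\}$ to be an edge as well, and flagness gives $\sigma'\in\triangle_n$. (If $|\sigma|=1$ then $\sigma'=\{(k,j)\}$ is a single vertex of $\triangle_n$, hence automatically a face, and there is nothing to check.) The head-insertion case is entirely analogous---one takes $\sigma'=(\sigma-\{(i,j)\})\cup\{(i,k)\}$ for the edge $(i,j)\in\sigma$ whose head $j$ is adjacent to $k$---or it may be deduced from the tail case by passing to the dual complex $\triangle_n^{*}$.

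I do not anticipate a genuine obstacle; this is the most transparent of the auxiliary lemmas. The only two points requiring a little care are the observation that the element $i$ supplied by the hypothesis really is the tail of an edge of $\sigma$ (which is why one keeps $I$ equal to the tail set of $\sigma$), and the precise verification that the type is unchanged, i.e.\ that for each $n_0\in\{j,i',j'\}$ one has $n_0<k$ exactly when $n_0<i$---this is immediate from $n_0\in I\cup J$, $n_0\neq i,k$, and the hypothesis that nothing of $I\cup J$ lies strictly between $k$ and $i$.
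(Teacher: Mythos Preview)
Your argument is correct and is precisely the approach the paper has in mind: the paper does not give a formal proof at all but simply remarks, in the sentence following the lemma, that one ``may extend the arrow containing the old tail to contain the new tail instead.'' Your write-up supplies the details---flagness reduces the check to pairs, and the adjacency hypothesis guarantees each such pair has the same type as before---so this is exactly the intended reasoning, spelled out in full.
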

In other words, if we insert a new tail in position $k$ next to a tail,
then we may extend the arrow containing the old tail to contain the new
tail instead. A similar observation can be made about inserting
a new head. We only need to verify the linkage axiom (LA) in the cases
when a new head is inserted between two tails (or as the least or
largest node, next to a tail), and when a new tail is inserted
between two heads (or as the least or largest node, next to a head).

\begin{lemma}
Let $\triangle_{n}$ be any uniform flag complex on $V_{n}$ satisfying the
necessary conditions stated in Theorem~\ref{theorem_main}. Assume that
$HTHT$ type of arrows do not nest.
Let $I$ and $J$ be two sets
satisfying $I, J \subset \{1,2, \ldots, n+1\}$,  $I\cap J=\emptyset$,
and $|I|=|J|\neq 0$.
Assume that $TH$-word associated with the two sets $I$ and $J$ is a
lower Dyck word. 
If $k > \min(I \cup J)$ is inserted as a tail or $k < \max(I \cup J)$ and $k$ is
inserted as a head, then the relevant part of the linkage axiom is
verified in such a way that the new arrow is also a backward arrow.
\label{lemma_LA_backward}
\end{lemma}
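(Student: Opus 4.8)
The plan is to reduce the verification to a single local configuration and then read off the replacement arrow from the explicit description of the matching in Remark~\ref{remark_backward_only}. First, Lemma~\ref{lemma_LA} already disposes of every case in which the new node $k$ has a neighbor in $I\cup J$ of the same type: if $k$ is a new tail adjacent to some $i\in I$, extend the arrow at $i$ to the new tail, and observe the extended arrow is backward because its head, lying to the left of $i$, also lies to the left of $k$. Since the $TH$-word of $(I,J)$ is a lower Dyck word its least element is a head and its greatest element is a tail; together with the hypotheses $k>\min(I\cup J)$, resp.\ $k<\max(I\cup J)$, this leaves exactly two configurations: a new tail $k$ strictly between two consecutive heads $j_1<k<j_2$ of $I\cup J$, and a new head $k$ strictly between two consecutive tails. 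The involution $\triangle_n\mapsto\overline{\triangle_n}$ of Lemma~\ref{lemma_involutions} preserves the standing hypothesis that $HTHT$ does not nest, preserves the necessary conditions of Theorem~\ref{theorem_main} (which are symmetric in the types $THHT$ and $HTTH$), sends lower Dyck words to lower Dyck words and backward arrows to backward arrows, and interchanges the two configurations; hence it suffices to treat the new-tail configuration.

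So let $k$ be a new tail with $j_1<k<j_2$, where $j_1,j_2$ are consecutive in $I\cup J$ and both heads, and let $\sigma$ be the unique all-backward matching of $I$ to $J$ given by Lemma~\ref{lemma_backward_only_converse}. I will produce an arrow $(i^*,j^*)\in\sigma$ with $j^*\le j_1$, so that $(k,j^*)$ is backward, and show that $\sigma':=(\sigma\setminus\{(i^*,j^*)\})\cup\{(k,j^*)\}$ is the unique matching of $I':=(I\setminus\{i^*\})\cup\{k\}$ to $J$ in $\triangle_n$; this verifies the relevant part of (LA) with $(k,j^*)$ as the new backward arrow. Since $\sigma'$ is a perfect matching of $I'\uplus J$ by backward arrows, each tail lying in an initial segment of $I'\cup J$ is matched to a head in the same initial segment, so the $TH$-word of $(I',J)$ is a lower Dyck word; by Lemma~\ref{lemma_backward_only_converse} there is therefore a unique all-backward matching $\tau$ of $I'$ to $J$ in $\triangle_n$, namely the one described by Remark~\ref{remark_backward_only}, and it remains only to check $\tau=\sigma'$. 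If $HHTT$ nests, take $j^*=j_1$ and $i^*$ its $\sigma$-partner: inserting the up step for $k$ immediately after the down step for $j_1$ makes $k$ the first return to the level of $j_1$, so the first-return rule rematches $j_1$ to $k$, while the portion of the path formerly bracketed by $j_1$ and $i^*$ is merely translated, leaving every other head's first-return partner unchanged. If $HHTT$ crosses, let $t$ be the number of tails of $I$ below $j_1$; lower-Dyck-ness forces $t+1\le q$, where $q$ is the rank of $j_1$ among the heads, so the $(t+1)$-st head $h_{t+1}$ satisfies $h_{t+1}\le j_1<k$; take $j^*=h_{t+1}$ and $i^*$ the $(t+1)$-st tail of $I$ (which lies above $j_1$, hence is the $\sigma$-partner of $h_{t+1}$). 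Deleting $i^*$ and inserting $k$ alters the left-to-right list of tails only in the $(t+1)$-st slot, so the rank-matching rule reproduces $\sigma$ except that $h_{t+1}$ is now matched to $k$; that is, $\tau=\sigma'$.

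The delicate point is precisely the last verification --- that the local surgery on the $TH$-word changes the matching of Remark~\ref{remark_backward_only} in only one arrow --- and it is handled by the choice of $i^*$: the deleted up step and the inserted up step bracket a common excursion of the lattice path (the excursion ending at the first return to the level of $j_1$ in the nesting case, the monotone descending stretch before the first tail past $j_1$ in the crossing case), so the remainder of the path, and with it the remainder of the matching, is translated rather than rebuilt. It is convenient to first invoke Lemma~\ref{lemma_lower_Dyck_word} to factor the lower Dyck word into its irreducible blocks and restrict attention to the block containing $j_1$ and $j_2$, since $k$ is inserted inside that block and the matching is a disjoint union over blocks; the bookkeeping then takes place inside a single irreducible factor.
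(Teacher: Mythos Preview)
Your proof is correct and follows essentially the same approach as the paper's: reduce via Lemma~\ref{lemma_LA} to the case of a new tail between two consecutive heads, then in the $HHTT$-nesting case replace the arrow at $j_1$ by $(k,j_1)$, and in the $HHTT$-crossing case replace the arrow at the least tail exceeding $k$ by the arrow from $k$ to that tail's partner---exactly the moves the paper makes. You supply more detail than the paper (making the symmetry reduction explicit via $\overline{\triangle_n}$, and verifying that the surgically modified matching agrees with the one described in Remark~\ref{remark_backward_only} rather than just asserting it), and your final invocation of Lemma~\ref{lemma_lower_Dyck_word} is harmless but unnecessary.
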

\begin{proof}
By symmetry, it is enough to consider the case when $k$ is inserted as a tail.
By Lemma~\ref{lemma_LA} 
we can assume that $k$ is inserted
after head~$j$ of an arrow $(i,j)$.
If $HHTT$ type of pairs nest then we match $j$ to $k$ and we remove the tail $i$.
Note that the new arrow does not introduce any crossings.
If $HHTT$ type of pairs cross then consider the least tail $i'>k$,
which is the tail of an arrow $(i',j')$.
Note that $j' \leq j$ since otherwise there would be two nesting
backward arrows.
Match $k$ to $j'$ and remove the tail~$i'$.
By Remark~\ref{remark_backward_only}  part (2),
it is straightforward to see that we obtain a matching belonging to $\triangle_{n}$.
Note that in both cases, all the new arrows are backward arrows.
\end{proof}

\subsection{The lex class}
\label{subsection_nonest}

In this subsection we show that all four triangulations in the lex class
are possible.

\begin{theorem}
\label{theorem_nonest}  
Let  $\triangle_{n}$ be a permissible uniform flag complex on the vertex
set~$V_{n}$ that has the property
that both $THTH$ and $HTHT$ types of arrows do not nest.
Then the complex~$\triangle_{n}$ represents a
triangulation of the boundary~$\partial P_{n}$ of the root polytope. 
\end{theorem}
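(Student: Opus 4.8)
The strategy is to apply Theorem~\ref{theorem_legendreme}: letting $\mathcal{M}$ denote the family of matching faces of $\triangle_n$, it suffices to verify the support axiom~(SA) and the linkage axiom~(LA). Since $\triangle_n$ lies in the lex class, $THTH$ and $HTHT$ type pairs do not nest and $THHT$ and $HTTH$ type pairs do not cross. I will also use the dual forms, for forward arrows and upper Dyck words, of Lemmas~\ref{lemma_backward_only}, \ref{lemma_backward_only_converse}, \ref{lemma_lower_Dyck_word} and~\ref{lemma_LA_backward}, valid by the remark following Lemma~\ref{lemma_lower_Dyck_word}.

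To prove~(SA), fix disjoint $I,J\subseteq\{1,2,\ldots,n+1\}$ with $|I|=|J|$ and form the associated $TH$-word $w$ and its lattice path. The key observation is that in any matching supported on arrows from $I$ to $J$, a forward arrow and a backward arrow must occupy node-intervals that are disjoint and unnested: an interleaving forward/backward pair would realize a crossing $HTTH$ or $THHT$ pair, and a nested one a nesting $THTH$ or $HTHT$ pair, both forbidden here. Hence, writing $w=w_1w_2\cdots w_m$ for the factorization of $w$ into its primitive pieces, cut off at the successive returns of the path to the $x$-axis (each $w_t$ a primitive upper or lower Dyck word), every matching from $I$ to $J$ is a disjoint union over the pieces, forward arrows being confined to the upper pieces and backward arrows to the lower pieces. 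The dual of Lemma~\ref{lemma_backward_only_converse} shows each upper piece supports a unique all-forward matching, and Lemma~\ref{lemma_backward_only_converse} that each lower piece supports a unique all-backward matching; let $\sigma$ be their disjoint union. That $\sigma$ is a face of $\triangle_n$ follows because pairs of arrows internal to one piece are edges by the cited lemmas, while a pair with one arrow in the $s$-th piece and the other in the $t$-th piece, $s<t$, has disjoint and unnested node-intervals — so it is of type $THTH$, $HTHT$, $THHT$ or $HTTH$ with every endpoint of the left arrow below every endpoint of the right — and is an edge by the lex-class rules. Uniqueness then follows from existence together with Lemma~\ref{lemma_lower_Dyck_word}, its dual and its mixed analogue: the restriction of any matching face to a maximal forward or backward run of nodes is forced to agree with the concatenation of the matchings of its constituent primitive pieces.

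To prove~(LA), Lemma~\ref{lemma_LA} reduces us to inserting $k$ as a new tail immediately after a head $j'$ (inserting $k$ as a head is the dual statement, obtained by passing to $\triangle_n^{*}$, which again lies in the lex class by Lemma~\ref{lemma_involutions}). Let $(i',j')\in\sigma$ be the arrow with head $j'$ and let $w_s$ be the primitive piece containing $j'$. If $w_s$ is a lower piece then $j'$ is interior to it (lower pieces end in a tail), inserting the up-step $k$ keeps $w_s$ a lower Dyck word, and Lemma~\ref{lemma_LA_backward} produces, inside $w_s$, a backward arrow with tail $k$ whose pairings with the arrows of the other pieces are unchanged and hence still edges. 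If $w_s$ is an upper piece, then, branching on the $TTHH$ rule of $\triangle_n$ in analogy with how Lemma~\ref{lemma_LA_backward} branches on the $HHTT$ rule, one reroutes either $(i',j')$, producing the backward arrow $(k,j')$, or the innermost forward arrow $(i'',j'')$ of $w_s$ with $i''<j'<j''$, producing the forward arrow $(k,j'')$; in either case the new arrow is checked to form an edge with every surviving arrow of $\sigma$, using that $THTH,HTHT$ do not nest and $THHT,HTTH$ do not cross.

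The bookkeeping of the primitive-piece decomposition in~(SA), and the precise statement of the dual and mixed versions of the combinatorial lemmas, are routine. The real work, and the main obstacle, is the upper-piece case of~(LA): one must first determine, as a function of the $TTHH$ rule, which forward arrow of the piece to reroute, and then carry out the case analysis showing that the new arrow pairs correctly with the remaining — possibly mutually crossing or nested — forward arrows of the piece. This is the forward-arrow analogue of the argument establishing Lemma~\ref{lemma_LA_backward}.
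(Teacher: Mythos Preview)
Your support-axiom argument is correct and follows the paper's: both factor the associated lattice path at its returns to the $x$-axis into maximal upper and lower runs and invoke Lemma~\ref{lemma_backward_only_converse} and its dual on the pieces.

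For the linkage axiom, your use of Lemma~\ref{lemma_LA_backward} on lower pieces is sound, but the upper-piece case --- which you rightly flag as the crux --- has a genuine gap. Inserting a \emph{tail} into an \emph{upper} Dyck word is not the dual of Lemma~\ref{lemma_LA_backward} (which inserts a tail into a lower word; its dual would insert a \emph{head} into an upper word), so the ``forward-arrow analogue'' you appeal to does not exist as stated, and the two-option rule you sketch fails. Take the $TTHH$-cross triangulation, the upper piece on nodes $1,\ldots,6$ with pattern $TTTHHH$ and $\sigma=\{(1,4),(2,5),(3,6)\}$, and insert the tail $k$ between $4$ and $5$. Here $j'=4$ and $(i',j')=(1,4)$. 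The backward arrow $(k,4)$ forms a nesting $THTH$ pair with $(2,5)$; and if ``innermost arching arrow with $i''<j'<j''$'' means the one of smallest span, that is $(2,5)$, whose replacement by $(k,5)$ forms a nesting $TTHH$ pair with $(3,6)$. The only valid move is $(3,6)\mapsto(k,6)$. More generally, the backward option $(k,j')$ is compatible with the rest of the piece exactly when \emph{no} forward arrow arches over $j'$, independently of the $TTHH$ rule, so it cannot serve as one branch of a $TTHH$-dependent split. A rule that does work uniformly in both $TTHH$ branches is: among the forward arrows $(i'',j'')$ with $i''<j'<j''$, replace the one with the \emph{largest} tail $i''$ by $(k,j'')$; if no such arrow exists, replace $(i',j')$ by the backward arrow $(k,j')$. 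You should also treat the case $k<\min(I\cup J)$, which ``immediately after a head $j'$'' does not cover; this is handled by passing to the reflected dual $\overline{\triangle_n}$, not to $\triangle_n^{*}$.
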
  
\begin{proof}
We begin by observing that Proposition~\ref{proposition_nonest}
implies that
$THHT$ and $HTTH$ types of arrows do not cross.
Next we verify the support and linkage axioms
of Theorem~\ref{theorem_legendreme}.

To verify the support axiom (SA), 
mark maximal runs of the associated lattice path
that are above the $x$-axis with forward arrows.
Similarly,
mark the maximal runs of the lattice path
that are below the $x$-axis with backward arrows. The fact that no end
of a forward arrow can occur between the head and tail of a backward
arrow follows from the fact that arrows of opposite direction neither
cross nor nest. By the same reason no tail of a backward arrow
can occur between the head and tail of a forward arrow. 
Within each maximal run, match the heads $H$ and the
tails $T$ according to the $TTHH$ and $HHTT$ rules.
By Lemmas~\ref{lemma_backward_only}, \ref{lemma_backward_only_converse}
and their duals, this can be done
in a unique way.
Furthermore, note that
arrows of opposite direction do not
cross and do not nest.
This shows that there is a unique way to obtain a matching between
the set of tails~$I$ and the set of heads~$J$.

To verify the linkage axiom (LA),
we may by symmetry assume that
we are  inserting a new tail at position $k$.
Denote this node by $T'$.
If the new tail is adjacent to an old tail,
we are done by
Lemma~\ref{lemma_LA}.
If a new tail $T'$ is inserted in between two heads
we have three possible cases.
If the two heads are part of a $THHT$ pattern
then we are between two maximal runs as described above.
Take any of the two heads adjacent
to the inserted element, unlink it from its pair and link it to the
inserted tail;
see the first line of Figure~\ref{figure_proof_of_nonest}.
If the two heads are part of a $HHTT$ pattern, there are two
subcases depending upon whether the two arrows nest or cross,
and these two cases are explained in the second and third line of
Figure~\ref{figure_proof_of_nonest}.
In each of these two subcases, one can verify that the new matching
is in fact a face.
Finally, 
if the two heads are part of a $TTHH$ pattern,
it is mirror symmetric to the previous case.
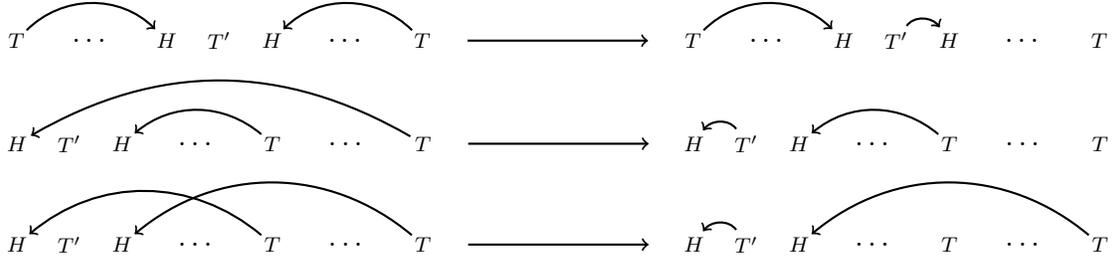
\begin{figure}
\begin{center}
\begin{tikzpicture}
\node[circle, inner sep = 0.9pt] (a1) at (0, 0) {$\scriptstyle{T}$};
\node[circle, inner sep = 0.9pt] (a2) at (2, 0) {$\scriptstyle{H}$};
\node[circle, inner sep = 0.9pt] (a3) at (2.7, 0) {$\scriptstyle{T'}$};
\node[circle, inner sep = 0.9pt] (a4) at (3.4, 0) {$\scriptstyle{H}$};
\node[circle, inner sep = 0.9pt] (a5) at (5.4, 0) {$\scriptstyle{T}$};
\draw[->, thick] (a1) to[out=45, in=135] (a2);
\draw[<-, thick] (a4) to[out=45, in=135] (a5);
\node at (1.0, 0) {\ldots};
\node at (4.4, 0) {\ldots};

\draw[->, thick] (6.0,0) -- (8.4,0);

\node[circle, inner sep = 0.9pt] (b1) at (9, 0) {$\scriptstyle{T}$};
\node[circle, inner sep = 0.9pt] (b2) at (11, 0) {$\scriptstyle{H}$};
\node[circle, inner sep = 0.9pt] (b3) at (11.7, 0) {$\scriptstyle{T'}$};
\node[circle, inner sep = 0.9pt] (b4) at (12.4, 0) {$\scriptstyle{H}$};
\node[circle, inner sep = 0.9pt] (b5) at (14.4, 0) {$\scriptstyle{T}$};
\draw[->, thick] (b1) to[out=45, in=135] (b2);
\draw[->, thick] (b3) to[out=55, in=125] (b4);
\node at (10.0, 0) {\ldots};
\node at (13.4, 0) {\ldots};
\end{tikzpicture}
\\
\begin{tikzpicture}
\node[circle, inner sep = 0.9pt] (a1) at (0, 0) {$\scriptstyle{H}$};
\node[circle, inner sep = 0.9pt] (a2) at (0.7, 0) {$\scriptstyle{T'}$};
\node[circle, inner sep = 0.9pt] (a3) at (1.4, 0) {$\scriptstyle{H}$};
\node[circle, inner sep = 0.9pt] (a4) at (3.4, 0) {$\scriptstyle{T}$};
\node[circle, inner sep = 0.9pt] (a5) at (5.4, 0) {$\scriptstyle{T}$};
\draw[<-, thick] (a1) to[out=30, in=150] (a5);
\draw[<-, thick] (a3) to[out=40, in=140] (a4);
\node at (2.4, 0) {\ldots};
\node at (4.4, 0) {\ldots};

\draw[->, thick] (6.0,0) -- (8.4,0);

\node[circle, inner sep = 0.9pt] (b1) at (9, 0) {$\scriptstyle{H}$};
\node[circle, inner sep = 0.9pt] (b2) at (9.7, 0) {$\scriptstyle{T'}$};
\node[circle, inner sep = 0.9pt] (b3) at (10.4, 0) {$\scriptstyle{H}$};
\node[circle, inner sep = 0.9pt] (b4) at (12.4, 0) {$\scriptstyle{T}$};
\node[circle, inner sep = 0.9pt] (b5) at (14.4, 0) {$\scriptstyle{T}$};
\draw[<-, thick] (b1) to[out=55, in=125] (b2);
\draw[<-, thick] (b3) to[out=40, in=140] (b4);
\node at (11.4, 0) {\ldots};
\node at (13.4, 0) {\ldots};
\end{tikzpicture}
\\
\begin{tikzpicture}
\node[circle, inner sep = 0.9pt] (a1) at (0, 0) {$\scriptstyle{H}$};
\node[circle, inner sep = 0.9pt] (a2) at (0.7, 0) {$\scriptstyle{T'}$};
\node[circle, inner sep = 0.9pt] (a3) at (1.4, 0) {$\scriptstyle{H}$};
\node[circle, inner sep = 0.9pt] (a4) at (3.4, 0) {$\scriptstyle{T}$};
\node[circle, inner sep = 0.9pt] (a5) at (5.4, 0) {$\scriptstyle{T}$};
\draw[<-, thick] (a1) to[out=40, in=140] (a4);
\draw[<-, thick] (a3) to[out=40, in=140] (a5);
\node at (2.4, 0) {\ldots};
\node at (4.4, 0) {\ldots};

\draw[->, thick] (6.0,0) -- (8.4,0);

\node[circle, inner sep = 0.9pt] (b1) at (9, 0) {$\scriptstyle{H}$};
\node[circle, inner sep = 0.9pt] (b2) at (9.7, 0) {$\scriptstyle{T'}$};
\node[circle, inner sep = 0.9pt] (b3) at (10.4, 0) {$\scriptstyle{H}$};
\node[circle, inner sep = 0.9pt] (b4) at (12.4, 0) {$\scriptstyle{T}$};
\node[circle, inner sep = 0.9pt] (b5) at (14.4, 0) {$\scriptstyle{T}$};
\draw[<-, thick] (b1) to[out=55, in=125] (b2);
\draw[<-, thick] (b3) to[out=40, in=140] (b5);
\node at (11.4, 0) {\ldots};
\node at (13.4, 0) {\ldots};
\end{tikzpicture}
\end{center}
\caption{Verifying the linkage axiom
in the lex class of triangulations.}
\label{figure_proof_of_nonest}
\end{figure}
\end{proof}

\subsection{The revlex class}
\label{subsection_nest}

We now turn our attention to the
revlex class and show that all four triangulations are
possible.

\begin{theorem}
\label{theorem_nest}  
Let  $\triangle_{n}$ be a permissible uniform flag complex on the vertex
set~$V_{n}$ that has the properties
that both $THTH$ and $HTHT$ types of arrows nest.
Then the complex~$\triangle_{n}$ represents a
triangulation of the boundary~$\partial P_{n}$ of the root polytope. 
\end{theorem}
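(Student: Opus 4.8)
The plan is to mirror the proof of Theorem~\ref{theorem_nonest}: by Theorem~\ref{theorem_legendreme} it suffices to verify the support axiom (SA) and the linkage axiom (LA) for the family $\mathcal{M}$ of matching faces of $\triangle_n$. First note that Proposition~\ref{proposition_nest} forces $THHT$ and $HTTH$ types of pairs to cross, while the rules for $TTHH$ and $HHTT$ types remain unconstrained; hence there are again four triangulations to treat, and we handle them simultaneously.

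The core structural fact I would isolate is that in the revlex class \emph{no two arrows of a matching face can have disjoint spans}. Indeed, if two node-disjoint arrows were ``sequential'', meaning all four endpoints of one arrow precede those of the other, then, inspecting the possible $TH$-words $THTH$, $HTHT$, $THHT$, $HTTH$ that such a pair realizes, the induced pairing is in each case the non-nesting (respectively non-crossing) one, which is excluded under the hypotheses. Consequently the interval spans of the arrows of any matching face $\sigma\in\mathcal{M}$ are pairwise intersecting, so by the one-dimensional Helly property they share a common point; taking this point to be the smallest right endpoint among the arrows and counting shows that every arrow of $\sigma$ straddles the gap in the middle of the ordered node set $I\cup J$. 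This gives an explicit description of the matching required by (SA): given disjoint $I,J$ with $|I|=|J|=m$, split the $2m$ nodes in the middle into blocks $L$ and $R$; join the tails in $L$ to the heads in $R$ by forward arrows according to the $TTHH$ rule, and join the heads in $L$ to the tails in $R$ by backward arrows according to the $HHTT$ rule. A short computation then shows that any such forward arrow together with any such backward arrow forms a pair whose induced chord configuration between $L$ and $R$ (order preserving gives a crossing, order reversing gives a nesting) is exactly the one demanded by its type $THHT$, $HTTH$, $THTH$ or $HTHT$; so the matching just constructed is a face of $\triangle_n$, irrespective of the $TTHH$ and $HHTT$ rules. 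Uniqueness follows because the straddling property forces every matching face supported on $I\times J$ to have this block structure, and then the $TTHH$ and $HHTT$ rules pin down the forward and backward sub-matchings uniquely.

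For the linkage axiom, I would reverse all arrows --- an operation that by Lemma~\ref{lemma_involutions} keeps us in the revlex class --- so as to assume that the new node $k$ is inserted as a tail; and by Lemma~\ref{lemma_LA} we may further assume $k$ falls strictly between two heads or is extreme next to a head. Using the description above one locates the arrow $(i,j)\in\sigma$ to delete: if $k$ lands in the left block it replaces a nearby tail in the $TTHH$-matching of forward arrows; if $k$ lands in the right block it replaces a nearby tail in the $HHTT$-matching of backward arrows; and in the boundary situation the middle gap shifts by one and an arrow changes direction, the tail to be removed being the one that $\sigma$ matches to the head adjacent to the new midpoint. In each case a direct check, paralleling Figure~\ref{figure_proof_of_nonest}, confirms that $(\sigma-\{(i,j)\})\cup\{(k,j)\}$ is again the canonical matching of its support, hence lies in $\mathcal{M}$.

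I expect the main obstacle to be the structural lemma that all arrows of a matching face straddle the common middle gap: once this ``globally nested'' picture is in place the support axiom becomes essentially bookkeeping. A secondary difficulty is the case analysis for the linkage axiom, where the delicate point is precisely the boundary case in which inserting the new node shifts the midpoint and converts a backward arrow into a forward one or vice versa.
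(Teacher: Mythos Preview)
Your proposal is correct and follows essentially the same approach as the paper: both arguments first deduce from Proposition~\ref{proposition_nest} that $THHT$ and $HTTH$ pairs cross, then establish the key structural fact that every arrow in a matching face must straddle the midpoint of $I\cup J$ (you phrase this via pairwise-intersecting spans and Helly, the paper via ``every pair nests or crosses''), and finally build the unique matching by pairing the left-block tails/heads with the right-block heads/tails according to the $TTHH$ and $HHTT$ rules. Your treatment of the linkage axiom, including the boundary case where the midpoint shifts, also matches the paper's case analysis.
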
  
\begin{proof}
By Proposition~\ref{proposition_nest} 
we conclude that $THHT$ and $HTTH$ types of arrows cross.
Hence
if we disregard the direction of the arrows,
we have that
the pattern
\begin{tikzpicture}
\node[fill, circle, inner sep = 0.01pt] (a1) at (0, 0) {.};
\node[fill, circle, inner sep = 0.01pt] (a2) at (0.5, 0) {.};
\node[fill, circle, inner sep = 0.01pt] (a3) at (1.0, 0) {.};
\node[fill, circle, inner sep = 0.01pt] (a4) at (1.5, 0) {.};
\draw[-, thick] (a1) to[out=30, in=150] (a2);
\draw[-, thick] (a3) to[out=30, in=150] (a4);
\end{tikzpicture}
cannot occur. Thus all pairs of arrows must either nest or cross.

The fact that every pair of arrows nests or crosses
implies that all arrows must arch over the
{\em midpoint} of the set $I \cup J$, that is, the point that has the same number of
elements of $I \cup J$ to the left of it as the number of such elements
to the right. For example, for the word $TTHT|HHTH$, the midpoint is
marked with a vertical bar. Hence the number of tails to the left of the
midpoint must equal the number of heads to the right of it. The tails to
the left of the midpoint are tails of the forward arrows. They must be
matched with the heads to the right of the midpoint. The analogous
statement is true for the heads and tails of the backward arrows.
Now match the left of midpoint tails with the right of the midpoint heads
according to the $TTHH$ rule.
Similarly, match the right of midpoint tails with the left of the midpoint heads
according to the $HHTT$ rule.
Both of these matchings are unique, proving the support axiom.

To verify the linkage axiom~(LA),
by symmetry
it is enough to verify
the linkage axiom after inserting a new tail at position~$k$, denoted by~$T'$.
Assume that there another tail $T$ of an arrow~$(i,j)$
in the set of tails $I$ adjacent to $T'$
and that the position $i$ is on the same side of the midpoint
as position~$k$.
Then we can replace the arrow $(i,j)$ with~$(k,j)$.
If there is no such arrow~$(i,j)$,
there is no tail on the same side as~$T'$,
that is, the situation is
$T \cdots T | H H \cdots H T' H \cdots H$
or
$H \cdots H T' H \cdots H H | T \cdots T$.
These two possibilities are symmetric, so it enough to consider
the first one.
Note that all the arrows are forward arrows.
Let $(i,j)$ be the arrow with the smallest value of $j$,
that is, the arrow attached to the first $H$.
Replace the arrow $(i,j)$ with the new arrow $(k,j)$.
Note that this yields the new word
$T \cdots T H | H \cdots H T' H \cdots H$,
where there is now an $H$ on the left of the new midpoint.
This completes the verification of the linkage axiom.
\end{proof}

\subsection{The Simion class of triangulations}
\label{subsection_other}

In this section we conclude the proof of Theorem~\ref{theorem_main}. The
remaining case is the Simion class of triangulations.
\begin{theorem}
Let  $\triangle_{n}$ be a permissible uniform flag complex on the vertex
set~$V_{n}$ 
that has the properties
that exactly one of the types $THTH$ and $HTHT$ do nest.
Then the complex~$\triangle_{n}$ represents a
triangulation of the boundary~$\partial P_{n}$ of the root polytope. 
\label{theorem_existence_simion_class}
\end{theorem}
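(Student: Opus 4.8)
The proof will follow the same template as Theorems~\ref{theorem_nonest} and~\ref{theorem_nest}: verify the support axiom~(SA) and the linkage axiom~(LA) of Theorem~\ref{theorem_legendreme}. The new feature is that the two directions of arrows now behave asymmetrically, so the argument must be organized around the midpoint-type decomposition of the word associated to a pair $(I,J)$. By Lemma~\ref{lemma_involutions} and the dual form of the arrow-reversal operation, I may assume without loss of generality that $HTHT$ type pairs nest and $THTH$ type pairs do not nest (equivalently, in the language of Theorems~\ref{theorem_forward} and its dual, forward arrows are governed by revlex-type rules and backward arrows by lex-type rules). Under this normalization, the restriction of $\triangle_n$ to forward arrows is handled by Lemma~\ref{lemma_forward_only}, and the restriction to backward arrows by Lemmas~\ref{lemma_backward_only} and~\ref{lemma_backward_only_converse}.

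\textbf{Support axiom.} Given disjoint $I,J$ with $|I|=|J|$, form the associated $TH$-word $w$ and lattice path. The key structural claim is that $w$ admits a canonical factorization separating the ``forward region'' from the ``backward region.'' Since $THTH$ pairs do not nest, by Lemma~\ref{lemma_backward_only_converse} and its dual, any maximal lower-Dyck factor of $w$ must be matched entirely by backward arrows, and any maximal upper-Dyck factor entirely by forward arrows; the remaining part of $w$ — the letters lying on the ``spine'' between these excursions — must be matched by arrows joining a forward tail to a backward head or vice versa, and here the conditions on $THHT$, $HTTH$, $TTHH$, $HHTT$ (which, by the three Simion subclasses, are exactly the data not yet pinned down) determine the matching uniquely. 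Concretely: strip off maximal Dyck factors from both ends using Lemma~\ref{lemma_lower_Dyck_word} and its dual, reducing to the case where $w$ has no proper Dyck prefix or suffix of the ``wrong'' excursion type; in the reduced case the first letter is a tail that must be forward and the last letter is a head that must be backward (or the degenerate situations), and one matches greedily from the outside in, invoking Propositions~\ref{proposition_nonest}--\ref{proposition_nest_follow} to rule out the configurations that would create non-uniqueness. One checks at each step that removing the outermost matched arrow leaves a word of the same reduced shape, so induction closes.

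\textbf{Linkage axiom.} By Lemma~\ref{lemma_LA} it suffices to treat insertion of a new head $k$ between two tails and, dually (using the normalization rather than Lemma~\ref{lemma_involutions} here, since the two directions are not symmetric), insertion of a new tail $k$ between two heads; I will also need the boundary cases where $k$ becomes the least or greatest node. When $k$ is inserted inside a backward-dominated portion of the word, Lemma~\ref{lemma_LA_backward} already gives the required re-link with a backward arrow. The genuinely new situations are those in which $k$ lands on the spine, i.e.\ between a region matched by forward arrows and a region matched by backward arrows, or straddling the midpoint. There I pick one of the two arrows incident to the neighbors of $k$, unlink it, and re-link it to $k$, exactly as in the figures accompanying Theorems~\ref{theorem_nonest} and~\ref{theorem_nest}; the verification that the result is still a face amounts to checking the finitely many local patterns against the four relevant rules, and the three Simion subclasses are precisely designed so that in each subclass such a re-linking exists. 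This case analysis — there are a handful of spine patterns, each with the two subcases nest/cross for the neighboring arrows — is where the bulk of the work lies.

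\textbf{Main obstacle.} The hard part is the support axiom in the reduced case: proving that once the Dyck excursions are peeled off, the spine forces a \emph{unique} matching. Uniqueness fails in general for flag complexes satisfying only a subset of the rules, and it is exactly Proposition~\ref{proposition_nest_follow} (together with the subclass hypotheses on $TTHH$/$HHTT$ when both $THHT$ and $HTTH$ cross) that eliminates the bad cases; organizing the induction so that these hypotheses are invoked at the right moment, and handling the three subclasses uniformly rather than in three separate arguments, is the delicate point. I expect the linkage axiom to be comparatively routine once the canonical factorization from the support-axiom analysis is in hand, since it localizes to small patterns already implicitly understood through Lemmas~\ref{lemma_LA} and~\ref{lemma_LA_backward} and their duals.
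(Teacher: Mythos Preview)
Your proposal has a genuine gap at the structural claim underlying the support axiom. You assert that ``any maximal lower-Dyck factor of $w$ must be matched entirely by backward arrows, and any maximal upper-Dyck factor entirely by forward arrows,'' and build the rest of the argument on this decomposition. This is true only in the Simion subclass~$a$, where neither $THHT$ nor $HTTH$ crosses; it fails in subclasses~$b$ and~$c$. For a concrete counterexample (in your normalization $HTHT$ nests, $THTH$ does not), take $I=\{2,3\}$, $J=\{1,4\}$, so $w=HTTH$ factors as a lower Dyck word $HT$ followed by an upper Dyck word $TH$. In subclass~$c$ (both $THHT$ and $HTTH$ cross), the unique matching is $\{(2,4),(3,1)\}$: the forward arrow $(2,4)$ has its tail inside the lower Dyck factor and the backward arrow $(3,1)$ has its tail inside the upper Dyck factor, so neither excursion is matched ``entirely'' by one direction. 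Your spine reduction therefore does not get off the ground in subclasses~$b$ and~$c$, and the attempted uniform treatment collapses.

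The paper does not attempt a uniform argument. It fixes the normalization $THTH$ nests, $HTHT$ does not nest (the opposite of yours, via $\triangle\mapsto\triangle^{*}$), and then proves the three subclasses separately in Propositions~\ref{proposition_existence_simion_subclass_a}, \ref{proposition_existence_simion_subclass_c}, \ref{proposition_existence_simion_subclass_b}, \ref{proposition_existence_simion_subclass_b_part_two}. Each subclass gets its own explicit factorization of the $TH$-word: in subclass~$a$ the forward-arrow endpoints are the left-to-right and right-to-left maxima of the lattice path, in subclass~$c$ they are the first $h$ tails and last $h$ heads where $h$ is the maximum height, and subclass~$b$ is a hybrid. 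These factorizations are genuinely different, which is precisely why your unified Dyck-stripping does not work. A smaller point: under your normalization you have swapped which direction is ``lex-type'' and which is ``revlex-type,'' and correspondingly the lemmas you cite for forward versus backward arrows should be interchanged (you want the dual of Lemma~\ref{lemma_backward_only_converse} for forward arrows and the dual of Lemma~\ref{lemma_forward_only} for backward arrows).
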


Using the involution
$\triangle_{n} \longmapsto \triangle_{n}^{*}$
it is enough to consider the cases
where
$THTH$ type of arrows nest
and
$HTHT$ type of arrows do not nest
in the following propositions.
We begin by considering all four flag complexes 
in the Simion subclass of type~$a$.

\begin{proposition}
Let  $\triangle_{n}$ be a permissible uniform flag complex on the vertex
set~$V_{n}$ that has the properties
that the $THTH$ type of arrows nest, the $HTHT$ type of arrows do not 
nest, and both $THHT$ and $HTTH$ types of arrows do not cross.
Then the complex~$\triangle_{n}$ represents a
triangulation of the boundary~$\partial P_{n}$ of the root polytope. 
\label{proposition_existence_simion_subclass_a}
\end{proposition}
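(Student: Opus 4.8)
The plan is to verify the support axiom (SA) and the linkage axiom (LA) of Theorem~\ref{theorem_legendreme} for this particular uniform flag complex. We are in the case where $THTH$ type pairs nest, $HTHT$ type pairs do not nest, and neither $THHT$ nor $HTTH$ type pairs cross; since exactly one of $THTH$, $HTHT$ nests, by the necessity results (Propositions~\ref{proposition_nonest} through~\ref{proposition_nest_follow}) there are no further constraints forced, so the conditions on $TTHH$ and $HHTT$ are free, and the argument will handle all four sign choices uniformly. The key structural observation is that arrows of opposite orientation neither cross nor nest: a forward arrow and a backward arrow that share no endpoint must be of type $THHT$/$HTTH$ (if they are ``interleaved-looking'') or $THTH$/$HTHT$; since $THHT$ and $HTTH$ do not cross and forward/backward arrows cannot form a nesting $THTH$ or $HTHT$ pattern (a nest would force both endpoints of one inside the other, which changes the head/tail reading), any two arrows of opposite direction must be disjoint in the interval sense --- one lies entirely to the left of the other.

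\textbf{Support axiom.} First I would analyze the associated lattice path of the $TH$-word for a disjoint pair $(I,J)$ with $|I|=|J|$. Because opposite-direction arrows do not interleave, the forward arrows live over the maximal runs of the path strictly above the $x$-axis and the backward arrows live over the maximal runs strictly below. On a maximal above-axis run the restricted word is an upper Dyck word, and on a maximal below-axis run it is a lower Dyck word. I would then invoke Lemma~\ref{lemma_backward_only_converse} (applicable here since $HTHT$ does not nest) together with Lemma~\ref{lemma_lower_Dyck_word} to get a unique backward matching on each below-axis run, and its dual (using that $THTH$ nests, i.e. the dual of the $HTHT$-nondoesnt hypothesis) to get a unique forward matching on each above-axis run. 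Concatenating these gives existence. For uniqueness I would argue that any matching in $\triangle_n$ between $I$ and $J$ must respect the run decomposition --- an arrow crossing from an above-run to a below-run would have to be an opposite-direction interleaving pair with some arrow, which is excluded --- and then appeal to the uniqueness parts of Lemma~\ref{lemma_backward_only_converse} and its dual on each run.

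\textbf{Linkage axiom and main obstacle.} By symmetry (reverse all arrows, invoking Theorem~\ref{theorem_forward}-style duality) it suffices to treat the insertion of a new tail $k$, and by Lemma~\ref{lemma_LA} we may assume $k$ is inserted strictly between two heads (or at an end next to a head). The heads to the left and right of $k$ lie in runs of the path; I would split into cases according to the type ($THHT$, $HHTT$, or $TTHH$) formed by the two arrows attached to the bracketing heads, mirroring the case analysis of Theorem~\ref{theorem_nonest} and Figure~\ref{figure_proof_of_nonest}, but now the subtlety is that the signs of $TTHH$ and $HHTT$ are unconstrained, so within a maximal run one must use the explicit matching rules of Remark~\ref{remark_backward_only}. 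The hard part will be the case where inserting $k$ as a tail falls inside a below-axis (backward) run and lowers the path further, or sits exactly at a return point to the $x$-axis, so that the insertion changes which endpoints belong to which maximal run; there I would use Lemma~\ref{lemma_LA_backward} to produce a backward arrow to $k$ from the appropriate head (the first-return head if $HHTT$ nests, the least available head if $HHTT$ crosses) and verify the resulting matching is still run-compatible and hence lies in $\triangle_n$. Once every sub-case exhibits an explicit edge $(i,j)\in\sigma$ with $(\sigma-\{(i,j)\})\cup\{(k,j)\}\in\mathcal{M}$, the linkage axiom holds and Theorem~\ref{theorem_legendreme} completes the proof.
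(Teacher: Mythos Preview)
Your structural claim that ``any two arrows of opposite direction must be disjoint in the interval sense'' is false, and this breaks the support-axiom argument. Consider a forward arrow $(i,j)$ with $i<j$ and a backward arrow $(i',j')$ with $i<j'<i'<j$. Reading the four endpoints left to right gives $T,H,T,H$, so this is a $THTH$ pair, and since $THTH$ nests, the pair $\{(i,j),(i',j')\}$ \emph{is} an edge of $\triangle_n$: a backward arrow may sit entirely inside a forward arrow. Thus forward and backward arrows are not confined to separate maximal above/below-axis excursions of the lattice path. That decomposition is exactly what makes the lex-class proof (Theorem~\ref{theorem_nonest}) go through, but it relies on both $THTH$ and $HTHT$ failing to nest; here $THTH$ nests, so you have imported the wrong picture. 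Relatedly, you cannot invoke ``the dual of Lemma~\ref{lemma_backward_only_converse}'' for the forward arrows: that dual requires $THTH$ \emph{not} to nest, which is the opposite of the present hypothesis.

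The correct structure, which the paper uses, comes from Lemma~\ref{lemma_forward_only}: because $THTH$ nests, all tails of forward arrows precede all heads of forward arrows. If there are $h$ forward arrows, their $2h$ endpoints cut the line into $2h+1$ segments; since $THHT$ and $HTTH$ do not cross and $HTHT$ does not nest, every backward arrow lies entirely inside one such segment, and on each segment one applies Lemma~\ref{lemma_backward_only_converse}. Equivalently, the $TH$-word factors as $w_1\,T\,w_2\,T\cdots T\,w_{h+1}\,H\,w_{h+2}\,H\cdots H\,w_{2h+1}$ with each $w_r$ a lower Dyck word, where the explicit $T$'s and $H$'s are the left-to-right and right-to-left maxima of the lattice path. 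Uniqueness then comes from showing these maxima are forced to be exactly the endpoints of the forward arrows. For (LA), the paper uses Lemma~\ref{lemma_LA_backward} inside a backward run and handles separately the boundary cases where the inserted node sits at the very beginning or end of a run (possibly converting a forward arrow into a backward one or vice versa). Your case analysis for (LA) would need to be rebuilt on this corrected picture.
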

\begin{proof}
Let $w$ be the associated $TH$-word to the two sets $I$ and $J$.
Factor the word $w$ as follows 
$$ w = w_{1} T \cdots T w_{h} T w_{h+1} H w_{h+2} H \cdots H w_{2h+1} $$
where the factors $w_{1}, w_{2}, \ldots, w_{2h+1}$
are lower Dyck words.
Note that such a factorization exists and is unique
since the $h$ $T$s in the expression correspond
to left-to-right maxima of the lattice path. Similarly,
the $h$ $H$s correspond to right-to-left maxima.
Create a matching from $I$ to $J$ by
making $h$ forward arrows from the $h$ $T$s
to the $h$ $H$s, according to the $TTHH$ rule.
Finally, apply
Lemma~\ref{lemma_backward_only_converse}
to each factor $w_{r}$ to create a matching consisting
of only backward arrows.
It is straightforward to see that there is no crossing
between a forward and a backward arrow, that is,
the $THHT$ and $HTTH$ conditions hold.
Furthermore, no backward arrow nests a forward arrow,
the $HTHT$ condition follows
and
finally no two forward arrows follow each other,
so the $THTH$ condition is also true.

Next we show the uniqueness part of the support axiom. Assume a
matching from $I$ to~$J$ contains $h$ forward arrows. Since $THTH$ type of
pairs nest, the tails of all forward arrows precede all heads. 
The heads and tails of the forward  arrows partition the number line
into $2h+1$ segments.
Since $THHT$ and $HTTH$ types of pairs do not cross
and $HTHT$ do not nest,
backward arrows that have one end in one of these line
segments have their other end in the same line segment.
By Lemma~\ref{lemma_backward_only} the part of the lattice path associated
to all backward arrows in one of these line segments represent a
lattice path starting and ending at the same level and never going above
the level where it started. Hence the tails of the forward arrows mark
the first ascents to level $1$, $2$, $\ldots$, $h$ and the heads of the
forward arrows mark the last descents to levels $h-1$, $h-2$, $\ldots$,
$0$. This observation shows the unique determination of the endpoints of
the forward arrows. By Lemma~\ref{lemma_forward_only} there is a unique
way to match the heads and tails of the forward arrows, and by
Lemma~\ref{lemma_backward_only_converse} there is a unique way to match
the heads and tails of the backward arrows within each segment created
by the endpoints of the forward arrows.

To verify the linkage axiom~(LA), note that
Lemma~\ref{lemma_LA_backward} is applicable unless $k$ is inserted as a
tail at the beginning of a run of backward arrows or as a head at the
end of such a run. Assume $k$ is inserted as a tail; the case when $k$
is inserted as a head is completely analogous. If $k$ is inserted right
after a tail, then we are done by Lemma~\ref{lemma_LA}. If $k$ is inserted right
after a head $j$, then this head is necessarily the head of a forward
arrow $(i,j)$. In this case insert the backward arrow $(k,j)$ and remove the
arrow $(i,j)$. Note that this results in a matching in $\triangle_{n}$ in
which there is one less forward arrow and the arrow $(k,j)$ becomes part
of the run of backward arrows immediately succeeding it. We are left
with the case when the inserted tail $k$ satisfies $k<\min(I\cup J)$. If
the current matching contains at least one forward arrow, then we select
the forward arrow $(i,j)$ with the smallest $i$. We remove the arrow
$(i,j)$ and add the arrow $(k,j)$. This move does not change the
crossing or nesting properties of arrows of the same direction, nor does
it create crossing arrows of opposite directions. Finally, if the
current matching on $I\cup J$ consists of backward arrows only, then we
associate an initial $NE$ step to $k$, and continue with the lattice
path associated to $I\cup J$ which now starts and ends at level $1$ and
never goes above that level. Let $j\in J$ be the head associated to the
last descent from level $1$ to level $0$. This is the head of a backward
arrow $(i,j)$. Removing $(i,j)$ and adding $(k,j)$ results in a valid
matching because of part (3) of Remark~\ref{remark_backward_only}. 
\end{proof}

We next turn our attention to the Simion subclass of type~$c$.
\begin{proposition}
Let  $\triangle_{n}$ be a permissible uniform flag complex on the vertex
set~$V_{n}$ that has the properties
that the $THTH$ type of arrows nest, the $HTHT$ type of arrows do not
nest, both $THHT$ and $HTTH$ types of arrows cross
and
both $TTHH$ and $HHTT$ type of arrows nest.
Then the complex~$\triangle_{n}$ represents a
triangulation of the boundary~$\partial P_{n}$ of the root polytope. 
\label{proposition_existence_simion_subclass_c}
\end{proposition}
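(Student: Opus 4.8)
The plan is to verify the support axiom (SA) and the linkage axiom (LA) of Theorem~\ref{theorem_legendreme}, following the template of the proof of Proposition~\ref{proposition_existence_simion_subclass_a}. Using the involution $\triangle_n\longmapsto\triangle_n^{*}$ of Lemma~\ref{lemma_involutions}, we may assume the arrows are oriented so that $THTH$ nests while $HTHT$ does not nest; together with the hypotheses this pins down the complete list of edge rules: $THTH$ nests, $HTHT$ does not nest, both $THHT$ and $HTTH$ cross, and both $TTHH$ and $HHTT$ nest. The first thing I would record is what these rules force on any face $\sigma$ of $\triangle_n$ that is a matching: (i) any two forward arrows of $\sigma$ nest and all of their tails precede all of their heads (a pair of forward arrows cannot be of type $THTH$, hence must be of type $TTHH$, which nests); (ii) a forward arrow and a backward arrow of $\sigma$ always overlap --- either they cross (type $THHT$ or $HTTH$) or the backward arrow is nested inside the forward one (type $THTH$) --- but a forward arrow is never nested inside a backward arrow (that would be a non-nesting $HTHT$ pair) and the two are never disjoint; and (iii) two backward arrows of $\sigma$ either nest (type $HHTT$) or are disjoint (type $HTHT$), so by Lemma~\ref{lemma_backward_only} the $TH$-subword spanned by the backward arrows of $\sigma$ is a lower Dyck word, whose induced matching must be the first-return matching described in Remark~\ref{remark_backward_only}.

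To verify (SA), given disjoint $I,J$ with $|I|=|J|\neq 0$ and associated $TH$-word $w$ with lattice path $P$, let $p$ denote the maximum height of $P$. I would take the first $p$ up steps of $P$ to be the tails of the forward arrows and the last $p$ down steps to be their heads; since the $p$-th up step always precedes the $(|J|-p+1)$-st down step, these tails all precede these heads, and we match them by the nesting ($TTHH$) rule. A short bookkeeping computation with the heights shows that deleting these $2p$ steps from $w$ leaves a lower Dyck word $w^{-}$; by Lemma~\ref{lemma_backward_only_converse} there is then a unique matching on the remaining nodes, consisting of backward arrows, and we adjoin it. That the resulting matching is a face of $\triangle_n$ follows by checking, with the help of the location of the deleted steps and the non-crossing property of the first-return matching, that every pair of its arrows is of an admissible type: two forward arrows form a nesting $TTHH$ pair; two backward arrows form a nesting $HHTT$ pair or a non-nesting $HTHT$ pair; and a forward--backward pair is always a crossing $THHT$ or $HTTH$ pair or a nesting $THTH$ pair, the excluded ``forward inside backward'' configuration never arising. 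For uniqueness I would take an arbitrary matching $\sigma$ of $I$ to $J$ lying in $\triangle_n$: by~(i) its forward arrows occupy a tail-block followed by a head-block matched by nesting, and by~(ii) and~(iii) the tails of these forward arrows must be exactly the first $p$ up steps of $P$ and their heads exactly the last $p$ down steps. This rigidity is the crux, and I would establish it by an extremal argument on the outermost forward arrow and on the leftmost backward endpoint, in the spirit of the proof of Lemma~\ref{lemma_backward_only_converse}; the forward part is then determined by the uniqueness statement underlying Lemma~\ref{lemma_forward_only} and the backward part by Lemma~\ref{lemma_backward_only_converse}.

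To verify (LA), by the $*$-involution it suffices to treat the insertion of a new tail $k$, and by Lemma~\ref{lemma_LA} we may assume $k$ is inserted between two heads (or as the extreme node next to a head). If those heads both lie in the backward part of the canonical matching, Lemma~\ref{lemma_LA_backward} applies directly. If $k$ falls within the forward block, I would re-point an appropriate arrow --- the candidates being the forward arrows whose heads straddle $k$ and the innermost forward arrow $F_p=(t_p,h_p)$ --- so that the resulting matching is again a face, noting that this move may decrease the number of forward arrows by one, with the freed-up arrow joining the run of backward arrows; the type checks here use that $THHT$ and $HTTH$ cross. If $k<\min(I\cup J)$ and $\sigma$ still contains a forward arrow, I would delete the outermost one $F_1=(t_1,h_1)$ and adjoin $(k,h_1)$; since $k<t_1$, each pair involving $(k,h_1)$ stays of an admissible type. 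Finally, if $k<\min(I\cup J)$ and $\sigma$ consists of backward arrows only, I would mimic the corresponding end case of the proof of Proposition~\ref{proposition_existence_simion_subclass_a}: prepend an up step for $k$, so that the path now starts and ends at level $1$ and never rises above it, let $j$ be the head of the backward arrow realizing the last descent from level $1$ to level $0$, delete $(i,j)$ and adjoin $(k,j)$, which is a valid matching by part~(3) of Remark~\ref{remark_backward_only}.

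The step I expect to be the main obstacle is the uniqueness half of (SA): proving that in every matching of $I$ to $J$ inside $\triangle_n$ the forward arrows are forced to use precisely the first $p$ up steps and the last $p$ down steps of the path, and correspondingly that $w^{-}$ is a lower Dyck word. The remainder of (SA) and all of (LA) should reduce to type checks and a finite, if somewhat involved, case analysis that becomes routine once the structural observations (i)--(iii) are available.
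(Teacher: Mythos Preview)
Your approach is essentially the paper's: for (SA) you pick out the first $p$ up steps and last $p$ down steps as the forward arrows (matched by the $TTHH$-nesting rule), observe that what remains is a lower Dyck word, and invoke Lemma~\ref{lemma_backward_only_converse}; for uniqueness you argue that the forward arrows are forced to occupy exactly those positions. The paper organises the (LA) verification via an explicit five-factor decomposition
\[
w=\underbrace{H^{m_1}T\cdots H^{m_h}T}_{1}\cdot\underbrace{u_1T\cdots u_mT}_{2}\cdot\underbrace{x}_{3}\cdot\underbrace{Hv_1\cdots Hv_n}_{4}\cdot\underbrace{HT^{n_1}\cdots HT^{n_h}}_{5},
\]
which makes the case analysis cleaner than your ``forward block'' language; in particular, inserting a tail among the forward \emph{heads} (factor~5) is the delicate case, handled by either sliding a backward tail (when $n>0$) or removing the innermost forward arrow (when $n=0$). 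Your sketch does not clearly separate this case.

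One genuine slip: you invoke the involution $\triangle_n\mapsto\triangle_n^{*}$ twice, once at the outset and once to reduce head insertion to tail insertion in (LA). The first use is merely redundant, since the hypotheses already fix $THTH$ nesting and $HTHT$ non-nesting. The second use, however, is incorrect: $\triangle_n^{*}$ swaps the $THTH$ and $HTHT$ conditions, so $\triangle_n^{*}\neq\triangle_n$ here, and proving tail insertion for $\triangle_n$ does not yield head insertion for $\triangle_n$ via~$*$. The reduction you want uses the \emph{reflected} dual $\triangle_n\mapsto\overline{\triangle_n}$: by Lemma~\ref{lemma_involutions} this fixes the $THTH$, $HTHT$, $TTHH$, $HHTT$ conditions and swaps $THHT\leftrightarrow HTTH$, so in subclass~$c$ (where both cross) one has $\overline{\triangle_n}=\triangle_n$, and since $\overline{\phantom{x}}$ exchanges heads with tails while reversing node order, tail insertion does imply head insertion.
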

\begin{proof}
Let $w$ be the associated $TH$-word to the two sets $I$ and $J$
and assume that the associated lattice path reaches height~$h$.
That is, there are at least $h$~ascents ($T$) before the path reaches
height~$h$, and at least $h$~descents ($H$) since it leaves height $h$ the
last time.
Hence we factor the word $w$ uniquely as
\begin{align*}
w
& =
H^{m_{1}} T H^{m_{2}} T \cdots T H^{m_{h}} T
\cdot u \cdot
H T^{n_{1}} H T^{n_{2}} H \cdots H T^{n_{h}}
\end{align*}
Using the rule for $TTHH$ (nest),
match the first $h$ tails in this expression with the
last $h$ heads to create $h$ pairwise nesting forward arrows.
The remaining nodes contribute the subword
$w^{\prime}
=
H^{m_{1}+m_{2}+\cdots+m_{h}}
\cdot u \cdot
T^{n_{1}+n_{2}+\cdots+n_{h}}$.
Note that this is a lower Dyck word.
Thus apply 
Lemma~\ref{lemma_backward_only_converse}
to this word to create the remaining arrows,
which are all backward arrows.
Directly by the construction
rules hold for types $THTH$, $TTHH$ and $HHTT$.
Next notice that $HTTH$ must cross, since all tails
of backward arrows occur after the tails of forward arrows.
By the symmetric argument $THHT$ must also cross.
The final condition for $HTHT$
is that no backward arrow can nest a forward arrow.
Recall that the word $w$
reaches its maximum height somewhere in the factor $u$.
Hence the word $w^{\prime}$ reaches the maximum of $0$
also in the factor $u$.
Thus $w^{\prime}$ can be factored into two lower Dyck words
$w^{\prime}
=
(H^{m_{1}+m_{2}+\cdots+m_{h}} \cdot u_{1})
\cdot
(u_{2} \cdot T^{n_{1}+n_{2}+\cdots+n_{h}})$.
By Lemma~\ref{lemma_lower_Dyck_word}
each backward arrow has either its tail or head (or both)
in the word $u_{1} \cdot u_{2}$. Hence it lies between
the tail and and head of every forward arrow
and last condition is proved.

Next we prove uniqueness.
Assume that the associated lattice path reaches height $h$.
Observe that the conditions imply that all the forward arrows must nest.
Let $(i^{\prime},j^{\prime})$
be the innermost forward arrow.
Since no backward arrow nests, whether in front or behind this shortest forward
arrow, each backward arrow must have either its tail or head (or both)
in the interval from $i^{\prime}$ to $j^{\prime}$.
Let $h'$ be the number of forward arrows.
Hence the first $h'$ $T$s in the associated $TH$-word
correspond to the tails of the forward arrows.
Similarly, the last $h'$ $H$s correspond to the heads of
these forward arrows. 
Since the backward arrows nest. Consider the backward arrow $(i'',j'')$
with the smallest head $j''$. If $j'' < i'$ then there is no backward
arrow above the tail $i''$, only $h'$ forward arrows.
Similarly, if there is no such backward arrow then there
is no backward arrow above $i'$, and only $h'$ forward arrows.
In both cases we obtain that the maximal height $h$ is $h'$.
This agrees with the construction in the previous paragraph.
The uniqueness in Lemma~\ref{lemma_backward_only_converse}
implies that the matching from $I$ to $J$ is unique.

Before proving the linkage axiom recall that the $TH$-word $w$
has the following factorization
\begin{align*}
w
& =
\underbrace{H^{m_{1}} T H^{m_{2}} T \cdots T H^{m_{h}} T}_{1\text{st factor}}
\cdot
z
\cdot
\underbrace{H T^{n_{1}} H T^{n_{2}} H \cdots H T^{n_{h}}}_{5\text{th factor}}
, \\
\intertext{where}
z
& =
\underbrace{u_{1} T u_{2} T \cdots u_{m} T}_{2\text{nd factor}}
\cdot
\underbrace{x}_{3\text{rd factor}}
\cdot
\underbrace{H v_{1} H v_{2} \cdots H v_{n}}_{4\text{th factor}} ,
\end{align*}
where
$m = \sum_{i=1}^{h} m_{i}$,
$n = \sum_{i=1}^{h} n_{i}$
and
$u_{1}, u_{2}, \ldots, u_{m}, x, v_{1}, v_{2},  \ldots, v_{n}$
are all lower Dyck words.
Consider the case when we insert a new tail~$T'$.
By Lemma~\ref{lemma_LA_backward}
it can be inserted in the one of the lower Dyck words
$u_{1}, \ldots, u_{m}, x, v_{1},  \ldots, v_{n}$
or immediately after one of these words.
If inserted in the $1$st factor,
or immediately afterwards,
we can move one of the tails
of the forward arrows.
The case of insertion in the $2$nd or $3$rd factor
is already taken care of.
In the $4$th factor, insertion in front of a $v_{i}$ must occur
immediately after 
a head, which can be used for the switch.
Finally, in the $5$th factor 
when $n > 0$ we can move one of tails of the closest backward edge
from the $5$th factor to the $4$th factor.
When $n=0$, remove the innermost forward arrow
and use its head. Observe that this is only case that changes
the number of forward arrows. This completes the proof of the linkage axiom (LA).
\end{proof}

Finally, we consider half of the cases in
the Simion subclass of type~$b$.
The proof is a mixture of the two previous proofs.

\begin{proposition}
Let  $\triangle_{n}$ be a permissible uniform flag complex on the vertex
set~$V_{n}$ that has the properties
that the $THTH$ type of arrows nest, the $HTHT$ type of arrows do not
nest, the $HTTH$ type of arrows cross and the $THHT$ type of arrows do not cross.
Then the complex~$\triangle_{n}$ represents a
triangulation of the boundary~$\partial P_{n}$ of the root polytope. 
\label{proposition_existence_simion_subclass_b}
\end{proposition}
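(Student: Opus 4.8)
The plan is to verify the support axiom (SA) and the linkage axiom (LA) of
Theorem~\ref{theorem_legendreme}, as in the two preceding propositions, by
building an explicit matching from a factorization of the associated
$TH$-word and then checking the rules force a unique such matching. This
case is a hybrid: the $HTTH$ crossing is as in Proposition
\ref{proposition_existence_simion_subclass_c}, while the $THHT$
non-crossing is as in Proposition
\ref{proposition_existence_simion_subclass_a}. Since $HTTH$ crosses but
$THHT$ does not cross, Proposition~\ref{proposition_nest_follow} is not
triggered, and one should first record what the necessary conditions of
Theorem~\ref{theorem_main} leave open here: $TTHH$ and $HHTT$ are still
free, but (as we will see) a \emph{backward} arrow and a \emph{forward}
arrow may cross only in the $HTTH$ pattern --- that is, a backward arrow
may have its tail after the tail of a forward arrow, but not before it.
Concretely: all tails of forward arrows precede all heads of forward
arrows (the $THTH$-nest rule), forward arrows among themselves nest or
cross according to $TTHH$, and a backward arrow never nests a forward one
($HTHT$ non-nest) and never forms a $THHT$ crossing with a forward one, so
every backward arrow lies ``to the right'' in the sense that it cannot
start before a forward tail but can start after one.

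\textbf{The factorization and existence.}
Given disjoint $I,J$ with $|I|=|J|$, let $w$ be the associated $TH$-word
and let $h$ be the maximal height of the associated lattice path. As in
Proposition~\ref{proposition_existence_simion_subclass_c}, the first $h$
left-to-right maximal ascents identify $h$ tails, and the last $h$
right-to-left maximal descents identify $h$ heads; use the $TTHH$ rule to
pair these into $h$ forward arrows (pairwise nesting if $TTHH$ nests,
pairwise crossing if $TTHH$ crosses). Deleting these $2h$ steps leaves a
lower Dyck word $w'$. Now --- and this is the point that borrows from
Proposition~\ref{proposition_existence_simion_subclass_a} rather than from
\ref{proposition_existence_simion_subclass_c} --- because $THHT$ does not
cross, $w'$ need not collapse to a single hump: we further factor $w'$
according to its returns to its own base level, using the left-to-right
maxima structure inside $w'$ exactly as in
Proposition~\ref{proposition_existence_simion_subclass_a}, and apply
Lemma~\ref{lemma_backward_only_converse} (or
Lemma~\ref{lemma_lower_Dyck_word}) to each piece to produce backward arrows
only. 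One then checks the six rules directly from the construction: $THTH$
holds since no two forward arrows abut and every forward tail precedes
every head; $TTHH$ and $HHTT$ hold by the matching used for the forward
arrows and by Remark~\ref{remark_backward_only}; $HTTH$ crosses because
every backward-arrow tail occurs after every forward-arrow tail (the
backward arrows were built entirely out of $w'$, which sits between the
outermost forward tail and head, or to the left of the forward heads);
$THHT$ does not cross because, by the humped structure of $w'$, no backward
arrow straddles a forward head; and $HTHT$ non-nest follows since a
backward arrow, having both ends inside the forward-arrow interval, cannot
enclose a forward arrow.

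\textbf{Uniqueness (support axiom) and the linkage axiom.}
For uniqueness, suppose a matching from $I$ to $J$ has $h'$ forward arrows;
the $THTH$-nest rule forces all $h'$ forward tails before all forward
heads, and the combination ``$HTHT$ does not nest, $THHT$ does not cross''
forces each backward arrow to have \emph{both} endpoints in the interval
spanned by the innermost forward arrow, while ``$HTTH$ crosses'' pins the
backward tails to the right of the forward tails; reading the lattice path,
the forward tails must be exactly the first $h'$ ascents to successive new
record heights and the forward heads the last $h'$ descents, so $h'=h$ and
the forward endpoints are determined. Then Lemma~\ref{lemma_forward_only}
makes the forward matching unique and Lemma~\ref{lemma_backward_only_converse}
makes the backward matching unique on $w'$. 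For (LA), by symmetry assume a
new tail $k$ is inserted; Lemma~\ref{lemma_LA} disposes of insertion next
to a tail, and Lemma~\ref{lemma_LA_backward} handles insertion inside or
just after any of the lower Dyck factors of $w'$, producing a new backward
arrow. The only remaining positions are: inserting $k$ just after a forward
head (delete that forward arrow $(i,j)$ and add the backward arrow
$(k,j)$, decreasing the forward count by one, the new arrow joining the run
of backward arrows to its right exactly as in
Proposition~\ref{proposition_existence_simion_subclass_a}); inserting $k$
as the new minimum, where if there is at least one forward arrow we pull
the smallest-indexed forward tail $(i,j)$ to $(k,j)$ and otherwise we
prepend an ascent to the all-backward lattice path and reassign the head of
the last descent via part~(3) of Remark~\ref{remark_backward_only}; and
insertion among the heads of forward arrows, where the nearest forward head
to the right of $k$ gives the arrow to switch. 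The main obstacle is
precisely the bookkeeping in this last paragraph: one must confirm that
each switch neither creates a forbidden crossing between arrows of opposite
direction (in particular does not create a $THHT$ crossing) nor violates
the $TTHH$/$HHTT$ rules, and that after a forward arrow is removed the
resulting word still admits the canonical factorization with $h-1$ forward
arrows --- both of which follow from the fact that the removed or shortened
arrow was an extreme one (innermost forward, smallest forward tail, or
nearest forward head) in its class.
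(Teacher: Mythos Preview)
Your overall plan---verify (SA) and (LA) by exhibiting an explicit matching from a factorization of the $TH$-word, as in the two neighbouring propositions---is exactly the paper's plan, and you correctly identify this subclass as a hybrid of subclasses~$a$ and~$c$. But you get the hybrid backwards on the tail side, and the resulting construction does not satisfy the $HTTH$-crossing rule.

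Concretely: you take the forward-arrow tails to be the $h$ \emph{left-to-right record ascents}. That is the subclass~$a$ choice, appropriate when $HTTH$ does \emph{not} cross; here $HTTH$ crosses, and the correct choice (which is what Proposition~\ref{proposition_existence_simion_subclass_c} actually does---not the record ascents, as you write) is to take the \emph{first $h$ letters $T$} in the word, regardless of whether they are records. The distinction matters as soon as the word begins with an $H$. Take $I=\{2,3\}$, $J=\{1,4\}$, so $w=HTTH$ and $h=1$. The single left-to-right record ascent is at position~$3$, so your construction produces the forward arrow $(3,4)$ and the backward arrow $(2,1)$; but $\{(2,1),(3,4)\}$ is a \emph{non}-crossing $HTTH$ pair and hence not an edge of $\triangle_n$. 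The paper instead pairs the first $T$ (position~$2$) with the right-to-left record $H$ (position~$4$), giving forward $(2,4)$ and backward $(3,1)$, which is the crossing $HTTH$ pair. In general the paper's factorization is
\[
w \;=\; H^{m_1}T\,H^{m_2}T\cdots T\,H^{m_h}T\,w_{h+1}\cdot H\,w_{h+2}\,H\cdots H\,w_{2h+1},
\]
with $w_{h+2},\ldots,w_{2h+1}$ lower Dyck words (the subclass~$a$ structure on the head side, since $THHT$ does not cross) and the first block treated as in subclass~$c$ (since $HTTH$ crosses): the heads $H^{m_1},\ldots,H^{m_h}$ together with $w_{h+1}$ form a single lower Dyck word to which Lemma~\ref{lemma_backward_only_converse} is applied.

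Your uniqueness argument inherits the same slip: the assertion that ``$HTHT$ does not nest, $THHT$ does not cross'' forces every backward arrow to have \emph{both} endpoints inside the innermost forward arrow is false. A backward arrow may lie entirely to the right of a forward head---$i<j<j'<i'$ with forward $(i,j)$ and backward $(i',j')$ is a non-crossing $THHT$ pair, hence allowed---and this is exactly what populates the segments $w_{h+2},\ldots,w_{2h+1}$ in the paper's factorization. What the rules do force is that no backward \emph{tail} precedes any forward tail (from $HTTH$ crossing), so the leftmost $h$ tails in $I$ are the forward-arrow tails; and that the forward heads are the last descents to levels $h-1,\ldots,0$ (from $THHT$ not crossing). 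With those two corrections your argument goes through essentially as written.
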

\begin{proof}
Let $w$ be the associated $TH$-word to the two sets $I$ and $J$
and assume that the associated lattice path reaches height $h$.
Factor the word $w$ as follows 
$z_{1} \cdot z_{2} = z_{1} \cdot H w_{h+2} H \cdots H w_{2h+1}$
where the factors $w_{h+2}, w_{h+3}, \ldots, w_{2h+1}$
are lower Dyck words.
Note that such a factorization exists and is unique
since the $h$ $H$s are right-to-left maxima.
Note that the word $z_{1}$ contains at least $h$ $T$s.
Factor $z_{1}$ as
$H^{m_{1}} T H^{m_{2}} T \cdots T H^{m_{h}} T w_{h+1}$.
That is, we have
\begin{align*}
w = z_{1} \cdot z_{2}
& =
H^{m_{1}} T H^{m_{2}} T \cdots T H^{m_{h}} T w_{h+1}
\cdot
H w_{h+2} H \cdots H w_{2h+1} .
\end{align*}
Create a matching from $I$ to $J$ by
making $h$ forward arrows from the first $h$ $T$s in
$z_{1}$ to the $h$ $H$s in $z_{2}$, according to the $TTHH$ rule.
Apply
Lemma~\ref{lemma_backward_only_converse}
to each factor $w_{h+2}$ through $w_{2h+1}$ to create matchings consisting
of only backward arrows.
Finally, on the remaining letters
$H^{m_{1}}$, $H^{m_{2}}$,  $\ldots$, $H^{m_{h}}$, $w_{h+1}$
apply
Lemma~\ref{lemma_backward_only_converse}
again
to create the remaining backward arrows.
It is straightforward to see that this matching
satisfies all the conditions and hence the existence
part of the support axiom (SA) holds.

We now prove the uniqueness part of the support axiom (SA).
We denote the number of forward arrows in the matching
between $I$ and $J$ by $h$. The endpoints of these arrows partition the
number line into $2h+1$ segments. Since $THHT$ pairs do not cross, just
as in the proof of
Proposition~\ref{proposition_existence_simion_subclass_a},
backward
arrows having one end in one of the last $h$ segments have both ends in
the same segment. The endpoints of these arrows within the same segment
are associated to a lattice path starting and ending at the same
level. Before the leftmost head of a forward arrow the numbers of tails
exceeds the number of heads by $h$ and this is the largest height
reached by the associated lattice path. The heads of the forward arrows
mark the last descents to level $h-1$, $h-2$, $\ldots$, $0$,
respectively. Since $HTTH$ type of pairs cross, no tail of a backward arrow
may occur before the tail of the forward arrow: the leftmost $h$ tails
are tails of forward arrows, and they mark the first $h$ ascents in the
associated lattice path. The rest of the proof of the uniqueness and
existence parts of the support axiom is very similar to the one in the proof
of Proposition~\ref{proposition_existence_simion_subclass_a},
thus we omit the details.
We only underscore the key difference:
we treat all backward arrows whose tail is
to the right of the heads of the forward arrows as a single set:
after removing the tails of the  forward arrows this yields a lattice path
from level $0$ to level $0$ that never goes above the $x$-axis, and the
tails of the forward arrows are correctly reinserted if and only if they
are to the left of the tails of all backward arrows. Hence we 
consider $h+1$ runs of backward arrows: the last $h$ runs are just like
in the proof of Proposition~\ref{proposition_existence_simion_subclass_a}
and the first run is different. 

To verify the linkage axiom (LA) we observe that
Lemma~\ref{lemma_LA_backward} is applicable in all cases except when a
new head is inserted at the end or a new tail is inserted at the
beginning of any of the $h+1$ runs of backward arrows. If a new tail is
inserted at the beginning of one of the last $h$ runs of backward arrows then
we may proceed exactly as in the proof of
Proposition~\ref{proposition_existence_simion_subclass_a}
when $k$ is inserted right after
the head of a forward arrow. If $k < \min(I\cup J)$ is inserted as a tail,
again we may proceed as in the proof of
Proposition~\ref{proposition_existence_simion_subclass_a}
(note that there is nothing to the
left of this inserted new tail).  If $k$ is inserted as a head at the
end of one of the first $h$ runs of backward arrows then $k$ is
immediately followed by the head $j$ of a forward arrow $(i,j)$ and
Lemma~\ref{lemma_LA} is applicable. We are left with the case when
$k > \max(I \cup J)$ and $k$ is inserted as a head. If the matching on
$I \cup J$ contains at least one forward arrow, we proceed as in the
proof of Proposition~\ref{proposition_existence_simion_subclass_a}
(note that the dual case
when $k$ is inserted as a tail at the beginning was explained). Finally,
when the matching on $I\cup J$ consists of backward arrows only, then
let $(i,j)$ be the backward arrow whose tail $i$ is $\min(I)$. Replacing
$(i,j)$ with $(i,k)$ yields a matching in $\triangle_{n}$. 
\end{proof}
  
\begin{proposition}
Let  $\triangle_{n}$ be a permissible uniform flag complex on the vertex
set~$V_{n}$ that has the properties
that the $THTH$ type of arrows nest, the $HTHT$ type of arrows do not
nest, the $HTTH$ type of arrows do not cross and the $THHT$ type of arrows cross.
Then the complex~$\triangle_{n}$ represents a
triangulation of the boundary~$\partial P_{n}$ of the root polytope. 
\label{proposition_existence_simion_subclass_b_part_two}
\end{proposition}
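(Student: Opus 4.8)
The plan is to deduce this proposition from Proposition~\ref{proposition_existence_simion_subclass_b} by applying the involution $\triangle_{n} \longmapsto \overline{\triangle_{n}}$, rather than repeating the verification of the support and linkage axioms. First I would record that $\overline{\triangle_{n}}$ is again a permissible uniform flag complex on $V_{n}$: the operation is the composition of reversing all arrows with relabeling node $i$ as $n+2-i$, and neither of these affects admissibility, the forest condition, flagness, or the conditions of Lemma~\ref{lemma_easy} (condition~(a) merely interchanges its two clauses under arrow reversal, and all of (a)--(c) are symmetric under relabeling). Likewise uniformity in the sense of Definition~\ref{definition_uflag} is preserved, since the operation acts on $V_{n}$ through a bijection that respects the pattern of equalities and inequalities among the four indices involved.

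Next I would invoke Lemma~\ref{lemma_involutions} (see also Table~\ref{table_star_and_overline}): under $\triangle_{n} \longmapsto \overline{\triangle_{n}}$ the conditions on the types $THTH$, $HTHT$, $TTHH$ and $HHTT$ are unchanged, while those on $THHT$ and $HTTH$ are interchanged. Hence, starting from a complex $\triangle_{n}$ in which $THTH$ nests, $HTHT$ does not nest, $HTTH$ does not cross and $THHT$ crosses, the reflected dual $\overline{\triangle_{n}}$ has $THTH$ nesting, $HTHT$ not nesting, $HTTH$ crossing and $THHT$ not crossing --- exactly the hypotheses of Proposition~\ref{proposition_existence_simion_subclass_b}. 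Therefore $\overline{\triangle_{n}}$ represents a triangulation of $\partial P_{n}$.

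Finally I would observe that $\triangle_{n} \longmapsto \overline{\triangle_{n}}$ is realized geometrically by the linear automorphism $\phi$ of $\mathbb{R}^{n+1}$ sending $e_{i}$ to $-e_{n+2-i}$, which carries the vertex $e_{j}-e_{i}$ of $P_{n}$ to the vertex $e_{n+2-i}-e_{n+2-j}$; since $\phi$ induces a bijection of the vertex set $V_{n}$, it is a symmetry of the polytope $P_{n}$ and hence of its boundary $\partial P_{n}$. An affine symmetry carries triangulations without new vertices to triangulations without new vertices, so $\triangle_{n}$ triangulates $\partial P_{n}$ if and only if $\phi(\triangle_{n})=\overline{\triangle_{n}}$ does; as the latter has just been shown to be a triangulation, so is $\triangle_{n}$. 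I do not expect a genuine obstacle here: the mathematical content lies entirely in Proposition~\ref{proposition_existence_simion_subclass_b}, and the only point requiring care is the bookkeeping that the four local conditions transform as claimed under $\overline{\phantom{x}}$, which is precisely what Lemma~\ref{lemma_involutions} and Table~\ref{table_star_and_overline} supply. (Alternatively one could give a direct proof mirroring that of Proposition~\ref{proposition_existence_simion_subclass_b}, with the roles of the forward and backward runs on the two sides of the forward arrows swapped, but the reflection argument is considerably shorter.)
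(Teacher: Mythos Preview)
Your proposal is correct and takes essentially the same approach as the paper: the paper's proof consists of the single sentence that the result follows from Proposition~\ref{proposition_existence_simion_subclass_b} and Lemma~\ref{lemma_involutions} by applying the involution $\triangle_{n} \longmapsto \overline{\triangle_{n}}$. You have simply supplied more of the routine verifications (preservation of permissibility and uniformity, and the geometric interpretation of the involution as a linear symmetry of~$P_{n}$) that the paper leaves implicit.
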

\begin{proof}
Follows from 
Proposition~\ref{proposition_existence_simion_subclass_b}
and
Lemma~\ref{lemma_involutions}
by applying the involution
$\triangle_{n} \longmapsto \overline{\triangle_{n}}$.
\end{proof}

\begin{proof}[Proof of Theorem~\ref{theorem_existence_simion_class}.]
The result 
in the case $THTH$ nests and $HTHT$ does not nest
follows by combining
Propositions~\ref{proposition_existence_simion_subclass_a},
\ref{proposition_existence_simion_subclass_c},
\ref{proposition_existence_simion_subclass_b}
and~\ref{proposition_existence_simion_subclass_b_part_two}.
The case when
$THTH$ does not nest and $HTHT$ nests
follows by the involution
$\triangle_{n} \longmapsto \triangle_{n}^{*}$.
\end{proof}

\section{Fifteen distinct triangulations}
\label{section_fifteen_distinct_triangulations}

The classification given in Theorem~\ref{theorem_main} was accompanied
by the observation that some triangulations are (reflected) duals of each
other. Taking the dual of a uniform flag triangulation amounts to taking
a centrally symmetric copy. Taking the reflected dual amounts to
composing this reflection about the origin with a reflection
about the subspace defined as the intersection of the
$\lceil n/2 \rceil$ hyperplanes 
$x_i=x_{n+2-i}$ for $1 \leq i \leq \lceil n/2 \rceil$.
In this section we outline how to prove that there are no other 
isomorphisms between the uniform flag triangulations of the root polytope.

\begin{theorem}
For $n \geq 4$ there are
$15$ non-isomorphic uniform flag triangulations of the boundary of
the root polytope~$P_{n}$.
They are distributed as follows:
the lex class and the revlex class each contain $3$~triangulations;
the two Simion subclasses $a$ and $b$ each contain $4$~triangulations;
and finally
the Simion subclass $c$ only contains $1$~triangulation.
\label{theorem_15_different_triangulations} 
\end{theorem}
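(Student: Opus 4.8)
The plan is to prove the theorem in two stages: first that there are \emph{at most} $15$ isomorphism classes, distributed as claimed, and then that these $15$ are genuinely pairwise non-isomorphic.

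For the first stage, observe that by Theorem~\ref{theorem_main} a uniform flag triangulation of $\partial P_n$ is encoded by a \emph{rule set} --- a choice, for each of the six pair-types $THTH$, $HTHT$, $THHT$, $HTTH$, $TTHH$, $HHTT$, of which of its two possibilities makes a pair of arrows an edge --- subject to the trichotomy of Theorem~\ref{theorem_main}. A direct enumeration gives $4$ rule sets in the lex class, $4$ in the revlex class, and $8$, $16$ and $2$ in the Simion subclasses~$a$,~$b$ and~$c$ respectively (in each case the pair-types not pinned down by the defining conditions range freely over their two values). The operations $\triangle_n\mapsto\triangle_n^{*}$ and $\triangle_n\mapsto\overline{\triangle_n}$ are induced by combinatorial symmetries of $\partial P_n$, namely the central inversion $x\mapsto -x$ and its composite with the coordinate reflection $e_i\mapsto e_{n+2-i}$, so rule sets lying in the same orbit of the order-$4$ group $G$ they generate yield isomorphic triangulations. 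Using Lemma~\ref{lemma_involutions} --- which tells us that $\triangle_n^{*}$ interchanges the $THTH/HTHT$, the $THHT/HTTH$ and the $TTHH/HHTT$ rules, while $\overline{\triangle_n}$ interchanges only the $THHT/HTTH$ rules --- we would compute the $G$-orbits. In the Simion class $\triangle_n^{*}$ is fixed-point-free (it exchanges the $THTH$ and $HTHT$ rules, which differ there), and a short case analysis gives $3$, $3$, $4$, $4$ and $1$ orbits in the five classes, hence at most $15$ isomorphism types with the claimed distribution. (The small case $n=4$, where Theorem~\ref{theorem_main} is not stated, would be checked separately.)

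For the second stage the key structural fact is that, being a flag complex, each uniform flag triangulation is determined by its $1$-skeleton, so two of them are isomorphic if and only if their graphs are. The plan is to reconstruct, from the abstract complex $\triangle_n$ alone, the polytope $P_n$ together with its arrow labelling, up to the natural action of $S_{n+1}\times\Zzz/2\Zzz$ on $V_n$ (the factor $\Zzz/2\Zzz$ being the interchange of tails and heads). The point is that the $2(n+1)$ \emph{coordinate stars} $T_i=\{(i,\ell):\ell\neq i\}$ and $H_j=\{(k,j):k\neq j\}$ are precisely the facets of $\triangle_n$ that are simplices of $\partial P_n$ already --- a product of a point with a simplex needs no subdivision --- and we would argue these are recognizable from the combinatorics of $\triangle_n$: they form two transversal families of $n+1$ pairwise-disjoint maximal cliques partitioning $V_n$, with $|T_i\cap H_j|\leq 1$, and no other pair of partitions of $V_n$ into maximal cliques of $\triangle_n$ has this incidence pattern when $n\geq 4$. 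From the intersection pattern of the $T_i$'s and $H_j$'s one recovers $V_n$ as the set of directed non-loop edges on $\{1,\dots,n+1\}$, up to relabelling the nodes and up to the tails--heads swap. Hence any isomorphism between two uniform flag triangulations of $\partial P_n$ is induced by an element of $S_{n+1}\times\Zzz/2\Zzz$; and since a non-monotone permutation of $\{1,\dots,n+1\}$ destroys uniformity --- which one checks on a suitable four-node configuration, using $n+1\geq 5$ --- the permutation part must be the identity or $i\mapsto n+2-i$. Thus the isomorphism lies in $G$, so the two rule sets are in the same $G$-orbit; with the first stage this yields exactly $15$ isomorphism classes, distributed as stated.

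The step we expect to be the main obstacle is the recognition of the coordinate stars --- equivalently, proving that $P_n$ together with its arrow structure is an invariant of the bare simplicial complex $\triangle_n$. One can separate the three \emph{major} classes more cheaply, since the refined face numbers computed in Sections~\ref{section_Simion}--\ref{section_enumeration_nonest} depend only on the major class; but telling apart the triangulations within a single class, which all share the same $f$-vector, appears to require this reconstruction (or an equivalent analysis, via Theorem~\ref{theorem_forward}, of the induced triangulations of $P^{+}_{n}$ and their Simion-class analogues).
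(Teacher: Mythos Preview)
Your Stage~1 (orbit counting under the group $G=\langle *,\overline{\phantom{x}}\rangle$ to get the upper bound $15$ with the stated distribution) matches the paper's reasoning exactly; this is precisely what the paper means by ``a direct consequence of Lemma~\ref{lemma_involutions}.''

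Your Stage~2 takes a genuinely different route. The paper's argument is purely computational: it introduces the \emph{excess degree} $\varepsilon(i,j)$ of an arrow (the number of neighbours $(i',j')$ with $|\{i,i',j,j'\}|=4$), writes down closed polynomial formulas for $\varepsilon(i,j)$ and $\varepsilon(j,i)$ in terms of $p=i-1$, $q=j-i-1$, $r=n+1-j$ for each of the six rule choices (Table~\ref{table_excess_degrees_p_q_r}), and then simply tabulates the sorted excess-degree multisets of the fifteen representatives at $n=4$ (Table~\ref{table_excess_degrees_n_4}) and observes they are pairwise distinct. No reconstruction of the arrow labelling is attempted; the whole lower bound fits in two short tables.

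Your approach --- recover the coordinate stars $T_i,H_j$ intrinsically, hence the arrow labelling up to $S_{n+1}\times\Zzz/2\Zzz$, and then argue that only monotone permutations preserve uniformity --- is more structural and, if it went through, would yield the cleaner statement that the isomorphism classes are exactly the $G$-orbits, uniformly in~$n$. But the step you correctly flag as the obstacle really is one: every facet of $\triangle_n$ is an $(n-1)$-simplex, so there is no size or dimension argument to single out the $T_i$'s and $H_j$'s among the maximal cliques, and showing that no other pair of $(n{+}1)$-block partitions of $V_n$ into facets has the required transversality looks at least as laborious, class by class, as the paper's degree computation. The paper's method buys a complete proof in a page of arithmetic; yours would buy a conceptually sharper conclusion, but only once that reconstruction lemma is actually in hand.
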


The upper bound of $15$ is a direct consequence of
Lemma~\ref{lemma_involutions}. 
In the rest of this section we show
how the triangulations differ already when $n=4$.

In every flag triangulation of the boundary $\partial P_{n}$ of
the root polytope each arrow
$(i,j)$ has at least $2n-2$ neighbors: the set
$\{(i,k) \: : \: k \not\in \{i,j\}\} \cup \{(k,j) \: : \: k\not\in \{i,j\}\}$
is a subset of cardinality $2n-2$ of the set of neighbors of $(i,j)$. 
\begin{definition}
The {\em excess degree} $\varepsilon(i,j)$ of the arrow $(i,j)\in V_{n}$
in a uniform flag complex $\triangle_{n}$ on the vertex set $V_{n}$ is the
number of arrows $(i',j')$ such that $|\{i,i',j,j'\}|=4$ and
$\{(i,j),(i',j')\} \in \triangle_{n}$.
\end{definition}  
In other words,
the excess degree $\varepsilon(i,j)$ is the amount by which the degree
of $(i,j)$ exceeds $2n-2$. 
Table~\ref{table_excess_degrees_p_q_r}
shows how to compute the excess degrees.
For example, whenever $THTH$ pairs of arrows nest and $i<j$,
then for each arrow $(i,j)$
there are $\binom{q}{2}$ ways to select $(i',j')$ satisfying $i<j'<i'<j$,
where $p = i-1$, $q = j-i-1$ and $r = n+1-j$.

In Table~\ref{table_excess_degrees_n_4} we show
the sorted lists of the excess degrees for
the $15$ triangulations when $n=4$. It is straightforward to observe
that these lists are distinct, showing that the triangulations
are non-isomorphic in general, and hence
Theorem~\ref{theorem_15_different_triangulations} holds.

\newcommand{\lifta}[1]{\raisebox{1.5mm}{#1}}
\newcommand{\liftb}[1]{\raisebox{2.0mm}{#1}}
\newcommand{\liftc}[1]{\raisebox{2.5mm}{#1}}

\renewcommand{\arraystretch}{1.5}
\begin{table}[t]
\begin{center}
\begin{tabular}{|c|c|c||c|c|c|}
\hline
Condition & Contribution & Contribution &
Condition & Contribution & Contribution \\
                & to $\varepsilon(i,j)$ & to $\varepsilon(j,i)$ &
                & to $\varepsilon(i,j)$ & to $\varepsilon(j,i)$ \\
\hline
\hline
\begin{tikzpicture}
\smallbase{T}{H}{T}{H}
\draw[->, thick] (a1) to[out=55, in=125] (a4);
\draw[<-, thick] (a2) to[out=55, in=125] (a3);
\end{tikzpicture}
& \liftc{$\binom{q}{2}$}  & \liftc{$p r$} &
\begin{tikzpicture}
\smallbase{T}{H}{T}{H}
\draw[->, thick] (a1) to[out=55, in=125] (a2);
\draw[->, thick] (a3) to[out=55, in=125] (a4);
\end{tikzpicture}
& \lifta{$\binom{p}{2}+\binom{r}{2}$}  & \lifta{$0$} \\
\hline
\begin{tikzpicture}
\smallbase{H}{T}{H}{T}
\draw[<-, thick] (a1) to[out=55, in=125] (a4);
\draw[->, thick] (a2) to[out=55, in=125] (a3);
\end{tikzpicture}
&  \liftc{$p r$} & \liftc{$\binom{q}{2}$} &
\begin{tikzpicture}
\smallbase{H}{T}{H}{T}
\draw[<-, thick] (a1) to[out=55, in=125] (a2);
\draw[<-, thick] (a3) to[out=55, in=125] (a4);
\end{tikzpicture}
& \lifta{$0$} & \lifta{$\binom{p}{2}+\binom{r}{2}$} \\
\hline
\begin{tikzpicture}
\smallbase{T}{H}{H}{T}
\draw[->, thick] (a1) to[out=55, in=125] (a2);
\draw[<-, thick] (a3) to[out=55, in=125] (a4);
\end{tikzpicture}
& \lifta{$\binom{r}{2}$}  & \lifta{$\binom{p}{2}$} &
\begin{tikzpicture}
\smallbase{T}{H}{H}{T}
\draw[->, thick] (a1) to[out=55, in=125] (a3);
\draw[<-, thick] (a2) to[out=55, in=125] (a4);
\end{tikzpicture}
& \liftb{$q r$}  & \liftb{$p q$} \\
\hline
\begin{tikzpicture}
\smallbase{H}{T}{T}{H}
\draw[<-, thick] (a1) to[out=55, in=125] (a2);
\draw[->, thick] (a3) to[out=55, in=125] (a4);
\end{tikzpicture}
& \lifta{$\binom{p}{2}$} & \lifta{$\binom{r}{2}$} &
\begin{tikzpicture}
\smallbase{H}{T}{T}{H}
\draw[<-, thick] (a1) to[out=55, in=125] (a3);
\draw[->, thick] (a2) to[out=55, in=125] (a4);
\end{tikzpicture}
& \liftb{$p q$} & \liftb{$q r$} \\
\hline
\begin{tikzpicture}
\smallbase{H}{H}{T}{T}
\draw[<-, thick] (a1) to[out=55, in=125] (a4);
\draw[<-, thick] (a2) to[out=55, in=125] (a3);
\end{tikzpicture}
& \liftc{$0$}  & \liftc{$p r + \binom{q}{2}$} &
\begin{tikzpicture}
\smallbase{H}{H}{T}{T}
\draw[<-, thick] (a1) to[out=55, in=125] (a3);
\draw[<-, thick] (a2) to[out=55, in=125] (a4);
\end{tikzpicture}
& \liftb{$0$}  & \liftb{$p q + q r$} \\
\hline
\begin{tikzpicture}
\smallbase{T}{T}{H}{H}
\draw[->, thick] (a1) to[out=55, in=125] (a4);
\draw[->, thick] (a2) to[out=55, in=125] (a3);
\end{tikzpicture}
& \liftc{$p r + \binom{q}{2}$} & \liftc{$0$}  &
\begin{tikzpicture}
\smallbase{T}{T}{H}{H}
\draw[->, thick] (a1) to[out=55, in=125] (a3);
\draw[->, thick] (a2) to[out=55, in=125] (a4);
\end{tikzpicture}
& \liftb{$p q + q r$} & \liftb{$0$}  \\
\hline
\end{tabular}
\end{center}
\caption{Table to compute the excess degrees
$\varepsilon(i,j)$ and $\varepsilon(j,i)$ for $1 \leq i < j \leq n+1$,
where $p=i-1$, $q=j-i-1$ and $r=n+1-j$.}
\label{table_excess_degrees_p_q_r}
\end{table}

\newcommand{\smallsmallbase}[4]
{
\node[circle, inner sep = 0.9pt] (a1) at (0, 0) {$\scriptscriptstyle #1$};
\node[circle, inner sep = 0.9pt] (a2) at (0.4, 0) {$\scriptscriptstyle #2$};
\node[circle, inner sep = 0.9pt] (a3) at (0.9, 0) {$\scriptscriptstyle #3$};
\node[circle, inner sep = 0.9pt] (a4) at (1.3, 0) {$\scriptscriptstyle #4$};
}

\newcommand{\crossbackward}
{
\begin{tikzpicture}
\smallsmallbase{H}{H}{T}{T}
\draw[<-, thick] (a1) to[out=55, in=125] (a3);
\draw[<-, thick] (a2) to[out=55, in=125] (a4);
\end{tikzpicture}
}

\newcommand{\crossforward}
{
\begin{tikzpicture}
\smallsmallbase{T}{T}{H}{H}
\draw[->, thick] (a1) to[out=55, in=125] (a3);
\draw[->, thick] (a2) to[out=55, in=125] (a4);
\end{tikzpicture}
}

\newcommand{\nestbackward}
{
\begin{tikzpicture}
\smallsmallbase{H}{H}{T}{T}
\draw[<-, thick] (a1) to[out=55, in=125] (a4);
\draw[<-, thick] (a2) to[out=55, in=125] (a3);
\end{tikzpicture}
}

\newcommand{\nestforward}
{
\begin{tikzpicture}
\smallsmallbase{T}{T}{H}{H}
\draw[->, thick] (a1) to[out=55, in=125] (a4);
\draw[->, thick] (a2) to[out=55, in=125] (a3);
\end{tikzpicture}
}

\begin{table}[t]
\begin{center}
\begin{tabular}{|l|c|l|}
\hline    
Class/ & $HHTT$ \& $TTHH$ & Sorted list of excess degrees \\[-1mm]
subclass & conditions & \\
\hline
Lex
&
\nestbackward\nestforward
& 
$1^{6}, 2^{4}, 3^{2}, 4^{4}, 6^{4}$
\\[-2mm]
& 
\nestbackward\crossforward
&
$0^{1}, 1^{3}, 2^{7}, 3^{1}, 4^{4}, 6^{4}$
\\[-2mm]
& 
\crossbackward\crossforward
&
$0^{2}, 2^{10}, 4^{4}, 6^{4}$
\\[0mm]
\hline
Simion $a$         
&
\nestbackward\nestforward
&
$1^{5}, 2^{5}, 3^{5}, 6^{5}$
\\[-2mm]
&
\crossbackward\nestforward
&
$0^{1}, 1^{3}, 2^{4}, 3^{5}, 4^{4}, 6^{3}$
\\[-2mm]
&
\nestbackward\crossforward
&
$1^{4}, 2^{4}, 3^{8}, 6^{4}$
\\[-2mm]
&
\crossbackward\crossforward
&
$0^{1}, 1^{2}, 2^{3}, 3^{8}, 4^{4}, 6^{2}$
\\[-1mm]
\hline
Simion $b$         
&
\nestbackward\nestforward
&
$0^{1}, 1^{2}, 2^{5}, 3^{7}, 4^{1}, 5^{1}, 6^{3}$
\\[-2mm]
&
\crossbackward\nestforward
&
$0^{2}, 1^{1}, 2^{5}, 3^{4}, 4^{5}, 5^{1}, 6^{2}$
\\[-2mm]
&
\nestbackward\crossforward
&
$0^{2}, 1^{2}, 2^{1}, 3^{10}, 4^{1}, 5^{2}, 6^{2}$
\\[-2mm]
&
\crossbackward\crossforward
&
$0^{3}, 1^{1}, 2^{1}, 3^{7}, 4^{5}, 5^{2}, 6^{1}$
\\[-1mm]
\hline
Simion $c$ 
&
\nestbackward\nestforward
&
$0^{2}, 2^{4}, 3^{8}, 4^{3}, 5^{2}, 6^{1}$
\\[-1mm]
\hline
Revlex
&
\nestbackward\nestforward
&
$0^{4}, 2^{4}, 4^{10}, 6^{2}$
\\[-2mm]
&
\crossbackward\nestforward
&
$0^{4}, 2^{4}, 3^{1}, 4^{7}, 5^{3}, 6^{1}$
\\[-2mm]
&
\crossbackward\crossforward
&
$0^{4}, 2^{4}, 3^{2}, 4^{4}, 5^{6}$
\\[0mm]
\hline
\end{tabular}    
\end{center}  
\caption{The sorted lists of the excess degrees for $n=4$,
where superscripts denotes multiplicities.}
\label{table_excess_degrees_n_4}
\end{table}

\section{Tools for refined face enumeration}
\label{section_tools}

In this section we introduce some terminology and results that we will use to
prove theorems regarding the refined face counting in uniform flag
triangulations of the boundary $\partial P_{n}$ of the root polytope.
Our triangulations are defined by a set of rules,
independent of the dimension. After fixing such a set of rules, we
will simultaneously consider each triangulation $\triangle_{n}$
determined on the vertex set $V_{n}$ defined by these rules, for each $n \geq 0$.
Note that the set $V_{0}$ is the empty set, and the only face contained
in~$\triangle_{0}$ is the empty set. 

In order to compute the associated generating function, we introduce
the following more general notion.
Let $\mathcal{F}  = (\mathcal{F}_{0}, \mathcal{F}_{1}, \ldots)$
be a sequence of collections of arrows, where
$\mathcal{F}_{n}$ is a subset of the power set of $V_{n}$,
that is, $\mathcal{F}_{n} \subseteq 2^{V_{n}}$.
We define the associated 
{\em face generating function}
$$
F(\mathcal{F}, x,y,t)
=
\sum_{n,i,j \geq 0} f(\mathcal{F}_{n},i,j) \cdot x^{i}y^{j}t^{n}
$$
where $f(\mathcal{F}_{n},i,j)$ is the number of sets in $\mathcal{F}_{n}$
consisting of $i$ forward arrows and $j$ backward arrows.
Our interest is to compute  this generating function
when $\mathcal{F}$ is the collection
$\mathcal{F}  =  (\triangle_{0}, \triangle_{1}, \ldots)$.
In this case $f(\triangle_{n},i,j)$
counts the faces of the simplicial complex~$\triangle_{n}$ with $i$
forward arrows and $j$ backward arrows. We call
the polynomial $\sum_{i,j \geq 0} f(\triangle_{n},i,j) \cdot x^{i} y^{j}$
the {\em face polynomial}. 
We note that the power of $t$ in a term of $F(\triangle_{n},x,y,t)$
is the same as the number of vertices
in an associated facet of the triangulation.
The number of cases to be considered can be reduced by extending the
notion of the dual and reflected dual triangulations to all families
$\mathcal{F}  = (\mathcal{F}_{0}, \mathcal{F}_{1}, \ldots)$.

The following lemma is a straightforward
consequence of the definitions.
\begin{lemma}
Let $\mathcal{F}  = (\mathcal{F}_{0}, \mathcal{F}_{1}, \ldots)$
be a sequence of collections of arrows, where $\mathcal{F}_{n} \subseteq
2^{V_{n}}$. Let $\mathcal{F}^{*}  = (\mathcal{F}_{0}^{*},
\mathcal{F}_{1}^{*}, \ldots)$ be the dual family obtained by
reversing all arrows in all sets, and let $\overline{\mathcal{F}}  =
(\overline{\mathcal{F}_{0}}, 
\overline{\mathcal{F}_{1}}, \ldots)$ be the reflected dual family
obtained by reversing each arrow, and replacing each node $i$ in $V_{n}$
with $n+2-i$. Then the following two equalities hold:
$$
F(\mathcal{F}^{*}, x,y,t)
=
F(\mathcal{F}, y,x,t)
\quad\mbox{and}\quad
F(\overline{\mathcal{F}}, x,y,t)
=
F(\mathcal{F}, x,y,t).
$$
\label{lemma_dual_enumeration}  
\end{lemma}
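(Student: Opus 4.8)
The plan is to prove both identities by exhibiting, for each $n$, an explicit bijection between the collections of arrow sets involved and tracking how this bijection affects the two statistics that count forward and backward arrows.

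First I would handle the dual family $\mathcal{F}^{*}$. The underlying operation on a single arrow is $r(i,j)=(j,i)$, an involution of $V_{n}$ that interchanges the forward arrows (those $(i,j)$ with $i<j$) with the backward arrows (those with $i>j$). Applying $r$ elementwise gives a bijection $\sigma\longmapsto\sigma^{*}$ of $\mathcal{F}_{n}$ onto $\mathcal{F}_{n}^{*}$ which sends a set consisting of $i$ forward and $j$ backward arrows to one consisting of $j$ forward and $i$ backward arrows. Hence $f(\mathcal{F}_{n}^{*},i,j)=f(\mathcal{F}_{n},j,i)$ for all $n,i,j$, and substituting into the definition of $F$ and renaming the summation indices $i$ and $j$ yields
$$
F(\mathcal{F}^{*},x,y,t)
=\sum_{n,i,j\geq 0} f(\mathcal{F}_{n},j,i)\,x^{i}y^{j}t^{n}
=\sum_{n,i,j\geq 0} f(\mathcal{F}_{n},i,j)\,x^{j}y^{i}t^{n}
=F(\mathcal{F},y,x,t).
$$

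Next I would handle the reflected dual $\overline{\mathcal{F}}$, whose underlying elementwise operation is $\rho(i,j)=(n+2-j,\,n+2-i)$, again an involution of $V_{n}$. The one point that genuinely needs to be checked is that $\rho$ preserves the orientation type of an arrow: $(i,j)$ is forward if and only if $i<j$, if and only if $n+2-j<n+2-i$, if and only if $\rho(i,j)$ is forward, and similarly for backward arrows. This is precisely why the variables $x$ and $y$ are not interchanged in the second identity, in contrast with the first. It follows that the induced bijection $\sigma\longmapsto\overline{\sigma}$ of $\mathcal{F}_{n}$ onto $\overline{\mathcal{F}_{n}}$ preserves both the number of forward arrows and the number of backward arrows, so $f(\overline{\mathcal{F}_{n}},i,j)=f(\mathcal{F}_{n},i,j)$ for all $n,i,j$, whence $F(\overline{\mathcal{F}},x,y,t)=F(\mathcal{F},x,y,t)$.

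Both claims thus reduce to elementary bookkeeping once the two elementwise maps $r$ and $\rho$ are written down, so there is no real obstacle. The only subtlety worth flagging is the short sign-flip computation showing that $\rho$ fixes the forward/backward partition of the arrows while $r$ swaps it; this dichotomy is exactly what produces the asymmetry between the two displayed equalities.
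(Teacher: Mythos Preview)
Your argument is correct and is exactly the straightforward verification the paper has in mind: the paper does not spell out a proof at all, merely declaring the lemma ``a straightforward consequence of the definitions,'' and your bijective bookkeeping via the involutions $r(i,j)=(j,i)$ and $\rho(i,j)=(n+2-j,n+2-i)$ is precisely that consequence made explicit.
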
  
The families of uniform flag triangulations defined by a set of rules are {\em
coherent}  in the sense that they are closed under the insertion and
removal of isolated nodes. To make this informal observation precise,
consider the map $\pi_{k}: {\mathbb N}-\{k\}\rightarrow {\mathbb N}$ given
by 
\begin{align*}
  \pi_{k}(m)
& =
  \begin{cases}
    m & \mbox{if $m< k$,}\\
    m-1 & \mbox{if $m>k$.}\\
  \end{cases}  
\end{align*}
\begin{definition}
Let $\mathcal{F} = (\mathcal{F}_{0}, \mathcal{F}_{1},\ldots)$
be a collection of families of sets such that for each $n$
the family~$\mathcal{F}_{n}$ consists of subsets of $V_{n}$.
We call such a collection {\em coherent}
if for each subset $\sigma$ of $V_{n}$ and each
$k \in \{1,2, \ldots, n+1\}$ that is not incident to any arrow in $\sigma$, the
set $\pi_{k}(\sigma)=\{(\pi_{k}(i),\pi_{k}(j))\::\:   (i,j)\in \sigma\}$
belongs to $\mathcal{F}_{n-1}$ if and only if $\sigma \in \mathcal{F}_{n}$.  
In particular, for a coherent collection
$\mathcal{F}=(\mathcal{F}_{0}, \mathcal{F}_{1}, \ldots)$,
the empty set either belongs to all
$\mathcal{F}_{n}$ or it belongs to none of them. 
\end{definition}
By abuse of terminology we will say that $\mathcal F$ {\em contains the
  empty set} if all families $\mathcal{F}_{n}$ in $\mathcal F$ contain it. 
Sets of arrows in coherent collections may be enumerated by counting only
the {\em saturated}  sets in the families, which we now define. 
\begin{definition}
For $n\geq 1$, a subset $\sigma$ of $V_{n}$ is {\em saturated} if the set of
endpoints of its arrows is the set $\{1,2, \ldots, n+1\}$. We also
consider the empty set to be a saturated subset of $V_{0}=\emptyset$.
Given a coherent collection
$\mathcal{F} = (\mathcal{F}_{0}, \mathcal{F}_{1}, \ldots)$,
we denote the family of saturated sets in
$\mathcal{F}_{n}$ by $\widehat{\mathcal{F}}_{n}$. 
\end{definition}

Note that $\widehat{\mathcal{F}_{0}}=\{\emptyset\}$ exactly when $\mathcal
F$ contains the empty set.
One of our key enumeration tools is the following observation.

\begin{lemma}
\label{lemma_all_full}
Given a coherent collection of families $\mathcal{F}$ of arrows, the
face generating function satisfies
\begin{align}
F(\mathcal{F},x,y,t)
& =
\frac{-t}{(1-t)^{2}} \cdot \delta_{\mathcal{F}_{0}, \{\emptyset\}} 
+
\frac{1}{(1-t)^{2}} \cdot F\left(\widehat{\mathcal{F}},x,y,\frac{t}{1-t}\right) ,
\label{equation_all_full_1} \\
F(\widehat{\mathcal{F}},x,y,z)
& =
\frac{z}{1+z} \cdot \delta_{\widehat{\mathcal{F}}_{0}, \{\emptyset\}}
+
\frac{1}{(1+z)^{2}} \cdot F\left(\mathcal{F},x,y,\frac{z}{1+z}\right) ,
\label{equation_all_full_2}
\end{align}  
where $\delta$ denotes the Kronecker delta function.   
\end{lemma}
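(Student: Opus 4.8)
The plan is to use coherence to reduce the count of all sets in $\mathcal{F}_{n}$ to the count of \emph{saturated} sets, and then to translate the resulting recursion into generating functions. The first step is a bijective one. A non-empty set $\sigma\in\mathcal{F}_{n}$ has a well-defined set $S$ of endpoints of its arrows, with $s:=|S|\ge 2$. Deleting the $n+1-s$ nodes not in $S$ one at a time — each such node is incident to no arrow of $\sigma$, so coherence applies at every step — and relabeling the elements of $S$ by the unique order-preserving bijection $S\to\{1,2,\ldots,s\}$ produces a saturated set $\widehat{\sigma}\in\widehat{\mathcal{F}}_{s-1}$. Conversely, starting from a non-empty $\tau\in\widehat{\mathcal{F}}_{m}$ and any $(m+1)$-element subset $S\subseteq\{1,2,\ldots,n+1\}$, reinserting the missing nodes (the inverse operation, again justified by coherence) yields a set of $\mathcal{F}_{n}$ whose endpoint set is exactly $S$. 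Since an order-preserving relabeling of the nodes preserves the relation ``tail precedes head,'' it preserves the number $i$ of forward and the number $j$ of backward arrows. Hence, for each $n$ and each $(i,j)\ne(0,0)$,
$$
f(\mathcal{F}_{n},i,j)=\sum_{m\ge 1}\binom{n+1}{m+1}f(\widehat{\mathcal{F}}_{m},i,j),
$$
whereas the unique arrow-free set — the empty set — contributes $1$ to $f(\mathcal{F}_{n},0,0)$ for every $n\ge 0$ precisely when $\mathcal{F}$ contains the empty set.

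The second step is bookkeeping with generating functions. Multiplying the displayed recursion by $x^{i}y^{j}t^{n}$, summing over all $n,i,j$, and inserting the standard identity $\sum_{n\ge 0}\binom{n+1}{m+1}t^{n}=t^{m}/(1-t)^{m+2}$ gives
$$
F(\mathcal{F},x,y,t)=\frac{\delta_{\mathcal{F}_{0},\{\emptyset\}}}{1-t}+\frac{1}{(1-t)^{2}}\sum_{m\ge 1}\widehat{G}_{m}(x,y)\left(\frac{t}{1-t}\right)^{m},
$$
where $\widehat{G}_{m}(x,y)=\sum_{i,j}f(\widehat{\mathcal{F}}_{m},i,j)x^{i}y^{j}$. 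The empty set is saturated only in $V_{0}$, so $\sum_{m\ge 1}\widehat{G}_{m}(x,y)z^{m}=F(\widehat{\mathcal{F}},x,y,z)-\delta_{\widehat{\mathcal{F}}_{0},\{\emptyset\}}$, and $\delta_{\widehat{\mathcal{F}}_{0},\{\emptyset\}}=\delta_{\mathcal{F}_{0},\{\emptyset\}}$ by coherence. Substituting $z=t/(1-t)$ and combining the two $\delta$-terms via $\frac{1}{1-t}-\frac{1}{(1-t)^{2}}=\frac{-t}{(1-t)^{2}}$ yields exactly equation~\eqref{equation_all_full_1}.

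Equation~\eqref{equation_all_full_2} then follows by a purely formal substitution, with no further combinatorics. The map $t\mapsto z/(1+z)$ is the compositional inverse, among formal power series without constant term, of $z\mapsto t/(1-t)$; under it one has $1-t\mapsto 1/(1+z)$, hence $(1-t)^{-2}\mapsto(1+z)^{2}$ and $-t(1-t)^{-2}\mapsto -z(1+z)$. Making this substitution in \eqref{equation_all_full_1} and solving the resulting identity for $F(\widehat{\mathcal{F}},x,y,z)$ gives \eqref{equation_all_full_2}.

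I expect the only subtle points to be the treatment of the empty set — it is the one set with no arrows, it lies in every $\mathcal{F}_{n}$ or in none of them by coherence, yet it is ``saturated'' only when $n=0$, which is the source of both Kronecker deltas — together with the observation that the relabelings used in the bijection preserve arrow orientation and hence the monomial $x^{i}y^{j}$. Once these are pinned down, the rest is routine manipulation of binomial generating functions.
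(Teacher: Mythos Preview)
Your proof is correct and follows essentially the same approach as the paper's: both arguments rest on the bijection, afforded by coherence, between arbitrary sets in $\mathcal{F}_{n}$ and saturated sets on a subset of the nodes, and both derive \eqref{equation_all_full_2} from \eqref{equation_all_full_1} by the formal inversion $z=t/(1-t)\leftrightarrow t=z/(1+z)$. The paper phrases the same combinatorics more tersely---interpreting the substitution $z\mapsto t/(1-t)$ as ``inserting isolated nodes into the spaces between existing nodes'' and the prefactor $1/(1-t)^{2}$ as inserting isolated nodes at the two ends---whereas you make the underlying binomial identity $\sum_{n\ge 0}\binom{n+1}{m+1}t^{n}=t^{m}/(1-t)^{m+2}$ explicit; the content is the same.
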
  
\begin{proof}
Consider the second term of equation~\eqref{equation_all_full_1}.
The power $n$ in a term
$x^{i} y^{j} t^{n}$ in
$F(\widehat{\mathcal{F}},x,y,t)$
counts the number of spaces between nodes
in the digraph.
The substitution $t \longmapsto t/(1-t)$
corresponds to subdividing each space into
more spaces, that is, inserting isolated nodes
into these spaces.
Finally,
the factor of $1/(1-t)^{2}$ corresponds to inserting
isolated nodes before and after the full subset.
This completes the proof of the first
equation in the case when
$\widehat{\mathcal{F}}_{0} = \emptyset$.
In the case when
$\widehat{\mathcal{F}}_{0} = \{\emptyset\}$
we need to correct the right-hand side of the first
equation by subtracting $1/(1-t)^{2}$ contributed by $z^{0}$ in
$F(\widehat{\mathcal{F}},x,y,z)$ and we need to add $1/(1-t)$ to
account for the empty set belonging to all families $\mathcal{F}_{n}$. 
Equation~\eqref{equation_all_full_1} is equivalent to~\eqref{equation_all_full_2}
by noting that $z=t/(1-t)$ is equivalent to $t=z/(1+z)$.
\end{proof}

An interesting special case is counting all facets with a given number of forward
and backward arrows. The following statement is straightforward to check.
\begin{lemma}
\label{lemma_facets}
Let $\triangle_{n}$ be any uniform flag triangulation of the boundary $\partial
P_{n}$ of the root polytope. Then a face $\sigma\in \triangle_{n}$ is a
facet if and only if it is saturated and contains no isolated nodes,
that is, every $i\in \{1,2,\ldots,n+1\}$ is incident to some element of
$\sigma$. 
\end{lemma}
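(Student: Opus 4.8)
The plan is to pass to the graph picture of Section~\ref{section_preliminaries} and reduce everything to the forest description of the simplices of $\partial P_{n}$. The first step is a dimension count: $P_{n}$ is $n$-dimensional, so $\partial P_{n}$ is pure of dimension $n-1$, and since $\triangle_{n}$ triangulates $\partial P_{n}$ without introducing vertices, its facets are exactly its $(n-1)$-dimensional simplices, that is, the faces containing exactly $n$ arrows. Hence the lemma is equivalent to the combinatorial statement: a face of $\triangle_{n}$ has $n$ arrows if and only if every one of the nodes $1,2,\dots,n+1$ is incident to some arrow of the face.

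I would prove the forward implication first. Let $\sigma$ be a facet, so $|\sigma|=n$. By the description of affinely independent vertex sets of $\partial P_{n}$ recalled after Lemma~\ref{lemma_easy} (these are the forests, \cite[Lemma~2.4]{Hetyei_Legendre}), forgetting the orientations of the arrows of $\sigma$ produces a forest on the vertex set $\{1,\dots,n+1\}$ with $n$ edges. A forest with $m$ edges has at least $m+1$ vertices (a tree component with $k$ edges has $k+1$ vertices, and the component counts add up), so a forest with $n$ edges on at most $n+1$ vertices must be a spanning tree. Therefore each of $1,\dots,n+1$ is an endpoint of some arrow of $\sigma$, i.e.\ $\sigma$ is saturated and has no isolated node.

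For the converse, suppose the face $\sigma\in\triangle_{n}$ meets every node. Since $\triangle_{n}$ is permissible, $\sigma$ is admissible, so its node set partitions into a tail set $I$ and a head set $J$ with $I\cap J=\emptyset$; as every node occurs, $I\uplus J=\{1,\dots,n+1\}$, so $\Delta_{I}\times\Delta_{J}$ is a facet of $P_{n}$ and the faces of $\triangle_{n}$ lying in it form a triangulation of this product of simplices, of dimension $(|I|-1)+(|J|-1)=n-1$. Under the forest description $\sigma$ becomes a subgraph of $K_{I,J}$ that is a forest covering all $n+1$ vertices, and I would conclude that such a face is already a facet of $\rest{\triangle_{n}}{\Delta_{I}\times\Delta_{J}}$, hence an $(n-1)$-simplex of $\triangle_{n}$. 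This last step — that a node-covering face of $\rest{\triangle_{n}}{\Delta_{I}\times\Delta_{J}}$ is inclusion-maximal, equivalently that the forest it defines on $K_{I,J}$ is connected, i.e.\ a spanning tree — is the place where the real verification lies, and it is what uses the flag property together with the matching rules: from any proper node-covering subforest one picks two of its tree components and joins them by an arrow of the appropriate type, producing a strictly larger face. I expect this to be the main obstacle; the forward implication and the dimension bookkeeping are routine.
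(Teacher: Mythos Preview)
The paper offers no proof of this lemma beyond ``straightforward to check'', so there is nothing to compare your argument against directly. Your forward implication is correct and is essentially the only content needed downstream: a facet of $\triangle_n$ is an $(n-1)$-simplex, hence contains $n$ arrows, and a forest with $n$ edges on the node set $\{1,\dots,n+1\}$ must be a spanning tree, so every node is covered.

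The converse, however, is false as written, and your proposed argument cannot salvage it. Take $n=3$ and the lex triangulation: by the $THTH$ rule in Table~\ref{table_six_way} the pair $\{(1,2),(3,4)\}$ is an edge of $\triangle_3$; it is saturated (its endpoint set is all of $\{1,2,3,4\}$), yet it is only a $1$-simplex, not a $2$-dimensional facet. Your suggested step --- ``from any proper node-covering subforest pick two tree components and join them by an arrow of the appropriate type, producing a strictly larger face'' --- shows precisely that such a $\sigma$ is \emph{not} maximal, which is the opposite of what the converse demands; it does not force $\sigma$ itself to be a spanning tree. The correct characterization is that $\sigma$ is a facet if and only if $|\sigma|=n$, equivalently $\sigma$ is a spanning tree on $\{1,\dots,n+1\}$; saturation is necessary but not sufficient. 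Fortunately the paper only uses the forward direction (so that facets appear in $F(\widehat{\mathcal F},x,y,z)$, after which Corollary~\ref{corollary_facets} extracts exactly the terms with $i+j=n$), so the overstatement in the ``if and only if'' causes no downstream error.
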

In other words, as a subset of $V_{n}$, a facet is the set of edges of a
forest with no isolated nodes. Such a forest has $n+1$ nodes and $n$
arrows. If the number of forward arrows is $i$ then the number of
backward arrows is $n-i$, contributing a term
$x^iy^{n-i}t^n$ to the generating function of all faces and the term
$x^iy^{n-i}z^n$ to the generating function of all saturated faces. 
\begin{corollary}
\label{corollary_facets}
Let $\mathcal{F}  =  (\triangle_{0}, \triangle_{1}, \ldots)$ be a
coherent family of uniform flag triangulations. Then the facet generating
function $\sum_{n\geq 0}\sum_{i=0}^n f(\triangle_{n},i,n-i)
x^iy^{n-i} z^n$ may be obtained by substituting $x/w$ into $x$, $y/w$ into $y$
and $wz$ into $t$ in the face generating function $F(\mathcal{F},x,y,t)$
and then evaluating the 
resulting expression at $w=0$. Alternatively it may also be obtained by   
substituting $x/w$ into $x$, $y/w$ into $y$
and $wz$ into $z$ in $F(\widehat{\mathcal{F}},x,y,z)$ and then evaluating the
resulting expression at $w=0$.  
\end{corollary}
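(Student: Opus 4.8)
The plan is an elementary degree extraction in an auxiliary variable~$w$. Recall first that by Lemma~\ref{lemma_facets} a face $\sigma\in\triangle_n$ is a facet exactly when its arrows form a spanning tree of $\{1,2,\ldots,n+1\}$; equivalently, writing $i$ for the number of forward and $j$ for the number of backward arrows of $\sigma$, a facet is a face with $i+j=n$. More generally, since the arrows of any face form a forest on at most $n+1$ nodes, every face satisfies $i+j\le n$, with equality precisely for facets (a spanning tree is a maximal forest, hence a maximal face). Consequently, in $F(\mathcal F,x,y,t)=\sum_{n,i,j\ge 0}f(\triangle_n,i,j)\,x^iy^jt^n$ every monomial obeys $i+j\le n$, and the facet terms are exactly those with $i+j=n$.

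Now carry out the substitution $x\mapsto x/w$, $y\mapsto y/w$, $t\mapsto wz$. A monomial $x^iy^jt^n$ is sent to $x^iy^jz^n\,w^{\,n-i-j}$. Because $n-i-j\ge 0$ for every term, the substituted series has no negative powers of $w$, so it is a genuine power series in $w$ and setting $w=0$ is legitimate; this operation annihilates each monomial with $i+j<n$ and retains precisely those with $i+j=n$. By the previous paragraph what remains is
\[
\sum_{n\ge 0}\ \sum_{i=0}^{n} f(\triangle_n,i,n-i)\,x^iy^{n-i}z^n ,
\]
the asserted facet generating function. The second formulation is identical: in $F(\widehat{\mathcal F},x,y,z)=\sum_{n,i,j\ge 0}f(\widehat{\mathcal F}_n,i,j)\,x^iy^jz^n$ the monomials run over the saturated faces of the $\triangle_n$, and a saturated face on $n+1$ nodes with $c$ connected components has $i+j=n+1-c\le n$ arrows, with equality exactly when it is a spanning tree, hence a facet; conversely every facet is saturated, so no facet is lost. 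Substituting $x\mapsto x/w$, $y\mapsto y/w$, $z\mapsto wz$ and setting $w=0$ therefore again isolates precisely the facet terms and yields the same double sum.

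The only point that needs a word -- and it is the main (mild) obstacle -- is that the substitutions $x\mapsto x/w$ and $y\mapsto y/w$ a priori introduce negative powers of $w$, so before ``evaluating at $w=0$'' one must check that these cancel against the positive powers contributed by $t\mapsto wz$ (respectively $z\mapsto wz$). This is exactly the inequality $i+j\le n$ recorded above, and once it is in hand the corollary is simply the extraction of the coefficient of $w^0$; nothing further is required.
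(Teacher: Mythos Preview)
Your argument is correct and is precisely the reasoning the paper has in mind: the discussion immediately preceding the corollary records that facets are exactly the faces with $i+j=n$, and the corollary is then the trivial observation that the substitution $x\mapsto x/w$, $y\mapsto y/w$, $t\mapsto wz$ (or $z\mapsto wz$) tags each monomial by $w^{n-i-j}$, so evaluating at $w=0$ picks out the facet terms. Your explicit check that $i+j\le n$ for every face (so no negative powers of $w$ appear) is a useful clarification that the paper leaves implicit.
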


For any uniform flag triangulation of the boundary $\partial P_{n}$ of
the root polytope, the vertex sets consisting only of forward (backward)
arrows form subcomplexes whose face numbers are easier to count. We
first review the rephrasing of a known result. 

A useful way to express our results is in terms of
the generating function for the Catalan numbers:
\begin{align*}
C(u)
& =
\sum_{n \geq 0} C_{n} \cdot u^{n} = \frac{1-\sqrt{1-4u}}{2u} .
\end{align*}

\begin{proposition}
Let $\mathcal{F}$ be a family of uniform flag triangulations defined by a
set of rules that contains the rule that $HTHT$ types of pairs of arrows
do not nest. Then the following two identities hold:
\begin{align}
F(\mathcal{F},0,y,t)
& =
\frac{1-t-\sqrt{1-(4y+2)t+t^{2}}}{2yt} ,
\label{equation_F_0_y_t} \\
F(\widehat{\mathcal{F}},0,y,z)
& =
\frac{1}{1+z}\cdot \left(C(yz(z+1))+z\right) .
\label{equation_follow_full}  
\end{align}
\label{proposition_follow}  
\end{proposition}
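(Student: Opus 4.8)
The two identities are equivalent, so I will establish one of them, say~\eqref{equation_follow_full}. Since $\mathcal{F}$ is coherent and contains the empty set, Lemma~\ref{lemma_all_full} with $x=0$ reads
\[
F(\mathcal{F},0,y,t)=\frac{-t}{(1-t)^2}+\frac{1}{(1-t)^2}\, F\left(\widehat{\mathcal{F}},0,y,\frac{t}{1-t}\right).
\]
Putting $z=t/(1-t)$ in the right side of~\eqref{equation_follow_full} and using $1+z=1/(1-t)$, $yz(z+1)=yt/(1-t)^2$ and $z/(1+z)=t$ gives $F(\widehat{\mathcal{F}},0,y,t/(1-t))=(1-t)\,C(yt/(1-t)^2)+t$; substituting, the two copies of $t/(1-t)^2$ cancel and $F(\mathcal{F},0,y,t)=\frac{1}{1-t}\,C(yt/(1-t)^2)$. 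Expanding $C(u)=\frac{1-\sqrt{1-4u}}{2u}$ at $u=yt/(1-t)^2$ and noting that $1-4u=(1-(4y+2)t+t^2)/(1-t)^2$ turns this into exactly~\eqref{equation_F_0_y_t}, and running the substitution backwards shows~\eqref{equation_F_0_y_t} implies~\eqref{equation_follow_full}.

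To prove~\eqref{equation_follow_full} I would first describe the backward-arrow faces. By coherence and Lemma~\ref{lemma_all_full} it suffices to count the \emph{saturated} ones. Since $HTHT$ pairs do not nest, any two non-incident backward arrows form either a type-$HTHT$ pair, whose only non-nesting realization places the two arrows side by side, or a type-$HHTT$ pair, realized as dictated by the fixed rules of $\mathcal{F}$ --- always nesting, or always crossing. Drawing each backward arrow $(i,j)$ as an arc over $1<\cdots<n+1$, it follows that a set of backward arrows is a face of $\triangle_{n}$ precisely when it is a forest, no point carries both a leftward and a rightward arc, and no two arcs cross (if the $HHTT$ rule is ``nest'') resp.\ no two arcs nest (if it is ``cross''); this is also the content of the dual of Theorem~\ref{theorem_forward}, which identifies $\triangle_{n}$ restricted to backward arrows with the lexicographic (resp.\ reverse lexicographic) pulling triangulation of the union of the boundary facets of $-P^{+}_{n}$ avoiding the origin, a fixed polytopal complex. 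Refined by the pair (number of points used, number of arcs), the non-crossing and the non-nesting versions are equinumerous --- by the classical interchange of crossings and nestings, or because both triangulate that same complex by forests introducing no new vertices, whose distribution by these two statistics is triangulation-independent by an argument like the one in~\cite{Hetyei_Legendre} --- so I may assume we are in the non-crossing case.

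It remains to evaluate $F(\widehat{\mathcal{F}},0,y,z)$, in which $z$ records one less than the number of points and $y$ the number of arcs. I would compute this by a first-return decomposition: in a saturated non-crossing forest of backward arcs on $\{1,\dots,n+1\}$, let $p$ be the largest neighbour of the point $1$; non-crossingness forces every point of $\{p+1,\dots,n+1\}$ to attach only within $\{p+1,\dots,n+1\}$ and every point of $\{2,\dots,p-1\}$ to attach only within $\{1,\dots,p\}$, so the forest splits into a saturated non-crossing forest on $\{1,\dots,p\}$ with $1\sim p$ and an independent one (possibly empty) on $\{p+1,\dots,n+1\}$. Peeling off the arcs at the point $1$ one at a time and recording how the gaps between its successive neighbours absorb further points and sub-forests yields a system of algebraic equations equivalent to the Catalan equation $C(u)=1+uC(u)^{2}$ with $u=yz(z+1)$; reinserting isolated points at the two ends by Lemma~\ref{lemma_all_full} then produces the prefactor $\frac{1}{1+z}$ and the extra summand $\frac{z}{1+z}$, giving~\eqref{equation_follow_full}. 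Alternatively one may simply quote the known face numbers of the lexicographic pulling triangulation of $P^{+}_{n}$ from~\cite{Gelfand_Graev_Postnikov,Hetyei_Legendre} and run the same manipulation.

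The step requiring care is the bookkeeping of the number of points: this statistic is not additive when two consecutive point-intervals are glued (two pieces with $a$ and $b$ points contribute $z^{a-1}$ and $z^{b-1}$ separately but $z^{a+b-1}$ together), and it is exactly this, together with the choices of where isolated points may be inserted, that manufactures the argument $yz(z+1)$ inside $C$, the factor $\frac{1}{1+z}$, and the term $\frac{z}{1+z}$. One must also check that the resulting generating function does not in fact depend on the $HHTT$ rule; everything else is formal.
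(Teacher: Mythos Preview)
Your proposal is correct and follows essentially the same approach as the paper: reduce to backward arrows via the dual of Theorem~\ref{theorem_forward}, note that the $HHTT$ rule does not affect the face count (both cases triangulate the same complex), and derive a quadratic/Catalan-type equation whose solution gives the claimed generating function, with Lemma~\ref{lemma_all_full} mediating between~\eqref{equation_F_0_y_t} and~\eqref{equation_follow_full}. The only difference is the order of operations: the paper proves~\eqref{equation_F_0_y_t} first by quoting the quadratic $F=1+tF+ytF^{2}$ from~\cite{Hetyei_Legendre} at the level of all faces and then deduces~\eqref{equation_follow_full}, whereas you propose to work directly at the saturated level with a first-return decomposition (or, equivalently, quote the same source) and then deduce~\eqref{equation_F_0_y_t}.
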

\begin{proof}
Setting $x=0$ implies that we are only interested
in digraphs with backward arrows. By the dual of
Theorem~\ref{theorem_forward}  
the subcomplex of faces formed by
backward arrows represents the lexicographic or revlex
pulling triangulation of the faces of~$P^{+}_{n}$ not
containing the origin. (The choice depends on the rule for the $HHTT$
pairs of arrows.) 
Both triangulations have the same face numbers.
The proof of Theorem~5.4 in~\cite[Theorem 5.4]{Hetyei_Legendre}
implies the quadratic equation
\begin{align}
F(\mathcal{F},0,y,t)
& =
1  +  t \cdot F(\mathcal{F},0,y,t)  +  y t \cdot F(\mathcal{F},0,y,t)^{2}.
\label{equation_quadraticF}  
\end{align}
Solving it yields~\eqref{equation_F_0_y_t}.
Identity~\eqref{equation_follow_full}  
follows by applying~\eqref{equation_all_full_2} in Lemma~\ref{lemma_all_full}
and~\eqref{equation_F_0_y_t}.
\end{proof}  

\begin{remark}
{\em
Another way to prove~\eqref{equation_F_0_y_t}
is to use Theorem~5.4 and Corollary~5.6
in~\cite{Hetyei_Legendre}, which states
\begin{align*}
\sum_{j=0}^{n} f(\mathcal{F}_{n},0,j) \cdot \left(\frac{u-1}{2}\right)^{j}
& =
\sum_{j=0}^{n}
\frac{1}{j+1} \cdot \binom{n+j}{j} \cdot \binom{n}{j}
\cdot \left(\frac{u-1}{2}\right)^{j}
 = 
\frac{P_{n}^{(-1,1)}(u)}{n+1} , 
\end{align*}
where $P_{n}^{(-1,1)}(u)$ is a Jacobi polynomial. The stated equation
follows by integrating the well-known generating
function~\cite[22.9.1]{Abramowitz-Stegun} of the Jacobi polynomials
$P_{n}^{(-1,1)}(u)$.  
}
\end{remark}

We will later use the following corollary of
Proposition~\ref{proposition_follow}.
It has a direct bijective proof;
see~\cite{Ehrenborg_Hetyei_Readdy_Catalan}.
\begin{corollary}
Let $\mathcal{F}$ be a family of uniform flag triangulations
such that $HTHT$ types of pairs of arrows do not nest.
Then the sum over all forests $F$ consisting of $k \geq 1$
backward arrows, no forward arrows and no isolated nodes is
\begin{align}
G_{k}(z) 
= 
\sum_{F} z^{\text{\#nodes of $F$}}
& = 
C_{k} \cdot z^{k+1} \cdot (z+1)^{k-1} .
\label{equation_G_1}
\end{align}
Similarly, if the uniform flag triangulations $\mathcal{F}$ 
satisfies the requirement that $THTH$ types of pairs of arrows do not nest,
then the sum over all forests $F$ consisting of $k \geq 1$
forward arrows, no backward arrows and no isolated nodes
also yields
the identity~\eqref{equation_G_1}.
\label{corollary_Catalan_times_1+z}
\end{corollary}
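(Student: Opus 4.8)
The plan is to read $G_{k}(z)$ directly off the explicit generating function~\eqref{equation_follow_full} of Proposition~\ref{proposition_follow}. First I would pin down the dictionary between the two sides. Adopt the convention that a forest $F$ counted by $G_{k}(z)$ has node set $\{1,2,\ldots,m\}$, where $m=\#\text{nodes of }F$. Then $F$ consisting of $k$ backward arrows, no forward arrows and no isolated nodes means precisely that $F$, viewed as a subset of $V_{m-1}$, is saturated and lies in $\triangle_{m-1}$; that is, $F$ is a saturated face of $\triangle_{m-1}$ with $0$ forward and $k$ backward arrows. By the dual of Theorem~\ref{theorem_forward} the subcomplex of $\triangle_{n}$ spanned by backward arrows is a lexicographic or revlex pulling triangulation of the faces of $P^{+}_{n}$ not containing the origin, and any two such triangulations have the same face numbers; hence $f(\widehat{\mathcal{F}}_{n},0,k)$ does not depend on the particular admissible $\mathcal{F}$ and the collection of forests above is well defined. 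Recalling that in $F(\widehat{\mathcal{F}},x,y,z)$ the exponent of $z$ is one less than the number of nodes while the exponent of $y$ records the number of backward arrows, this gives
\begin{align*}
G_{k}(z)
& =
\sum_{n\geq 0} f(\widehat{\mathcal{F}}_{n},0,k)\cdot z^{n+1}
=
z\cdot [y^{k}]\, F(\widehat{\mathcal{F}},0,y,z) .
\end{align*}

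Next I would substitute~\eqref{equation_follow_full} and expand the Catalan series as $C(u)=\sum_{j\geq 0}C_{j}u^{j}$, so that $C(yz(z+1))=\sum_{j\geq 0}C_{j}\,z^{j}(z+1)^{j}\,y^{j}$. For $k\geq 1$ the isolated $z$ summand in~\eqref{equation_follow_full} contributes nothing to the coefficient of $y^{k}$, so the coefficient of $y^{k}$ in $F(\widehat{\mathcal{F}},0,y,z)$ equals $\frac{1}{1+z}\cdot C_{k}\,z^{k}\,(z+1)^{k}=C_{k}\,z^{k}\,(z+1)^{k-1}$. Multiplying by $z$ yields $G_{k}(z)=C_{k}\,z^{k+1}\,(z+1)^{k-1}$, which is~\eqref{equation_G_1}.

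For the second assertion, where $\mathcal{F}$ instead satisfies that $THTH$ pairs of arrows do not nest, I would pass to the dual family $\mathcal{F}^{*}$ and invoke Lemma~\ref{lemma_dual_enumeration}. Reversing every arrow turns a $THTH$ pattern into an $HTHT$ pattern and leaves nesting and crossing unchanged, so $\mathcal{F}^{*}$ satisfies the hypothesis of the first part; moreover a forest of $k$ forward arrows with no isolated nodes for $\mathcal{F}$ corresponds to a forest of $k$ backward arrows with no isolated nodes and the same number of nodes for $\mathcal{F}^{*}$. Since $F(\mathcal{F}^{*},x,y,t)=F(\mathcal{F},y,x,t)$, the forward analogue of $G_{k}(z)$ for $\mathcal{F}$ coincides with the backward $G_{k}(z)$ for $\mathcal{F}^{*}$, already shown to equal $C_{k}\,z^{k+1}\,(z+1)^{k-1}$.

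The only step needing care is the first one: fixing exactly which forests are being enumerated, checking that they are in bijection with the saturated backward-arrow faces of the complexes $\triangle_{m-1}$, and confirming the factor of $z$ that converts the exponent $n$ occurring in $F(\widehat{\mathcal{F}},x,y,z)$ into the node count $n+1=\#\text{nodes of }F$. After that the argument is a routine power-series extraction, and I do not expect any further obstacle.
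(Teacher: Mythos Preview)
Your proposal is correct and follows essentially the same route as the paper: extract the coefficient of $y^{k}$ from equation~\eqref{equation_follow_full}, account for the extra factor of $z$ converting the exponent $n$ into the node count $n+1$, and obtain the forward-arrow statement by reversing arrows. The paper's proof is a terse two-line version of exactly this argument; your write-up simply spells out the dictionary with saturated faces and the duality via $\mathcal{F}^{*}$ more carefully.
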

\begin{proof}
Equation~\eqref{equation_G_1}
follows by considering the coefficient of $y^{k}$
in equation~\eqref{equation_follow_full}.
Observe that there is an extra factor of $z$ since we are counting
the number of nodes.
The second statement follows by reversing the first statement.
\end{proof}
Note that the lower extreme cases of
Corollary~\ref{corollary_Catalan_times_1+z} enumerate the
anti-standard trees and the noncrossing alternating trees
of Gelfand, Graev and Postnikov~\cite{Gelfand_Graev_Postnikov}.
We will also use the following refined variant of
Proposition~\ref{proposition_follow}.
\begin{proposition}
Let $\mathcal{F}$ be a family of uniform flag triangulations defined by a
set of rules that contains the rule that $HTHT$ type of pairs of arrows
do not nest. For each $n\geq 0$, let $\mathcal{F}^{(i)}_{n}$ denote the
set of all faces
where the sequence of heads and tails, listed in increasing order,
satisfies the condition that
the $i$ smallest nodes are heads and the next node is a tail.
Then the two resulting collections
$\mathcal{F}^{(i)}
= 
\bigl(\mathcal{F}^{(i)}_{0}, \mathcal{F}^{(i)}_{1}, \ldots\bigr)$
and
$\widehat{\mathcal{F}^{(i)}} =
\bigl(\widehat{\mathcal{F}^{(i)}_{0}}, \widehat{\mathcal{F}^{(i)}_{1}}, \ldots\bigr)$
satisfy
\begin{align}
F(\mathcal{F}^{(i)},0,y,t)
& =
\frac{y^{i} t^{i} F(\mathcal{F},0,y,t)^{i}}{(1-t)^{i+1}},
\label{equation_follow_refined}
\\
F(\widehat{\mathcal{F}^{(i)}},0,y,z)
& =
\frac{1}{1+z} \cdot
\left(y z (1+z) C(yz(z+1))\right)^{i} .
\label{equation_follow_refined_full}  
\end{align}
\label{proposition_follow_refined}   
\end{proposition}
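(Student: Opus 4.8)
\section*{Proof proposal}

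The plan is to reduce the statement about all faces to the corresponding statement about \emph{saturated} faces, prove the latter by a recursive decomposition of backward‑arrow forests, and then transfer back; of the two displayed identities, \eqref{equation_follow_refined_full} is the substantial one and \eqref{equation_follow_refined} is a formal consequence.

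First I would observe that each collection $\mathcal{F}^{(i)}$ is coherent: the defining condition ``the $i$ smallest nodes incident to an arrow are heads and the next one is a tail'' refers only to the nodes actually used by a face, so it is unaffected by the maps $\pi_{k}$ that delete or insert isolated nodes. Also $\mathcal{F}^{(i)}$ does not contain the empty set (for $i\ge 1$), so $\delta_{\mathcal{F}^{(i)}_{0},\{\emptyset\}}=0$, and Lemma~\ref{lemma_all_full} gives $F(\mathcal{F}^{(i)},0,y,t)=\frac{1}{(1-t)^{2}}F\!\left(\widehat{\mathcal{F}^{(i)}},0,y,\tfrac{t}{1-t}\right)$. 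Setting $z=t/(1-t)$, so that $1+z=1/(1-t)$ and $yz(1+z)=yz(z+1)=yt/(1-t)^{2}$, and feeding \eqref{equation_follow_refined_full} into this, one gets $F(\mathcal{F}^{(i)},0,y,t)=\frac{1}{1-t}\bigl(\tfrac{yt}{(1-t)^{2}}\,C(\tfrac{yt}{(1-t)^{2}})\bigr)^{i}$. It then remains to note the identity $C\!\left(\tfrac{yt}{(1-t)^{2}}\right)=(1-t)\,F(\mathcal{F},0,y,t)$: writing $G:=(1-t)F(\mathcal{F},0,y,t)$, the quadratic equation \eqref{equation_quadraticF} transforms into $G=1+\tfrac{yt}{(1-t)^{2}}G^{2}$, which is exactly the defining equation of $C\!\left(\tfrac{yt}{(1-t)^{2}}\right)$. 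Substituting this in yields precisely \eqref{equation_follow_refined}. So everything reduces to \eqref{equation_follow_refined_full}.

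For \eqref{equation_follow_refined_full}, put $\beta=yz(z+1)$ and recall that a saturated face $\sigma\in\widehat{\mathcal{F}^{(i)}_{n}}$ (with $x=0$) is a forest of backward arrows on $\{1,\dots,n+1\}$ whose tail set and head set are disjoint, with $1,\dots,i$ heads and $i+1$ a tail; thus $\{1,\dots,i\}$ is an independent set of heads and the unique arrow out of $i+1$ ends in $\{1,\dots,i\}$. The plan is to prove the two relations $F(\widehat{\mathcal{F}^{(1)}},0,y,z)=yz\,C(\beta)$ and, for $i\ge 2$, $F(\widehat{\mathcal{F}^{(i)}},0,y,z)=\beta\,C(\beta)\cdot F(\widehat{\mathcal{F}^{(i-1)}},0,y,z)$; iterating these gives $F(\widehat{\mathcal{F}^{(i)}},0,y,z)=(\beta C(\beta))^{i-1}\,yz\,C(\beta)=y^{i}z^{i}(z+1)^{i-1}C(\beta)^{i}=\frac{1}{1+z}\bigl(yz(1+z)C(\beta)\bigr)^{i}$, which is the claim. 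For the base case, the arrow out of node $2$ must be $(2,1)$, so every $\sigma\in\widehat{\mathcal{F}^{(1)}}$ contains $(2,1)$; deleting this arrow and contracting its now‑possibly‑isolated endpoints reduces $\sigma$ to a backward‑arrow forest with no isolated nodes, and keeping track of the $z$‑weight of the two contracted nodes, together with Corollary~\ref{corollary_Catalan_times_1+z} (which gives $\sum_{F}z^{\#\mathrm{nodes}}=G_{k}(z)=C_{k}z^{k+1}(z+1)^{k-1}$ over forests with $k\ge 1$ backward arrows and no isolated nodes) and Lemma~\ref{lemma_lower_Dyck_word}, assembles to $yz+y(z+1)\sum_{k\ge 1}y^{k}G_{k}(z)=yz+yz\bigl(C(\beta)-1\bigr)=yz\,C(\beta)$.

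For the recursive step I would peel off the ``block'' hanging from head node $1$. It is convenient to carry this out in the model in which $HHTT$ pairs of backward arrows nest, so that by Theorem~\ref{theorem_forward} (dualized) the backward arrows of $\sigma$ form a non‑crossing forest; in this model the arrows into node $1$ together with all structure hanging below their tails form a sub‑forest $B$ occupying an initial segment of the nodes and disjoint from the rest, so deleting $B$ and relabelling leaves a saturated backward face in which the old nodes $2,\dots,i$ become heads $1,\dots,i-1$ and the old node $i+1$ is still the first tail --- that is, an element of $\widehat{\mathcal{F}^{(i-1)}}$ --- while $B$ itself, rooted at its smallest node, is a $\widehat{\mathcal{F}^{(1)}}$‑configuration with generating function $yz\,C(\beta)$; the gluing is recorded by an extra factor $(z+1)$ (whether the gluing node of $B$ and of the remainder coincide), which is absent in the base case because there the remainder is empty. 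This gives $F(\widehat{\mathcal{F}^{(i)}},0,y,z)=(z+1)\cdot yz\,C(\beta)\cdot F(\widehat{\mathcal{F}^{(i-1)}},0,y,z)=\beta C(\beta)\cdot F(\widehat{\mathcal{F}^{(i-1)}},0,y,z)$, and the same argument runs in the $HHTT$‑crossing model using part~(2) of Remark~\ref{remark_backward_only} instead of part~(1), so the formula is independent of that rule. The hard part will be making the block decomposition precise: identifying exactly which sub‑forest is $B$, verifying that the remainder is genuinely of type $\widehat{\mathcal{F}^{(i-1)}}$ (this is where the non‑crossing structure of the backward restriction is essential), and keeping the $z$‑exponent bookkeeping exact, in particular isolating the single factor $(z+1)$ that separates the recursive step from the base case. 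Once this is in place, the rest is the routine algebra indicated above.
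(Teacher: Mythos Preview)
Your overall strategy --- reverse the paper's order by proving \eqref{equation_follow_refined_full} first and then deducing \eqref{equation_follow_refined} via Lemma~\ref{lemma_all_full} --- is sound, and two of the three pieces are correct. The reduction is fine: the identity $C\!\left(\tfrac{yt}{(1-t)^{2}}\right)=(1-t)F(\mathcal{F},0,y,t)$ follows exactly as you say from~\eqref{equation_quadraticF}, and substituting it turns \eqref{equation_follow_refined_full} into \eqref{equation_follow_refined}. The base case $i=1$ is also correct: every saturated $\sigma\in\widehat{\mathcal{F}^{(1)}}$ contains $(2,1)$, node~$2$ becomes isolated upon its removal, and the two cases (node~$1$ isolated or not) give exactly the factor $yz+y(z+1)\sum_{k\ge 1}y^{k}G_{k}(z)=yz\,C(\beta)$.

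The gap is in the recursive step. Your description of the block $B$ is internally inconsistent: you want $B$ to contain the arrows into node~$1$, to occupy an \emph{initial segment} of the nodes, and to leave nodes $2,\dots,i$ in the remainder. But every arrow into node~$1$ has its tail at some $t\ge i+1$ (since $2,\dots,i$ are heads), so any set containing such an arrow spans past node~$i+1$; it cannot simultaneously be an initial segment and avoid nodes $2,\dots,i$. Concretely, for $i=2$ and $\sigma=\{(5,1),(3,2),(5,4)\}$: the connected component of node~$1$ is $\{(5,1),(5,4)\}$ on nodes $\{1,4,5\}$, which after relabeling lies in $\widehat{\mathcal{F}^{(2)}}$, not $\widehat{\mathcal{F}^{(1)}}$; the set of arrows under the longest arrow into~$1$ is all of $\sigma$; and taking $B=\{\text{arrows into }1\}$ gives only stars, so the map to $\widehat{\mathcal{F}^{(1)}}\times\widehat{\mathcal{F}^{(i-1)}}\times\{0,1\}$ is not surjective (already at $n=4$, since $\widehat{\mathcal{F}^{(1)}}_{3}$ contains the non-star $\{(2,1),(4,3)\}$). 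The identity $\widehat{F}^{(i)}=(1+z)\widehat{F}^{(1)}\widehat{F}^{(i-1)}$ is numerically correct, but none of the natural block decompositions witnesses it bijectively.

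The paper avoids this by proving \eqref{equation_follow_refined} first: working with \emph{non}-saturated faces allows node~$1$ to be isolated, and conditioning on the degree~$d$ of node~$1$ yields the clean recursion \eqref{equation_firecurrence}, which simplifies (using~\eqref{equation_quadraticF}) to $F(\mathcal{F}^{(i)},0,y,t)=\frac{ytF(\mathcal{F},0,y,t)}{1-t}\cdot F(\mathcal{F}^{(i-1)},0,y,t)$. Equation~\eqref{equation_follow_refined_full} then follows from~\eqref{equation_all_full_2}. If you want to salvage your order of attack, you would need a genuinely different decomposition for the saturated recursion; otherwise, it is simpler to follow the paper's route.
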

\begin{proof}
Without loss of generality we may assume that $HHTT$ type pairs of
arrows nest. Indeed, if we fix the head-tail pattern of the nodes
then we are counting the faces in a triangulation of the convex hull of
all vertices represented by all backward arrows whose heads and tails
are selected from the prescribed set of heads and tails. For example, if
we fix $n=3$ and the pattern $THTH$ (contributing to the collection
$\mathcal{F}^{(1)}$) then we have to count the faces in the
triangulation of the convex hull of $e_{2}-e_{1}$, $e_4-e_{1}$ and
$e_4-e_3$. We obtain the same face numbers for the lexicographic and
revlex triangulations. 
  
First we show that for $i>1$ the formal power series
$F(\mathcal{F}^{(i)},0,y,t)$ satisfies the relation
\begin{align}
F(\mathcal{F}^{(i)},0,y,t)
& =
t \cdot F(\mathcal{F}^{(i)},0,y,t) 
\label{equation_firecurrence} \\
& +
\sum_{d \geq 1}
y^{d} t^{d} \cdot F(\mathcal{F}^{(i-1)},0,y,t) \cdot 
F(\mathcal{F},0,y,t)^{d-1} \cdot (1 + t F(\mathcal{F},0,y,t)).  
\nonumber
\end{align}
The term $t\cdot F(\mathcal{F}^{(i)},0,y,t)$ corresponds to the
possibility that the node $1$ is not incident to any arrow in a face,
and the $d$th term in the next sum accounts for the possibility that the node $1$ is
the head of exactly 
$d$ arrows. Assume the set of tails of these arrows is
$\{i_{1},i_{2}, \ldots,  i_{d}\}$, where $1<i_{1}<i_{2}<\cdots< i_{d}$.
These arrows contribute a factor $y^{d} t^{d}$. Since no
pairs of arrows are allowed to cross, the $d$ arrows whose tail is $1$
partition the set of arrows into $d+1$ classes. Each of these classes
may be empty, except for the set of arrows whose head precedes $i_{1}$:
these arrows form a set where $i-1$ tails are followed by the first head
when we list their endpoints in increasing order. These arrows
contribute a factor of $F(\mathcal{F}^{(i-1)},0,y,t)$. For each $j$ in
$\{1,2, \ldots,  d-1\}$, the set of arrows whose head belongs to the set
$\{i_{j}+1, i_{j}+2, \ldots,  i_{j+1}\}$ contribute a factor $F({\mathcal
  F},0,y,t)^{d-1}$. (Note that the factor of $t$ contributed by~$i_{j}$ is
already counted.) Finally the set of arrows whose tail is $i_{d}+1$ or a
larger element contributes a factor of $1+t F(\mathcal{F},0,y,t)$.
Rearranging~\eqref{equation_firecurrence} yields the recurrence
\begin{align*}
F(\mathcal{F}^{(i)},0,y,t)
& =
F(\mathcal{F}^{(i-1)},0,y,t)
\cdot 
\frac{yt(1+t F(\mathcal{F},0,y,t))}{(1-t)(1-ytF(\mathcal{F},0,y,t))}.
\end{align*}
By~\eqref{equation_quadraticF} we may replace the factor
$1+t F(\mathcal{F},0,y,t)$ with
$F(\mathcal{F},0,y,t) \cdot (1- yt F(\mathcal{F},0,y,t))$.
Thus we obtain
\begin{align*}
F(\mathcal{F}^{(i)},0,y,t)
& =
F(\mathcal{F}^{(i-1)},0,y,t)
\cdot 
\frac{ytF(\mathcal{F},0,y,t)}{1-t}.
\end{align*}
Combining this recurrence with the expression
\begin{align*}
F(\mathcal{F}^{(1)},0,y,t)
& =
\frac{ytF(\mathcal{F},0,y,t)}{(1-t)^2},
\end{align*}
which may be shown in a completely analogous fashion,
equation~\eqref{equation_follow_refined}
follows by induction on $i$. 

Combining equations~\eqref{equation_all_full_2},
\eqref{equation_F_0_y_t}
and~\eqref{equation_follow_refined}
yields the identity
\eqref{equation_follow_refined_full}.
\end{proof}

When $HTHT$ pairs of arrows do not nest, we obtain a very different
expression for enumerating faces containing backward arrows only. We
employ its {\em dual form}, obtained after reversing all
arrows, using a generalization of the Delannoy numbers.
Recall a {\em Delannoy path} from $(0,0)$ to $(a,b)$ is a lattice path
consisting of North steps $(0,1)$, East steps $(1,0)$ and NE steps
$(1,1)$. The number of Delannoy paths from $(0,0)$ to $(a,b)$ is the
Delannoy number $D_{a,b}$; see~\cite{Banderier_Schwer}
for more on Delannoy numbers.

\begin{definition}
Given two non-negative integers $a$ and $b$,
the {\em Delannoy polynomial $D_{a,b}(x)$}
is the total weight of all Delannoy paths from $(0,0)$ to $(a,b)$, where
each step contributes a factor of $x$. Thus the coefficient of $x^{j}$ in
$D_{a,b}(x)$ is the number of Delannoy paths from $(0,0)$ to $(a,b)$
having $j$ steps. 
\end{definition}  
The bivariate ordinary generating function of the Delannoy polynomials
is given by 
\begin{align}
D(u,v,x)
& =
\sum_{a,b\geq 0} u^{a} v^{b} \cdot D_{a,b}(x)
=
\frac{1}{1-x(u+v+uv)} .
\label{equation_Delannoy}
\end{align}
\begin{proposition}
Let $\mathcal{F}$ be a family of uniform flag triangulations defined by a
set of rules requiring that $THTH$ type of pairs of arrows nest.
Then the collection $\widehat{\mathcal{F}}$ of families of
saturated faces satisfies
\begin{align}
F(\widehat{\mathcal{F}},x,0,z)
& =
1 + xz \cdot \sum_{a,b\geq 0} D_{a,b}(x) \cdot z^{a+b}. 
\label{equation_saturated_Delannoy}
\end{align}
In particular, the contribution to $F(\widehat{\mathcal{F}},x,0,z)$
of all saturated faces having $a+1$ tails and $b+1$ heads is
$D_{a,b}(x) \cdot xz^{a+b+1}$.
\label{proposition_nest_full}
\end{proposition}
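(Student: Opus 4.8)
The plan is to reduce the statement to a weight-preserving bijection between the saturated faces counted by $F(\widehat{\mathcal F},x,0,z)$ and Delannoy paths, and then to assemble~\eqref{equation_saturated_Delannoy} by summing over the number of tails and heads. Setting $y=0$ means we only see faces consisting of forward arrows; the constant term $1$ records the empty face (the unique saturated face on $V_0$), so it remains to handle the saturated forward-arrow faces $\sigma$ with $n\ge 1$.

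The first step is structural. In such a $\sigma$ every node of $\{1,\dots,n+1\}$ is either a tail or a head, and since $\sigma$ has no isolated nodes, each tail-node is the tail of some arrow of $\sigma$ and each head-node the head of some arrow. If a head-node $h$ were the head of an arrow $(i,h)$ while some tail-node $i'>h$ were the tail of an arrow $(i',j')$, then $\{(i,h),(i',j')\}$ would be a non-nesting $THTH$ pair contained in $\sigma$, contradicting the hypothesis that $THTH$ pairs nest (compare the proof of Lemma~\ref{lemma_forward_only}). Hence all tail-nodes precede all head-nodes, so if $\sigma$ has $a+1$ tails and $b+1$ heads then $n=a+b+1$, the tails are $\{1,\dots,a+1\}$, the heads are $\{a+2,\dots,a+b+2\}$, every edge of the complete bipartite graph on these two vertex sets is automatically a forward arrow, and $\sigma$ is exactly a set of such edges covering every vertex, all of whose pairs of edges are edges of the flag complex $\triangle_n$.

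Next I would reduce to the case where $TTHH$ pairs cross, the case where they nest following by reversing the linear order of the head-nodes (a bijection between the two families of saturated forward-arrow faces that preserves the numbers of tails, heads, and edges). Under ``$TTHH$ crosses'', two non-incident forward arrows $(i,j)$ and $(i',j')$ form an edge of $\triangle_n$ exactly when $(i-i')(j-j')>0$; since incident arrows agree in one coordinate, a set of forward arrows is a face precisely when, listed so that the tails weakly increase, the heads also weakly increase. Adding the edges in that order shows that no edge can close a cycle, so every such ``comonotone'' set is automatically a forest. Identifying an arrow with the point $(i,j)$ of the grid $\{1,\dots,a+1\}\times\{1,\dots,b+1\}$ (after shifting the heads down by $a+1$), the faces in question are exactly the comonotone subsets of this grid whose first coordinates exhaust $\{1,\dots,a+1\}$ and whose second coordinates exhaust $\{1,\dots,b+1\}$.

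Finally I would identify such a comonotone full-support set $\{(i_1,j_1),\dots,(i_m,j_m)\}$, listed in increasing order: full support forces $i_1=j_1=1$, $i_m=a+1$, $j_m=b+1$, and $(i_{k+1}-i_k,\,j_{k+1}-j_k)\in\{(1,0),(0,1),(1,1)\}$, so translating by $(-1,-1)$ turns this into the list of lattice points of a Delannoy path from $(0,0)$ to $(a,b)$ with $m-1$ steps, and this is a bijection. A face with $m$ edges carries weight $x^m z^{a+b+1}$ while the corresponding path has $m-1$ steps, so the faces with $a+1$ tails and $b+1$ heads contribute $x\cdot D_{a,b}(x)\cdot z^{a+b+1}$, which is the last assertion of the proposition; summing over $a,b\ge0$ and adding the empty face yields~\eqref{equation_saturated_Delannoy}, with the generating function $\sum_{a,b\ge0}D_{a,b}(x)z^{a+b}$ read off from~\eqref{equation_Delannoy}. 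I expect the structural first step to be the crux — establishing that all tails precede all heads, that ``face'' is equivalent to ``comonotone set'', and that comonotone sets are forests — after which the Delannoy bijection and the bookkeeping of the $x$- and $z$-weights are routine.
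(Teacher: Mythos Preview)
Your proposal is correct and follows essentially the same route as the paper's proof: both first use the $THTH$-nest rule to force all tails to precede all heads in a saturated forward-arrow face, then set up a bijection between such faces on $a+1$ tails and $b+1$ heads and Delannoy paths from $(0,0)$ to $(a,b)$ by reading off the successive arrows in lexicographic order. Your reduction of the $TTHH$-nest case to the $TTHH$-cross case by reversing the order of the heads is exactly what the paper does when it says to ``order the set of heads decreasingly'' in the nesting case, and your explicit check that comonotone sets are forests is a welcome detail the paper leaves implicit.
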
  
\begin{proof}
The constant $1$ on the right-hand side
of~\eqref{equation_saturated_Delannoy}
accounts for the empty face.   
The condition on the $THTH$ type of pairs of arrows implies that for any
face consisting of forward arrows only, all tails precede all heads.
Assume that the set of tails of forward arrows representing a saturated face has
cardinality $a+1$ and 
that the set of heads has cardinality $b+1$.
Let the set of tails be
$\{i_{1} < i_{2} < \cdots < i_{a+1}\}$
and the set of heads be
$\{j_{1}, j_{2}, \ldots, j_{b+1}\}$.
If $TTHH$ type of pairs of arrows nest,
order the set of heads decreasingly, that is,
$j_{b+1} < \cdots < j_{2} < j_{1}$.
Otherwise, if $TTHH$ type of pairs of arrows cross,
order the set of heads increasingly, that is,
$j_{1} < j_{2} < \cdots < j_{b+1}$.
Order the arrows in lexicographic order.
Associate a North step to each
instance when the tail remains unchanged from the next arrow in the
list, an East step to each instance when the head remains unchanged and
a NE step to each instance when both head and tail change. We obtain a
Delannoy path from~$(0,0)$ to~$(a,b)$ in which the number of steps is
one less than the number of arrows on the list. The correspondence is
a bijection between all saturated faces on the given set of heads
and tails and all Delannoy paths from~$(0,0)$ to~$(a,b)$. 
\end{proof}

\section{Face enumeration in the Simion class}
\label{section_Simion}

In this section we compute generating functions for the class of uniform
triangulations of the boundaries of the root polytope
in the Simion class, that is,
exactly one of the $THTH$ and $HTHT$ types of pairs of arrows nest.
Our main result on enumerating faces is the following theorem:
\begin{theorem}
Let $\mathcal{F}$ be a collection of uniform flag triangulations
belonging to the Simion class and let $\widehat{\mathcal{F}}$ be the
collection of families of saturated faces. If $THTH$ types of pairs of arrows
do not nest and $HTHT$ types of pairs of arrows nest then the following
identity holds:
\begin{align}
F(\widehat{\mathcal{F}},x,y,z)
& =
\frac{C(yz(z+1))+z}{1+z}
+
\frac{xz\cdot (1+zC(yz(z+1)))\cdot C(yz(z+1))^{2}}
{(1+z) \cdot (1-2C(yz(z+1))xz-C(yz(z+1))^{2}xz^{2})}. 
\label{equation_enumeration_in_the_simion_class}  
\end{align}
If $THTH$ types of pairs of arrows
nest and $HTHT$ types of pairs of arrows do not nest,
then the symmetric identity holds:
\begin{align}
F(\widehat{\mathcal{F}},x,y,z)
& =
\frac{C(xz(z+1))+z}{1+z}
+
\frac{yz\cdot (1+zC(xz(z+1)))\cdot C(xz(z+1))^{2}}
{(1+z) \cdot (1-2C(xz(z+1))yz-C(xz(z+1))^{2}yz^{2})}. 
\label{equation_enumeration_in_the_dual_simion_class}  
\end{align}
\label{theorem_enumeration_in_the_simion_class}  
\end{theorem}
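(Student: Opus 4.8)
The plan is to sort the saturated faces by the number $h$ of forward arrows they contain and to build the generating function one "block" at a time, using the structural decompositions that were already set up in Section~\ref{section_sufficiency}. First, by Lemma~\ref{lemma_dual_enumeration} the two displayed identities~\eqref{equation_enumeration_in_the_simion_class} and~\eqref{equation_enumeration_in_the_dual_simion_class} are carried into one another by the involution $\mathcal{F}\mapsto\mathcal{F}^{*}$, which interchanges $x$ and $y$; hence it suffices to treat one of them, namely the case in which no two forward arrows can occur "side by side" in a face, so that in every face all tails of forward arrows precede all heads of forward arrows. This is exactly the hypothesis of Propositions~\ref{proposition_existence_simion_subclass_a}, \ref{proposition_existence_simion_subclass_c}, \ref{proposition_existence_simion_subclass_b} and~\ref{proposition_existence_simion_subclass_b_part_two}, whose proofs already describe the relevant faces explicitly.

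When $h=0$ the face has backward arrows only, and Proposition~\ref{proposition_follow} (equation~\eqref{equation_follow_full}) contributes $\frac{C(yz(z+1))+z}{1+z}$, which is the first summand of~\eqref{equation_enumeration_in_the_simion_class}. For $h\geq 1$ I would use the structural picture from the sufficiency proofs: the $h$ forward arrows form a nested (or crossed, according to the $TTHH$ rule) stack with all tails first and all heads last, and the backward arrows split into lower Dyck "blocks" sitting in the gaps determined by the forward endpoints. Each interior block is a saturated forest of backward arrows with no isolated nodes, so by Corollary~\ref{corollary_Catalan_times_1+z} a block with $k$ backward arrows contributes $G_{k}(z)=C_{k}z^{k+1}(z+1)^{k-1}$, and (allowing a block to be empty) its generating function is governed by $C(yz(z+1))$; the two extremal blocks, in which a prescribed initial or terminal run of nodes consists of backward heads adjacent to a forward tail, are instead enumerated by Proposition~\ref{proposition_follow_refined} (equation~\eqref{equation_follow_refined_full}). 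Multiplying the block generating functions over the positions of a height-$h$ stack and then summing over $h\geq 1$ yields a geometric series whose ratio is $xz\bigl(2C(yz(z+1))+zC(yz(z+1))^{2}\bigr)$ and whose $h=1$ term is $\frac{xz\,(1+zC(yz(z+1)))\,C(yz(z+1))^{2}}{1+z}$; this is precisely the second summand of~\eqref{equation_enumeration_in_the_simion_class}, and the case $\mathcal{F}^{*}$ then follows from Lemma~\ref{lemma_dual_enumeration}.

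The step I expect to be the main obstacle is establishing that the refined face count is genuinely the same across all of the Simion sub-cases — subclasses $a$ and $c$ and the two halves of subclass $b$ — even though the sets of faces differ; this is the refined analogue of the coarser fact that all triangulations of $\partial P_{n}$ share the same face numbers. In subclass $a$ the backward blocks are confined to single gaps of the forward stack, whereas in subclasses $b$ and $c$ certain backward arrows straddle the forward core, so that a different collection of blocks becomes "restricted" (of the $\mathcal{F}^{(i)}$ type). I would carry out the block count separately in each of these four configurations and verify each time that the product of block generating functions collapses to the single expression~\eqref{equation_enumeration_in_the_simion_class}; checking that these four bookkeeping computations all agree is the technical heart of the argument.
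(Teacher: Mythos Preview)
Your plan is essentially the paper's plan: reduce to one of the two dual cases, treat the backward-only faces via Proposition~\ref{proposition_follow}, then handle faces containing forward arrows by decomposing into a forward ``skeleton'' and backward ``blocks'', and finally check that the answer is independent of the Simion subclass.  The paper does exactly this, proving one proposition for each of subclasses~$a$, $b$, $c$ (Propositions~\ref{proposition_simion_subclass_a}--\ref{proposition_simion_subclass_c}) and then showing all three expressions collapse to~\eqref{equation_enumeration_in_the_simion_class} using the quadratic relation for $C(u)$.

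The one organizational difference is in how the forward skeleton is bookkept.  You sort by the number $h$ of forward arrows and aim directly for a geometric series.  The paper instead sorts by the pair $(a,b)$ where the forward arrows sit on $a{+}1$ tails and $b{+}1$ heads, invoking Proposition~\ref{proposition_nest_full} so that the forward contribution is $x\,D_{a,b}(x)\,z^{a+b+1}$; the backward blocks live in the $a{+}b{+}3$ gaps, and summing over $(a,b)$ produces a Delannoy generating function $D(u,v,x)$ evaluated at arguments built from $C(yz(z+1))$.  Your geometric-series ratio $xz\bigl(2C+zC^{2}\bigr)$ is exactly what one gets by reading the $(a,b)$-sum step by step along the Delannoy path (an $N$ or $E$ step adds one new forward endpoint and one new gap, contributing $xzC$; an $NE$ step adds two, contributing $xz^{2}C^{2}$), so the two bookkeepings are equivalent.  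Be careful, though: your phrase ``height-$h$ stack'' and the use of Corollary~\ref{corollary_Catalan_times_1+z} for the blocks tacitly suggest that the $h$ forward arrows have $h$ distinct tails and $h$ distinct heads, which is false in general; the paper's $(a,b)$ indexing is what keeps track of shared endpoints, and this is precisely where the factor $D_{a,b}(x)$ (rather than a naive $x^{h}$) enters.  Your invocation of Proposition~\ref{proposition_follow_refined} is appropriate for subclasses~$b$ and~$c$ but not for the ``extremal blocks'' of subclass~$a$; the paper uses it exactly where backward arrows straddle the forward tails or heads.
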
  
It suffices to prove the first half of
Theorem~\ref{theorem_enumeration_in_the_simion_class}. The second half
is a direct consequence of Lemmas~\ref{lemma_involutions} and
\ref{lemma_dual_enumeration}, using the duality $\triangle\longmapsto
\triangle^{*}$. We prove the first half for each subclass
of the Simion class separately, in
Propositions~\ref{proposition_simion_subclass_a}, 
\ref{proposition_simion_subclass_b}
and~\ref{proposition_simion_subclass_c}, respectively.

We begin by studying the type~$a$ subclass of the
Simion class;
see Figure~\ref{figure_Simion_type_a}.

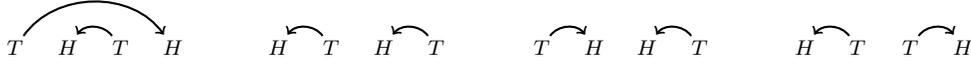
\begin{figure}
\begin{center}
\begin{tikzpicture}
\node[circle, inner sep = 0.9pt] (a1) at (0, 0) {$\scriptstyle{T}$};
\node[circle, inner sep = 0.9pt] (a2) at (0.7, 0) {$\scriptstyle{H}$};
\node[circle, inner sep = 0.9pt] (a3) at (1.4, 0) {$\scriptstyle{T}$};
\node[circle, inner sep = 0.9pt] (a4) at (2.1, 0) {$\scriptstyle{H}$};
\draw[->, thick] (a1) to[out=55, in=125] (a4);
\draw[<-, thick] (a2) to[out=55, in=125] (a3);

\node[circle, inner sep = 0.9pt] (b1) at (3.5, 0) {$\scriptstyle{H}$};
\node[circle, inner sep = 0.9pt] (b2) at (4.2, 0) {$\scriptstyle{T}$};
\node[circle, inner sep = 0.9pt] (b3) at (4.9, 0) {$\scriptstyle{H}$};
\node[circle, inner sep = 0.9pt] (b4) at (5.6, 0) {$\scriptstyle{T}$};
\draw[<-, thick] (b1) to[out=55, in=125] (b2);
\draw[<-, thick] (b3) to[out=55, in=125] (b4);

\node[circle, inner sep = 0.9pt] (c1) at (7.0, 0) {$\scriptstyle{T}$};
\node[circle, inner sep = 0.9pt] (c2) at (7.7, 0) {$\scriptstyle{H}$};
\node[circle, inner sep = 0.9pt] (c3) at (8.4, 0) {$\scriptstyle{H}$};
\node[circle, inner sep = 0.9pt] (c4) at (9.1, 0) {$\scriptstyle{T}$};
\draw[->, thick] (c1) to[out=55, in=125] (c2);
\draw[<-, thick] (c3) to[out=55, in=125] (c4);

\node[circle, inner sep = 0.9pt] (d1) at (10.5, 0) {$\scriptstyle{H}$};
\node[circle, inner sep = 0.9pt] (d2) at (11.2, 0) {$\scriptstyle{T}$};
\node[circle, inner sep = 0.9pt] (d3) at (11.9, 0) {$\scriptstyle{T}$};
\node[circle, inner sep = 0.9pt] (d4) at (12.6, 0) {$\scriptstyle{H}$};
\draw[<-, thick] (d1) to[out=55, in=125] (d2);
\draw[->, thick] (d3) to[out=55, in=125] (d4);
\end{tikzpicture}
\end{center}
\caption{The rules for pairs of arrows in the Simion type~$a$ subclass.}
\label{figure_Simion_type_a}
\end{figure}

\begin{proposition}
Let $\mathcal{F}$ be a collection of uniform flag triangulations defined by a
set of rules that contain the following rules:
\begin{enumerate}
\item $THTH$ type of pairs of arrows nest.
\item $HTHT$ type of pairs of arrows do not nest.
\item Both $THHT$ and $HTTH$ types of pairs of arrows do not cross.
 \end{enumerate}
Then the collection $\widehat{\mathcal{F}}$ of families of
saturated faces satisfies:
\begin{align*}
F(\widehat{\mathcal{F}},x,y,z)
& =
\frac{C(yz(z+1))+z}{1+z} \\
&
+ \frac{xz\cdot (1+zC(yz(z+1)))\cdot C(yz(z+1))^{2}}{1+z}
\cdot D(z \cdot C(yz(z+1)), z \cdot C(yz(z+1)), x).
\end{align*}
\label{proposition_simion_subclass_a}  
\end{proposition}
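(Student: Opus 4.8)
The idea is to decompose a saturated face in the Simion type~$a$ subclass into a "forward-arrow skeleton" of $h$ pairwise-nesting (actually just non-following, by rule~(1)) forward arrows together with $h+1$ blocks of backward arrows, exactly as in the proof of Proposition~\ref{proposition_existence_simion_subclass_a}. Recall from that proof that the associated $TH$-word factors uniquely as
$$
w = w_{1}\,T\,w_{2}\,T\,\cdots\,T\,w_{h}\,T\,w_{h+1}\,H\,w_{h+2}\,H\,\cdots\,H\,w_{2h+1},
$$
where the $h$ displayed $T$'s are the tails of the forward arrows (the left-to-right maxima of the lattice path), the $h$ displayed $H$'s are their heads (the right-to-left maxima), and each $w_{r}$ is a lower Dyck word carrying only backward arrows. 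Condition~(3) guarantees that forward and backward arrows neither cross nor nest, so the face is genuinely the disjoint union of these pieces, and the count is multiplicative.

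**Translating the decomposition into generating functions.** First handle $h=0$: a saturated face with no forward arrows is a single lower Dyck word's worth of backward arrows on all $n+1$ nodes, contributing $F(\widehat{\mathcal{F}},0,y,z) = (C(yz(z+1))+z)/(1+z)$ by equation~\eqref{equation_follow_full} of Proposition~\ref{proposition_follow}. This is the first summand. For $h \geq 1$, I would count the contribution of each block. By Corollary~\ref{corollary_Catalan_times_1+z}, a nonempty lower Dyck block of $k$ backward arrows on $k+1$ fresh nodes contributes $G_{k}(z) = C_{k}z^{k+1}(z+1)^{k-1}$, so a possibly-empty such block contributes $1 + \sum_{k\geq1}G_{k}(z) = z\,C(yz(z+1)) + 1$ once we also track the $y$'s — more precisely, summing $y^{k}G_{k}(z)$ gives $1 + yz^{2}(z+1)\sum_{k\geq1}C_{k}(yz(z+1))^{k-1}\cdot(\text{bookkeeping})$; the clean statement is that a possibly-empty backward block contributes the series $B(y,z) := \sum_{k\geq 0} C_{k}\,y^{k}z^{k}(z+1)^{k}\cdot z = zC(yz(z+1))$ for its node/arrow count when it sits strictly between two forward-arrow endpoints and must share no node with them, while the two extreme blocks $w_{1}$ and $w_{2h+1}$ carry an extra factor because they abut only one forward endpoint. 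The key computation is to check that the bracketed factor $(1 + zC(yz(z+1)))$ in the claimed formula is exactly the contribution of the outermost pair of blocks together with one shared forward-arrow node, and that each of the remaining $h-1$ internal "gaps" together with its forward tail and the $TTHH$/Delannoy structure among the $h$ forward arrows is what produces the Delannoy generating function.

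**The Delannoy factor.** Here is where Proposition~\ref{proposition_nest_full} enters: the $h$ forward arrows, with all $a+1 = h$ tails preceding all $b+1 = h$ heads and matched by the $TTHH$ rule, form a saturated face of the forward-only subcomplex, contributing $D_{h-1,h-1}(x)\cdot x z^{?}$ — except that in our setting the $2h$ forward endpoints are not consecutive nodes; between consecutive forward tails sits a backward block and between consecutive forward heads sits a backward block. Reinterpreting each North/East/NE step of the Delannoy path of Proposition~\ref{proposition_nest_full} as carrying an additional factor $zC(yz(z+1))$ for the intervening backward block (a step in the $u$-direction gets weight $z\,C(yz(z+1))\cdot x$, likewise $v$, and the NE step $z^{2}C(yz(z+1))^{2}x$ or the appropriate cross term) turns $D(u,v,x)$ evaluated at $u=v=z\,C(yz(z+1))$ into precisely the factor $D(zC(yz(z+1)), zC(yz(z+1)), x)$ appearing in the statement. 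Then the prefactor $xz\cdot(1+zC(yz(z+1)))\cdot C(yz(z+1))^{2}/(1+z)$ collects: the very first forward arrow's weight $xz$, the two extreme backward blocks bundled as $(1+zC(yz(z+1)))$ and one factor $C(yz(z+1))^{2}$ (I will need to verify this exponent carefully — it is the two backward blocks $w_{h+1}$ and one of $w_{1}$/$w_{2h+1}$ after one Catalan factor is absorbed into $1+zC$), and the overall $1/(1+z)$ from the ambient "insert-isolated-nodes" normalization of $\widehat{\mathcal{F}}$, consistent with equation~\eqref{equation_all_full_2} of Lemma~\ref{lemma_all_full}.

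**Main obstacle.** The delicate point is the \emph{bookkeeping of shared nodes}: each forward-arrow endpoint is a single node, but it is simultaneously the right (or left) end of an adjacent backward block in the factorization above, so naively multiplying $G_{k}(z)$'s and $D_{a,b}(x)$'s overcounts nodes by a factor of $z$ at each junction. Getting the powers of $z$ and of $(z+1)$ exactly right — so that the $h-1$ internal junctions, the two external blocks, and the $h$ forward arrows assemble into $xz\,(1+zC)\,C^{2}\,D(zC,zC,x)$ and nothing more — is the real content of the proof and the step most prone to off-by-one errors. I would pin it down by first writing the bivariate series $F_{h}$ for fixed $h$, checking it against the $h=1$ and $h=2$ cases by hand (where $\widehat{\mathcal{F}}$ can be enumerated directly for small $n$), and then summing $\sum_{h\geq1}F_{h}$ using equation~\eqref{equation_Delannoy} for the Delannoy generating function. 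Once the $h$-indexed sum is identified with the geometric-type series implicit in $D(zC,zC,x) = 1/(1 - x(zC + zC + (zC)^{2}))$, comparison with equation~\eqref{equation_enumeration_in_the_simion_class} is immediate, which is how Proposition~\ref{proposition_simion_subclass_a} feeds into Theorem~\ref{theorem_enumeration_in_the_simion_class}.
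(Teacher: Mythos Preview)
Your high-level strategy---forward skeleton plus independent backward blocks, Delannoy for the forward part, Catalan for the backward part---is exactly the paper's approach. But there is a genuine gap: you are importing the factorization from Proposition~\ref{proposition_existence_simion_subclass_a}, which concerns \emph{matchings} (the support-axiom setting), into the enumeration of arbitrary saturated \emph{faces}. In a matching the forward arrows have $h$ distinct tails and $h$ distinct heads, but in a general face forward arrows may share tails or share heads, so ``$a+1=h$ tails and $b+1=h$ heads'' is simply false, and $D_{h-1,h-1}(x)$ is not the object you want. The correct indexation (which the paper uses) is by the pair $(a+1,b+1)=(\text{\# forward tails},\text{\# forward heads})$; then Proposition~\ref{proposition_nest_full} gives the forward contribution as $xz^{a+b+1}D_{a,b}(x)$, and summing over all $a,b\geq 0$ is what produces the full bivariate Delannoy generating function $D(u,v,x)$, not just its diagonal.

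Once you fix the indexation, the block structure also becomes cleaner than your description: the $a+b+2$ forward endpoints cut the line into $a+b+3$ segments, not $h+1$ or $2h+1$. The node-sharing bookkeeping you flag as the main obstacle is then handled uniformly rather than by ad~hoc junction corrections: in each of the leftmost $a+1$ segments the rightmost backward \emph{tail} may coincide with the adjacent forward tail, so one multiplies the nonempty-block weight by $(1+z)$ and obtains a clean factor $C(yz(z+1))$ per segment; the rightmost $b+1$ segments behave symmetrically via shared heads; and only the single middle segment (between the last forward tail and the first forward head) forbids sharing, contributing $(1+zC(yz(z+1)))/(1+z)$. Multiplying these out gives exactly
\[
\sum_{a,b\geq 0} xz^{a+b+1}D_{a,b}(x)\cdot \frac{1+zC(yz(z+1))}{1+z}\cdot C(yz(z+1))^{a+b+2},
\]
and then~\eqref{equation_Delannoy} collapses the sum to the stated $D(zC,zC,x)$ form. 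Your ``reinterpret each Delannoy step as carrying a $zC$ factor'' intuition is correct and is precisely this computation, but it only works once you stop tying the number of forward arrows to the number of forward endpoints.
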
  
\begin{proof}
First we show that 
\begin{align}
F(\widehat{\mathcal{F}},x,y,z)
&=
\frac{1}{1+z}\cdot\left(C(yz(z+1))+z\right)
\label{simion-0c-rec} \\
&+
\sum_{a,b\geq 0} xz^{a+b+1} D_{a,b}(x)\cdot
\frac{1+zC(yz(z+1))}{1+z}
\cdot 
C(yz(z+1))^{a+b+2}. 
\nonumber
\end{align}
By equation~\eqref{equation_follow_full} the term
$\frac{1}{1+z}\cdot\left(C(yz(z+1))+z\right)$ accounts for the
possibility of a face containing backward arrows only. In all the other
cases, the face also contains forward arrows. If these arrows are
incident to $a+1$ tails and $b+1$ heads, then by
Proposition~\ref{proposition_nest_full} these contribute
$D_{a,b}(x)\cdot xz^{a+b+1}$. The $a+1$ tails and $b+1$ heads of forward
arrows partition the number line into $a+b+3$ segments. Since $THHT$ and
$HTTH$ type of pairs of arrows do not cross, backward arrows that have at
least one end 
in one of these segments must have both ends in the same segment. Hence
the contribution of the backward arrows may be written as a product of
$a+b+3$ independent factors. Tails of backward arrows whose endpoints
are
contained in 
one of the leftmost $a+1$ segments may coincide with the tail of
a forward arrow. 
On these segments, the total weight of nonempty sets of backward arrows
must be multiplied by $(1+z)$ to account for the possibility of (not)
identifying the rightmost tail of a backward arrow with the tail of a forward
arrow. By equation~\eqref{equation_follow_full} the total weight of nonempty sets
of arrows is $\frac{1}{1+z}\cdot\left(C(yz(z+1))+z\right)-1$. Keeping in
mind also the possibility of not inserting any 
backward arrow between the tails of two forward arrows, or to the left 
of all forward arrows, we obtain that the backward arrows discussed so
far contribute a factor of 
$$
\left(1+(1+z)\cdot\left(
\frac{1}{1+z}\cdot\left(C(yz(z+1))+z\right)-1\right)\right)^{a+1}
=C(yz(z+1))^{a+1}.
$$
Similarly, backward arrows inserted between the $b+1$ heads of forward
arrows or to the right of the heads of all forward arrows contribute a
factor of $C(yz(z+1))^{b+1}$. The only remaining possibility is to
insert backward arrows between the rightmost tail of a forward arrow and
the leftmost head of a forward arrow. Heads and tails of backward arrows
inserted on this segment cannot coincide with the head or tail of a
backward arrow. They contribute a factor of
$$
1+z\cdot\left(\frac{1}{1+z}\cdot\left(C(yz(z+1))+z\right)-1\right)
=
\frac{1+zC(yz(z+1))}{1+z}.
$$
The statement is a direct consequence of~\eqref{simion-0c-rec} and the
definition of $D(u,v,x)$ given in~\eqref{equation_Delannoy}.   
\end{proof}

We now consider the uniform
triangulations which belong to 
the Simion subclass of type~$b$.
Since there is still no restriction on the rules for the $TTHH$
and $HHTT$ types of pairs, at a first glance
this subclass appears to be the largest one. 
This appearance is misleading, as it is closed under taking the
reflected dual triangulations. By Lemma~\ref{lemma_involutions} this
operation takes any uniform flag triangulation in which $THHT$ type
pairs cross and $HTTH$ type pairs do not cross into a uniform flag
triangulation in which $THHT$ type pairs do not cross and $HTTH$ type
pairs cross. As a direct consequence of
Lemma~\ref{lemma_dual_enumeration}, the generating function
$F(\widehat{\mathcal{F}},x,y,z)$ does not change if we take the reverse
of ${\mathcal{F}}$.    

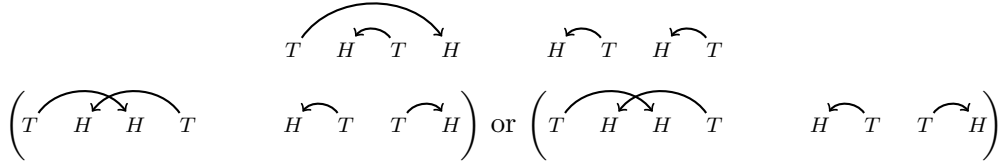
\begin{figure}
\begin{center}
\begin{tikzpicture}
\node[circle, inner sep = 0.9pt] (a1) at (3.5, 1) {$\scriptstyle{T}$};
\node[circle, inner sep = 0.9pt] (a2) at (4.2, 1) {$\scriptstyle{H}$};
\node[circle, inner sep = 0.9pt] (a3) at (4.9, 1) {$\scriptstyle{T}$};
\node[circle, inner sep = 0.9pt] (a4) at (5.6, 1) {$\scriptstyle{H}$};
\draw[->, thick] (a1) to[out=55, in=125] (a4);
\draw[<-, thick] (a2) to[out=55, in=125] (a3);

\node[circle, inner sep = 0.9pt] (b1) at (7.0, 1) {$\scriptstyle{H}$};
\node[circle, inner sep = 0.9pt] (b2) at (7.7, 1) {$\scriptstyle{T}$};
\node[circle, inner sep = 0.9pt] (b3) at (8.4, 1) {$\scriptstyle{H}$};
\node[circle, inner sep = 0.9pt] (b4) at (9.1, 1) {$\scriptstyle{T}$};
\draw[<-, thick] (b1) to[out=55, in=125] (b2);
\draw[<-, thick] (b3) to[out=55, in=125] (b4);

\node[circle, inner sep = 0.9pt] () at (-0.2, 0) {$\biggl ($};

\node[circle, inner sep = 0.9pt] (c1) at (0, 0) {$\scriptstyle{T}$};
\node[circle, inner sep = 0.9pt] (c2) at (0.7, 0) {$\scriptstyle{H}$};
\node[circle, inner sep = 0.9pt] (c3) at (1.4, 0) {$\scriptstyle{H}$};
\node[circle, inner sep = 0.9pt] (c4) at (2.1, 0) {$\scriptstyle{T}$};
\draw[->, thick] (c1) to[out=55, in=125] (c3);
\draw[<-, thick] (c2) to[out=55, in=125] (c4);

\node[circle, inner sep = 0.9pt] (d1) at (3.5, 0) {$\scriptstyle{H}$};
\node[circle, inner sep = 0.9pt] (d2) at (4.2, 0) {$\scriptstyle{T}$};
\node[circle, inner sep = 0.9pt] (d3) at (4.9, 0) {$\scriptstyle{T}$};
\node[circle, inner sep = 0.9pt] (d4) at (5.6, 0) {$\scriptstyle{H}$};
\draw[<-, thick] (d1) to[out=55, in=125] (d2);
\draw[->, thick] (d3) to[out=55, in=125] (d4);

\node[circle, inner sep = 0.9pt] () at (6.3, 0) {$\biggr )$ or $\biggl ($};

\node[circle, inner sep = 0.9pt] (e1) at (7.0, 0) {$\scriptstyle{T}$};
\node[circle, inner sep = 0.9pt] (e2) at (7.7, 0) {$\scriptstyle{H}$};
\node[circle, inner sep = 0.9pt] (e3) at (8.4, 0) {$\scriptstyle{H}$};
\node[circle, inner sep = 0.9pt] (e4) at (9.1, 0) {$\scriptstyle{T}$};
\draw[->, thick] (e1) to[out=55, in=125] (e3);
\draw[<-, thick] (e2) to[out=55, in=125] (e4);

\node[circle, inner sep = 0.9pt] (f1) at (10.5, 0) {$\scriptstyle{H}$};
\node[circle, inner sep = 0.9pt] (f2) at (11.2, 0) {$\scriptstyle{T}$};
\node[circle, inner sep = 0.9pt] (f3) at (11.9, 0) {$\scriptstyle{T}$};
\node[circle, inner sep = 0.9pt] (f4) at (12.6, 0) {$\scriptstyle{H}$};
\draw[<-, thick] (f1) to[out=55, in=125] (f2);
\draw[->, thick] (f3) to[out=55, in=125] (f4);

\node[circle, inner sep = 0.9pt] () at (12.8, 0) {$\biggr )$};
\end{tikzpicture}
\end{center}
\caption{The rules for pairs of arrows in the Simion type~$b$ subclass.}
\label{figure_largest_Simion_subclass}
\end{figure}

\begin{proposition}
Let $\mathcal{F}$ be a family of uniform flag triangulations defined by a
set of rules that contains the following requirements:
\begin{enumerate}
\item $THTH$ type of pairs of arrows nest.
\item $HTHT$ type of pairs of arrows do not nest.
\item Exactly one of the $THHT$ and $HTTH$ types of pairs of arrows cross.
\end{enumerate}
Then the collection $\widehat{\mathcal{F}}$ of families of
saturated faces satisfies
\begin{align*}
F(\widehat{\mathcal{F}},x,y,z)
&=
\frac{1}{1+z}\cdot
\left(C(yz(z+1))+z\right)\\
&+
D\left(C(yz(z+1)) z,\frac{z}{1-yz(z+1)C(yz(z+1))},x\right) 
\\
& \cdot \frac{xz \cdot C(yz(z+1)) \cdot (1-yzC(yz(z+1)))}{(1-yz(z+1) \cdot C(yz(z+1)))^{2}}.
\end{align*}
\label{proposition_simion_subclass_b}
\end{proposition}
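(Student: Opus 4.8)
The plan is to mirror the segment decomposition used in the proof of Proposition~\ref{proposition_simion_subclass_a}, making the one structural change forced by the hypothesis that exactly one of $THHT$ and $HTTH$ crosses. First I would cut down the number of cases: by Lemma~\ref{lemma_involutions} the reflected dual operation $\triangle_{n}\longmapsto\overline{\triangle_{n}}$ exchanges the conditions on the types $THHT$ and $HTTH$ while leaving all the other conditions untouched, and by Lemma~\ref{lemma_dual_enumeration} it does not change $F(\widehat{\mathcal{F}},x,y,z)$. Hence it suffices to treat the case where $HTTH$ type pairs of arrows cross and $THHT$ type pairs of arrows do not cross. Throughout write $C=C(yz(z+1))$ and recall the Catalan identity $C=1+yz(z+1)C^{2}$, which I shall use in the two equivalent forms $1-yz(z+1)C=1/C$ and $C(1-yzC)=(1+zC)/(1+z)$.

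Next I would split the saturated faces according to whether they contain a forward arrow. The faces consisting of backward arrows only contribute $\frac{1}{1+z}(C+z)$ by equation~\eqref{equation_follow_full}; this is the first summand of the claimed formula. In every remaining saturated face there is at least one forward arrow; since $THTH$ type pairs nest, all tails of forward arrows precede all heads of forward arrows. If the forward arrows are incident to $a+1$ tails and $b+1$ heads, then by Proposition~\ref{proposition_nest_full} (which applies because $THTH$ type pairs nest) the forward arrows contribute the factor $D_{a,b}(x)\,xz^{a+b+1}$, and their $a+b+2$ endpoints partition the node line into $a+b+3$ segments.

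The heart of the argument is then the analysis of the backward arrows segment by segment. Because $THHT$ type pairs do not cross, a backward arrow with one endpoint in one of the $b+1$ segments lying strictly between two consecutive forward heads, or to the right of the last forward head, has both endpoints in that same segment; exactly as in the proof of Proposition~\ref{proposition_simion_subclass_a}, invoking equation~\eqref{equation_follow_full} together with Lemmas~\ref{lemma_backward_only} and~\ref{lemma_backward_only_converse} and the factor $(1+z)$ recording whether the leftmost head of such a run is identified with the bounding forward head, each of these $b+1$ head-side segments contributes a factor $C$, so the head side contributes $C^{b+1}$. The new phenomenon is that, because $HTTH$ type pairs cross, the $a+1$ tail-side segments and the segment lying between the last forward tail and the first forward head are no longer independent: a backward arrow may straddle a forward tail and may reach from a tail-side segment into the middle segment, so these $a+2$ pieces fuse into a single run of backward arrows carrying the $a+1$ forward tails as docking points. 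I would show that the associated lattice path of this fused run is a lower Dyck word into which the $a+1$ forward tails are inserted, that by Lemma~\ref{lemma_lower_Dyck_word} it decomposes at its returns to the axis into indecomposable backward blocks each of weight $\frac{1}{1-yz(z+1)C}=C$, and that, tracking the $(1+z)$ option of identifying a backward tail with each forward tail together with the single boundary correction at the first forward head, this run contributes $C^{a+2}(1-yzC)$. I expect this step---pinning down the decomposition of the fused run and verifying its weight---to be the main obstacle. The cleanest check is the boundary case $a=0$, where the run is just the middle segment together with the single tail-side segment, which the proof of Proposition~\ref{proposition_simion_subclass_a} already evaluates as $C\cdot\frac{1+zC}{1+z}=C^{2}(1-yzC)$.

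Finally I would assemble the pieces into
$$
F(\widehat{\mathcal{F}},x,y,z)=\frac{C+z}{1+z}+\sum_{a,b\ge 0}D_{a,b}(x)\,xz^{a+b+1}\,C^{b+1}\,C^{a+2}(1-yzC),
$$
rewrite the double sum using $z^{a+b+1}C^{a+b+3}=xz\cdot C^{3}\cdot(zC)^{a}(zC)^{b}$ divided back by the appropriate power, and recognize it through equation~\eqref{equation_Delannoy} as $\frac{xzC(1-yzC)}{(1-yz(z+1)C)^{2}}\cdot D\!\left(\frac{z}{1-yz(z+1)C},\,Cz,\,x\right)$, where I have put the per-extra-tail factor $C$ of the fused run back into the form $\frac{z}{1-yz(z+1)C}$ via the Catalan identity. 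By the symmetry $D(u,v,x)=D(v,u,x)$ this is the stated closed form. As a consistency check, substituting $1-yz(z+1)C=1/C$ and $C(1-yzC)=(1+zC)/(1+z)$ collapses this expression into the formula of equation~\eqref{equation_enumeration_in_the_simion_class} in Theorem~\ref{theorem_enumeration_in_the_simion_class}, as it must, since the refined face count depends only on the class.
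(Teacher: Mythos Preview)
Your overall architecture matches the paper's proof: you reduce via Lemma~\ref{lemma_involutions} and Lemma~\ref{lemma_dual_enumeration} to the case where $HTTH$ crosses and $THHT$ does not, you split off the backward-only faces via~\eqref{equation_follow_full}, you get the forward-arrow factor $D_{a,b}(x)\,xz^{a+b+1}$ from Proposition~\ref{proposition_nest_full}, and the $b+1$ head-side segments contribute $C^{b+1}$ exactly as in Proposition~\ref{proposition_simion_subclass_a}. Your target weight $C^{a+2}(1-yzC)$ for the combined tail-side and middle segment is also correct, and your final assembly into the stated Delannoy form is fine.

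The gap is in your proposed justification of that weight. You suggest tracking ``the $(1+z)$ option of identifying a backward tail with each forward tail,'' but this is precisely what the $HTTH$-crossing rule forbids: if a backward arrow $(i,j)$ has $j<i<t$ for some forward tail $t$, then $j<i<t<(\text{forward head})$ is the non-crossing $HTTH$ pattern, which is excluded. Hence \emph{all} $a+1$ forward tails must precede the \emph{first} backward tail; only the rightmost forward tail may coincide with the leftmost backward tail. So there is no independent per-forward-tail $(1+z)$ factor, and your ``decompose the fused run at returns to the axis, each block of weight $C$'' picture does not survive the insertion of the forward tails.

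The paper's device for this step is to parametrize the backward arrows in the left region by the number $i$ of backward heads preceding the first backward tail, invoke Proposition~\ref{proposition_follow_refined} to get their weight $\frac{1}{1+z}\bigl(yz(1+z)C\bigr)^{i}$, and then count the ways to thread the $a+1$ forward tails among those $i$ heads: $\binom{i+a+1}{a+1}$ ways if the last forward tail is a new node (contributing an extra $z$) and $\binom{i+a}{a}$ ways if it coincides with the first backward tail. Summing the resulting binomial series via $\sum_{i\ge 1}\binom{i+m}{m}t^{i}=(1-t)^{-m-1}-1$ collapses to $\frac{1-yzC}{(1-yz(z+1)C)^{a+2}}=C^{a+2}(1-yzC)$, which is the factor you were aiming for.
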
  
\begin{proof}
The proof is similar to the proof of Proposition~\ref{proposition_simion_subclass_a}.
As a consequence of Lemmas~\ref{lemma_involutions}
and~\ref{lemma_dual_enumeration}, 
without loss of generality we may assume that $THHT$ type of pairs do not 
cross and $HTTH$ type of pairs cross. We first prove that  
\begin{align}
\label{equation_simion-1c-rec}  
F(\widehat{\mathcal{F}},x,y,z)
&=
\frac{1}{1+z}\cdot
\left(C(yz(z+1))+z\right)\\
&+
\sum_{a,b\geq 0} xz^{a+b+1} D_{a,b}(x)\cdot C(yz(z+1))^{b+1}
\nonumber \\
&\cdot \left(1+\sum _{i\geq 1}
\frac{(yz(z+1)\cdot C(yz(z+1)))^{i}}{z+1}\cdot\left(\binom{i+a+1}{a+1}z 
+\binom{i+a}{a}\right)\right). 
\nonumber
\end{align}
Just as in the proof of Proposition~\ref{proposition_simion_subclass_a},  the term
$\frac{1}{1+z}\cdot\left(C(yz(z+1))+z\right)$ accounts for the 
possibility of a face containing backward arrows only. In all the other
cases, forward arrows that are incident to $a+1$ tails and $b+1$ heads
contribute a factor of $D_{a,b}(x)\cdot xz^{a+b+1}$. The $b+1$ heads of
the forward arrows partition the number line into $b+2$ segments.
Since $THHT$ type of pairs of arrows do not cross, backward arrows that
have at least one end between the heads of two forward arrows or to the
right of the largest tail of a forward arrow must have both ends in the
same position. Backward arrows contained in the right $b+1$ segments
contribute a factor of $C(yz(z+1))^{b+1}$, just as in the proof of
Proposition~\ref{proposition_simion_subclass_a}.

It remains to consider the possibility of having backward arrows that
are entirely to the left of the head of any forward arrow. Let us list
the endpoints of these backward arrows in increasing 
order. This list must begin with a positive number of heads, followed by
a tail.
Let $i$ be the number of heads of backward arrows preceding all tails.
By equation~\eqref{equation_follow_refined_full}, the total weight of
these backward arrows is $\frac{1}{1+z} \cdot
\left(y z (1+z) C(yz(z+1))\right)^{i}$.  
Since $HTTH$ type of pairs of arrows cross, the $a+1$ tails of
the forward arrows  must all appear before the first tail of a backward
arrow, only the rightmost of them may coincide with the leftmost tail of
a backward arrow. There are $\binom{i+a+1}{a+1}$ ways to insert the
tails of the forward arrows strictly in front of the leftmost tail of a
backward arrow, and there are $\binom{i+a}{a}$ ways to perform this
insertion if the rightmost tail of a forward arrow is equal to the
leftmost head of a backward arrow. The contribution of these arrows
is the sum after $1$ on the last line of~\eqref{equation_simion-1c-rec}.  

Observe that by applying the identity
$\sum_{i \geq 1} \binom{i+m}{m} \cdot t^{i} = {1}/{(1-t)^{m+1}} - 1$
twice to the factor appearing
on the last line of~\eqref{equation_simion-1c-rec},
we can rewrite this factor as
$$
1+\frac{z}{z+1}\cdot \left(\frac{1}{(1-yz(z+1)C(yz(z+1)))^{a+2}}-1\right)
+\frac{1}{z+1}\left(\frac{1}{(1-yz(z+1)C(yz(z+1))^{a+1}}-1\right).
$$
Simplifying this expression, including canceling a factor of $z+1$
in the numerator and the denominator, yields
$$
\frac{1}{(1-yz(z+1)C(yz(z+1))^{a}} \cdot
\frac{(1-yzC(yz(z+1)))}{(1-yz(z+1)C(yz(z+1)))^{2}}. 
$$
Finally, using~\eqref{equation_Delannoy}, we see that
equation~\eqref{equation_simion-1c-rec}
simplifies to the desired expression in the proposition.
\end{proof}

We now examine the Simion subclass of type~$c$.
This is the smallest subclass, as by
Proposition~\ref{proposition_nest_follow}
the $TTHH$ and $HHTT$ types of pairs of arrows must nest.  

\begin{figure}
\begin{center}
\begin{tikzpicture}
\node[circle, inner sep = 0.9pt] (a1) at (0, 0) {$\scriptstyle{T}$};
\node[circle, inner sep = 0.9pt] (a2) at (0.7, 0) {$\scriptstyle{H}$};
\node[circle, inner sep = 0.9pt] (a3) at (1.4, 0) {$\scriptstyle{T}$};
\node[circle, inner sep = 0.9pt] (a4) at (2.1, 0) {$\scriptstyle{H}$};
\draw[->, thick] (a1) to[out=55, in=125] (a4);
\draw[<-, thick] (a2) to[out=55, in=125] (a3);

\node[circle, inner sep = 0.9pt] (b1) at (3.5, 0) {$\scriptstyle{H}$};
\node[circle, inner sep = 0.9pt] (b2) at (4.2, 0) {$\scriptstyle{T}$};
\node[circle, inner sep = 0.9pt] (b3) at (4.9, 0) {$\scriptstyle{H}$};
\node[circle, inner sep = 0.9pt] (b4) at (5.6, 0) {$\scriptstyle{T}$};
\draw[<-, thick] (b1) to[out=55, in=125] (b2);
\draw[<-, thick] (b3) to[out=55, in=125] (b4);

\node[circle, inner sep = 0.9pt] (c1) at (7.0, 0) {$\scriptstyle{T}$};
\node[circle, inner sep = 0.9pt] (c2) at (7.7, 0) {$\scriptstyle{H}$};
\node[circle, inner sep = 0.9pt] (c3) at (8.4, 0) {$\scriptstyle{H}$};
\node[circle, inner sep = 0.9pt] (c4) at (9.1, 0) {$\scriptstyle{T}$};
\draw[->, thick] (c1) to[out=55, in=125] (c3);
\draw[<-, thick] (c2) to[out=55, in=125] (c4);

\node[circle, inner sep = 0.9pt] (d1) at (10.5, 0) {$\scriptstyle{H}$};
\node[circle, inner sep = 0.9pt] (d2) at (11.2, 0) {$\scriptstyle{T}$};
\node[circle, inner sep = 0.9pt] (d3) at (11.9, 0) {$\scriptstyle{T}$};
\node[circle, inner sep = 0.9pt] (d4) at (12.6, 0) {$\scriptstyle{H}$};
\draw[<-, thick] (d1) to[out=55, in=125] (d3);
\draw[->, thick] (d2) to[out=55, in=125] (d4);

\node[circle, inner sep = 0.9pt] (e1) at (3.5, -1) {$\scriptstyle{T}$};
\node[circle, inner sep = 0.9pt] (e2) at (4.2, -1) {$\scriptstyle{T}$};
\node[circle, inner sep = 0.9pt] (e3) at (4.9, -1) {$\scriptstyle{H}$};
\node[circle, inner sep = 0.9pt] (e4) at (5.6, -1) {$\scriptstyle{H}$};
\draw[->, thick] (e1) to[out=55, in=125] (e4);
\draw[->, thick] (e2) to[out=55, in=125] (e3);

\node[circle, inner sep = 0.9pt] (f1) at (7.0, -1) {$\scriptstyle{H}$};
\node[circle, inner sep = 0.9pt] (f2) at (7.7, -1) {$\scriptstyle{H}$};
\node[circle, inner sep = 0.9pt] (f3) at (8.4, -1) {$\scriptstyle{T}$};
\node[circle, inner sep = 0.9pt] (f4) at (9.1, -1) {$\scriptstyle{T}$};
\draw[<-, thick] (f1) to[out=55, in=125] (f4);
\draw[<-, thick] (f2) to[out=55, in=125] (f3);
\end{tikzpicture}
\end{center}
\caption{The rules for pairs of arrows in the Simion type~$c$ subclass.}
\label{figure_smallest_Simion_subclass}
\end{figure}
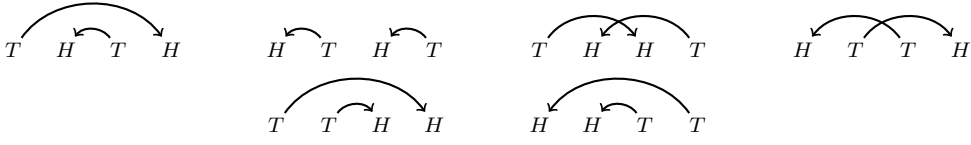

\begin{proposition}
Let $\mathcal{F}$ be a family of uniform flag triangulations defined by
the following rules:
\begin{enumerate}
\item $THTH$ type of pairs of arrows nest.
\item $HTHT$ type of pairs of arrows do not nest.
\item Both $THHT$ and $HTTH$ types of pairs of arrows cross.
\item Both $TTHH$ and $HHTT$ types of pairs of arrows nest. 
\end{enumerate}
Then the collection $\widehat{\mathcal{F}}$ of families of
saturated faces satisfies:
\begin{align*}
F(\widehat{\mathcal{F}},x,y,z)
&=
\frac{1}{1+z} \cdot \left(C(yz(z+1))+z\right) \\
&+
D\left(\frac{z}{1 - yz(z+1) \cdot C(yz(z+1))},\frac{z}{1 - yz(z+1) \cdot C(yz(z+1))},x\right)
\\  
&\cdot \frac{xz (z+1-yz(z+1) \cdot C(yz(z+1)))^{2}}{(1+z)
  (1+z \cdot C(yz(z+1))) \cdot (1-yz(z+1) \cdot C(yz(z+1)))^{4}} .
\end{align*}
\label{proposition_simion_subclass_c}
\end{proposition}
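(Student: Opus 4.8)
The plan is to mirror the proofs of Propositions~\ref{proposition_simion_subclass_a} and~\ref{proposition_simion_subclass_b}: first derive an intermediate summation formula for $F(\widehat{\mathcal{F}},x,y,z)$ by decomposing a saturated face into its forward and backward arrows, and then collapse the resulting double sum using the Delannoy generating function~\eqref{equation_Delannoy}. Throughout I write $C=C(yz(z+1))$ and use the Catalan identity $1-yz(z+1)C=1/C$.

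First I separate off the saturated faces with no forward arrow; by Proposition~\ref{proposition_follow} (applicable since $HTHT$ does not nest) these contribute $F(\widehat{\mathcal{F}},0,y,z)=(C+z)/(1+z)$, the first term of the claimed formula. For a saturated face with at least one forward arrow, the hypothesis that $THTH$ nests forces all tails of forward arrows to precede all their heads, exactly as in Proposition~\ref{proposition_existence_simion_subclass_a}; if the forward arrows are incident to $a+1$ tails $T_1<\cdots<T_{a+1}$ and $b+1$ heads $H_1<\cdots<H_{b+1}$, then by Proposition~\ref{proposition_nest_full} they contribute a factor $D_{a,b}(x)\cdot xz^{a+b+1}$, and the backward arrows are placed in the $a+b+3$ gaps these endpoints determine.

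The heart of the proof is to describe these backward arrows. A case analysis of the six arrow-pair types, using all the subclass~$c$ rules, shows: no two backward arrows cross as arcs; every backward-arrow head lies to the left of $H_1$ and every backward-arrow tail lies to the right of $T_{a+1}$, except that a backward-arrow tail may equal $T_{a+1}$; no backward arrow lies entirely to the left of $T_1$ or entirely to the right of $H_{b+1}$; and no backward arrow straddles both the forward tails and the forward heads. Consequently the backward arrows split into three non-crossing families: those running from the tail side (one of the $a+1$ gaps to the left of $H_1$) into the innermost gap $(T_{a+1},H_1)$, or ending at $T_{a+1}$; those with both ends in $(T_{a+1},H_1)$; and those running from $(T_{a+1},H_1)$ to the head side (one of the $b+1$ gaps to the right of $T_{a+1}$). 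Reading from left to right, the heads of the first family interleave with $T_1,\ldots,T_{a+1}$, the innermost gap carries a lower Dyck configuration of backward arrows, and the tails of the third family interleave with $H_1,\ldots,H_{b+1}$.

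Following the bookkeeping of Proposition~\ref{proposition_simion_subclass_b}, I would list the backward arrows by the positions of their endpoints, weight each run of them using Corollary~\ref{corollary_Catalan_times_1+z} together with equations~\eqref{equation_follow_full} and~\eqref{equation_follow_refined_full}, and count the interleavings of the $a+1$ forward tails (respectively the $b+1$ forward heads) with those runs by binomial coefficients, treating the coincidence of a backward tail with $T_{a+1}$ as the analogous coincidence was treated in Proposition~\ref{proposition_simion_subclass_b}. Summing the resulting series with $\sum_{i\ge1}\binom{i+m}{m}t^i=(1-t)^{-m-1}-1$ gives, for each $(a,b)$, a term of the form $D_{a,b}(x)\cdot xz\cdot\bigl(z/(1-yz(z+1)C)\bigr)^{a+b}$ times a factor independent of $a$ and $b$; substituting this into~\eqref{equation_Delannoy} collapses the double sum to $D\bigl(z/(1-yz(z+1)C),\,z/(1-yz(z+1)C),\,x\bigr)$ times the stated multiplier. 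The main obstacle is exactly this last stretch: pinning down which gap each backward arrow may occupy, handling the exceptional coincidence with $T_{a+1}$ (which is what produces the extra factor $(z+1-yz(z+1)C)^2/(1+zC)$ compared with the Simion type~$a$ subclass), and keeping the binomial bookkeeping consistent across the left interleaving, the middle region, and the right interleaving. As a check, invoking $1-yz(z+1)C=1/C$ should collapse the final expression to the right-hand side of~\eqref{equation_enumeration_in_the_simion_class}, in agreement with Propositions~\ref{proposition_simion_subclass_a} and~\ref{proposition_simion_subclass_b}.
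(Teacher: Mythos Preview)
Your overall plan matches the paper's: separate off the backward-only faces, write the forward arrows as a Delannoy contribution $D_{a,b}(x)\,xz^{a+b+1}$, analyse where the backward arrows can sit, and then collapse the double sum via~\eqref{equation_Delannoy}. Your structural claims are also essentially correct; in particular your assertion that no backward arrow can straddle both the forward-tail block and the forward-head block is right, and the reason is that $TTHH$ nesting forces the innermost forward arrow $(T_{a+1},H_1)$ to be present, so such a straddling backward arrow would form a forbidden nesting $HTHT$ pair with it.

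The gap is in the factorisation step. Your families~1, 2, 3 are \emph{not} independent: the tails of family~1, all of family~2, and the heads of family~3 live together in the open gap $(T_{a+1},H_1)$, and because $HHTT$ nests, the backward arrows there must form one non-crossing picture. So you cannot simply multiply three separate generating functions as in subclass~$a$. Concretely, a family~2 arrow may be nested inside the innermost family~1 arrow, so ``family~1'' is not a stand-alone block whose weight you can read off from~\eqref{equation_follow_refined_full}.

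The paper handles exactly this interaction by introducing \emph{connected} saturated backward-arrow faces and their generating function $F(\widehat{\mathcal G},0,y,z)$, related to the full one by
\[
1+z\bigl(F(\widehat{\mathcal F},0,y,z)-1\bigr)=\frac{1}{1-zF(\widehat{\mathcal G},0,y,z)}.
\]
One then observes that all family~1 arrows arch over $T_{a+1}$ and hence lie in the \emph{leftmost} connected component, and symmetrically family~3 lies in the rightmost component; moreover these two components are necessarily distinct. The subclass-$b$ factor $1+\sum_{i\ge1}\dots$ (with the $a$-binomials) counts \emph{all} backward structures with $i$ leading heads, so multiplying by $(1-zF(\widehat{\mathcal G},0,y,z))$ strips off the extra components and leaves just the leftmost one; the symmetric manoeuvre handles the right side; and the middle contributes $1/(1-zF(\widehat{\mathcal G},0,y,z))$. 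The net effect is the single extra factor $1/(1+z((C+z)/(z+1)-1))$ appearing in the intermediate identity~\eqref{equation_simion-2c-rec}. That device is what is missing from your proposal; once it is in place, your planned binomial summation and the Delannoy collapse go through exactly as you describe.
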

\begin{proof}
The proof is similar to the proof of
Proposition~\ref{proposition_simion_subclass_b} in many details. We will highlight the
substantial differences. First we show the following equality:
\begin{align}
\label{equation_simion-2c-rec}  
F(\widehat{\mathcal{F}},x,y,z)
&=
\frac{1}{1+z}\cdot
\left(C(yz(z+1))+z\right)\\
&+
\sum_{a,b\geq 0} xz^{a+b+1} D_{a,b}(x)\cdot \frac{1}{1+z((C(yz(z+1))+z)/(z+1)-1)}
\nonumber \\
&\cdot \left(1+\sum _{i\geq 1} 
\frac{(yz(z+1)\cdot C(yz(z+1)))^{i}}{1+z}\cdot\left(\binom{i+a+1}{a+1}z 
+\binom{i+a}{a}\right)\right)
\nonumber \\
&\cdot \left(1+\sum _{j\geq 1} 
\frac{(yz(z+1)\cdot C(yz(z+1)))^{j}}{1+z}\cdot\left(\binom{j+b+1}{b+1}z 
+\binom{j+b}{b}\right)\right). 
\nonumber
\end{align}
Just as in equation~\eqref{equation_simion-1c-rec}, the term
$\frac{1}{1+z}\cdot\left(C(yz(z+1))+z\right)$ results from the
faces containing backward arrows only. The second sum is contributed by
faces that contain forward arrows as well; these forward arrows are
incident to $a+1$ tails and $b+1$ heads. The total contribution of the
forward arrows is $xz^{a+b+1} D_{a,b}(x)$. For the precise count of the
contribution of the backward arrows, we use the fact that no
pair of backward arrows crosses. We call a saturated face $\{(i_{1},j_{1}),
(i_{2},j_{2}), \ldots, (i_{k},j_{k})\}$ of backward arrows {\em connected} if it
is empty or the arrow $(\max(i_{1},i_{2}, \ldots, i_{k}),
\min(j_{1},j_{2}, \ldots, j_{k}))$ belongs to the set. Clearly each saturated face of backward arrows is uniquely the disjoint
union of maximal connected sets. Introducing $\widehat{{\mathcal G}}$ as the
collection of families of connected saturated sets of backward arrows,
we have the equality
\begin{align*}
1+z\cdot (F(\widehat{\mathcal{F}},0,y,z)-1)
& =
\sum_{k\geq 0} F(\widehat{\mathcal{G}},0,y,z)^{k} \cdot z^{k}
=
\frac{1}{1 - z \cdot F(\widehat{\mathcal{G}},0,y,z)},
\end{align*}
where $k$ stands for the number of maximal connected sets. Indeed, for
$k=0$ the empty set is saturated and connected. For all nonempty
saturated sets, the first component contributes
an unnecessary additional factor of $z$ on the right-hand side.  
Substituting~\eqref{equation_follow_full} yields
\begin{align}
1+z\cdot\left(
\frac{1}{1+z}\cdot\left(C(yz(z+1))+z\right)-1\right)
& =
\frac{1}{1-z\cdot F(\widehat{\mathcal G},0,y,z)} .
\label{equation_connected}  
\end{align}
We partition the backward arrows of a saturated face into three
classes. The first class is formed by all backward arrows whose head is
weakly to the left of the head of some forward arrow. Since $HTTH$ type of
pairs of arrow cross, the tail of such a backward arrow is to the right
of all backward arrows. The same condition on the $HTTH$ type of pairs of
arrows also guarantees that to the left of any tail of a forward arrow
we can only have a head of a backward arrow belonging to the first
class.  Since $HTHT$ type of pairs do not nest, the tail of
a backward arrow in the first class is between the heads and tails of
all forward arrows. All such backward arrows form a connected component:
they all contain or arch over the rightmost tail of a forward arrow, and any
backward arrow that does not contain or arch over this rightmost tail
has its head to the right of all backward arrows in the first
class. The same reasoning also shows that all heads of backward arrows
in the first class are to the left of the tails of these arrows. 
Introducing $i$ as the number of heads of backward arrows in the
first class, the contribution of all backward arrows in the first class
is
\begin{align*}
\left(1+\sum _{i\geq 1} 
\frac{(yz(z+1)\cdot C(yz(z+1)))^{i}}{1+z}\cdot\left(\binom{i+a+1}{a+1}z 
+\binom{i+a}{a}\right)\right)\cdot (1-zF(\widehat{\mathcal G},0,y,z)).
\end{align*}
Just as in the proof of Proposition~\ref{proposition_simion_subclass_b}, each summand in
the first factor of the above is the total weight of all saturated faces of backward
arrows in which $i$ heads are followed by a tail in the left-to-right
order, and the factor of $1-zF(\widehat{\mathcal G},0,y,z)$ represents
dividing by $1/(1-zF(\widehat{\mathcal G},0,y,z))$, i.e., removing the
contribution of the additional connected components. Hence the above
expression represents the total weight of connected saturated faces.

The second class is formed by all backward arrows whose head is weakly
to the right of the head of some forward arrow. A completely analogous
reasoning shows that the total weight of these arrows is
\begin{align*}
\left(1+\sum _{j\geq 1} 
\frac{(yz(z+1)\cdot C(yz(z+1)))^{j}}{1+z}\cdot\left(\binom{j+b+1}{b+1}z 
+\binom{j+b}{b}\right)\right)\cdot (1-zF(\widehat{\mathcal G},0,y,z)).
\end{align*}
The remaining arrows form the third class: the heads and tails of these
arrows are to the right to the tails of the arrows in the first class
and to the left of the heads of the arrows in the second class.
They contribute a factor of
\begin{align*}
1 + z \cdot (F(\widehat{\mathcal{F}},0,y,z)-1)
& =
1 + z \cdot \left(\frac{1}{1+z}\cdot\left(C(yz(z+1))+z\right)-1\right).
\end{align*}
Equation~\eqref{equation_simion-2c-rec} is now a consequence
of~\eqref{equation_connected}.     
The algebraic manipulations used to derive the statement
from~\eqref{equation_simion-2c-rec} are very similar to the proof of
Proposition~\ref{proposition_simion_subclass_b}, and are therefore omitted.
\end{proof}

\begin{proof}[Proof of 
Theorem~\ref{theorem_enumeration_in_the_simion_class}.]
We begin with the proof
of~\eqref{equation_enumeration_in_the_simion_class}.
We have proved three
variants of this formula in 
Propositions~\ref{proposition_simion_subclass_a},
\ref{proposition_simion_subclass_b}
and~\ref{proposition_simion_subclass_c}.
It remains to show that
the three generating functions in
these propositions are equal to
the generating function in
equation~\eqref{equation_enumeration_in_the_simion_class}.
This is straightforward by 
expanding $D(u,v,x)$ using equation~\eqref{equation_Delannoy}
and using the fact that the Catalan generating function
satisfies the quadratic relation
$C(u) = 1 + u \cdot C(u)^{2}$,
especially in the form
$1/(1 - u \cdot C(u)) = C(u)$.
Equation~\eqref{equation_enumeration_in_the_dual_simion_class}
follows now from
equation~\eqref{equation_enumeration_in_the_simion_class} 
by applying the involution
$\triangle \longmapsto \triangle^{*}$
and Lemmas~\ref{lemma_involutions} and~\ref{lemma_dual_enumeration}.
\end{proof}

By combining
Theorem~\ref{theorem_enumeration_in_the_simion_class}
and Lemma~\ref{lemma_all_full} it is possible to
give the generating function of all faces.
We now explicitly count the facets using
Corollary~\ref{corollary_facets} and
equation~\eqref{equation_enumeration_in_the_simion_class}. 
Since
\begin{align*}
F\left(\widehat{\mathcal{F}},\frac{x}{w},\frac{y}{w},wz\right)
& =
\frac{C(yz(wz+1))+wz}{1+wz} 
+
\frac{xz\cdot (1+wzC(yz(wz+1)))\cdot C(yz(wz+1))^{2}}
{(1+wz)(1-2C(yz(wz+1))xz-C(yz(wz+1))^{2}xwz^{2})}, 
\end{align*}
we conclude that the facet generating function is given by
\begin{align}
\sum_{n \geq 0} \sum_{i=0}^{n} f(\triangle_{n},i,n-i) x^{i}y^{n-i}z^{n}
& =
C(yz) + \frac{xz\cdot C(yz)^{2}}{1-2xzC(yz)}
\label{equation_simion_0c_facetgen}  \\
& =
C(yz)
+
\sum_{i \geq 1}
2^{i-1} \cdot (xz)^{i} \cdot C(yz)^{i+1}
\nonumber .
\end{align}

\begin{theorem}
Let $\triangle_{n}$ be a uniform flag triangulation of
the boundary $\partial P_{n}$ of the root polytope
belonging to the Simion class satisfying the property that $THTH$ type of
pairs of arrows nest and $HTHT$ type of arrows do not nest.
Then for $i \geq 1$, the number of facets
of $\triangle_{n}$
consisting of $i$ forward arrows and
$n-i$ backward arrows is given by
\begin{align}
f(\triangle_{n}, i, n-i)
& =
2^{i-1} \cdot \frac{(i+1) \cdot (2n-i)!}{(n-i)! \cdot (n+1)!} .
\label{equation_number_of_facets_in_Rodicatope}
\end{align}
The number of facets
of the triangulation $\triangle_{n}$
consisting of no forward arrows and
$n$ backward arrows is given by
the Catalan number $C_{n}$.
\label{theorem_facet_enumeration_class_closest_to_Rodicatope}
\end{theorem}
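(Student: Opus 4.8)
The plan is to read off both assertions directly from the facet generating function already established in equation~\eqref{equation_simion_0c_facetgen}, namely
\[
\sum_{n \geq 0} \sum_{i=0}^{n} f(\triangle_{n},i,n-i)\, x^{i}y^{n-i}z^{n}
= C(yz) + \sum_{i \geq 1} 2^{i-1} \cdot (xz)^{i} \cdot C(yz)^{i+1}.
\]
Since $y$ and $z$ occur only in the combination $yz$, extracting the coefficient of $x^{i}y^{n-i}z^{n}$ reduces to extracting the coefficient of $u^{n-i}$ in the one-variable series multiplying $(xz)^{i}$, where $u = yz$. For $i=0$ this series is $C(u)$ itself, so $f(\triangle_{n},0,n)$ is the coefficient of $u^{n}$ in $C(u)$, which is the Catalan number $C_{n}$; this proves the last sentence of the theorem at once.

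For $i \geq 1$ the quantity to compute is $2^{i-1}$ times the coefficient of $u^{n-i}$ in $C(u)^{i+1}$. The main ingredient is the classical identity for powers of the Catalan generating function,
\[
C(u)^{k} = \sum_{m \geq 0} \frac{k}{2m+k} \binom{2m+k}{m} \, u^{m} \qquad (k \geq 1),
\]
which follows from Lagrange inversion applied to the quadratic relation $C(u) = 1 + u\,C(u)^{2}$ (equivalently, it records the ballot numbers counting $k$-tuples of Dyck paths). Taking $k = i+1$ and $m = n-i$ gives
\[
[u^{n-i}]\, C(u)^{i+1}
= \frac{i+1}{2n-i+1} \binom{2n-i+1}{n-i}
= \frac{(i+1)\,(2n-i)!}{(n-i)!\,(n+1)!},
\]
using $\binom{2n-i+1}{n-i} = \frac{(2n-i+1)!}{(n-i)!\,(n+1)!}$ and cancelling the factor $2n-i+1$. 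Multiplying by $2^{i-1}$ yields exactly formula~\eqref{equation_number_of_facets_in_Rodicatope}.

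There is essentially no obstacle here beyond invoking the ballot-number identity; the only care needed is in tracking the three variables and in noting the implicit convention that $n \geq i$ (when $n < i$ there are no such facets, consistent with $C(u)^{i+1}$ having no term of negative degree). If one prefers a self-contained route avoiding the quoted identity, one can instead observe that $(xz)^{i}C(yz)^{i+1}$ is, up to the power of $x$, the generating function for saturated faces built from $i$ mutually nesting forward arrows together with an arbitrary packet of backward arrows on each of the $i+1$ segments they cut out, each packet enumerated by $C(yz(z+1))$ evaluated at the appropriate specialization via equation~\eqref{equation_follow_full}; but the coefficient extraction above is the most economical proof.
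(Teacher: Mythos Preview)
Your proof is correct and follows essentially the same route as the paper: both start from the facet generating function~\eqref{equation_simion_0c_facetgen}, read off $C_n$ for $i=0$, and for $i\geq 1$ extract the coefficient of $u^{n-i}$ in $C(u)^{i+1}$ via the standard ballot-number identity (which the paper attributes to Catalan). The only cosmetic difference is that the paper writes the binomial as $\binom{2n-i+1}{n+1}$ rather than $\binom{2n-i+1}{n-i}$, but these are of course equal.
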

\begin{proof}
Observe that the coefficient of $y^{n}z^{n}$
in equation~\eqref{equation_simion_0c_facetgen}
is the $n$th Catalan number $C_{n}$.
For $i \geq 1$, the coefficient of
$x^{i} y^{n-i} z^{n} = (xz)^{i} \cdot (yz)^{n-i}$
is
$2^{i-1}$ times
the coefficient of
$(yz)^{n-i}$
in
$C(yz)^{i+1}$,
which is
$2^{i-1} \cdot \frac{i+1}{2n-i+1} \cdot \binom{2n-i+1}{n+1}$
by an identity due to Catalan~\cite[Formula (19)]{Catalan}.
\end{proof}
\begin{remark}
{\em The formula (\ref{equation_number_of_facets_in_Rodicatope}) given in
  Theorem~\ref{theorem_facet_enumeration_class_closest_to_Rodicatope}
  may be restated as
\begin{align}
f(\triangle_{n}, i, n-i)
& =
2^{i-1} \cdot C(n,n-i)
\end{align}
where the numbers
$$C(n,k)=\frac{(n+k)! \cdot (n-k+1)}{k! \cdot (n+1)!}$$
are the entries in the {\em Catalan triangle}.
See OEIS sequence~A009766~\cite{OEIS}.  
}    
\end{remark}

We end with two observations.
First, it is amusing how the expression
in~\eqref{equation_number_of_facets_in_Rodicatope}
is off by a factor of $1/2$ in the case when $i=0$.
Second, when $\triangle_{n}$ is
the Simion type~$B$ associahedron triangulation
of the boundary of the root polytope,
it is possible to give a more
constructive proof of
Theorem~\ref{theorem_facet_enumeration_class_closest_to_Rodicatope}
by analyzing the tree structure of the digraphs
corresponding to facets.

\section{Face enumeration in the revlex class}
\label{section_revlex}

In this section we study the revlex class,
that is, the class containing the revlex pulling triangulation.
Our first result is similar to equations~\eqref{simion-0c-rec},
\eqref{equation_simion-1c-rec} and~\eqref{equation_simion-2c-rec}. The
revles class is perfectly suitable to compute the face numbers with a prescribed
number of forward and backward arrows. Unfortunately, it does not seem
feasible to produce a closed form formula without infinite sums, that is
similar to
Propositions~\ref{proposition_simion_subclass_a},
\ref{proposition_simion_subclass_b}
and~\ref{proposition_simion_subclass_c}.  

\begin{theorem}
Let $\mathcal{F}$ be a family of uniform flag triangulations defined by a
set of rules that contains the following rules:
\begin{enumerate}
\item Both $THTH$ and $HTHT$ types of pairs of arrows nest.
\item Both $HTTH$ and $THHT$ types of pairs of arrows cross.
\end{enumerate}
Then the collection $\widehat{\mathcal{F}}$ of families of
saturated faces satisfies
\begin{align}  
\label{equation_revlex_rec}
F(\widehat{\mathcal{F}},x,y,z)
&=
1
+
\sum_{a,b\geq 0}
(x\cdot D_{a,b}(x) + y\cdot D_{a,b}(y)) 
\cdot
z^{a+b+1}
\\
&+
xy
\cdot
\sum_{a^{\prime},b^{\prime},a^{\prime\prime},b^{\prime\prime}\geq 0} D_{a^{\prime},b^{\prime}}(x)
\cdot
D_{a^{\prime\prime},b^{\prime\prime}}(y)
\cdot
z^{a^{\prime}+b^{\prime}+a^{\prime\prime}+b^{\prime\prime}+2}
\cdot 
C(a^{\prime},b^{\prime},a^{\prime\prime},b^{\prime\prime},z)
\nonumber
\end{align}
where
\begin{align}  
C(a^{\prime},b^{\prime},a^{\prime\prime},b^{\prime\prime},z)
&=
\binom{a^{\prime}+b^{\prime\prime}+2}{a^{\prime}+1}
\cdot
\binom{a^{\prime\prime}+b^{\prime}+2}{b^{\prime}+1} \cdot z \\
&+
\binom{a^{\prime}+b^{\prime\prime}+1}{b^{\prime\prime}}
\cdot
\binom{a^{\prime\prime}+b^{\prime}+1}{b^{\prime}}
+
\binom{a^{\prime}+b^{\prime\prime}+1}{a^{\prime}}
\cdot
\binom{a^{\prime\prime}+b^{\prime}+1}{a^{\prime\prime}}.
\nonumber
\end{align}
\label{theorem_revlex}  
\end{theorem}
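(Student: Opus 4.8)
The plan is to follow the template established in Section~\ref{section_Simion}: first prove a decomposition identity in the spirit of \eqref{simion-0c-rec}, \eqref{equation_simion-1c-rec} and \eqref{equation_simion-2c-rec}, and then observe that---unlike in the Simion class---no algebraic simplification is needed, so that this decomposition identity \emph{is} \eqref{equation_revlex_rec}. Every saturated face of a triangulation in the revlex class falls into exactly one of four mutually exclusive types: it is (i) the empty face; (ii) a nonempty face of forward arrows only; (iii) a nonempty face of backward arrows only; or (iv) a face containing at least one forward and at least one backward arrow. Type~(i) contributes $1$. By Proposition~\ref{proposition_nest_full} a type~(ii) face with $a+1$ tails and $b+1$ heads contributes $xD_{a,b}(x)z^{a+b+1}$, so summing over $a,b\geq 0$ accounts for the $\sum_{a,b\geq 0}xD_{a,b}(x)z^{a+b+1}$ part; applying the arrow-reversing duality of Lemma~\ref{lemma_dual_enumeration} to the dual rule set handles type~(iii) and produces the term with $yD_{a,b}(y)$. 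Together these give the first line of \eqref{equation_revlex_rec}.

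For type~(iv) I would first recall, as in the proof of Theorem~\ref{theorem_nest}, that in the revlex class any two non-incident arrows of a face either nest or cross; hence their closed segments pairwise intersect, and by Helly's theorem in dimension one all arrows of the face share a common point~$p$. Since the complex is permissible, no node of the face is simultaneously a head and a tail, so each node of the face lies strictly to one side of~$p$ or else equals~$p$. A node strictly to the left of~$p$ is the left endpoint of every arrow through it, hence a forward tail or a backward head (never both, again by admissibility); symmetrically on the right. I would then split type~(iv) according to whether $p$ may be chosen strictly between two consecutive nodes, or is forced to be a node~$m$ incident to both a forward and a backward arrow.

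In the first case the forward sub-forest uses $a'+1$ tails and $b'+1$ heads (all left, resp.\ right, of~$p$) and the backward sub-forest uses $b''+1$ heads and $a''+1$ tails, these four sets being pairwise disjoint; by Proposition~\ref{proposition_nest_full}, which uses only the $THTH$ rule and is insensitive to the $TTHH$ rule and so applies verbatim, the forward part contributes $xD_{a',b'}(x)$ and the backward part $yD_{a'',b''}(y)$. It then remains to count the admissible interleavings of the forward tails with the backward heads to the left of~$p$ and of the forward heads with the backward tails to the right; one checks that every interleaving yields a valid circuit-free face, giving $\binom{a'+b''+2}{a'+1}\binom{a''+b'+2}{b'+1}$ possibilities and a total of $a'+b'+a''+b''+4$ nodes, i.e.\ the first (the $z$-)term of $C$. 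In the second case $p=m$, and according to whether $m$ is a common tail or a common head of a forward and a backward arrow one obtains, by analogous bookkeeping but with one node fewer, the two constant-in-$z$ terms of~$C$. Summing over $a',b',a'',b''\geq 0$ yields the second and third lines of \eqref{equation_revlex_rec}, and assembling the four types completes the proof.

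The main obstacle will be the type~(iv) bookkeeping: pinning down exactly in which ways a forward sub-forest and a backward sub-forest may be amalgamated along the common point~$p$ while keeping the whole arrow set circuit-free and compatible with the $THHT$/$HTTH$-cross and $THTH$/$HTHT$-nest rules, and then verifying that the number of such amalgamations, together with the induced node count, is exactly $C(a',b',a'',b'',z)$---in particular separating cleanly the ``$p$ between nodes'' regime (the $z$-term) from the two ``$p=m$'' regimes (the constant terms).
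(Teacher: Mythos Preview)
Your proposal is correct and follows essentially the same approach as the paper's own proof: split saturated faces into empty/forward-only/backward-only/mixed, invoke Proposition~\ref{proposition_nest_full} (and its dual) for the Delannoy contributions, establish that the forward and backward endpoints separate into a left block (forward tails and backward heads) and a right block (forward heads and backward tails), and then count the free interleavings within each block to obtain the three terms of $C(a',b',a'',b'',z)$. The one cosmetic difference is how the left/right separation is argued: you use the observation that any two arrows in a revlex face nest or cross, hence their closed spans pairwise meet, and then invoke one-dimensional Helly to get a common point~$p$; the paper instead reads the same separation directly off the $THHT$/$HTTH$-cross rules (backward heads precede forward heads, forward tails precede backward tails), which together with the $THTH$/$HTHT$-nest rules force $\max(\text{forward tails}\cup\text{backward heads})\le\min(\text{forward heads}\cup\text{backward tails})$. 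Both routes yield the same bipartition, and the ensuing bookkeeping (disjoint case giving the $z$-term, shared head or shared tail giving the two constant terms) is identical.
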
  
\begin{proof}
By Proposition~\ref{proposition_nest_full} the first three terms on the
right-hand side of~\eqref{equation_revlex_rec} are the total weights of
  all faces that do not contain arrows in both directions. The last sum is the total
weight of all faces containing arrows in both directions: forward arrows on
$a^{\prime}+1$ tails and $b^{\prime}+1$ heads and backward arrows on $a^{\prime\prime}+1$
tails and $b^{\prime\prime}+1$ heads. Again, by
Proposition~\ref{proposition_nest_full} the subfaces of forward and
backward arrows, respectively, contribute factors of $xD_{a^{\prime},b^{\prime}}(x)
z^{a^{\prime}+b^{\prime}+1}$ and $yD_{a^{\prime\prime},b^{\prime\prime}}(y)z^{a^{\prime\prime}+b^{\prime\prime}+1}$
respectively. We may collect the contribution of all faces that
contain only forward or only backward arrows by identifying $a^{\prime}$
and $a^{\prime\prime}$ with $a$, and $b^{\prime}$ and $b^{\prime\prime}$
with $b$. For the remaining faces there is an additional factor of $z$
when the set of 
endpoints of forward arrows is disjoint from the set of endpoints of
backward arrows. By Proposition~\ref{proposition_nest} $THHT$ and $HTTH$
type of pairs of arrows cross. As a consequence, heads of backward arrows
are to the left of the heads of forward arrows, and tails of backward
arrows are to the right of the tails of the backward arrows. These
conditions also ensure that the set of endpoints of the backward arrows cannot
have two or more nodes in common with the set of endpoints of the
forward arrows. The first term factor $C(a^{\prime},b^{\prime},a^{\prime\prime},b^{\prime\prime},z)$
accounts for the number of ways we may line up $a^{\prime}+1$ tails of
forward arrows with $b^{\prime\prime}+1$ heads of backward arrows on one
side, and 
independently, $a^{\prime\prime}+1$ tails of
backward arrows with $b^{\prime}+1$ heads of forward arrows on the other
side. The remaining terms correspond to the cases when the forward
arrows and the backward arrows share one head or one tail, respectively.  
 \end{proof}  

We obtain a more compact expression using the proof of
Theorem~\ref{theorem_revlex} by introducing the following generating
function.

\begin{definition}
Let $\widehat{{\mathcal F}}=(\widehat{{\mathcal F}}_{0},
\widehat{{\mathcal F}}_{1}, \ldots)$ be a collection of families of arrows
such that for each $n$ the family $\widehat{{\mathcal F}_{n}}$ consists of
saturated subsets of $V_{n}$. We define the {\em node-enriched
  exponential generating function} of $\widehat{\mathcal{F}}$ as follows: 
\begin{enumerate}
\item
The empty set (if it belongs to $\widehat{{\mathcal F}}_{0}$)
contributes a factor of $1$.
\item
Each nonempty $\sigma\in \widehat{{\mathcal F}}_{n}$ contributes a term
$$
x^{i} y^{j} \cdot \frac{u^{a+1} \cdot v^{b+1}}{(a+1)! \cdot (b+1)!} \cdot t^{n},
$$
where $i$ is the number of forward arrows, $j$ is the number of backward
arrows, $a+1$ is the number of nodes that are left ends of arrows and
$b+1$ is the number of nodes that are right ends of arrows.   
\end{enumerate}  
\end{definition}  

It should be noted that the numbers $a+1$ and $b+1$
respectively count the left and
right ends of arrows and not their heads or tails: a left end is the
tail of a forward arrow or the head of a backward arrow. A common tail
of a forward and a backward arrow is counted twice: once as a left end
and once as a right end. It is easy to derive from the requirements on
the $THHT$ and $HTTH$ type of pairs of arrows that for the triangulations
in the revlex class, there is at most one node that is simultaneously
the left end and the right end of some arrow.

The node-enriched exponential generating function of the saturated
faces in a triangulation in the revlex class has a compact expression in
terms of the following exponential generating function of the Delannoy
polynomials:
\begin{equation}
\Ddd(u,v,x)
=
\sum_{a,b\geq 0} \frac{D_{a,b}(x) \cdot u^{a+1} \cdot v^{b+1}}{(a+1)! \cdot (b+1)!}. 
\end{equation}  

\begin{theorem}
Let $\mathcal{F}$ be a family of uniform flag triangulations defined by a
set of rules that contains the following rules:
\begin{enumerate}
\item Both $THTH$ and $HTHT$ types of pairs of arrows nest.
\item Both $HTTH$ and $THHT$ types of pairs of arrows cross.
\end{enumerate}
Then the node-enriched exponential generating function of the collection
$\widehat{\mathcal F}$ of families of saturated faces is given by
\begin{align*}
&1+\frac{1}{z} \cdot\Ddd(uz,vz,x)+\frac{1}{z}\cdot \Ddd(vz,uz,y)+\frac{1}{z}
  \cdot\Ddd(uz,vz,x)\cdot\Ddd(vz,uz,y)\\
  &+\frac{1}{z^{2}}\cdot\frac{\partial}{\partial u} \Ddd(uz,vz,x)\cdot
  \frac{\partial}{\partial v}\Ddd(vz,uz,y)
  +\frac{1}{z^{2}}\cdot\frac{\partial}{\partial v} \Ddd(uz,vz,x)\cdot
  \frac{\partial}{\partial u}\Ddd(vz,uz,y). 
\end{align*}  
\label{theorem_revlex_exp}
\end{theorem}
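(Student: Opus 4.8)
The plan is to revisit the combinatorial decomposition of saturated faces used in the proof of Theorem~\ref{theorem_revlex} and re-run it after attaching exponential node-weights: each left end of an arrow (the tail of a forward arrow or the head of a backward arrow) is recorded by $u$, each right end (the head of a forward arrow or the tail of a backward arrow) by $v$, and the linear order on the endpoints is treated as a labelling, so that $u$ and $v$ enter exponentially. Under this dictionary the series $\Ddd$ is the node-enriched analogue of the Delannoy contribution of Proposition~\ref{proposition_nest_full}: the saturated faces consisting of forward arrows only contribute $\frac1z\,\Ddd(uz,vz,x)$, and — by reversing all arrows, which interchanges the roles of $u$ and $v$ — the saturated faces consisting of backward arrows only contribute $\frac1z\,\Ddd(vz,uz,y)$. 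Together with the empty face these account for the first three summands $1+\frac1z\Ddd(uz,vz,x)+\frac1z\Ddd(vz,uz,y)$.

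For a saturated face with arrows of both kinds I would reuse the structural facts established in the proof of Theorem~\ref{theorem_revlex}: because $HTTH$ and $THHT$ cross in this class, all backward heads precede all forward heads and all forward tails precede all backward tails, so every left end precedes every right end, the endpoint set of the forward arrows meets the endpoint set of the backward arrows in at most one node, and that node (when present) is either a common tail or a common head of a forward and a backward arrow. Thus such a face is built from a forward-only saturated sub-face (weighted by a term of $\Ddd(uz,vz,x)$) and a backward-only saturated sub-face (weighted by a term of $\Ddd(vz,uz,y)$) by independently shuffling the left ends of the two sub-faces and, independently, their right ends. The key point is that shuffling a linearly ordered $p$-set into a linearly ordered $q$-set contributes the factor $\binom{p+q}{p}=\frac{(p+q)!}{p!\,q!}$, which is precisely what the product of two exponential factors $\frac{u^{p}}{p!}\cdot\frac{u^{q}}{q!}$ produces; this is exactly how the binomial coefficients appearing in $C(a',b',a'',b'',z)$ of Theorem~\ref{theorem_revlex} arise. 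When the two sub-faces have disjoint endpoint sets the two shuffles are unconstrained, so this case contributes the ordinary product $\frac1z\Ddd(uz,vz,x)\cdot\Ddd(vz,uz,y)$, the single surviving factor of $\frac1z$ (equivalently the lone factor $z$ on the first term of $C$) recording the one extra gap created by juxtaposing the two endpoint sets. When instead the two sub-faces share one node, that node has to be marked simultaneously on the forward side and on the backward side; marking a node in an exponential variable is differentiation in that variable, and since a common tail is a left end of the forward part and a right end of the backward part, whereas a common head is a right end of the forward part and a left end of the backward part, the two shared-node subcases contribute $\frac1{z^{2}}\,\partial_{u}\Ddd(uz,vz,x)\,\partial_{v}\Ddd(vz,uz,y)$ and $\frac1{z^{2}}\,\partial_{v}\Ddd(uz,vz,x)\,\partial_{u}\Ddd(vz,uz,y)$ respectively, the remaining binomials of $C(a',b',a'',b'',z)$ being exactly what these derivative-products supply. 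Summing the six contributions gives the asserted formula, and it is then a routine check — extracting coefficients of $\Ddd$, clearing the factorials, and using identities such as $\binom{a'+b''+1}{b''}=\binom{a'+b''+1}{a'+1}$ — that it agrees term-by-term with Theorem~\ref{theorem_revlex}.

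The step I expect to be most delicate is the bookkeeping of the powers of $z$. The exponent of $z$ counts gaps between consecutive nodes, one less than the number of nodes, and this statistic behaves non-additively under the two gluing operations: juxtaposing the node sets of the forward and backward sub-faces creates one extra gap, whereas identifying a shared node destroys one; meanwhile, because $u$ and $v$ occur inside $\Ddd$ in the combinations $uz$ and $vz$, each differentiation $\partial_{u}$ or $\partial_{v}$ also pulls down a factor of $z$. Reconciling all of these shifts is exactly what pins down the prefactors $\frac1z$ and $\frac1{z^{2}}$ and the factor $z$ inside $C(a',b',a'',b'',z)$, and carrying it out cleanly — rather than by a brute-force comparison with the already-proved formula of Theorem~\ref{theorem_revlex} — is the part of the argument that requires the most care. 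A secondary point worth spelling out carefully is the justification, from the crossing hypotheses of the revlex class, that a mixed saturated face decomposes in exactly the stated way, with at most one shared node of one of the two prescribed types.
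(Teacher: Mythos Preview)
Your approach is exactly what the paper does: its entire proof reads ``The proof is essentially the same as that of Theorem~\ref{theorem_revlex} and is thus omitted,'' and your proposal is precisely a re-running of that decomposition (empty face; forward-only; backward-only; mixed with disjoint endpoint sets; mixed sharing a tail; mixed sharing a head) with the exponential $u,v$ weights in place, using the fact that shuffles of labelled sets become products of exponential generating functions and that marking a distinguished node corresponds to differentiation. The care you flag about the $z$-bookkeeping and about justifying the at-most-one-shared-node structure from the $HTTH$/$THHT$ crossing rules is exactly the content already established in the proof of Theorem~\ref{theorem_revlex}, so nothing new is needed there.
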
  
\begin{proof}
The proof is essentially the same as that of
Theorem~\ref{theorem_revlex}  and is thus omitted. 
\end{proof}  

Theorem~\ref{theorem_revlex_exp} motivates computing $\Ddd(u,v,x)$
explicitly.

\begin{theorem}
The exponential generating function $\Ddd(u,v,x)$ is given by
$$
\Ddd(u,v,x)
=
\sum_{k \geq 0}
\frac{(uv \cdot (x^{2}+x))^k}{k!^{2}} \cdot \psi_{k+1}(ux) \cdot \psi_{k+1}(vx)  
$$
where $\psi_{k+1}(z)=\frac{d^{k}}{dz^k} \left(\frac{e^z-1}{z}\right)$.  
\label{theorem_psi}
\end{theorem}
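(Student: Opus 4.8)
The idea is to recognize $\Ddd$ as (essentially) the double Borel transform of the ordinary bivariate generating function $D(u,v,x)=1/(1-x(u+v+uv))$ of the Delannoy polynomials, and to evaluate that transform by a contour‑integral computation in which the coefficient $x^{2}+x$ appears on its own. First I would rewrite $\Ddd$ as an iterated integral: since $u^{a+1}/(a+1)! = \int_{0}^{u} s^{a}/a!\,ds$, and likewise in $v$, we get
\[
\Ddd(u,v,x)=\int_{0}^{u}\!\!\int_{0}^{v} E(s,t,x)\,dt\,ds,
\qquad
E(s,t,x)=\sum_{a,b\ge 0}\frac{D_{a,b}(x)}{a!\,b!}\,s^{a}t^{b},
\]
where $E$ is the double \emph{exponential} generating function of the Delannoy polynomials. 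Thus the problem reduces to computing $E$ in closed form and then integrating twice.

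To compute $E$, I would use the Borel integral $\tfrac{1}{a!}=\tfrac{1}{2\pi i}\oint e^{\xi}\xi^{-a-1}\,d\xi$ on a large circle about the origin in each of two variables $\xi,\eta$, and sum the resulting geometric series using $D(s/\xi,\,t/\eta,\,x)=1/\bigl(1-x(s/\xi+t/\eta+st/(\xi\eta))\bigr)$. This gives
\[
E(s,t,x)=\frac{1}{(2\pi i)^{2}}\oint\!\!\oint
\frac{e^{\xi+\eta}}{\,\xi\eta-xs\eta-xt\xi-xst\,}\,d\xi\,d\eta .
\]
The key algebraic step is the factorization $\xi\eta-xs\eta-xt\xi-xst=(\xi-xs)(\eta-xt)-(x^{2}+x)\,st$, which is exactly where the factor $x^{2}+x$ of the theorem originates. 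Expanding
$\bigl((\xi-xs)(\eta-xt)-(x^{2}+x)st\bigr)^{-1}=\sum_{k\ge 0}\frac{((x^{2}+x)st)^{k}}{(\xi-xs)^{k+1}(\eta-xt)^{k+1}}$
and evaluating each contour integral by Cauchy's formula, $\tfrac{1}{2\pi i}\oint e^{\xi}(\xi-xs)^{-k-1}\,d\xi=e^{xs}/k!$, yields the closed form
$E(s,t,x)=e^{x(s+t)}\sum_{k\ge 0}\frac{((x^{2}+x)st)^{k}}{(k!)^{2}}$
(that is, $e^{x(s+t)}$ times a modified Bessel function of the first kind).

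Finally I would integrate term by term, using $\int_{0}^{u}s^{k}e^{xs}\,ds=\frac{d^{k}}{dx^{k}}\!\bigl(\frac{e^{xu}-1}{x}\bigr)=u^{k+1}\psi_{k+1}(xu)$ — the last equality because substituting $z=xu$ turns $\frac{e^{xu}-1}{x}$ into $u\cdot\frac{e^{z}-1}{z}$ and the operator $\frac{d^{k}}{dx^{k}}$ into $u^{k}\frac{d^{k}}{dz^{k}}$ — and the analogous identity in $v$. Substituting these into $\Ddd(u,v,x)=\int_{0}^{u}\!\int_{0}^{v}E(s,t,x)\,dt\,ds$ and collecting the powers $(uv)^{k}$ produces the stated formula. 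I expect the principal obstacle to be the routine‑but‑delicate justification of the contour manipulations as an identity of formal power series (choosing the circles large enough that the two geometric expansions converge, and verifying that the term‑by‑term integration and summation are legitimate), together with careful tracking of the index shift $a\mapsto a+1$ built into the definition of $\Ddd$, which is what converts the naive factor $e^{xs}$ into $\psi_{k+1}(xu)$ and supplies the precise powers of $u$ and $v$. Everything after the factorization of the denominator is bookkeeping.
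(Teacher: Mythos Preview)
Your argument is correct and follows a genuinely different route from the paper's. The paper proves the identity by first establishing the combinatorial expansion
\[
D_{a,b}(x)=\sum_{k}\binom{a}{k}\binom{b}{k}(x^{2}+x)^{k}x^{a+b-2k},
\]
interpreted by counting $NE$ steps together with ``northwest corners'' in a Delannoy path, then substituting into the defining double sum for $\Ddd$ and reorganizing term by term (using the power--series description $\psi_{k+1}(z)=\sum_{n\ge 0}\frac{z^{n}}{n!(n+k+1)}$). Your approach instead factors the denominator of the ordinary generating function as $(\xi-xs)(\eta-xt)-(x^{2}+x)st$, passes to the double exponential generating function $E$ via Cauchy/Borel, and then recovers $\Ddd$ by two antiderivatives. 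The algebraic content is the same---the factorization is exactly what underlies the paper's binomial identity---but your packaging is analytic rather than combinatorial, and it delivers the closed form $E(s,t,x)=e^{x(s+t)}I_{0}\!\bigl(2\sqrt{(x^{2}+x)st}\bigr)$ directly (this is stated in the paper only as a remark after the theorem).

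One point to watch: your integration step gives $\int_{0}^{u}s^{k}e^{xs}\,ds=u^{k+1}\psi_{k+1}(xu)$, and together with the companion $v$--integral this yields
\[
\Ddd(u,v,x)=uv\sum_{k\ge 0}\frac{\bigl(uv(x^{2}+x)\bigr)^{k}}{(k!)^{2}}\,\psi_{k+1}(ux)\,\psi_{k+1}(vx),
\]
with an extra overall factor $uv$ compared with the displayed statement. Your computation is the correct one: the right--hand side of the theorem as printed has constant term $\psi_{1}(0)^{2}=1$, whereas $\Ddd(0,0,x)=0$ from the definition, so the printed formula is missing this $uv$. (The paper's own rearrangement drops the same factor; the subsequent remark about $\partial_{u}\partial_{v}\Ddd$ is unaffected.) So do not be alarmed when ``collecting the powers $(uv)^{k}$'' leaves you with that residual $uv$---keep it.
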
  
\begin{proof}
We use the identity
$$
D_{a,b}(x) = \sum_{k} \binom{a}{k} \cdot \binom{b}{k} \cdot (x^{2}+x)^{k} \cdot x^{a+b-2k}.
$$
Here $k$ counts the total number of $NE$ steps and ``northwest corners''
(i.e., $N$ steps immediately followed by $E$ steps) in a Delannoy path
from $(0,0)$ to $(a,b)$. There are $\binom{a}{k}\binom{b}{k}$ ways to
select the positions of these steps and corners in the plane. Each
such place contributes a factor of $x^{2}+x$
as the weight of a $N$ step
followed by an $E$ step is $x^{2}$, whereas the weight of a $NE$ step is $x$.
Using the above expression for $D_{a,b}(x)$, the definition of
$\Ddd(u,v,x)$ may be rearranged as follows:
$$
\Ddd(u,v,x)
=
\sum_{k\geq 0}
\frac{(uv \cdot (x^{2}+x))^k}{k!^{2}}
\cdot
\sum_{a,b\geq k}
\frac{(ux)^{a-k}}{(a-k)!} \cdot \frac{(vx)^{b-k}}{(b-k)!} \cdot
\frac{1}{a+1} \cdot \frac{1}{b+1}.
$$
The statement follows after noticing that
$$
\psi_{k+1}(z) = \sum_{n\geq 0} \frac{1}{n+k+1} \cdot \frac{z^n}{n!}
$$
which can easily be shown by induction on $k$. 
\end{proof}  

\begin{remark}
  {\em
    It is a direct consequence of Theorems~\ref{theorem_revlex_exp} and
    \ref{theorem_psi} that
    $$
    \frac{\partial}{\partial u}\frac{\partial}{\partial v} \Ddd(u,v,x)=
    \exp(x \cdot (u+v))\cdot I_{0}\left(2\sqrt{(x^{2}+x) \cdot uv}\right)
    $$
  where $I_{0}(z)$ is the modified Bessel function of the first
  kind~\cite{McLachlan}.   
   } 
\end{remark}

\begin{remark}
{\em
It can be shown by induction that
$$ \psi_{k}(z)
=
\frac{
\left(\sum_{i=0}^{k-1} (-1)^{i} \cdot \frac{(k-1)!}{(k-1-i)!} \cdot z^{k-1-i}\right)
\cdot e^{z}
+
(-1)^{k} \cdot (k-1)!}
{z^k}. $$
}
\end{remark}

We conclude this section by counting the facets using
Corollary~\ref{corollary_facets}. 

\begin{theorem}
Let $\triangle_{n}$ be a uniform flag triangulation of the boundary
$\partial P_{n}$ 
of the root polytope that belongs to the revlex class.  
For $1 \leq k \leq n-1$, the number of facets
consisting of $k$ forward arrows and
$n-k$ backward arrows, that is, $f(\triangle_{n},k,n-k)$, is given by
\begin{align*}
\sum_{i=1}^{k}
\sum_{j=1}^{n-k}
\binom{k-1}{i-1}
\cdot
\binom{n-k-1}{j-1}
\cdot
\left[
\binom{n-k+i-j}{i}
\cdot
\binom{k-i+j}{j}
+
\binom{n-k+i-j}{i-1}
\cdot
\binom{k-i+j}{j-1}
\right].
\end{align*}
The number of facets with $n$ forward arrows and no backward arrows;
and
the number of facets with no forward arrows and $n$ backward arrows
are both equal to $2^{n-1}$.
\label{theorem_reverse_lexicographic_refined_facet_enumeration}
\end{theorem}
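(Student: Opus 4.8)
The plan is to extract the coefficient of $x^{k}y^{n-k}z^{n}$ from the saturated face generating function $F(\widehat{\mathcal F},x,y,z)$ of a revlex-class triangulation; by Corollary~\ref{corollary_facets} this coefficient is exactly $f(\triangle_n,k,n-k)$ (a saturated face with $n$ arrows contributing $z^{n}$ is a forest with $n$ edges on $n+1$ nodes, hence a facet), and an explicit formula for $F(\widehat{\mathcal F},x,y,z)$ in the revlex class is already at hand in equation~\eqref{equation_revlex_rec} of Theorem~\ref{theorem_revlex}. For $1\le k\le n-1$ a facet carries at least one arrow in each direction, so only the last (quadruple) sum of~\eqref{equation_revlex_rec} contributes. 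In that sum the factor $x\,D_{a',b'}(x)$ encodes the forward sub-forest: its contribution to $x^{k}$ is the coefficient of $x^{k-1}$ in $D_{a',b'}(x)$, i.e.\ the number of Delannoy paths $(0,0)\to(a',b')$ of length $k-1$, which by Proposition~\ref{proposition_nest_full} is the number of forward forests with $k$ arrows on $a'+1$ tails and $b'+1$ heads; dually $y\,D_{a'',b''}(y)$ encodes a backward sub-forest with $n-k$ arrows on $a''+1$ tails and $b''+1$ heads. The polynomial $C(a',b',a'',b'',z)$ records the gluing of the two sub-forests: since $THHT$ and $HTTH$ type pairs cross, the $a'+1$ forward tails and $b''+1$ backward heads shuffle on the left in $\binom{a'+b''+2}{a'+1}$ ways and the $b'+1$ forward heads and $a''+1$ backward tails shuffle on the right in $\binom{a''+b'+2}{b'+1}$ ways, with at most one globally shared node; this is precisely why $C$ has one monomial in degree $z^{1}$ (the disjoint case) and two in degree $z^{0}$ (a shared left, respectively right, end). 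Requiring the total $z$-exponent to equal $n$ then pins down $a'+b'+a''+b''\in\{n-3,n-2\}$.

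Carrying this out, I would write each Delannoy-path count as a multinomial indexed by the number of diagonal steps, or equivalently invoke the expansion $D_{a,b}(x)=\sum_{m}\binom am\binom bm(x^{2}+x)^{m}x^{a+b-2m}$ from Theorem~\ref{theorem_psi}. This turns $f(\triangle_n,k,n-k)$ into a finite multiple sum over $a',b',a'',b''$ (together with the auxiliary diagonal-step indices), of which the single linear relation on $a'+b'+a''+b''$ leaves one summation to be eliminated. I would then change summation variables so that the forward data is controlled by a parameter $i$ running over $1,\dots,k$ and the backward data by $j$ running over $1,\dots,n-k$, with $\binom{k-1}{i-1}$ and $\binom{n-k-1}{j-1}$ counting the two sub-forests, and collapse the leftover sum against the two interleaving binomials of $C$ using the Vandermonde--Chu convolution $\sum_{\ell}\binom p\ell\binom q{m-\ell}=\binom{p+q}{m}$. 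The two summands of the bracket $\binom{n-k+i-j}{i}\binom{k-i+j}{j}+\binom{n-k+i-j}{i-1}\binom{k-i+j}{j-1}$ should emerge respectively from the disjoint gluing (the $z^{1}$ monomial of $C$) and from the two one-shared-node gluings (the $z^{0}$ monomials, which merge into the second summand).

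For the two extreme cases I would use the simpler first sum in~\eqref{equation_revlex_rec}: $f(\triangle_n,n,0)$ is the coefficient of $x^{n}z^{n}$ in $\sum_{a,b}x\,D_{a,b}(x)z^{a+b+1}$, which forces $a+b=n-1$ and then asks for the coefficient of $x^{n-1}$ in $D_{a,b}(x)$; since $\deg D_{a,b}(x)=a+b=n-1$, realized only by the diagonal-free (monotone) paths, that coefficient is $\binom{n-1}{a}$, and $\sum_{a=0}^{n-1}\binom{n-1}{a}=2^{n-1}$. The count $f(\triangle_n,0,n)=2^{n-1}$ is identical with the roles of $x$ and $y$ swapped; alternatively, by Theorem~\ref{theorem_forward} the all-forward faces form the revlex pulling triangulation of the origin-free boundary facets of $P^{+}_{n}$, which has $2^{n-1}$ facets.

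The main obstacle is the final bookkeeping: choosing the change of summation variables so that the triple sum of multinomials coming from~\eqref{equation_revlex_rec} and Theorem~\ref{theorem_psi} telescopes, via the Vandermonde reductions, into the stated double sum over $1\le i\le k$ and $1\le j\le n-k$, and checking that the two summands of the inner bracket correspond correctly to the three monomials of $C$ and that the summation ranges come out exactly as claimed. Identifying the contributing terms, reading off their combinatorial meaning, and handling the two extreme cases are all routine given Theorems~\ref{theorem_revlex} and~\ref{theorem_psi}, Proposition~\ref{proposition_nest_full}, and Corollary~\ref{corollary_facets}.
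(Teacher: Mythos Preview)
Your overall strategy---read off the coefficient of $x^{k}y^{n-k}z^{n}$ from~\eqref{equation_revlex_rec}---is exactly what the paper does, and your treatment of the extreme cases $k=0,n$ is correct. But there is a genuine error in the mixed case. You assert that the $z^{1}$ monomial of $C(a',b',a'',b'',z)$ (the disjoint gluing) produces the first summand $\binom{n-k+i-j}{i}\binom{k-i+j}{j}$ of the bracket. It cannot contribute at all: in~\eqref{equation_revlex_rec} the ambient $z$-power is $a'+b'+a''+b''+2$, so picking up the extra $z$ from $C$ forces $a'+b'+a''+b''=n-3$, while the $x$- and $y$-degrees are bounded by $a'+b'+1$ and $a''+b''+1$, giving a total of at most $n-1<n$. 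Thus only the $z$-free part of $C$ survives, and its \emph{two} terms give the \emph{two} summands of the bracket (after the substitution $i=a'+1$, $j=a''+1$, $b'=k-i$, $b''=n-k-j$ one has $\binom{a'+b''+1}{b''}\binom{a''+b'+1}{b'}=\binom{n-k+i-j}{i}\binom{k-i+j}{j}$ and $\binom{a'+b''+1}{a'}\binom{a''+b'+1}{a''}=\binom{n-k+i-j}{i-1}\binom{k-i+j}{j-1}$).

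Once you see this, the rest is far simpler than you outline. The same degree count that kills the $z^{1}$ term also forces the Delannoy polynomials to contribute only their top-degree coefficient: the equality $k+(n-k)=a'+b'+a''+b''+2$ with $k\le a'+b'+1$ and $n-k\le a''+b''+1$ pins down $k=a'+b'+1$ and $n-k=a''+b''+1$, so $x\,D_{a',b'}(x)$ contributes exactly $\binom{a'+b'}{a'}=\binom{k-1}{i-1}$ (the diagonal-free paths) and $y\,D_{a'',b''}(y)$ contributes $\binom{n-k-1}{j-1}$. No expansion from Theorem~\ref{theorem_psi} and no Vandermonde convolution are needed; the double sum over $i=a'+1$ and $j=a''+1$ is already the stated formula.
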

\begin{proof}
Inspecting~\eqref{equation_revlex_rec}
we see that the total degree of $x$ and $y$
is strictly less than the degree of $z$, except for the contributions,
in which the following rules are observed:
\begin{enumerate}
\item
In the expansion of $D_{a^{\prime},b^{\prime}}(x)$ only the contribution of
those Delannoy paths are kept which contain no NE steps. Hence, to
compute the contribution of the facets only, we must replace each
appearance of $D_{a^{\prime},b^{\prime}}(x)$ in \eqref{equation_revlex_rec} with
$\binom{a^{\prime}+b^{\prime}}{a^{\prime}}\cdot x^{a^{\prime}+b^{\prime}}$. 
\item
Similarly, we must replace each
appearance of $D_{a^{\prime\prime},b^{\prime\prime}}(y)$ in~\eqref{equation_revlex_rec} 
with
$\binom{a^{\prime\prime}+b^{\prime\prime}}{a^{\prime\prime}}\cdot y^{a^{\prime\prime}+b^{\prime\prime}}$. 
\item
Only the $z$-free part of the factor
$C(a^{\prime},b^{\prime},a^{\prime\prime},b^{\prime\prime},z)$ contributes to the calculation of the
contribution of the facets.   
\end{enumerate}
Therefore we obtain
\begin{align}
\label{equation_revlex_facets}
\rest{F\left(\widehat{\mathcal{F}},\frac{x}{w},\frac{y}{w},wz\right)}{w=0}
&=
1
+
\sum_{a^{\prime},b^{\prime}\geq 0}
\binom{a^{\prime}+b^{\prime}}{a^{\prime}}  \cdot (xz)^{a^{\prime}+b^{\prime}+1}
+
\sum_{a^{\prime\prime},b^{\prime\prime}\geq 0}
\binom{a^{\prime\prime}+b^{\prime\prime}}{a^{\prime\prime}}
\cdot (yz)^{a^{\prime\prime}+b^{\prime\prime}+1} \\
&+
\sum_{\substack{a^{\prime},b^{\prime} \geq 0 \\ a^{\prime\prime},b^{\prime\prime}\geq 0}}
C_{0}(a^{\prime},b^{\prime},a^{\prime\prime},b^{\prime\prime})
\cdot
\binom{a^{\prime}+b^{\prime}}{a^{\prime}}
\cdot (xz)^{a^{\prime}+b^{\prime}+1}
\cdot
\binom{a^{\prime\prime}+b^{\prime\prime}}{a^{\prime\prime}}
\cdot (yz)^{a^{\prime\prime}+b^{\prime\prime}+1}
\nonumber
\end{align}
where
\begin{align}  
C_{0}(a^{\prime},b^{\prime},a^{\prime\prime},b^{\prime\prime})
&=
\binom{a^{\prime}+b^{\prime\prime}+1}{b^{\prime\prime}}
\cdot
\binom{a^{\prime\prime}+b^{\prime}+1}{b^{\prime}}
+
\binom{a^{\prime}+b^{\prime\prime}+1}{a^{\prime}}
\cdot
\binom{a^{\prime\prime}+b^{\prime}+1}{a^{\prime\prime}}.
\nonumber
\end{align}
The contribution of all facets consisting of forward arrows only is
$\sum_{a^{\prime},b^{\prime}\geq 0}
\binom{a^{\prime}+b^{\prime}}{a^{\prime}}
\cdot
(xz)^{a^{\prime}+b^{\prime}+1}$
on the right-hand side of~\eqref{equation_revlex_facets}. 
The part of the statement regarding these facets is a direct consequence
of the binomial theorem. Similarly, the part of the statement on facets
consisting entirely of backward arrows follows from inspecting the next sum
on the right-hand side of~\eqref{equation_revlex_facets}. The
contribution of all other facets is collected in the last sum. The
contribution of all facets consisting of $k$ forward and $n-k$ backward
arrows is the sum of all terms satisfying 
$a^{\prime}+b^{\prime}+1 = k$
and
$a^{\prime\prime}+b^{\prime\prime}+1=n-k$.
The statement now follows after substituting $i=a^{\prime}+1$ and $j=a^{\prime\prime}+1$.
\end{proof}

\section{Face enumeration in the lex class}
\label{section_enumeration_nonest}

We now turn our attention to face enumeration in the lex class
consisting of the four triangulations
studied in Subsection~\ref{subsection_nonest}.
Among them are the lexicographic pulling triangulation.
So far, the lex class and revlex class
have been similar to each other;
see
Propositions~\ref{proposition_nonest}
and~\ref{proposition_nest},
Subsections~\ref{subsection_nonest}
and~\ref{subsection_nest}.
In this section this similarity breaks down.
This section differs from the previous ones in the simplicity
and uniformity of its main result.
As the attentive reader might suspect,
there is also a purely combinatorial way to prove it.
For space considerations, we will present this combinatorial proof in an upcoming
paper~\cite{Ehrenborg_Hetyei_Readdy_Catalan}.
Here we depend on the tools developed in
Section~\ref{section_tools}.

\begin{theorem}
Let $\triangle_{n}$ be a uniform flag triangulation
of the boundary $\partial P_{n}$ of
the root polytope in the lex class,
that is, $\triangle_{n}$ satisfies the rules:
\begin{enumerate}
\item Both $THTH$ and $HTHT$ types of pairs of arrows do not nest.
\item Both $THHT$ and $HTTH$ types of pairs of arrows do not cross.
\end{enumerate}
Then the number of $(k-1)$-dimensional faces
in the triangulation~$\triangle_{n}$
consisting of $i$ forward arrows and
$k-i$ backward arrows is given by
\begin{align*}
f(\triangle_{n}, i, k-i)
=
\frac{f_{k-1}(\triangle_{n})}{k+1}
& =
\frac{1}{k+1}
\cdot
\binom{n+k}{k}
\cdot
\binom{n}{k} .
\end{align*}
Furthermore, this quantity is independent of the parameter $i$.
\label{theorem_lexicographic_refined_face_enumeration}
\end{theorem}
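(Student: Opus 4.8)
The plan is to compute the refined face generating function $F(\mathcal{F},x,y,t)$ for an arbitrary lex-class family $\mathcal{F}$ in closed form and to show that
$$
F(\mathcal{F},x,y,t)
=
\frac{x\cdot F(\mathcal{F},x,0,t)-y\cdot F(\mathcal{F},0,y,t)}{x-y}.
$$
Once this identity is in hand the theorem follows by extracting coefficients: both $F(\mathcal{F},x,0,t)$ and $F(\mathcal{F},0,y,t)$ are known explicitly from Proposition~\ref{proposition_follow} and its dual form (the latter applies because $THTH$ types of pairs do not nest in the lex class, so Theorem~\ref{theorem_forward} governs the forward-arrow subcomplex), and by the Jacobi polynomial remark following Proposition~\ref{proposition_follow} the coefficient of $x^{k}t^{n}$ in $F(\mathcal{F},x,0,t)$, and of $y^{k}t^{n}$ in $F(\mathcal{F},0,y,t)$, equals $\frac{1}{k+1}\binom{n+k}{k}\binom{n}{k}$. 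Since $\frac{x\cdot x^{k}-y\cdot y^{k}}{x-y}=\sum_{i=0}^{k}x^{i}y^{k-i}$, the coefficient of $x^{i}y^{k-i}t^{n}$ in the right-hand side above is $\frac{1}{k+1}\binom{n+k}{k}\binom{n}{k}$ for every $0\le i\le k$; this is the refined count, and summing over $i$ gives $f_{k-1}(\triangle_{n})=\binom{n+k}{k}\binom{n}{k}$.

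To prove the displayed identity I would use the structural description of lex-class faces from the proof of Theorem~\ref{theorem_nonest}. Since in the lex class pairs of arrows of opposite direction neither cross nor nest, every face of $\triangle_{n}$ decomposes uniquely, reading the nodes from left to right, into an alternating sequence of maximal \emph{forward runs} and \emph{backward runs}, interspersed with strings of isolated nodes, where two consecutive runs of opposite direction may overlap in at most one node. A nonempty forward run on a prescribed set of nodes is precisely a forest of forward arrows with no isolated nodes, so by Corollary~\ref{corollary_Catalan_times_1+z} its generating function (weight $x$ per arrow) is $\bigl(C(xz(z+1))-1\bigr)/(z+1)$, and dually $\bigl(C(yz(z+1))-1\bigr)/(z+1)$ for a backward run. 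Summing a geometric series over the number of runs, reinserting the isolated nodes via Lemma~\ref{lemma_all_full}, and accounting for the shared boundary nodes yields a closed form for $F(\widehat{\mathcal{F}},x,y,z)$, hence for $F(\mathcal{F},x,y,t)$; one then checks, using the Catalan relation $C(u)=1+uC(u)^{2}$ (equivalently $1/(1-uC(u))=C(u)$) together with the quadratic equations $F(\mathcal{F},x,0,t)=1+tF(\mathcal{F},x,0,t)+xtF(\mathcal{F},x,0,t)^{2}$ and its $y$-analogue, that this expression coincides with $\frac{x\,F(\mathcal{F},x,0,t)-y\,F(\mathcal{F},0,y,t)}{x-y}$. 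An equivalent route is to peel off the smallest node to obtain a functional equation for $F(\mathcal{F},x,y,t)$ directly, and then verify that $\frac{xA-yB}{x-y}$ satisfies it, where $A$ and $B$ are the solutions of the two quadratics.

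The main obstacle will be the bookkeeping in the run decomposition: one must handle correctly the nodes that are simultaneously an endpoint of a forward run and of the adjacent backward run — which, by admissibility, are necessarily common \emph{head} nodes — and confirm that the decomposition is a genuine bijection, so that the geometric-series assembly neither over- nor under-counts. Once the decomposition is nailed down, the algebraic reduction of the assembled generating function to $\frac{x\,F(\mathcal{F},x,0,t)-y\,F(\mathcal{F},0,y,t)}{x-y}$ and the final coefficient extraction are routine.
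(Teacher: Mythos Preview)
Your approach is essentially the same as the paper's: both hinge on the run decomposition into alternating maximal forward and backward blocks, and your target identity $F(\mathcal{F},x,y,t)=\frac{xF(\mathcal{F},x,0,t)-yF(\mathcal{F},0,y,t)}{x-y}$ is equivalent (after the substitution $z=t/(1-t)$ of Lemma~\ref{lemma_all_full}) to the Catalan identity the paper isolates as Lemma~\ref{lemma_Catalan_identity} and proves by the very generating-function manipulation you sketch. One small correction to your bookkeeping: the node shared by two adjacent runs of opposite direction is a common \emph{head} only when the forward run precedes the backward run; when a backward run is followed by a forward run the shared node is a common \emph{tail} (the rightmost node of a backward run is a tail, as is the leftmost node of a forward run), so both cases occur and must be treated symmetrically.
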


We begin with a lemma about Catalan numbers.
For a subset $S$ of the integers, call a {\em run} a maximal
interval in $S$.
\begin{lemma}
For $k$ a nonnegative integer,
let $i$ be an integer satisfying
$0 \leq i \leq k$.  Then the $k$th Catalan number is given by the
sum of products
\begin{align}
C_{k}
& =
\sum_{\substack{S \subseteq [k] \\ |S| = i}}
\prod_{\substack{\text{$R$ run in $S$} \\ \text{or in $[k] - S$}}}
C_{|R|} .
\label{equation_new_Catalan_identity}
\end{align}
\label{lemma_Catalan_identity}
\end{lemma}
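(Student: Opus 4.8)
The plan is to prove the identity~\eqref{equation_new_Catalan_identity} by a generating-function argument, exploiting the structure of the right-hand side as a ``coloring with independent runs'' enumeration. Fix the size $i$ of the subset $S\subseteq[k]$. A subset $S$ together with its complement $[k]\setminus S$ decomposes $[k]$ into an alternating sequence of runs, say $R_1,R_2,\ldots,R_m$ read left to right, where the runs alternate between being contained in $S$ and being contained in $[k]\setminus S$. The contribution of such a configuration to the right-hand side is $\prod_{j=1}^{m}C_{|R_j|}$, and the constraint is that the run-lengths coming from $S$ sum to $i$ while the run-lengths coming from $[k]\setminus S$ sum to $k-i$. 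So the right-hand side is the coefficient extraction
\begin{align*}
\sum_{\substack{S\subseteq[k]\\|S|=i}}\;\prod_{\text{$R$ run}}C_{|R|}
&=[x^{i}y^{k-i}]\;\sum_{m\ge 0}\Bigl(\text{alternating words of $m$ runs, $S$-runs marked by $x$, complement-runs by $y$}\Bigr).
\end{align*}

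The hard part will be bookkeeping the alternation cleanly; I expect this to be the main obstacle, since one must be careful about which color the first and last run have and about the empty configuration $k=0$. The cleanest device is to introduce $A(x)=\sum_{r\ge 1}C_r x^r=C(x)-1$, the generating function for a single nonempty run weighted by its length, where $C(x)=\sum_{r\ge0}C_rx^r$ is the Catalan generating function defined in the paper. A maximal alternating sequence of runs with $S$-runs weighted by $A(x)$ and complement-runs by $A(y)$, allowing the sequence to start and end with either color, has generating function
\begin{align*}
1+A(x)+A(y)+\bigl(A(x)+A(y)\bigr)\cdot\frac{A(x)A(y)}{1-A(x)A(y)}
&=\frac{1+A(x)+A(y)+A(x)A(y)}{1-A(x)A(y)}
=\frac{(1+A(x))(1+A(y))}{1-A(x)A(y)}\\
&=\frac{C(x)\,C(y)}{1-(C(x)-1)(C(y)-1)}.
\end{align*}
(The middle term counts sequences of length $\ge 2$: pick the color of the first run, then append alternating pairs.) Therefore I must show that $[x^iy^{k-i}]\,\dfrac{C(x)C(y)}{1-(C(x)-1)(C(y)-1)}=C_k$ for every $0\le i\le k$, equivalently that this bivariate series equals $\sum_{k\ge0}C_k\sum_{i=0}^{k}x^iy^{k-i}=\sum_{k\ge0}C_k\cdot\dfrac{x^{k+1}-y^{k+1}}{x-y}=\dfrac{xC(x)-yC(y)}{x-y}$.

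So the proof reduces to the single algebraic identity
\begin{align*}
\frac{C(x)\,C(y)}{1-(C(x)-1)(C(y)-1)}
&=\frac{x\,C(x)-y\,C(y)}{x-y},
\end{align*}
which I would verify using the defining quadratic relation $C(u)=1+u\,C(u)^2$, equivalently $C(u)-1=u\,C(u)^2$ and $x=\dfrac{C(x)-1}{C(x)^2}=\dfrac{1}{C(x)}-\dfrac{1}{C(x)^2}$. Substituting $C(x)-1=xC(x)^2$ and $C(y)-1=yC(y)^2$ into the left denominator gives $1-xyC(x)^2C(y)^2$, and then cross-multiplying and using $xC(x)^2=C(x)-1$, $yC(y)^2=C(y)-1$ reduces both sides to a polynomial identity in $C(x),C(y)$ after clearing $x-y$; this is a short computation. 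Finally I would note the degenerate case $k=0$ (where $i=0$, $S=\emptyset$, the empty product is $1=C_0$) is consistent, and that the case analysis does not actually depend on $i$ except through the coefficient extraction, which is what makes the ``independent of $i$'' phenomenon in Theorem~\ref{theorem_lexicographic_refined_face_enumeration} possible. An alternative, more bijective route would be to exhibit for each $i$ an explicit bijection between $k$-element binary trees (or Dyck paths of semilength $k$) and pairs (subset $S$ of size $i$, tuple of smaller Dyck paths indexed by the runs), but the generating-function argument above is shorter and self-contained given $C(u)=1+uC(u)^2$.
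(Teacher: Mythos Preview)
Your approach is essentially identical to the paper's: set $A(x)=C(x)-1$, show the bivariate generating function for the right-hand side of~\eqref{equation_new_Catalan_identity} equals $\dfrac{C(x)C(y)}{1-(C(x)-1)(C(y)-1)}$, and then verify algebraically (using $C(u)=1+uC(u)^2$) that this equals $\dfrac{xC(x)-yC(y)}{x-y}$. The paper does exactly this, with $f$ in place of your $A$.

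One slip to fix: your intermediate expression for the $m\geq 2$ contribution is wrong. You wrote
\[
1+A(x)+A(y)+\bigl(A(x)+A(y)\bigr)\cdot\frac{A(x)A(y)}{1-A(x)A(y)},
\]
but this simplifies to $\dfrac{1+A(x)+A(y)-A(x)A(y)}{1-A(x)A(y)}$, not $\dfrac{(1+A(x))(1+A(y))}{1-A(x)A(y)}$. The correct coefficient in front of $\dfrac{A(x)A(y)}{1-A(x)A(y)}$ is $2+A(x)+A(y)$, not $A(x)+A(y)$: alternating sequences of length $\geq 2$ starting with an $S$-run contribute $\dfrac{A(x)A(y)}{1-A(x)A(y)}(1+A(x))$ (even and odd lengths), and symmetrically for those starting with a complement-run. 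Your parenthetical ``pick the color of the first run, then append alternating pairs'' only produces odd-length sequences. The paper avoids this by casing on whether $1$ and $k$ lie in $S$, which gives the four terms $1$, $\dfrac{f(x)}{1-f(x)f(y)}$, $\dfrac{f(y)}{1-f(x)f(y)}$, $\dfrac{2f(x)f(y)}{1-f(x)f(y)}$ directly. With that correction your derivation goes through and matches the paper exactly.
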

Note that when $i=0$ or $i=k$,
the lemma does not give anything new.
When $i=1$ or $i=k-1$, the lemma yields
the classical recursion for the Catalan numbers.

\begin{proof}[Proof of Lemma~\ref{lemma_Catalan_identity}]
Recall that  $C(x) = \sum_{k \geq 0} C_{k} \cdot x^{k}$
is the generating function for the Catalan numbers,
which satisfies the quadratic relation
$C(x) = 1 + x \cdot C(x)^{2}$.
Let $f(x)$ be $C(x)$ without the constant term,
that is, $f(x) = C(x) - 1$.
Multiply the right-hand side of~\eqref{equation_new_Catalan_identity}
with $x^{i}y^{k-i}$
and notice that
$x^{i}y^{k-i}
=
\prod_{\text{$R$ run in $S$}} x^{|R|}
\cdot
\prod_{\text{$R$ run in $[k] - S$}} y^{|R|}$.
Sum over all $i$ and $k$ such that $0 \leq i \leq k$. The resulting
generating function is given by
\begin{align}
\sum_{\substack{k \geq 0 \\ 0 \leq i \leq k}}
\sum_{\substack{S \subseteq [k]\\ |S| = i}}
\prod_{\substack{\text{$R$ run in $S$} \\ \text{or in $[k] - S$}}}
    C_{|R|} \cdot x^{i} y^{k-i}
&= 
1
+ \frac{f(x)}{1 - f(y)f(x)}
+ \frac{f(y)}{1 - f(x)f(y)}
+ 2 \cdot \frac{f(x)f(y)}{1 - f(x)f(y)}
\label{equation_Catalan_4_cases}
\\
& = 
\frac{(1+f(x)) \cdot (1 + f(y))}{1 - f(x)f(y)} .
\label{equation_Catalan_identity_1}
\end{align}
Note that the constant term $1$ on the right hand side of~\eqref{equation_Catalan_4_cases}
corresponds to $k=0$,
the second term to subsets $S \subseteq [k]$ with $1,k \in S$,
the third to subsets $S \subseteq [k]$ with $1,k \not\in S$,
and finally, the fourth term to subsets $S$ such that $|S \cap \{1,k\}| = 1$.
Next observe that
\begin{align}
(1 - f(x)f(y))
\cdot
(x \cdot C(x) - y \cdot C(y))
& =
(x-y)
\cdot
C(x) \cdot C(y) ,
\label{equation_Catalan_identity_2}
\end{align}
by expanding the product on the
left-hand side 
of~\eqref{equation_Catalan_identity_2}
and simplifying using
the quadratic relation 
$C(x) = 1 + x \cdot C(x)^{2}$
four times.
Using~\eqref{equation_Catalan_identity_2}
the generating function in~\eqref{equation_Catalan_identity_1}
simplifies to
\begin{align*}
\frac{x \cdot C(x) - y \cdot C(y)}{x - y}
& =
\sum_{k \geq 0} \sum_{0 \leq i \leq k} C_{k} \cdot x^{i} y^{k-i} ,
\end{align*}
which is the generating function for the left-hand side
of~\eqref{equation_new_Catalan_identity}.
\end{proof}

\begin{remark}
{\em
Lemma~\ref{lemma_Catalan_identity} is equivalent to
the following statement about lattice paths
from the origin to $(2n,0)$ taking up steps $(1,1)$
and down steps $(1,-1)$. For such a lattice path $p$,
considered as a piecewise linear function,
let $b(p)$ be $1/2$ times the sum of the lengths of the
intervals where the lattice path is below the $x$-axis.
Then for an integer $0 \leq i \leq n$, the number of
lattice paths $p$ such that $b(p) = i$ is given by the
Catalan number $C_{n}$.
}
\end{remark}

Observe that Corollary~\ref{corollary_Catalan_times_1+z}
applies to the case when we only have backward arrows
or only forward arrows.
Now we turn our attention to the case when we have
both forward and backward arrows.
\begin{proposition}
Consider digraphs such that
both $THTH$ and $HTHT$ types of pairs of arrows do not nest
and both $THHT$ and $HTTH$ types of pairs of arrows do not cross.
The sum over all forests~$F$ consisting of $i$
forward arrows, $k-i$ backward arrows and no isolated nodes,
where $k \geq 1$, is
\begin{align*}
\sum_{F} z^{\text{\#nodes of $F$}}
& = 
C_{k} \cdot z^{k+1} \cdot (z+1)^{k-1} .
\end{align*}
\label{proposition_lex_class_forward_and_backwards}
\end{proposition}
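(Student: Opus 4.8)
The plan is to prove Proposition~\ref{proposition_lex_class_forward_and_backwards} by combining the structural description of saturated faces in the lex class (already worked out in the proof of Theorem~\ref{theorem_nonest}) with the single-direction count of Corollary~\ref{corollary_Catalan_times_1+z} and the combinatorial identity of Lemma~\ref{lemma_Catalan_identity}. Recall from the proof of Theorem~\ref{theorem_nonest} that in a lex-class triangulation, arrows of opposite direction neither cross nor nest; hence if we record, left to right along the node line $\{1,2,\ldots,n+1\}$, the maximal runs of the associated lattice path that lie above the $x$-axis (forward-arrow blocks) and below the $x$-axis (backward-arrow blocks), then every forest with no isolated nodes decomposes uniquely as a concatenation of such blocks, where consecutive blocks of opposite type share exactly one node (the return to the $x$-axis) and — this is the point — a forward block and the following backward block are glued at their common boundary node.

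First I would set up the generating-function bookkeeping. Let $G(z) = \sum_{k\ge1} G_k(z)$ where $G_k(z) = C_k z^{k+1}(z+1)^{k-1}$ is the weight (by number of nodes) of all nonempty single-direction forests with $k$ arrows and no isolated nodes, as in Corollary~\ref{corollary_Catalan_times_1+z}; both the forward-only and backward-only families give the same $G_k$. A forest with both types of arrows and no isolated nodes is an alternating sequence of forward blocks and backward blocks; two adjacent blocks overlap in one node. Tracking variables $x$ (forward arrows), $y$ (backward arrows), $z$ (nodes), I would write the generating function for such mixed forests as a sum over the number $b$ of blocks ($b\ge2$) of products of $G$'s in the variables $xz$ or $yz$ appropriately, with a correction factor $z^{-1}$ for each of the $b-1$ shared nodes. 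Concretely, reading off each block's arrow count as an index, the coefficient of $x^i y^{k-i}$ in this mixed generating function, as a polynomial in $z$, is exactly
\begin{align*}
\sum_{\substack{S\subseteq[k]\\ |S|=i,\ S\neq\emptyset,[k]}}
\ \prod_{\substack{R\text{ run in }S\\ \text{or in }[k]-S}} C_{|R|}\ z^{|R|+1}(z+1)^{|R|-1}
\ \cdot\ z^{-(\#\text{runs}-1)},
\end{align*}
where a subset $S\subseteq[k]$ encodes which arrows (in left-to-right order) are forward, and its runs record the block structure. The factor $z^{\#\text{runs}-1}$ downstairs implements the shared boundary nodes, and after cancellation each run of length $\ell$ contributes $C_\ell z^{\ell}(z+1)^{\ell-1}$ except that the very first and very last runs keep one extra $z$; a short computation shows the net power of $z$ is $k+1$ and the net power of $(z+1)$ is $\sum(\ell-1)=k-\#\text{runs}$.

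The key step is then to sum the single-direction contributions (runs with $S=\emptyset$ or $S=[k]$, handled by Corollary~\ref{corollary_Catalan_times_1+z}) together with the mixed contributions, and to recognize the total as $C_k z^{k+1}(z+1)^{k-1}$. Factoring out $z^{k+1}$ and setting $w=z+1$, what remains to prove is precisely $\sum_{S\subseteq[k]} w^{-\#\text{runs}} \prod_{R} C_{|R|} w^{|R|} \cdot(\text{boundary corrections}) = C_k w^{k-1}$ for each fixed $i=|S|$; but after the boundary bookkeeping is sorted out this is exactly the identity $C_k = \sum_{|S|=i}\prod_R C_{|R|}$ of Lemma~\ref{lemma_Catalan_identity}, since the $w$-powers match automatically once one checks the corners carefully. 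The main obstacle — and the only genuinely delicate part — is the boundary accounting: making sure the shared-node correction $z^{-1}$ per junction and the ``extra $z$'' at the two ends of the whole forest combine so that the exponent of $(z+1)$ becomes $k-1$ uniformly (independent of how many blocks there are) rather than $k-\#\text{runs}$, and that the exponent of $z$ is $k+1$. I would verify this by a careful telescoping: each internal junction is counted in two adjacent runs and corrected once, while the two extreme runs are missing one neighbor each, which is exactly compensated by the two end factors of $z$ coming from $G_\ell(z)=C_\ell z^{\ell+1}(z+1)^{\ell-1}$ carrying $z^{\ell+1}$ rather than $z^{\ell}$. Once this is checked on small cases and then in general, an appeal to Lemma~\ref{lemma_Catalan_identity} with the given $i$ finishes the proof.
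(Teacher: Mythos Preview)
Your overall strategy is exactly the paper's: decompose a saturated lex-class forest into alternating directional blocks, use Corollary~\ref{corollary_Catalan_times_1+z} for each block, and finish with Lemma~\ref{lemma_Catalan_identity}. But your structural description of the junctions is wrong, and this is precisely why your exponent of $(z+1)$ comes out as $k-\#\text{runs}$ rather than $k-1$.

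You assert that ``consecutive blocks of opposite type share exactly one node (the return to the $x$-axis).'' That is true for matchings, but not for general forests with no isolated nodes. At a forward--backward junction the rightmost node of the forward block is a head and the leftmost node of the backward block is also a head; these may be the \emph{same} node or two \emph{consecutive} nodes (and dually at a backward--forward junction, with tails). Both options produce admissible faces in the lex class. See Figure~\ref{figure_forest_with_both_forward_and_backward_arrows}: the first junction shares node~$4$ (factor $z^{-1}$), while the second junction places the last backward tail at node~$6$ and the first forward tail at node~$7$ (factor $1$). So the correct junction factor is $1+z^{-1}$, not $z^{-1}$. With $r(S)$ runs one gets
\[
(1+z^{-1})^{r(S)-1}\prod_{R} G_{|R|}(z)
= z^{-(r(S)-1)}(z+1)^{r(S)-1}\cdot z^{k+r(S)}(z+1)^{k-r(S)}\prod_{R}C_{|R|}
= z^{k+1}(z+1)^{k-1}\prod_{R}C_{|R|},
\]
and now the $(z+1)$ exponent is $k-1$ independently of $r(S)$, so Lemma~\ref{lemma_Catalan_identity} applies directly. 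Your ``boundary accounting'' paragraph is trying to manufacture the missing factor $(z+1)^{r(S)-1}$ out of corrections that are not actually there; no amount of telescoping at the two ends will fix this, because the discrepancy depends on $r(S)$, not on the ends.
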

\begin{proof}
Given that we have $k$ undirected arrows,
pick a subset $S$ of them.
Let $S$ be the set of the forward
arrows, and let the complement be the backward arrows.
Hence the generating function can be expressed as
\begin{align*}
\sum_{F} z^{\text{\#nodes of $F$}}
& = 
\sum_{\substack{S \subseteq [k] \\ |S| = i}}
(1+z^{-1})^{r(S) - 1}
\cdot
\prod_{\substack{\text{$R$ run in $S$} \\ \text{or in $[k] - S$}}}
G_{|R|}(z) ,
\end{align*}
where $r(S)$ is the sum of the number of runs in the subset $S$ 
and the number of runs in the complement subset $[k] - S$
and $G_{|R|}(z)$ is the polynomial
appearing in
equation~\eqref{equation_G_1}.
The factor $1+z^{-1}$ appears since when we switch
the direction of the arrows either the node set is disjoint, yielding the factor $1$,
or they share an a vertex, yielding the factor $z^{-1}$;
see~Figure~\ref{figure_forest_with_both_forward_and_backward_arrows}.
Expanding $G_{|R|}(z)$
using Corollary~\ref{corollary_Catalan_times_1+z}
we have
\begin{align*}
\sum_{F} z^{\text{\#nodes of $F$}}
& = 
\sum_{\substack{S \subseteq [k] \\ |S| = i}}
z^{-r(S) + 1}
\cdot
(z+1)^{r(S) - 1}
\cdot
\prod_{\substack{\text{$R$ run in $S$} \\ \text{or in $[k] - S$}}}
C_{|R|} \cdot z^{|R|+1} \cdot (z+1)^{|R|-1} \\
& =
z^{k+1}
\cdot
(z+1)^{k-1}
\cdot
\sum_{\substack{S \subseteq [k] \\ |S| = i}}
\prod_{\substack{\text{$R$ run in $S$} \\ \text{or in $[k] - S$}}}
C_{|R|} , 
\end{align*}
where we used $\sum_{R} |R| = k$.
Now by 
Lemma~\ref{lemma_Catalan_identity}
the result follows.
\end{proof}

\begin{figure}
\begin{center}
\begin{tikzpicture}
\foreach \u in {1,2, ..., 12} {\fill[black] (\u,0) circle (0.05);};
\draw[->, thick,=>spaced stealth] (1,0) to[out=45, in=135] (3.95,0.05);
\draw[->, thick,=>spaced stealth] (1,0) to[out=45, in=135] (2.95,0.05);
\draw[->, thick,=>spaced stealth] (2,0) to[out=45, in=135] (3.95,0.05);
\draw[<-, thick,=>spaced stealth] (4.05,0.05) to[out=45, in=135] (6,0);
\draw[<-, thick,=>spaced stealth] (5.05,0.05) to[out=45, in=135] (6,0);
\draw[->, thick,=>spaced stealth] (7,0) to[out=45, in=135] (7.95,0.05);
\draw[->, thick,=>spaced stealth] (9,0) to[out=45, in=135] (10.95,0.05);
\draw[->, thick,=>spaced stealth] (10,0) to[out=45, in=135] (10.95,0.05);
\draw[<-, thick,=>spaced stealth] (11.05,0.05) to[out=45, in=135] (12,0);
\end{tikzpicture}
\end{center}
\caption{A forest with $6$ forward arrows and $3$ backward arrows
on $12$ nodes, counted by the term
$G_{3}(z) \cdot z^{-1} \cdot G_{2}(z) \cdot 1
\cdot G_{4}(z) \cdot z^{-1} \cdot G_{1}(z)$
in the proof of 
Proposition~\ref{proposition_lex_class_forward_and_backwards}.}
\label{figure_forest_with_both_forward_and_backward_arrows}
\end{figure}
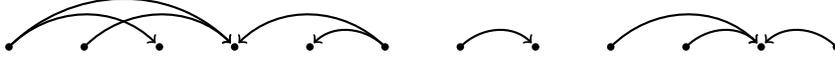

\begin{proof}[Proof of Theorem~\ref{theorem_lexicographic_refined_face_enumeration}]
Observe that the enumeration 
in
Proposition~\ref{proposition_lex_class_forward_and_backwards}
is independent of $i$,
that is, the distribution is uniform.
Inserting isolated vertices will not change this fact.
Since the total number of $k$-dimensional faces is given
by $f_{k-1}(\triangle_{n}) = \binom{n+k}{k} \cdot \binom{n}{k}$, the number
of faces with exactly $i$ forward arrows is $f_{k-1}(\triangle_{n})/(k+1)$.
\end{proof}

\section{Concluding Remarks}

The result of Oh and Yoo~\cite[Theorem~5.4]{Oh_Yoo_2} characterizing
triangulations of products of simplices is deep but not very direct. It
is a straightforward exercise, left to the reader, to show that most of the
fifteen uniform flag triangulations of the boundary $\partial P_{n}$ of
the root polytope are pulling triangulations. Since all pulling
triangulations of $\partial P_{n}$ are flag, it suffices to come up with
a pulling order that satisfies the given flag conditions. Three of the
fifteen uniform flag triangulations do not seem to arise in such an easy
manner: the triangulation in the lex class, where both $TTHH$ and
$HHTT$ types of pairs of arrows nest, the triangulation in the revlex
class where both $TTHH$ and $HHTT$ types of pairs of arrows cross, and
the triangulation (up to taking the dual) in the type~$c$ subclass
of the Simion class. The geometry of these three triangulations is
worth a closer look.

All triangulations of the boundary of the root polytope
discussed in this paper have the
same face numbers. Setting the variables $x$ and $y$ equal
in our results yields many
equalities linking the Legendre polynomials, the Catalan numbers, the
Delannoy numbers and their weighted generalizations. Exploring
these identities, relating them to known results, and proving them
combinatorially are all subjects of future investigation.

Stanley gives a condition for a lattice polytope so
that the $f$-vector of a triangulation of the polytope
would be independent of the triangulation;
see~\cite[Example~2.4 and Corollary~2.7]{Stanley_Dec}
or~\cite[Theorem~2.6]{Ehrenborg_Hetyei_Readdy}.
Is there an extension of this condition to explain
the invariance of the refined face count, as displayed
in Theorems~\ref{theorem_enumeration_in_the_simion_class},
\ref{theorem_revlex}
and~\ref{theorem_lexicographic_refined_face_enumeration}?
For instance, the condition ``if an arrow is forward or backward''
can be replaced with the more geometric condition
``if the inner product between the
vector $(1,2, \ldots, n+1)$ and the lattice point $v$ is
positive or negative''.

\section*{Acknowledgments}

The first and third authors thank the Institute for Advanced Study in
Princeton, New Jersey for supporting a research visit in Summer 2018.
This work was partially supported by grants from the
Simons Foundation
(\#429370 to Richard~Ehrenborg,
\#245153 and \#514648 to G\'abor~Hetyei,
\#422467 to Margaret~Readdy).

\newcommand{\journal}[6]{{\sc #1,} #2, {\it #3} {\bf #4} (#5), #6.}
\newcommand{\book}[4]{{\sc #1,} ``#2,'' #3, #4.}
\newcommand{\bookf}[5]{{\sc #1,} ``#2,'' #3, #4, #5.}
\newcommand{\arxiv}[3]{{\sc #1,} #2, {\tt #3}.}
\newcommand{\preprint}[3]{{\sc #1,} #2, preprint {(#3)}.}
\newcommand{\oeis}[3]{{\sc #1,} #2, {#3}.}
\newcommand{\preparation}[2]{{\sc #1,} #2, in preparation.}


\begin{thebibliography}{99}

\bibitem{Abramowitz-Stegun}
  \book{M.\ Abramowitz and I.\ A.\ Stegun}
{Handbook of Mathematical Functions}
{National Bureau of Standards, Washington, D.C.}{issued 1964, Tenth
Printing, 1972, with corrections}
  
  
\bibitem{Ardila_Beck_Hosten_Pfeifle_Seashore}
\journal{F.\ Ardila, M.\ Beck, S.\ Ho\c{s}ten,
             J.\ Pfeifle and K.\ Seashore}
         {Root polytopes and growth series of root lattices}
         {SIAM J.\ Discrete Math.}
         {25}{2011}{360--378} 

\bibitem{Banderier_Schwer}
\journal{C.\ Banderier and S.\ Schwer}
            {Why Delannoy numbers?}
            {J.\ Statist.\ Plan.\ Inference}
            {135}{2005}{40--54}

\bibitem{Catalan}
\journal{E.\ Catalan}
            {Sur les nombres de Segner}
            {Rend.\ Circ.\ Mat.\ Palermo}
            {1}{1887}{190--201}  
  
\bibitem{Ceballos_Padrol_Sarmiento_old}
\journal{C.\ Ceballos, A.\ Padrol and C.\ Sarmiento}
      {Dyck path triangulations and extendability}
           {J.\ Combin.\ Theory Ser.\ A }
           {131}{2015}{187--208}
  
\bibitem{Ceballos_Padrol_Sarmiento}
\journal{C.\ Ceballos, A.\ Padrol and C.\ Sarmiento}
      {Geometry of $\nu$-Tamari lattices in
       types $A$ and $B$}
           {S\'em.\ Lothar.\ Combin.}
           {78B}{2017}{Art.\ 68, 12 pp}

\bibitem{Cellini_Marietti_abelian}
\journal{P.\ Cellini and M.\ Marietti}
            {Root polytopes and Abelian ideals}
            {J.\ Algebr.\ Comb.}
            {2014}{39}{607--645}

\bibitem{Cho}
\journal{S.\ Cho}
           {Polytopes of roots of type $A_{n}$}
           {Bull.\ Austral.\ Math.\ Soc.}
           {59}{1999}{391--402} 

\bibitem{Cori_Hetyei}
\journal{R.\ Cori and G.\ Hetyei}
           {Counting genus one partitions and permutations}
           {S\'em.\ Lothar.\ Combin.}
           {70}{2013}{Art.\ B70e, 29 pp}

\bibitem{deLoera_Rambau_Santos}
\bookf{J.\ A.\ de Loera, J.\ Rambau and F.\ Santos}
          {Triangulations: Structures for Algorithms and Applications}
          {Algorithms Comput.\ Math.\ vol.\ 25}
          {Springer-Verlag, Berlin}
          {2010} 

\bibitem{Ehrenborg_Hetyei_Readdy}
\journal{R.\ Ehrenborg, G.\ Hetyei and M.\ Readdy}
            {Simion's type~$B$ associahedron is a pulling triangulation of
              the Legendre polytope}
            {Discrete Comput.\ Geom.}
            {60}{2018}{98--114} 

\bibitem{Ehrenborg_Hetyei_Readdy_Catalan}
\preparation{R.\ Ehrenborg, G.\ Hetyei and M.\ Readdy}
              {Very pure monoids and Catalan combinatorics}
            
\bibitem{Gelfand_Graev_Postnikov}
            {\sc I.\ M.\ Gelfand, M.\ I.\ Graev and A.\ Postnikov,}
             Combinatorics of hypergeometric functions associated
             with positive roots,
             in Arnold-Gelfand Mathematical Seminars:
             Geometry and Singularity Theory,
             Birkh\"auser, Boston, 1996, 205--221.

\bibitem{Hetyei_Legendre}
\journal{G.\ Hetyei}
           {Delannoy orthants of Legendre polytopes}
           {Discrete Comput.\ Geom.}
           {42}{2009}{705--721} 

\bibitem{Hudson}
\bookf{J.\ F.\ P.\ Hudson}
          {Piecewise Linear Topology}
          {W.\ A.\ Benjamin, Inc.}
          {New York and Amsterdam}
          {1969}

\bibitem{McLachlan}          
 \book{N.\ W.\ McLachlan}
{Bessel functions for engineers}
{Oxford at the Clarendon Press}{2nd Ed. 1961}
           
\bibitem{OEIS}
\oeis{OEIS Foundation Inc.}
  {The On-Line Encyclopedia of Integer Sequences}{\tt http://oeis.org}

\bibitem{Oh_Yoo1}
{\sc S.\ Oh and H.\ Yoo,}
Triangulations of $\Delta_{n-1} \times \Delta_{d-1}$ and
          tropical oriented matroids, 
         23rd International Conference on Formal Power Series and
          Algebraic Combinatorics (FPSAC 2011), 717--728, 
         Discrete Math.\ Theor.\ Comput.\ Sci.\ Proc., AO,
         Assoc.\ Discrete Math.\ Theor.\ Comput.\ Sci., Nancy, 2011.  

\bibitem{Oh_Yoo_2}
\arxiv{S.\ Oh and H.\ Yoo}
             {Triangulations of $\Delta_{n-1} \times \Delta_{d-1}$ and
          tropical oriented matroids}  
             {arXiv:1311. 6772v1 [math.CO]}
 
\bibitem{Postnikov_P}
\journal{A.\ Postnikov}
            {Permutohedra, associahedra, and beyond}
            {Int.\ Math.\ Res.\ Not.\ IMRN}
            {}{2009}{1026--1106}

\bibitem{Simion}
\journal{R.\ Simion}
            {A type-B associahedron}
            {Adv.\ in Appl.\ Math.} 
            {30}{2003}{2--25} 

\bibitem{Stanley_Dec}
\journal{R.\ P.\ Stanley}
             {Decompositions of rational convex polytopes}
             {Ann.\ Discrete Math.}
             {6}{1980}{333--342}

\bibitem{Sturmfels}
\book{B.\ Sturmfels}         
     {Gr\"obner bases and convex polytopes}
     {University Lecture Series, 8.\ American Mathematical Society,
       Providence, RI}
     {1996}

\end{thebibliography}
\end{document}